\newtheorem{result}{Theorem}
\newtheorem*{problem}{Problem}
\newtheorem{theorem}{Theorem}[section]
\newtheorem{lemma}[theorem]{Lemma}
\newtheorem{proposition}[theorem]{Proposition}
\newtheorem{corollary}[theorem]{Corollary}
\newtheorem{def/prop}[theorem]{Definition/Proposition}
\newtheorem{prop}[theorem]{Proposition}
\theoremstyle{definition}
\newtheorem{definition}[theorem]{Definition}
\newtheorem{remark}[theorem]{Remark}
\newtheorem{example}{Example}[section]
\newtheorem{subexample}{Example}[example]
\DeclareSymbolFont{bbold}{U}{bbold}{m}{n}
\DeclareSymbolFontAlphabet{\mathbbold}{bbold}
\def\BZero{\mathchoice{\scalebox{1.1}{$\displaystyle\mathbbold 0$}}{\scalebox{1.1}{$\textstyle\mathbbold 0$}}{\scalebox{1.1}{$\scriptstyle\mathbbold 0$}}{\scalebox{1.1}{$\scriptscriptstyle\mathbbold 0$}}}
\def\BOne{\mathchoice{\scalebox{1.16}{$\displaystyle\mathbbold 1$}}{\scalebox{1.16}{$\textstyle\mathbbold 1$}}{\scalebox{1.16}{$\scriptstyle\mathbbold 1$}}{\scalebox{1.16}{$\scriptscriptstyle\mathbbold 1$}}}
\def\bbc{{\mathbb C}}
\def\bbn{{\mathbb N}}
\def\bbp{{\mathbb P}}
\def\bbq{{\mathbb Q}}
\def\bbr{{\mathbb R}}
\def\bbt{{\mathbb T}}
\def\bbz{{\mathbb Z}}
\def\gra{\alpha}
\def\grd{\delta}
\def\gre{\epsilon}
\def\grg{\gamma}
\def\grl{\lambda}
\def\gro{\omega}
\def\grr{\rho}
\def\grs{\sigma}
\def\grt{\tau}
\def\grL{\Lambda}
\def\grS{\Sigma}
\def\bfk{{\bf k}}
\def\bfl{{\bf l}}
\def\bfr{{\bf r}}
\def\bfu{{\bf u}}
\def\bfx{{\bf x}}
\def\bfy{{\bf y}}
\def\cala{{\mathcal A}}
\def\calb{{\mathcal B}}
\def\calc{{\mathcal C}}
\def\cale{{\mathcal E}}
\def\calh{{\mathcal H}}
\def\cali{{\mathcal I}}
\def\calk{{\mathcal K}}
\def\call{{\mathcal L}}
\def\calo{{\mathcal O}}
\def\cals{{\mathcal S}}
\def\calt{{\mathcal T}}
\def\gg{{\mathfrak g}}
\def\gt{{\mathfrak t}}
\def\<{\langle}
\def\>{\rangle}
\def\ra#1{\to}
\def\a{\alpha}
\def\eps{\varepsilon}
\def\Scal{\mathit{Scal}}
\newcommand{\x}{r}
\newcommand{\Aut}{\mathrm{Aut}}
\newcommand{\BT}{\calb\calt^n}
\newcommand{\BM}{\calb_n}
\newcommand{\Symp}{\mathrm{Symp}}
\DeclareMathOperator{\grad}{grad}
\DeclareMathOperator{\tr}{tr}
\begin{document}

\title{The K\"ahler geometry of Bott manifolds}

\author[Charles Boyer]{Charles P. Boyer}
\address{Charles P. Boyer, Department of Mathematics and Statistics,
University of New Mexico, Albuquerque, NM 87131.}
\email{cboyer@math.unm.edu} 
\author[David Calderbank]{David M. J. Calderbank}
\address{David M. J. Calderbank, Department of Mathematical Sciences,
University of Bath, Bath BA2 7AY, UK}
\email{D.M.J.Calderbank@bath.ac.uk}
\author[Christina T{\o}nnesen-Friedman]{Christina W. T{\o}nnesen-Friedman}
\address{Christina W. T{\o}nnesen-Friedman, Department of Mathematics, Union
College, Schenectady, New York 12308, USA }
\email{tonnesec@union.edu}
\thanks{The first and third author were partially supported by grants from the
Simons Foundation, CPB by (\#245002) and (\#519432), and CWT-F by (\#208799) and (\#422410)}
\date{\today}
\begin{abstract}
We study the K\"ahler geometry of stage $n$ Bott manifolds, which can be
viewed as $n$-dimensional generalizations of Hirzebruch surfaces. We show,
using a simple induction argument and the generalized Calabi construction from
\cite{ACGT04,ACGT11}, that any stage $n$ Bott manifold $M_n$ admits an extremal
K\"ahler metric. We also give necessary conditions for $M_n$ to admit a
constant scalar curvature K\"ahler metric. We obtain more precise results for
stage $3$ Bott manifolds, including in particular some interesting relations
with $c$-projective geometry and some explicit examples of almost K\"ahler
structures.

To place these results in context, we review and develop the topology, complex
geometry and symplectic geometry of Bott manifolds. In particular, we study
the K\"ahler cone, the automorphism group and the Fano condition.  We also
relate the number of conjugacy classes of maximal tori in the
symplectomorphism group to the number of biholomorphism classes compatible
with the symplectic structure.
\end{abstract}
\maketitle
\vspace{-7mm}

\addtocontents{toc}{\protect\setcounter{tocdepth}{0}}

\tableofcontents

\section*{Introduction}\label{intro}
\addtocontents{toc}{\protect\setcounter{tocdepth}{2}}

The purpose of this paper is to present and develop the K\"ahler geometry of a
class of toric complex manifolds known as {\it Bott manifolds}, with one
ultimate goal being to understand their extremal K\"ahler metrics.

The introduction to Grossberg's PhD thesis~\cite{Gro91} describes a conjecture
by Bott, in a 1989 letter to Atiyah, that the well-studied Bott--Samelson
manifolds~\cite{BoSa58} ``should be realizible as some kind of tower of
projectivized vector bundles''. The thesis then proved the conjecture, showing
that each bundle in the tower is a $\bbc\bbp^1$-bundle with a circle action,
and Grossberg named such iterated $\bbc\bbp^1$-bundles {\it Bott towers}.
Their study was taken up by Grossberg and Karshon~\cite{GrKa94}, who proved
that isomorphism classes of Bott towers of complex dimension $n$ are in
bijection with $\bbz^{n(n-1)/2}$. More precisely, given an integer-valued
$n\times n$ lower triangular unipotent matrix $A=(A^j_i)$, they constructed a
Bott tower $M_n(A)$ as quotient of $(\bbc^2_*)^n$ (with $\bbc^2_*=
\bbc^2\setminus\{0\}$) by the action of a complex $n$-torus determined by
$A$. Then they proved that there is a unique such Bott tower $M_n(A)$ in each
isomorphism class.

A (stage $n$) Bott manifold $M_n$ is a complex $n$-manifold biholomorphic to
the total space of a Bott tower. Bott manifolds are natural generalizations of
Hirzebruch surfaces: as shown by Masuda and Panov~\cite{MaPa08} they are
precisely the toric complex manifolds whose fan $\grS$ is the cone over an
$n$-cross (i.e., combinatorially dual to an $n$-cube). Thus the primitive
generators of the rays of $\grS$ in the toric real Lie algebra $\grt$ come in
opposite pairs $u_j,v_j:j\in\{1,\ldots, n\}$, and a subset of generators spans
a cone of $\grS$ iff it contains no opposite pairs.

The close relation to Hirzebruch surfaces suggests that one may be able to
systematically explore the existence of extremal K\"ahler metrics, elegantly
constructed by Calabi~\cite{Cal82} when $n=2$, for arbitrary Bott manifolds.

Generally, let $\calk(M_n)$ be the K\"ahler cone of a complex manifold $M_n$,
and let $\cale(M_n)$ be the subset of K\"ahler classes which contain an
extremal K\"ahler metric.
\begin{problem}
Describe the extremal K\"ahler cone $\cale(M_n)$. In particular, \textup{(1)}
is $\cale(M_n)$ nonempty, and if so, \textup{(2)} is
$\cale(M_n)=\calk(M_n)\,$\textup?
\end{problem}

By a well-known result of LeBrun and Simanca \cite{LeSi93b}, $\cale(M_n)$ is
open in $\calk(M_n)$, a result which fails for the subclass of constant scalar
curvature (CSC) K\"ahler metrics.

Question (2) is known in the affirmative for Hirzebruch surfaces~\cite{Cal82}
(as already noted), and question (1) is true for toric surfaces in
general~\cite{WaZh11}. In another direction, Zhou and Zhu \cite{ZhZh08a}
proved that the existence of an extremal K\"ahler metric in the class $c_1(L)$
implies the K-polystability of the polarized toric complex manifold $(M_n,L)$,
and this has been generalized to arbitrary polarized complex manifolds by
Stoppa and Sz\'ekelyhidi \cite{StSz11} and Mabuchi \cite{Mab14}. Now Donaldson
\cite{Don02} and Wang and Zhou \cite{WaZh11} have given examples of K-unstable
K\"ahler classes on certain smooth toric surfaces, so they admit no extremal
metrics, and hence question (2) is false for toric surfaces in general. We
refer to \cite{WaZh14} for a recent survey.

For Bott manifolds, our main general result is the following.

\begin{result}\label{existence}
Let $M_n(A)$ be the Bott tower corresponding to the matrix $A$.
\begin{enumerate}
\item An invariant $\bbr$-divisor $D$ is ample if and only if its support
  function $\psi_D$ satisfies
\[
\psi_D(u_j)+\psi_D(v_j)>\psi_D\biggl(-\sum_{i=j+1}^nA^j_iv_i\biggr)
\]
for all opposite pairs $v_j,u_j$ of generators of the fan. In particular, if
$A^j_i\leq 0$ for all $i>j\geq 1$ then the ample cone $\cala(M_n)$, and hence
the K\"ahler cone $\calk(M_n)$, is the entire first orthant with respect to the natural basis of toric semi-ample divisors.
\item The extremal K\"ahler cone $\cale(M_n(A))$ is a nonempty open cone in
  $\calk(M_n(A))$, i.e., $M_n(A)$ admits extremal K\"ahler metrics.
\item If the elements below the diagonal in any one row of the matrix $A$ all have the same
  sign and are not all zero \textup(in particular, if $A^1_2\neq 0$\textup),
  then $M_n(A)$ does not admit a CSC K\"ahler metric.
\end{enumerate}
\end{result}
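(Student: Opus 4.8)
The plan is to invoke the classical Futaki obstruction: if $M_n(A)$ admits a CSC K\"ahler metric in a class $\Omega$, then the Futaki invariant $\mathcal{F}_\Omega\colon \mathfrak{h}(M_n(A))\to\bbc$ vanishes identically on the Lie algebra of holomorphic vector fields. Since $M_n(A)$ is toric, $\mathfrak{h}(M_n(A))$ contains the complexified torus Lie algebra $\mathfrak{t}_{\bbc}$, so it suffices to produce, for \emph{every} $\Omega\in\calk(M_n(A))$, one element $\xi\in\mathfrak{t}_{\bbc}$ with $\mathcal{F}_\Omega(\xi)\neq0$; this rules out a CSC representative in all K\"ahler classes simultaneously. (When the hypothesis additionally forces $\Aut_0(M_n(A))$ to be non-reductive, as it does in the Hirzebruch case, one may instead quote the Matsushima--Lichnerowicz theorem directly, but the Futaki route works uniformly.)

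Now suppose row $i$ of $A$ has below-diagonal entries all of one sign and not all zero, and let $L_i$ be the line bundle on $M_{i-1}(A)$ built from that row, so that the $i$-th stage of the tower is $\pi_i\colon M_i=\bbp(\calo\oplus L_i)\to M_{i-1}$. Take $\xi=\xi_i$ to be the generator of the fiberwise $\bbc^{*}$-action of $\pi_i$, lifted to $M_n(A)$ through the equivariant structure of the tower; in polytope terms this is the primitive lattice vector associated with the opposite pair $u_i,v_i$. The first step is to evaluate $\mathcal{F}_\Omega(\xi_i)$. Using the generalized Calabi construction of \cite{ACGT04,ACGT11} adapted to $\pi_i$ (absorbing the higher stages into a local K\"ahler product refinement of the base, or equivalently Fubini-reducing the toric boundary/interior integral defining $\mathcal{F}_\Omega$), the answer collapses to a one-variable expression: a ratio of moments $\int_{z_-}^{z_+} z^k\,\phi_\Omega(z)\,dz$ over the momentum interval of the fiber, where $\phi_\Omega(z)$ is, up to the fiber momentum profile, the base volume polynomial $(\omega_B+z\,c_1(L_i))^{i-1}$ evaluated on $M_{i-1}$, whose coefficients are the intersection numbers $\langle\omega_B^{\,i-1-k}\cup c_1(L_i)^{k},[M_{i-1}]\rangle$.

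The crux is to show this expression is nonzero for every $\Omega$. The constant-sign hypothesis on row $i$ is precisely what makes $c_1(L_i)$ (or $-c_1(L_i)$) a nonzero \emph{nef} class on $M_{i-1}(A)$ --- a nonnegative, nontrivial combination of the semi-ample generators --- so by positivity all the intersection numbers $\langle\omega_B^{\,i-1-k}\cup c_1(L_i)^{k},[M_{i-1}]\rangle$ have the same sign, with the $k=0$ and $k=1$ terms strictly of that sign, for \emph{every} K\"ahler class $\omega_B$ allowed by part (1). Hence, after fixing the sign convention, $\phi_\Omega$ is a polynomial with nonnegative coefficients and positive, genuinely nonconstant profile; feeding this into the moment formula the numerator of the Futaki ratio is a strictly-signed sum of such moments, so $\mathcal{F}_\Omega(\xi_i)\neq0$. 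Therefore no K\"ahler class on $M_n(A)$ carries a CSC metric. The stated special case $A^1_2\neq0$ is just the instance where row $2$ has a single below-diagonal entry: $M_2(A)$ is a nontrivial Hirzebruch surface sitting compatibly in the tower, recovering Calabi's observation.

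The step I expect to be the main obstacle is the reduction to $\pi_i$ when $i<n$: the fiber of $M_n(A)\to M_{i-1}(A)$ is not literally a $\bbc\bbp^1$-bundle, so one must either set up the generalized Calabi construction with $\pi_i$ as the admissible direction and the remaining stages packaged as a product refinement of the base, or compute $\mathcal{F}_\Omega(\xi_i)$ directly from the moment polytope of $M_n(A)$ --- a combinatorial cube by Masuda--Panov --- and perform the Fubini integration over the non-$\xi_i$ directions by hand. Once that is in place, the only remaining point is to confirm the strict sign of the resulting one-variable moment integral, where the hypothesis that the row has constant sign is exactly the input required.
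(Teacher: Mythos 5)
Your proposal addresses only part (3) of the theorem; parts (1) (the Batyrev-type ampleness criterion via primitive collections) and (2) (existence of extremal metrics via the generalized Calabi construction) are not treated at all, so even at best this is a proof of one third of the statement.

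For part (3), you take a genuinely different route from the paper's. The paper's proof is short: writing $\eps_i$ for the coordinate basis of $\gt^*$ dual to $v_1,\dots,v_n$, it observes (Lemma \ref{ichar}) that $\eps_i$ is a Demazure root of the fan iff every entry of the $i$-th row of $A$ is $\geq 0$, and $-\eps_i$ is a root iff every below-diagonal entry of that row is $\leq 0$. If the $i$-th row is same-sign, not all zero, then exactly one of $\pm\eps_i$ is a root; Demazure's criterion (Proposition \ref{Demprop}) then says $\Aut_0(M_n(A))$ is not reductive, and Matsushima--Lichnerowicz precludes any CSC K\"ahler metric. You consider this route but dismiss it because you are unsure the hypothesis ``forces $\Aut_0$ to be non-reductive'' outside the Hirzebruch case --- but the root computation in the paper shows it always does, so the Matsushima--Lichnerowicz route ``works uniformly'' and yours is unnecessary.

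Your Futaki-invariant route also has a genuine gap, precisely at the step you flag yourself: reducing $\mathcal F_\Omega(\xi_i)$ to a one-variable moment integral when $i<n$. The stages $\pi_j$ for $j>i$ sit over $M_i$, not over $M_{i-1}$, so $M_n(A)$ is not a $\bbc\bbp^1$-bundle over any base ``into which the higher stages have been absorbed.'' The polytope slices $\{\mu_i = z\}$ of the combinatorial $n$-cube do not have volumes given by the base intersection numbers $\langle \omega_B^{\,i-1-k}\cup c_1(L_i)^k,[M_{i-1}]\rangle$ you invoke unless $A^i_j=0$ for $j>i$ (i.e.\ the $i$-th column of $A-\BOne_n$ vanishes), which is not part of the hypothesis. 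For $i=n$ the reduction is legitimate (this is essentially Hwang's situation), but for general $i$ you would have to redo the analysis from the $n$-dimensional moment polytope, which you have not done. A secondary issue: nefness of $\pm c_1(L_i)$ gives nonnegativity of the intersection numbers; to conclude $\mathcal F_\Omega(\xi_i)\neq 0$ you need to exhibit a strictly signed linear combination of moments, and you never write down which combination the normalized Futaki invariant actually is. So even if the reduction were available, the final inequality would remain unproved.
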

The first part of this result follows from Batyrev's theorem for toric
varieties \cite{Bat91,CoRe09} and the structure of the fan of $M_n(A)$.  This
result has been well known since Calabi \cite{Cal82} for $\bbc\bbp^1$ and
Hirzebruch surfaces. In particular, with respect to a natural basis, the
K\"ahler cone of any Hirzebruch surface is the entire first quadrant; however,
this is not true generally for every Bott manifold. In fact, it breaks down at
stage $3$ as we shall show.  We also mention recent related papers
\cite{Cha17a,Cha17b} which study the Mori cone, and toric degenerations,
respectively.

For the second part, we apply a theorem of \cite{ACGT11} obtained using the
generalized Calabi construction of \cite{ACGT04}.

The third part uses the generalization of Matsushima's theorem by Lichnerowicz
\cite{Lic58} that the existence of a constant scalar curvature K\"ahler metric
implies that $\Aut(M_n(A))$ is reductive. The criterion then follows from a
balancing condition due to Demazure \cite{Dem70} on the roots of the Lie
algebra $\mathfrak{aut}(M_n(A))$ of $\Aut(M_n(A))$.

Theorem \ref{existence} does not produce explicit examples of extremal
K\"ahler metrics on Bott manifolds; however, there are a few explicit examples
known. These include:
\begin{enumerate}
\item The product metrics on $\bbc\bbp^1 \times \cdots \times \bbc\bbp^1$;
\item Calabi's extremal K\"ahler metrics \cite{Cal82} on Hirzebruch surfaces
  (stage $2$ Bott manifolds);
\item Koiso and Sakane's K\"ahler--Einstein metric \cite{KoSa86} on
  $\bbp(\BOne \oplus \calo(-1,1)) \rightarrow \bbc\bbp^1\times \bbc\bbp^1$;
  and, more generally,
\item admissible extremal K\"ahler metrics on $\bbp(\BOne\oplus
  \calo(k_1,\ldots,k_{n-1}))\to(\bbc\bbp^1)^{n-1}$ whose existence was proven
  by Hwang \cite{Hwa94} and Guan \cite{Gua95} (see also \cite[Section
    3]{ACGT08} for a treatment using the admissible convention).
\end{enumerate}
It is shown in \cite{Hwa94} that for $n=3$, if $k_1,k_2$ in (4) above have
opposite signs then the corresponding twist $\leq 1$ Bott manifold $M_3$
admits a CSC K\"ahler metric; whereas, if they have the same sign $M_3$ cannot
have a CSC K\"ahler metric \cite{ACGT08}.

To organize and generalize such examples, we note that the number
$t\in\{0,\ldots,n-1\}$ of holomorphically nontrivial $\bbc\bbp^1$ bundles in a
Bott tower $M_n(A)$ is a biholomorphism invariant of the total space which we
call the (holomorphic) \emph{twist}. An analogous topological invariant (the
number of topologically nontrivial $\bbc\bbp^1$ bundles in the tower), which
we call the \emph{topological twist}, was introduced by Choi and Suh
in~\cite{ChSu11}. As the above examples are all $\bbc\bbp^1$ bundles over a
product $(\bbc\bbp^1)^{n-1}$, they all have twist $\leq 1$.

There is a dual notion of (holomorphic or topological) \emph{cotwist}
$t'\in\{0,\ldots,n-1\}$, which is the number of bundles in the tower such that
the inverse image of any $\bbc\bbp^1$ fiber in $M_n(A)$ is (holomorphically or
topologically) trivial over the fiber.  For example, a stage $3$ Bott manifold
$M_3$ may be considered as a bundle of Hirzebruch surfaces over $\bbc\bbp^1$,
or as a $\bbc\bbp^1$ bundle over a Hirzebruch surface. If $M_3$ has twist $1$,
it is a $\bbc\bbp^1$ bundle over $\bbc\bbp^1\times\bbc\bbp^1$, while if it has
cotwist $1$, it is a $\bbc\bbp^1\times\bbc\bbp^1$ bundle over $\bbc\bbp^1$.

In Section~\ref{bm}, after reviewing Bott towers and the quotient
construction, we present the Bott manifolds of twist $t=0,1,2$, which we use
as running examples throughout the article. In particular, these values of $t$
cover all stage $3$ Bott manifolds, on which we place much emphasis, $n=3$
being the next dimension after Hirzebruch surfaces. We also discuss when two
stage $n$ Bott manifolds $M_n(A)$ and $M_n(A')$ are biholomoprhic.

In addition to proving Theorem~\ref{existence} and giving examples, we place
them in context in order to motivate further study. Bott towers can be studied
as smooth manifolds, complex manifolds, symplectic manifolds or K\"ahler
manifolds, and the rest of the article explores these approaches. Along the
journey, we obtain several results of independent interest, as we now explain.

In Section~\ref{top} we consider the topology of Bott manifolds.  In recent
years, research on Bott manifolds has centered around the {\it cohomological
  rigidity problem}~\cite{MaPa08,ChMaSu11,ChMa12,Choi15} which asks if the
integral cohomology ring of a toric complex manifold determines its
diffeomorphism (or homeomorphism) type.  This problem is still open even for
Bott manifolds, but has an affirmative answer in important special cases: in
particular for $n\leq 4$ and for topological twist $t\leq 1$, the cohomology
ring determines the diffeomorphism type~\cite{ChMaSu10,ChSu11,ChMa12,Choi15}.
Hence, in these cases the homeomorphism classification coincides with the
diffeomorphism classification. We review these ideas and obtain a
diffeomorphism classification of stage $3$ Bott manifolds.

\begin{result}\label{s3thm} The diffeomorphism type of a stage $3$ Bott
manifold $M_3$ is determined by its second Stiefel--Whitney class $w_2(M_3)$
and its first Pontrjagin class $p_1(M_3)$, which can be any integral multiple
$p$ of the primitive class $\tfrac12 c_1(\calo(1,1))^2$. Moreover\textup:
\begin{enumerate}
\item Each diffeomorphism type contains finitely many Bott manifolds with
  twist $\leq 1$, determined by the prime decomposition of $p$.
\item There are precisely three diffeomorphism types of stage $3$ Bott
  manifolds $M_3$ of cotwist $\leq 1$ and each diffeomorphism type has an
  infinite number of inequivalent toric Bott manifolds.
\end{enumerate}
\end{result}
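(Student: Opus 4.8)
The plan is to reduce the diffeomorphism classification to a classification of integral cohomology rings, and then to carry that out from the explicit structure of a Bott tower. Write $a=A^1_2$, $b=A^1_3$, $c=A^2_3$ for the subdiagonal entries. Applying the projective bundle formula to the two stages of $M_3(A)\to M_2(A)\to\bbc\bbp^1$ gives
\[
H^*(M_3(A);\bbz)=\bbz[x_1,x_2,x_3]\big/\bigl(x_1^2,\ x_2(x_2+ax_1),\ x_3(x_3+bx_1+cx_2)\bigr),
\]
with $x_j=c_1(\calo_{M_j}(1))$, and since the relative tangent bundle of a $\bbc\bbp^1$-bundle is a line bundle the total Chern class factors in this ring as $c(M_3)=\prod_{j=1}^3(1+2x_j+\ell_j)$ with $\ell_1=0$, $\ell_2=ax_1$, $\ell_3=bx_1+cx_2$. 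Hence $w_2(M_3)\equiv\ell_2+\ell_3\equiv(a+b)x_1+cx_2\pmod 2$, and from $p_1=c_1^2-2c_2=\sum_{j}(2x_j+\ell_j)^2$ together with the relations one gets $p_1(M_3)=c(2b-ac)\,x_1x_2=A^2_3\bigl(2A^1_3-A^1_2A^2_3\bigr)x_1x_2$. In particular $p_1$ is an integer multiple $p$ of the primitive class $x_1x_2=\tfrac12c_1(\calo(1,1))^2$ for a suitable toric line bundle $\calo(1,1)$, which is the assertion about the possible values of $p_1$.

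The next ingredient is cohomological rigidity in low stages: for $n\le4$ the integral cohomology ring of a Bott manifold determines its diffeomorphism (indeed homeomorphism) type \cite{ChMaSu10,ChSu11,ChMa12,Choi15}, so it suffices to classify the rings above up to graded ring isomorphism. On one hand, the isomorphisms of Bott towers from Section~\ref{bm}, together with the sign changes $x_j\mapsto-x_j$, reduce $A$ to a short normal form; the substitutions $x_j\mapsto x_j+\sum_{i<j}n_ix_i$ available here change the $\ell_j$ by even classes, so each entry of $A$ survives only through its parity, apart from the part of the data recorded by $p$. On the other hand, any graded ring isomorphism must preserve the intrinsic structures of $H^*(M_3)$ --- the primitive classes $u\in H^2$ with $u^2=0$, the values in $H^6$ of their products, and the multiples of $x_1x_2$ representable as squares --- and comparing these across the normal forms rules out any further identification. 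Together, the ring, hence the diffeomorphism type, is determined by $w_2(M_3)$ and the integer $p$, which is the first assertion.

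For the refinements: a stage $3$ Bott manifold of twist $\le1$ is $\bbp(\BOne\oplus\calo(k_1,k_2))\to\bbc\bbp^1\times\bbc\bbp^1$, for which the formulas above give $p=2k_1k_2$ and $w_2=(k_1\bmod2)\bar x_1+(k_2\bmod2)\bar x_2$; hence, within a fixed diffeomorphism type, the twist $\le1$ Bott manifolds correspond to the factorisations $k_1k_2=p/2$ with the prescribed parities, taken modulo $(k_1,k_2)\sim(k_2,k_1)\sim(-k_1,-k_2)$, a finite set whose size is an explicit function of the exponents in the prime factorisation of $p$. A stage $3$ Bott manifold of cotwist $\le1$ is a fibre product $\bbf_{k_1}\times_{\bbc\bbp^1}\bbf_{k_2}\to\bbc\bbp^1$; the normal form analysis shows its ring depends only on $(k_1\bmod2,k_2\bmod2)$ up to interchange, leaving exactly the three cases $(\bbc\bbp^1)^3$, $\bbf_1\times\bbc\bbp^1$ and $\bbf_1\times_{\bbc\bbp^1}\bbf_1$, which one checks directly to have pairwise non-isomorphic rings; and since $\bbf_{k_1}\times_{\bbc\bbp^1}\bbf_{k_2}$ is toric and, for fixed parities of $k_1,k_2$, realises infinitely many pairwise non-biholomorphic manifolds, each of the three diffeomorphism types contains infinitely many toric Bott manifolds.

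The hard part is the graded ring classification underlying the first assertion: one must control \emph{all} isomorphisms of the rings $H^*(M_3(A);\bbz)$, not just those induced by isomorphisms of the tower, in order to exclude accidental identifications and to see exactly how $w_2$ and $p_1$ separate the diffeomorphism types.
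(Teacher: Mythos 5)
Your computations of $H^*(M_3(A);\bbz)$, $w_2$ and $p_1$, and your invocation of cohomological rigidity for $n\le 4$, all match what the paper does; and your reduction of parts (1) and (2) once the ring classification is in hand is essentially the paper's reasoning as well. But you have not actually proved the ring classification, and you say so yourself in the final paragraph: ``the hard part is the graded ring classification underlying the first assertion.'' This is not a minor loose end --- it is precisely the content of the paper's Proposition~\ref{stage3diff}, and without it the first assertion of the theorem is unproven. The phrases ``any graded ring isomorphism must preserve the intrinsic structures $\ldots$ and comparing these across the normal forms rules out any further identification'' and ``each entry of $A$ survives only through its parity, apart from the part of the data recorded by $p$'' state the desired conclusion without an argument. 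In particular the second phrase is not literally true: by Proposition~\ref{stage3diff} the diffeomorphism type depends on the sign class of $p=c(2b-ac)$ and on a parity condition that is \emph{conditional} on the parity of $a$ (when $a$ is even it is the parity of $(1+b)(1+c)$, when $c$ is even it is the parity of $b$), which is more delicate than ``each entry surviving through its parity.''

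The paper fills this gap by an explicit classification of graded ring isomorphisms in the non-$\bbq$-trivial case. One direction is as you sketch: an isomorphism sends the unique (up to sign) pair $\{\pm\beta_1,\pm\beta_2\}$ of primitive square-zero classes to the corresponding pair, hence respects $\beta_1\beta_2$ up to sign, which forces $p=\pm p'$ and forces $a,a'$ to have the same parity; the remaining conditions come from comparing $w_2$ under the induced bijection. The substantive converse is a hands-on normalization: one shows by successive explicit substitutions $x_1\mapsto -x_1$, $x_2\mapsto x_2+\lambda x_1$, $x_1\mapsto x_2+\tfrac{a}{2}x_1$, $x_2\mapsto(1-\tfrac{a^2}{4})x_1-\tfrac{a}{2}x_2$, and finally $x_3\mapsto x_3-\tfrac12(b-b')x_1-\tfrac12(c-c')x_2$, that the hypotheses allow one to move $(a',b',c')$ to $(a,b,c)$. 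You need to carry out this (or an equivalent) argument; the parity bookkeeping in the intermediate steps (e.g.\ the identity $2bc-2b'c'=a(c+c')(c'-c)$ and the resulting interplay between the parities of $b,c,b',c'$) is where the work is, and it is exactly what your proposal omits. The $\bbq$-trivial case is handled separately via Proposition~\ref{c0} and the Choi--Masuda classification of $\bbq$-trivial Bott manifolds, which you tacitly use but should also cite and verify covers the cotwist $\le 1$ sub-case. Finally, in part (1) you should make explicit the standing assumption $p\ne 0$ (otherwise the factorizations $bc=p/2=0$ are infinite in number), a point the paper's own wording also leaves implicit.
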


In Section~\ref{complex}, we study the automorphism group, the K\"ahler cone,
and the Fano condition. For stage $3$ Bott manifolds we can give specific
information about $\cale(M_n)$ and CSC K\"ahler metrics according to the
diffeomorphism type.  For example, using the results in \cite{Hwa94,Gua95} we
obtain:

\begin{result}
A stage $3$ Bott tower $M_3(A)$ admits a constant scalar curvature K\"ahler
metric if and only if $\Aut(M_3(A))$ is reductive, which holds if and only if
$A^1_2=0$ and $A^1_3A^2_3<0$ or $A=I$.
\end{result}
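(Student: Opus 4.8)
The plan is to prove that the three properties are equivalent by closing a cycle of three implications: (i) if $M_3(A)$ admits a CSC K\"ahler metric then $\Aut(M_3(A))$ is reductive; (ii) if $\Aut(M_3(A))$ is reductive then $A=I$ or ($A^1_2=0$ and $A^1_3A^2_3<0$); and (iii) if $A=I$ or ($A^1_2=0$ and $A^1_3A^2_3<0$) then $M_3(A)$ admits a CSC K\"ahler metric. Implication (i) is Lichnerowicz's generalization of Matsushima's theorem \cite{Lic58}, exactly as already used in the proof of Theorem~\ref{existence}(3). Once the cycle is closed, the implications ``reductive $\Rightarrow$ CSC'' and ``arithmetic condition $\Rightarrow$ reductive'' follow automatically, so neither needs a separate argument.

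For (ii) I would argue the contrapositive. The entries of $A$ below the diagonal fall, by rows, into the singleton $\{A^1_2\}$ and the pair $\{A^1_3,A^2_3\}$. If neither $A=I$ nor ``$A^1_2=0$ and $A^1_3A^2_3<0$'' holds, then either $A^1_2\neq 0$, or else $A^1_2=0$ while $A^1_3$ and $A^2_3$ are both $\geq 0$ or both $\leq 0$ and not both zero; in either case one of these two rows has all its entries of a single sign and not all zero, which is precisely the hypothesis of Theorem~\ref{existence}(3). The proof of that theorem, which runs through Demazure's balancing condition \cite{Dem70} on the roots of $\mathfrak{aut}(M_3(A))$, then shows that $\Aut(M_3(A))$ is not reductive. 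Equivalently, one may specialize the Demazure root computation behind Theorem~\ref{existence}(3) to $n=3$: the set of Demazure roots of the fan of $M_3(A)$, which is the cone over the $3$-cross, is symmetric under negation if and only if the stated arithmetic condition holds.

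For (iii), the substantive step, there are two cases. If $A=I$ then $M_3(I)\cong\bbc\bbp^1\times\bbc\bbp^1\times\bbc\bbp^1$, on which the product of Fubini--Study metrics is CSC (indeed K\"ahler--Einstein). If $A^1_2=0$ and $A^1_3A^2_3<0$, then, because $A^1_2=0$, the stage~$2$ tower is $M_2=\bbc\bbp^1\times\bbc\bbp^1$ and $M_3(A)$ is a $\bbc\bbp^1$-bundle of the form $\bbp(\BOne\oplus\calo(k_1,k_2))\to\bbc\bbp^1\times\bbc\bbp^1$ with $k_1k_2=A^1_3A^2_3<0$---a stage~$3$ Bott manifold of twist $\leq 1$ whose two degrees have opposite signs. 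For such manifolds the existence of a CSC (in fact admissible) K\"ahler metric was established by Hwang \cite{Hwa94} and Guan \cite{Gua95}; see \cite[Section~3]{ACGT08} for the admissible treatment, based on \cite{ACGT04,ACGT11}. This closes the cycle and proves the theorem.

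The conceptual skeleton is short given Theorem~\ref{existence}(3) and the Hwang--Guan existence result, so I expect the real work to lie in the bookkeeping for the second case of (iii): one must identify $M_3(A)$ with $A^1_2=0$ as the projective bundle $\bbp(\BOne\oplus\calo(k_1,k_2))$ over $\bbc\bbp^1\times\bbc\bbp^1$ by tracing the tautological line bundles of the tower through the Grossberg--Karshon quotient construction, and in particular check that the sign conventions conspire so that $k_1k_2$ has the same sign as $A^1_3A^2_3$---which is what makes ``opposite signs of $k_1,k_2$'' in \cite{Hwa94} correspond to ``$A^1_3A^2_3<0$''. One also has to confirm that the admissible framework of \cite{ACGT08} applies here, with the base $\bbc\bbp^1\times\bbc\bbp^1$ carrying a product CSC K\"ahler metric. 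I expect this last point---importing the existence statement with the precise sign hypothesis---to be the crux, though with the cited results it amounts to a verification rather than a new argument.
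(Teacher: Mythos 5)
Your proposal is correct and follows essentially the same route as the paper: Lichnerowicz for CSC $\Rightarrow$ reductive, the Demazure root criterion (Corollary~\ref{3red}, deduced from Lemma~\ref{ichar} and Corollary~\ref{A12}) for the reductivity characterization, and the Hwang--Guan admissible construction (Proposition~\ref{1textprop} with $N=2$, via the identification $M_3(0,b,c)\cong\bbp(\BOne\oplus\calo(b,c))\to\bbc\bbp^1\times\bbc\bbp^1$ in Example~\ref{twist1-examples}) for the existence half. The sign bookkeeping you flag is immediate, since $k_1=A^1_3$ and $k_2=A^2_3$ by construction, so $k_1k_2=A^1_3A^2_3$.
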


We turn to the symplectic viewpoint in Section~\ref{symp}. In
Proposition~\ref{finite}, we observe that a result of McDuff \cite{McD11}
implies that there are only finitely many biholomorphism classes of complex
structure compatible with a fixed compact toric symplectic manifold.

\begin{result}
Let $(M,\gro)$ be a symplectic $2n$-manifold. Then there are finitely many
biholomorphism classes of Bott manifolds compatible with $(M,\gro)$, and their
number is bounded above by the number of conjugacy classes of $n$-tori in the
symplectomorphism group $\Symp(M,\gro)$.
\end{result}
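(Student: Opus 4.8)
The plan is to attach to each biholomorphism class of compatible Bott manifold a conjugacy class of $n$-torus in $\Symp(M,\gro)$, and to show this attachment is injective; finiteness then follows from McDuff's theorem (the input to Proposition~\ref{finite}). If no Bott manifold is compatible with $(M,\gro)$ the statement is vacuous, so assume there is one. Then $M$ is diffeomorphic to a Bott manifold, hence simply connected, so $H^1(M;\bbr)=0$ and every symplectic torus action on $M$ is Hamiltonian (vanishing flux); since a Hamiltonian effective torus action on a $2n$-manifold has rank at most $n$, an effective symplectic $n$-torus in $\Symp(M,\gro)$ is automatically a maximal torus. By McDuff's theorem \cite{McD11} there are finitely many conjugacy classes of maximal tori in $\Symp(M,\gro)$, hence finitely many conjugacy classes of $n$-tori.

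Next I would build the assignment. Fix a biholomorphism class and pick a representative: a complex structure $J$ on $M$, compatible with $\gro$, with $(M,J)$ a Bott manifold. Let $T_J\subset\Aut(M,J)$ be a maximal compact torus, so $T_J$ acts holomorphically on $(M,J)$ with a dense orbit, and set $\gro_J=\int_{T_J}t^*\gro\,dt$. Since each $t\in T_J$ is holomorphic, $\gro_J(\cdot,J\cdot)>0$, and since each $t$ is isotopic to the identity, $[\gro_J]=[\gro]$; thus $\gro_J$ is a $T_J$-invariant Kähler form on $(M,J)$ cohomologous to $\gro$. By Moser's theorem there is a diffeomorphism $f_J$ with $f_J^*\gro_J=\gro$, and $\hat T_J:=f_J^{-1}T_Jf_J$ is an $n$-torus in $\Symp(M,\gro)$; its conjugacy class is independent of the Moser choice, since two choices differ by an element of $\Symp(M,\gro_J)\supset T_J$. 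Assign $[\hat T_J]$ to the chosen biholomorphism class.

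The heart of the argument is injectivity: if $J,J'$ are representatives with $\hat T_J$ and $\hat T_{J'}$ conjugate in $\Symp(M,\gro)$, I must show $(M,J)\cong(M,J')$. Conjugating and composing with the Moser diffeomorphisms produces a symplectomorphism $g\colon(M,\gro_J)\to(M,\gro_{J'})$ with $gT_Jg^{-1}=T_{J'}$, so the toric symplectic manifolds $(M,\gro_J,T_J)$ and $(M,\gro_{J'},T_{J'})$ are equivariantly symplectomorphic up to a lattice automorphism of the torus; by Delzant's theorem their moment polytopes $\Delta_J,\Delta_{J'}$ therefore coincide up to a lattice automorphism and a translation. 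Since $T_J$ acts holomorphically on $(M,J)$ and preserves $\gro_J$, $(M,\gro_J,J,T_J)$ is a compact Kähler toric manifold, and the underlying complex manifold $(M,J)$ is the toric variety of the normal fan of $\Delta_J$ (Guillemin--Delzant); likewise for $(M,J')$. As the normal fan depends only on the stated equivalence class of the polytope, $(M,J)\cong(M,J')$ as toric varieties. Hence the number of biholomorphism classes of compatible Bott manifolds is at most the number of conjugacy classes of $n$-tori in $\Symp(M,\gro)$, which is finite.

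The step I expect to be the real obstacle is the bridge between the \emph{given}, non-invariant symplectic form $\gro$ and a $T_J$-invariant Kähler form: one must average over $T_J$ and apply Moser to arrive in the genuinely toric-Kähler situation where Delzant's theorem and the identification of the complex structure with the toric variety of the normal fan are available. Once that reduction is in place, the remaining bookkeeping (well-definedness of $[\hat T_J]$, tracking the lattice automorphism through the equivariant symplectomorphism, and maximality of $n$-tori) is routine.
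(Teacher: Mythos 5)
Your proposal is correct and rests on the same two pillars as the paper's proof of Proposition~\ref{finite} and Theorem~\ref{conjclassthm} --- Delzant's theorem (conjugate hamiltonian tori give $\mathrm{AGL}(n,\bbz)$-equivalent Delzant polytopes, hence equivalent normal fans, hence equivariantly biholomorphic toric varieties) together with an averaging-plus-Moser step passing between a compatible complex structure and an invariant K\"ahler form cohomologous to $\gro$ --- but you traverse the correspondence in the opposite direction. The paper defines a map from $\calc_n(M,\gro)$ onto biholomorphism classes of compatible toric complex manifolds (torus $\mapsto$ polytope $\mapsto$ fan $\mapsto$ toric variety), checks it is well defined, and shows it is surjective, with the averaging step hidden in the early assertion that $f_*(\bbt^c)\cap\Symp(M,\gro)$ is an $n$-torus; the Bott hypothesis enters via \cite{MaPa08} to ensure each polytope is combinatorially an $n$-cube. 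You instead build the assignment $J\mapsto[\hat T_J]$ explicitly and prove it separates biholomorphism classes, which makes the averaging-plus-Moser step the visible centerpiece rather than an implicit step. Two small remarks. First, you need not establish that $[\hat T_J]$ is independent of the choice of representative $J$ (or of maximal compact torus $T_J$): the implication $[\hat T_J]=[\hat T_{J'}]\Rightarrow (M,J)\cong(M,J')$ that you prove already shows distinct biholomorphism classes hit disjoint sets of conjugacy classes, which is all the inequality needs. Second, the detour through maximality of $n$-tori is unnecessary, since McDuff's result \cite[Prop.~3.1]{McD11}, as cited in the paper, directly gives finiteness of $\calc_n(M,\gro)$, the set of conjugacy classes of hamiltonian $n$-tori; what you do need, and correctly supply, is the observation that $M$ is simply connected in the Bott-type case, so every symplectic $n$-torus action is hamiltonian.
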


In Section~\ref{calabiconstruction}, we use the generalized Calabi
construction to prove Theorem~\ref{existence} (2), and study the admissible
construction. Generally, Bott manifolds do not fit so well with the admissible
construction (cf.~\cite{ACGT08} and references therein) that has been so
successful in producing explicit examples of extremal K\"ahler metrics. 
Nevertheless, in Section \ref{extradmBott} we describe such examples in
arbitrary dimension, and these suffice to show that for stage $3$ Bott
manifolds of twist $\leq 1$ there is an extremal K\"ahler metric in every
K\"ahler class, i.e., $\cale(M_3)=\calk(M_3)$ in this case. We can also construct
extremal K\"ahler metrics for certain twist 2 examples in arbitrary dimension,
cf.~Proposition~\ref{2textremal}.

In Section \ref{cprojconnection} we touch briefly on how admissible examples
are related to c-projective equivalence \cite{CEMN16}. In particular we prove:

\begin{result}
On the total space of $\bbp(\BOne \oplus \calo(-1,1)) \rightarrow
\bbc\bbp^1\times \bbc\bbp^1 $ there exists an infinite number of pairs of
c-projectively equivalent constant scalar curvature (CSC) K\"ahler metrics
which are \emph{not} affinely equivalent.
\end{result}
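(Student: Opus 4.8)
The plan is to combine the admissible (generalised Calabi) description of the K\"ahler metrics on the Koiso--Sakane manifold $M:=\bbp(\BOne\oplus\calo(-1,1))\to\bbc\bbp^1\times\bbc\bbp^1$ with the c-projective calculus of \cite{CEMN16}. First I set up the data: $M$ is compact and simply connected (long exact sequence of the fibration), and it is the $(k_1,k_2)=(-1,1)$ instance of the admissible construction of \cite{ACGT08}, with base $\bbc\bbp^1\times\bbc\bbp^1$, Fubini--Study factors $g_1,g_2$, momentum coordinate $\gz\in[-1,1]$, a connection $1$-form $\theta$, and a momentum profile $\Theta$ positive on $(-1,1)$ with the usual endpoint behaviour at $\gz=\pm1$, the K\"ahler metric being
\[
g_\Theta=(1-\gz)\,g_1+(1+\gz)\,g_2+\frac{d\gz^2}{\Theta(\gz)}+\Theta(\gz)\,\theta^2
\]
(normalisations as in \cite{ACGT08}), which carries the canonical Hamiltonian $2$-form with single non-constant eigenvalue $\gz$. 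Because $k_1=-k_2$, this family is invariant under $\gz\mapsto-\gz$, and by \cite{Hwa94,Gua95} (the opposite-sign case) the admissible K\"ahler classes contain admissible CSC K\"ahler metrics --- the Koiso--Sakane K\"ahler--Einstein metric \cite{KoSa86} being the $\gz\mapsto-\gz$-symmetric one --- so $M$ carries at least a one-parameter family of mutually non-homothetic admissible CSC metrics.

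Next I would read off from \cite{CEMN16} the description of the K\"ahler metrics on $(M,J)$ c-projectively equivalent to a given admissible $g_\Theta$: the Hamiltonian $2$-form supplies a two-dimensional space of solutions of the c-projective metrisability equation, spanned by the identity and the Hamiltonian $(1,1)$-tensor $\Phi$, so the c-projectively equivalent metrics form a pencil $\hat g_A$ with $A=a\,\mathrm{id}+b\Phi$ and $(a,b)$ in the positivity cone of $A$. The key point I would exploit is that, thanks to the self-duality $k_1=-k_2$ --- equivalently, the two ``base eigenvalues'' of $\Phi$ are symmetric about the midpoint of the momentum interval --- for $(a,b)$ in a suitable one-parameter locus $\hat g_A$ again extends to a global K\"ahler metric of admissible type on the \emph{same} $M$; I would make the induced transformation of the profile and weights explicit (a M\"obius change of the momentum coordinate together with rescalings of $g_1,g_2,\theta$), and this is precisely where the $\bbz/2$ symmetry enters.

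With the transformation in hand I would track the CSC condition. For an admissible metric the scalar curvature is the standard explicit function of the profile and its first two derivatives, so ``$\hat g_A$ is CSC'' is a linear second-order ODE for the transformed profile with the endpoint data fixed by the compactification. Substituting the transformation law of the previous step reduces ``$g$ and $\hat g_A$ are both CSC'' to finitely many polynomial relations among the pencil parameter and the parameters of the admissible K\"ahler class, and I would show this locus is non-empty and of positive dimension --- either by producing one pair in closed form, conveniently by perturbing the $\bbz/2$-symmetric Koiso--Sakane metric whose profile ODE is especially simple and computing the rank there, or by the dimension count that the admissible CSC metrics already form a family of dimension at least one (up to homothety) which cannot be a single orbit of the c-projective pencil. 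Each point of the locus yields a pair $(g,\hat g)$ of c-projectively equivalent admissible CSC K\"ahler metrics, and after discarding the at most discrete homothetic coincidences infinitely many genuine pairs remain.

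Finally, these pairs are not affinely equivalent. If $(g,\hat g)$ were affinely equivalent and $\hat g$ were not a constant multiple of $g$, then $g^{-1}\hat g$ would be a $\nabla^g$-parallel self-adjoint endomorphism with at least two distinct eigenvalues, so by the de Rham decomposition theorem $(M,g)$ --- compact and simply connected --- would split as a Riemannian product of at least two K\"ahler factors, forcing reducible holonomy. But $g=g_\Theta$ carries a Hamiltonian $2$-form whose non-constant eigenvalue $\gz$ couples the fibre direction to \emph{both} base factors (because $k_1=-1$ and $k_2=1$ are both non-zero, so the factor metrics $g_1,g_2$ acquire the non-constant weights $1\mp\gz$), hence $g_\Theta$ is de Rham irreducible, by the curvature analysis underlying \cite{ACGT04,ACGT11}; this contradiction completes the proof. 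The main obstacle is the middle pair of steps: identifying correctly, via \cite{CEMN16}, which members of the c-projective pencil of an admissible metric on this self-dual $M$ close up globally, and then showing that the resulting (rather rigid) ``simultaneously CSC'' locus is infinite rather than a single point.
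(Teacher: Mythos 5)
You have the right machinery in hand --- the Hamiltonian $2$-form on an admissible metric supplies a second solution of the c-projective metrisability equation, the c-projectively equivalent metrics can be brought back to admissible form by a M\"obius change of the momentum coordinate, and Hwang and Guan provide a one-parameter family of admissible CSC representatives --- but there is a genuine gap at the crux of your argument, and neither of the fall-backs you offer closes it. The issue is to show that the one-parameter c-projective pencil through a given admissible CSC metric actually meets the CSC locus at a \emph{second} point. Your ``dimension count'' does not establish this: a one-parameter CSC family and a one-parameter c-projective pencil through each of its members could perfectly well intersect only at single points, in which case no non-affine c-projectively equivalent CSC pair would exist at all. Your perturbative alternative is delicate too, because the Koiso--Sakane K\"ahler--Einstein metric ($\x_1=1/2=-\x_2$) is precisely the fixed point of the involution you need, and the c-projective parameter $\alpha$ degenerates to $0$ there; to push past that you would end up having to produce the very closed formulas the paper writes down. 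The paper resolves the issue by an explicit computation on the CSC family $\mathcal{KS}=\{\x_1=\x,\ \x_2=\x-1:0<\x<1\}$: with $\alpha=(2\x-1)/(1-\x+\x^2)$ and $\beta=1$ the transformed parameters are $(\x_1)_{\alpha,\beta}=1-\x$ and $(\x_2)_{\alpha,\beta}=-\x$, again in $\mathcal{KS}$, and the transformed extremal polynomial is verified to equal $F_{1-\x}$; so $g_\x$ and $g_{1-\x}$ are c-projectively equivalent CSC metrics and, since $\alpha\neq 0$ for $\x\neq 1/2$, not affinely equivalent.

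Two smaller points. First, the M\"obius re-parametrisation of the pencil as an admissible metric is \emph{not} special to $k_1=-k_2$: it works on every admissible manifold, as the paper's general formulas for $(\x_a)_{\alpha,\beta}$ and $F_{\alpha,\beta}$ show. What is special here is the existence of the second CSC branch $\x_2=\x_1-1$ and its compatibility with the induced involution $\x\mapsto 1-\x$; your emphasis on the ``$z\mapsto -z$ symmetric'' family reads more like the branch $\x_2=-\x_1$, which is not the one the paper uses. Second, your de Rham argument for non-affine-equivalence is a legitimate but more roundabout route than the paper's: the paper simply observes that $\alpha\phi+\beta\omega$ is a parallel metrisability solution if and only if $\alpha=0$, because $\tr_\omega\phi$ is non-constant. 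If you wanted the de Rham route you would also have to actually prove the de Rham irreducibility of $g_\Theta$, which you assert but do not establish.
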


We end with some almost K\"ahler examples, in the spirit
of~\cite{Don02,ACGT11,Lej10}.

\section{Bott manifolds}\label{bm}

We begin by following Grossberg and Karshon's description of Bott towers
\cite{GrKa94} as well as \cite{ChMa12}. We refer to the recent book
\cite{BuPa15} and references therein for further developments.

\subsection{Bott towers and their cohomology rings}

\begin{definition}\label{funddef} 
Given $n\in \bbn$, we construct complex manifolds $M_k$ for
$k\in\{0,1,\ldots,n\}$ inductively as follows. Let $M_0$ be a point
$\mathit{pt}$, and for $k\geq 1$, assume $M_{k-1}$ is already defined and
choose a holomorphic line bundle $\call_k$ on $M_{k-1}$.  Then $M_k$ is the
compact complex manifold arising as the total space of the $\bbc\bbp^1$ bundle
$\pi_k\colon \bbp(\BOne \oplus \call_k) \to M_{k-1}$.

We call $M_k$ the \emph{stage $k$ Bott manifold} of the \emph{Bott tower of
  height $n$}:
\[
M_n \xrightarrow{\pi_n}M_{n-1} \xrightarrow{\pi_{n-1}} \cdots M_2
\xrightarrow{\pi_2} M_1 =\bbc\bbp^1 \xrightarrow{\pi_1} \mathit{pt}.
\]
At each stage we have \emph{zero} and \emph{infinity sections} $\grs_k^0\colon
M_{k-1}\to M_k$ and $\grs_k^\infty\colon M_{k-1}\to M_k$ which respectively
identify $M_{k-1}$ with $\bbp(\BOne \oplus 0)$ and $\bbp(0\oplus \call_k)$.
We consider these to be part of the structure of the Bott tower
$(M_k,\pi_k,\grs_k^0,\grs_k^\infty)_{k=1}^n$.
\end{definition}

Notice that the stage $2$ Bott manifolds are nothing but the Hirzebruch
surfaces $\calh_a:=\bbp(\BOne \oplus \calo(a))\to \bbc\bbp^1$.

\begin{definition}  A \emph{Bott tower isomorphism} between Bott
towers $(M_k,\pi_k,\grs_k^0,\grs_k^\infty)_{k=1}^n$ and
$(\tilde{M}_k,\tilde{\pi}_k,\tilde{\grs}_k^0,\tilde{\grs}_k^\infty)_{k=1}^n$
is a sequence of biholomorphisms $F_k\colon M_k\to \tilde{M}_k$
(for $k\in\{0,1,\ldots,n\}$) which intertwine the maps
$\pi_k,\grs_k^0,\grs_k^\infty$ with the maps
$\tilde{\pi}_k,\tilde{\grs}_k^0,\tilde{\grs}_k^\infty$.
\end{definition}
Isomorphism of Bott towers of height $n$ is thus a stronger notion than
biholomorphism between the corresponding stage $n$ Bott manifolds.

\begin{remark}
We can consider the more general case where we have a projectivization
\[
\bbp(E_n)\to\bbp(E_{n-1})\to\cdots\to\bbp(E_2) \to\bbp(E_1)\to\mathit{pt}
\]
where the rank $2$ bundles $E_j$ do not necesarily split as they do in a Bott
tower. Of course, $E_2$ splits by a famous theorem of Grothendieck. But this
fails at the next stage since a rank $2$ bundle over a Hirzebruch surface need
not split as the first Hirzebruch surface $\calh_1$ shows. It would be
interesting to investigate this further. However the methods of the present
paper rely in an essential way on the toric geometry of the split case, which
we introduce in the next section.
\end{remark}

As the total space of a $\bbc\bbp^1$ bundle $\pi_k\colon
\bbp(\BOne\oplus\call_k)\to M_{k-1}$, $M_k$ has a fiberwise dual tautological
bundle $\calo(1)_{\BOne\oplus\call_k}$ whose first Chern class
$c_1(\calo(1)_{\BOne\oplus\call_k})$ is the Poincar\'e dual $PD(D_k^\infty)$
to the infinity section $D_k^\infty:=\grs_k^\infty(M_{k-1})$.

The vertical bundle $VM_k$ of $M_k\to M_{k-1}$ has first Chern class
$PD(D_k^0+D_k^\infty)$, as the generator of the fiberwise $\bbc^\times$ action
vanishes there. Since $VM_k=\calo(2)_{\BOne\oplus\call_k}\otimes
\pi_k^*\call_k$, it follows that $\pi_k^*\call_k$ has first Chern class
$PD(D_k^0 - D_k^\infty)$, and we let $\gra_k$ be the pullback of this class to
$M_n$. We let $x_k$ be the pullback of $PD(D_k^\infty)$ to $M_n$; thus
$y_k:=x_k+\gra_k$ is the pullback of $PD(D_k^0)$ to $M_n$.

\begin{prop}[\cite{ChMa12}] The map sending $x_j$
to $X_j+\cali$ induces a ring isomorphism
\begin{equation}\label{Bcohring}
H^*(M_n,\bbz)\cong\bbz[X_1,X_2,\ldots,X_n]/\cali
\end{equation}
where $\cali$ is the ideal generated by
\[
X_k^2+\tilde \gra_k(X_1,\ldots, X_{k-1}) X_k \quad\text{for}\quad
k\in\{1,\ldots,n\}
\]
and $\tilde\gra_k$ is the \textup(unique\textup) linear polynomial with $\tilde
\gra_k(x_1,\ldots, x_{k-1})=\gra_k$.
\end{prop}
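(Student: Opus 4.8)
The plan is to induct on $n$, peeling off the top $\bbc\bbp^1$-bundle $\pi_n\colon M_n\to M_{n-1}$. The base cases $n=0$ (where $H^*(\mathit{pt},\bbz)=\bbz$) and $n=1$ (where $H^*(\bbc\bbp^1,\bbz)=\bbz[X_1]/(X_1^2)$, with $\gra_1=0$ since $H^2(\mathit{pt})=0$) are immediate. So assume the isomorphism for the Bott tower of height $n-1$, $H^*(M_{n-1},\bbz)\cong\bbz[X_1,\dots,X_{n-1}]/\cali'$ with $X_j\mapsto x_j$, where $\cali'$ is generated by $X_k^2+\tilde\gra_k(X_1,\dots,X_{k-1})X_k$ for $k\le n-1$. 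I write $x_j,\gra_j$ for both the classes on $M_{n-1}$ and their $\pi_n^*$-pullbacks to $M_n$; since $\pi_n^*$ is an injective ring homomorphism this is harmless and the linear polynomials $\tilde\gra_k$ are unchanged. Note that $\cali'$ has no generators of polynomial degree $1$, so $H^2(M_{n-1},\bbz)$ is the free $\bbz$-module on $x_1,\dots,x_{n-1}$; I will use this.

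Since $\xi_n:=c_1(\calo(1)_{\BOne\oplus\call_n})=PD(D_n^\infty)=x_n$ restricts on each fiber $\bbc\bbp^1$ to a generator of $H^2(\bbc\bbp^1,\bbz)$, the Leray--Hirsch theorem says $H^*(M_n,\bbz)$ is a free $H^*(M_{n-1},\bbz)$-module on $\{1,x_n\}$; in particular it is $\bbz$-free with basis the squarefree monomials $x_1^{\eps_1}\cdots x_n^{\eps_n}$ ($\eps_i\in\{0,1\}$), of rank $2^n$. For the relation, the zero and infinity sections $D_n^0=\grs_n^0(M_{n-1})$ and $D_n^\infty=\grs_n^\infty(M_{n-1})$ --- i.e.\ $\bbp(\BOne\oplus 0)$ and $\bbp(0\oplus\call_n)$ --- are disjoint, so $x_n\,PD(D_n^0)=PD(D_n^\infty)\,PD(D_n^0)=PD(D_n^\infty\cap D_n^0)=0$; as $PD(D_n^0)=y_n=x_n+\gra_n$, this gives $x_n^2+\gra_n x_n=0$ (equivalently, the single relation of the projective bundle presentation of $H^*(M_n)$ over $H^*(M_{n-1})$, with the $c_2$ term absent because $\BOne\oplus\call_n$ splits). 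Finally $\gra_n=\pi_n^*c_1(\call_n)$ lies in the free module $H^2(M_{n-1},\bbz)$, hence equals a unique $\bbz$-linear combination of $x_1,\dots,x_{n-1}$, which is by definition $\tilde\gra_n(x_1,\dots,x_{n-1})$; so $x_n^2+\tilde\gra_n(x_1,\dots,x_{n-1})x_n=0$ in $H^*(M_n,\bbz)$.

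To assemble the presentation, the ring map $\bbz[X_1,\dots,X_n]\to H^*(M_n,\bbz)$, $X_j\mapsto x_j$, is surjective (the $x_j$ generate, by Leray--Hirsch and induction) and kills $\cali$ (its generators map to $0$: for $k<n$ by induction, for $k=n$ by the previous paragraph), so it factors as a surjection $\bbz[X_1,\dots,X_n]/\cali\twoheadrightarrow H^*(M_n,\bbz)$. Conversely, using $X_k^2\equiv-\tilde\gra_k(X_1,\dots,X_{k-1})X_k\pmod{\cali}$ one rewrites any monomial as a $\bbz$-combination of the $2^n$ squarefree monomials --- each step strictly lowers the exponent of the highest-index variable occurring to a power $\ge 2$ while introducing only lower-indexed variables, so the process terminates --- whence $\mathrm{rk}_{\bbz}\,\bbz[X_1,\dots,X_n]/\cali\le 2^n=\mathrm{rk}_{\bbz}H^*(M_n,\bbz)$. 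A surjection of finitely generated $\bbz$-modules onto a free module whose rank bounds that of the source above is an isomorphism, so the map is an isomorphism and the induction closes.

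The load-bearing inputs are the Leray--Hirsch theorem (for the free module structure, hence the rank $2^n$) and the disjointness of the two sections (for the quadratic relation, obtained thereby with no sign-convention bookkeeping); the rest is routine. The point I would expect to need the most care is exactly the rank matching --- verifying both that the squarefree monomials span $\bbz[X_1,\dots,X_n]/\cali$ and that $H^*(M_n,\bbz)$ is $\bbz$-free of rank precisely $2^n$ --- since that is what upgrades ``these relations hold'' to ``these relations generate all relations''; a secondary point is carrying the identification $\gra_k=\tilde\gra_k(x_1,\dots,x_{k-1})$ coherently through the induction, which is why one records that $H^2(M_{n-1},\bbz)$ is free on $x_1,\dots,x_{n-1}$.
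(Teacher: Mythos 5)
Your proof is correct and follows essentially the same route as the paper's: Leray--Hirsch gives the free module structure over $H^*(M_{k-1},\bbz)$ on $\{1,x_k\}$, the disjointness of the zero and infinity sections gives $x_ky_k=0$, and the conclusion follows by induction. You have simply made explicit the rank-counting step (that the quadratic relations generate the entire kernel) which the paper's proof leaves implicit in the phrase ``the result follows by induction''.
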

\begin{proof} Since $1$ and $PD(D_k^\infty)$ restrict to a basis for the
cohomology of any $\bbc\bbp^1$ fiber of $M_k$ over $M_{k-1}$, the
Leray--Hirsch Theorem implies that the cohomology ring $H^*(M_k,\bbz)$ is a
free module over $H^*(M_{k-1},\bbz)$ with generators $1$ and $PD(D_k^\infty)$.
The result follows by induction, using the fact that for all $k\in\{1,\ldots,
n\}$, $x_k y_k=0$ in $H^*(M_n,\bbz)$.
\end{proof}
Note that the cohomology ring of $M_n$ is filtered by pullbacks of the
cohomology rings of $M_k$ for $0\leq k\leq n$.

\subsection{The quotient construction and examples}
A stage $n$ Bott manifold $M_n$ can be written as a quotient of $n$ copies of
$\bbc^2_*:=\bbc^2\setminus\{0\}$ by a complex $n$-torus $(\bbc^\times)^n$. To
see this, consider the action of $(t_i)_{i=1}^n\in (\bbc^\times)^n$ on
$(z_j,w_j)_{j=1}^n\in (\bbc^2_*)^n$ by
\begin{equation}\label{Tnact}
(t_i)_{i=1}^n\colon (z_j,w_j)_{j=1}^n\mapsto
\Bigl(t_jz_j,\Bigl(\prod_{i=1}^{n}t_i^{A^i_j}\Bigr) w_j\Bigr)\,\strut_{j=1}^n,
\end{equation}
where $A$ is a lower triangular unipotent integer-valued matrix
\begin{equation}\label{Amatrix}
A=\begin{pmatrix}
1   & 0 &\cdots  & 0 &0 \\
A^1_2& 1    &\cdots &0  &0\\
\vdots&\vdots&\ddots&\vdots& \vdots\\
A^1_{n-1}& A^2_{n-1}& \cdots& 1 &0\\
A^1_n   & A^2_n & \cdots & A^{n-1}_n &  1
\end{pmatrix}, \qquad A^i_j\in\bbz.
\end{equation}
Since the induced action of $(\bbr^+)^n$ is transverse to $(S^3)^n$, where
$S^3$ is the unit sphere in $\bbc^2_*$, the orbits of this action are in
bijection with orbits of the induced free $(S^1)^n$ action on $(S^3)^n$, and
the geometric quotient is a compact complex $n$-manifold $M_n(A)$.

\begin{prop}[\cite{GrKa94,Ish12}] There is a bijection between
isomorphism classes of height $n$ Bott towers and $\bbz^{n(n-1)/2}$, i.e.,
matrices $A$ as in~\eqref{Amatrix}: $M_n(A)$ is the unique Bott tower for
which the pullback of $c_1(\call_k)$ to the total space is
$\gra_k=\sum_{j=1}^{k-1} A^j_k x_j$ for $k\in\{1,\ldots,n\}$. In particular
the isomorphism class of a height $n$ Bott tower is determined by its filtered
cohomology ring.
\end{prop}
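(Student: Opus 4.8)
The plan is to prove this by induction on the height $n$, tracking carefully how the line bundle data and the filtered cohomology ring transform under Bott tower isomorphisms. The base case $n=1$ is trivial since there is only one Bott tower of height $1$ (namely $\bbc\bbp^1$) and $\bbz^0$ is a point. For the inductive step, suppose the bijection holds for height $n-1$. Given a matrix $A$ as in~\eqref{Amatrix}, its first $n-1$ rows and columns form a matrix $A'$ of the same shape in one lower dimension; by the quotient construction~\eqref{Tnact}, projecting off the last coordinate pair $(z_n,w_n)$ exhibits $M_n(A)$ as a $\bbc\bbp^1$ bundle over $M_{n-1}(A')$, namely $\bbp(\BOne\oplus\call_n)$ where $\call_n$ is the line bundle associated to the last row of $A$. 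By the inductive hypothesis $M_{n-1}(A')$ is, up to Bott tower isomorphism, determined by $A'\in\bbz^{(n-1)(n-2)/2}$, so it remains to analyze how the choice of $\call_n$ — equivalently, the last row $(A^1_n,\ldots,A^{n-1}_n)\in\bbz^{n-1}$ — is recorded.

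The key computation is to identify the pullback of $c_1(\call_n)$ to $M_n(A)$ in terms of the classes $x_1,\ldots,x_{n-1}$. From the discussion preceding the proposition (the identification of $\pi_k^*\call_k$ with $\calo(2)_{\BOne\oplus\call_k}\otimes VM_k^{-1}$ and the resulting formula $c_1(\pi_k^*\call_k) = PD(D_k^0 - D_k^\infty)$), and directly from the torus action~\eqref{Tnact}, one reads off that $\gra_n = \sum_{j=1}^{n-1} A^j_n x_j$; indeed the exponent $A^i_j$ in~\eqref{Tnact} is precisely what makes the $w_j$-coordinate transform by $\prod_i t_i^{A^i_j}$, so the associated bundle has Chern class $\sum_j A^j_k x_j$. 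Since $\call_n\cong\calo(\BOne\oplus\call_n)(-D_n^0)\otimes\cdots$ pulls back isomorphically under $\pi_n^*$ onto its image, and $\pi_n^*$ is injective on $H^2$ by Leray--Hirsch, the integer vector $(A^1_n,\ldots,A^{n-1}_n)$ is recovered from the isomorphism class of $\call_n$ as a line bundle on $M_{n-1}(A')$; conversely distinct such vectors give non-isomorphic $\call_n$ (as $H^2(M_{n-1}(A'),\bbz)$ is free with basis $x_1,\ldots,x_{n-1}$) hence non-isomorphic Bott towers, because a Bott tower isomorphism in particular restricts to a biholomorphism $F_{n-1}$ intertwining the zero and infinity sections, forcing $F_n$ to identify $\bbp(\BOne\oplus\call_n)$ with $\bbp(\BOne\oplus\call_n')$ compatibly with both sections, which is only possible when $\call_n\cong\call_n'$.

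The main obstacle is the rigidity statement in the previous sentence: showing that a Bott tower isomorphism forces $\call_n\cong\call_n'$ rather than merely $\call_n'\cong\call_n$ or $\call_n'\cong\call_n^{-1}$ (the latter would also give an isomorphic $\bbc\bbp^1$ bundle but with zero and infinity sections swapped). Here the point is precisely that the zero and infinity sections are part of the data of a Bott tower isomorphism: $F_n$ must send $D_n^0$ to $\tilde D_n^0$ and $D_n^\infty$ to $\tilde D_n^\infty$, so the tautological sub-line-bundle is preserved on the nose and the swap is excluded. One also needs that an automorphism of $M_{n-1}(A')$ pulling back $\call_n'$ to $\call_n$ can be absorbed — but since the isomorphism $F_{n-1}$ is fixed by the inductive data (it must intertwine all the lower $\pi_k,\grs_k^0,\grs_k^\infty$, which by induction pins down $A'=\tilde A'$ and leaves no freedom beyond automorphisms that act trivially on the filtered cohomology ring), the comparison of $\call_n$ with $\call_n'$ takes place after identifying the base, and equality of Chern classes then gives $\call_n\cong\call_n'$ since $\mathrm{Pic}(M_{n-1}(A'))\to H^2(M_{n-1}(A'),\bbz)$ is injective (these are simply-connected projective toric manifolds, so $\mathrm{Pic}$ injects into $H^2$, in fact is an isomorphism). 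The final clause — that the isomorphism class is determined by the filtered cohomology ring — then follows, since $A$ is reconstructed row by row from the classes $\gra_k\in H^2(M_{k-1},\bbz)\subset H^2(M_n,\bbz)$, which are intrinsic to the filtration by Proposition~\ref{Bcohring} and its proof.
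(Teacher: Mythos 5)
Your argument is correct and follows essentially the same route as the paper's, which proceeds by the same induction on height and then appeals to \cite{GrKa94} for the fact that line bundles on Bott manifolds are determined by their Chern classes and to \cite{Ish12} for the filtered-cohomology recovery of $A$; you simply expand those citations into an explicit argument (Pic $\cong H^2$ for simply-connected projective toric manifolds, and the observation that a Bott tower automorphism preserves the divisors $D_k^0,D_k^\infty$ and hence acts trivially on $H^2$, which is the key fact pinning $\call_k$ down rather than $\call_k^{\pm 1}$). The one place to be slightly more careful in the write-up is the rigidity step: instead of saying $F_{n-1}$ ``is fixed by the inductive data,'' the clean statement is that any Bott tower automorphism of $M_{n-1}(A')$ acts trivially on $H^2(M_{n-1}(A'),\bbz)$, so $c_1(\call_n) = c_1(F_{n-1}^*\call_n')$ and then $\call_n\cong\call_n'$ — which is what you in fact use.
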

\begin{proof} Observe that $M_k(A)$ be the quotient of $(\bbc^2_*)^k$ by the
action~\eqref{Tnact} with $n$ replaced by $k$ for $0\leq k\leq n-1$; then
$M_k(A)=\bbp(\BOne\oplus\call_k)\to M_{k-1}(A)$ for some line bundle
$\call_k$. As shown in~\cite{GrKa94}, a holomorphic line bundle on $M_k(A)$ is
determined by its first Chern class, and it follows inductively that $M_n(A)$
is a Bott tower with the given Chern classes. Now the matrix $A$ is determined
by the filtered cohomology ring of the Bott tower and the result
follows~\cite{Ish12}.
\end{proof}
In this correspondence, it follows~\cite[Prop.~3.5]{CiRa05} that the height
$k$ Bott tower associated to the principal $k\times k$ submatrix of $A$ given
by removing the first $j$ rows and columns, and the last $n-j-k$ rows and
columns is the fiber of $M_{j+k}(A)$ over $M_j(A)$.

We turn now to examples; it is useful to organize these using the following
notions.
\begin{definition} The (\emph{holomorphic}) \emph{twist} of a Bott
tower $M_n(A)$ is the number $t\in\{0,\ldots,n-1\}$ of holomorphically
nontrivial $\bbc\bbp^1$ bundles in the tower, or equivalently the number of
nonzero rows in $A-\BOne_n$. Dually, we refer to the number
$t'\in\{0,\ldots,n-1\}$ of nonzero columns in $A-\BOne_n$ as the
(\emph{holomorphic}) \emph{cotwist}. (The $k$th column of $A-\BOne_n$ is zero
if and only if $M_n(A)\to M_k(A)$ is a pullback from $M_{k-1}(A)$.)
\end{definition}

The only Bott manifold $M_n$ of twist (or cotwist) $0$ is $(\bbc\bbp^1)^n$
with cohomology ring
\begin{equation}\label{cohring0twist}
H^*(M_n,\bbz)=\bbz[X_1,\ldots,X_n]/\bigl(X_1^2,\ldots,X_{n}^2\bigr).
\end{equation}
We now consider some examples of twist $1$ and $2$.
\begin{example}\label{twist1-examples} 
For $\bfk=(k_1,\ldots,k_N)\in \bbz^N$, let $M_{N+1}(\bfk)$ denote the
Bott tower $M_{N+1}(A)$ with 
\begin{equation}\label{genkexample}
A =\begin{pmatrix}
1   & 0 &\cdots & 0 & 0 \\
0   & 1 &\cdots & 0 & 0\\
\vdots&\vdots&\ddots&\vdots&\vdots\\
0   & 0 &\cdots & 1 & 0\\
k_1 & k_2&\cdots& k_N & 1
\end{pmatrix},
\end{equation}
which is the $\bbc\bbp^1$ bundle $\bbp(\BOne\oplus \calo(k_1,\ldots,k_N))\to
(\bbc\bbp^1)^N$. Hence it has twist $1$ unless $\bfk=0$, and its cohomology
ring is
\begin{equation}\label{cohring1twist}
\begin{split}
H^*(M_{N+1}(\bfk),\bbz)&=\bbz[X_1,\ldots,X_{N+1}]/\cali\\
\cali&=\bigl(X_1^2,\ldots,X_N^2,X_{N+1}(X_{N+1}+k_1X_1+\cdots +k_NX_N)\bigr).
\end{split}
\end{equation}
The case $N=1$ and $k_1=a$ is the Hirzebruch surface $\calh_a=M_n(A)$ with
$A=\bigl(\begin{smallmatrix} 1 & 0\\ a & 1 \end{smallmatrix}\bigr)$.
\end{example}

\begin{example}\label{twist2-examples}
Similarly for $\bfl=(l_1,\ldots,l_{N-1})\in\bbz^{N-1}$ and
  $\bfk=(k_1,\ldots,k_N)\in \bbz^N$, let $M_{N+1}(\bfl,\bfk)$ denote the
Bott tower $M_{N+1}(A)$ with
\begin{equation}\label{gen2example}
A =\begin{pmatrix}
1   & 0 &\cdots & 0 & 0 \\
0   & 1 &\cdots & 0 & 0\\
\vdots&\vdots&\ddots&\vdots&\vdots\\
l_1   & l_2 &\cdots & 1 & 0\\
k_1 & k_2&\cdots& k_N & 1
\end{pmatrix},
\end{equation}
This is a fiber bundle over $(\bbc\bbp^1)^{N-1}$ whose fiber is the Hirzebruch
surface $\calh_{k_N}$, and has twist $2$ unless $\bfk=0$ or $\bfl=0$. Its
cohomology ring is given by
\begin{equation}\label{cohring2twist}
\begin{split}
H^*(M_{N+1}(\bfk,\bfl),\bbz)&=\bbz[X_1,\ldots,X_{N+1}]/\cali\\
\cali&=\bigl(X_1^2,\ldots,X_{N-1}^2,X_N(X_N+l_1X_1+\cdots +l_{N-1}X_{N-1}),\\
&\qquad \qquad \qquad\qquad \qquad X_{N+1}(X_{N+1}+k_1X_1+\cdots +k_NX_N) \bigr).
\end{split}
\end{equation}
\end{example}

\begin{example}\label{3stageBottex} 
For stage $3$ Bott towers, we follow the notation of \cite{ChMaSu10} and let
$M_3(a,b,c)$ denote the Bott tower $M_3(A)$ with
\begin{equation}\label{A3matrix}
A=\begin{pmatrix}
1 & 0 & 0 \\
a & 1 & 0 \\
b & c & 1
\end{pmatrix}.
\end{equation}
This is the total space of $\bbc\bbp^1$ bundle over the Hirzebruch surface
$\calh_a$, and also a bundle of Hirzebruch surfaces over $\bbc\bbp^1$ with
fiber $\calh_c$.  Thus it has twist $\leq 1$ if $a=0$ and cotwist $\leq 1$ if
$c=0$. Its cohomology ring is
\begin{equation}\label{B3cohring}
H^*(M_3,\bbz)
=\bbz[X_1,X_2,X_3]/\bigl(X_1^2, X_2(aX_1+X_2), X_3(bX_1+cX_2+X_3)\bigr),
\end{equation}
which, as a $\bbz$-module, is freely generated by
$1,x_1,x_2,x_3,x_1x_2,x_2x_3,x_3x_1$ and $x_1x_2x_3$ (where $x_i$ is the image
of $X_i$ in $H^2(M_3,\bbz)$).
\end{example}

\subsection{Toric description of Bott manifolds}\label{s:toric}

The quotient construction shows that any Bott manifold is toric.  For general
descriptions of toric geometry we refer to
\cite{BuPa15,CoLiSc11,Dan78,Del88,LeTo97}, but we summarize the main ideas
here in order to declare our conventions.

\begin{definition} For a finite set $\cals$, let $\bbz_\cals$
and $\bbc_\cals=\bbz_\cals\otimes_Z \bbc$ be respectively the free abelian
group and complex vector space with basis $e_\grr:\grr\in\cals$, and let
the complex torus $\bbc_\cals^\times:=\bbz_\cals\otimes_Z \bbc^\times$ act
diagonally on $\bbc_\cals$ in the natural way, i.e.,
$(t_\grr)_{\grr\in\cals}\colon (z_\grr)_{\grr\in\cals}\mapsto (t_\grr
z_\grr)_{\grr\in\cals}$. A complex $n$-manifold (or orbifold) $M$ is
\emph{toric} if it is a quotient of a $\bbc_\cals^\times$-invariant open
subset of $\bbc_\cals$ by a subgroup $G^c$ of $\bbc_\cals^\times$ for some
$\cals$.
\end{definition}

It follows that the quotient torus $\bbt^c\cong \bbc_\cals^\times/G^c$ acts on
$M$ with an open orbit. To describe such $M$ up to equivariant biholomorphism,
it is convenient to fix $\bbt^c=\grL\otimes_\bbz \bbc^\times$, where $\grL$ is
a free abelian group of rank $n$ (the lattice of circle subgroups of
$\bbt^c$), and determine $G^c$ as the kernel of group homomorphism
$\bbc_\cals^\times\to \bbt^c$.  Any such homomorphism is determined by a
homomorphism $\bfu\colon\bbz_\cals\to \grL$, hence by the images
$u_\grr\in\grL$ of the basis vectors $e_\grr$ for all $\grr\in\cals$.  The
$\bbc_\cals^\times$-orbits in $\bbc_\cals$ may be parametrized by subsets of
$\cals$, where $R\subseteq \cals$ corresponds to the orbit $\bbc_{\cals,R}$ of
$\sum_{\grr\notin R} e_\grr$ (thus $\bbc_{\cals,\varnothing}$ is the open
orbit).

Hence any $\bbc_\cals^\times$-invariant subset of $\bbc_\cals$ has the form
$\bbc_{\cals,\Phi}:=\bigcup_{R\in\Phi} \bbc_{\cals,R}$ for some subset $\Phi$
of the power set $P(\cals)$, and it is open if and only if $\Phi$ is a
\emph{simplicial set}, i.e., $R\in\Phi$ and $R'\subseteq R$ implies that
$R'\in\Phi$. We also assume all singletons $\{\grr\}$ are in $\Phi$.

Any $R\subseteq \cals$ defines a \emph{cone} $\grs_R$ in $\gt:=
\grL\otimes_\bbz\bbr$ as the convex hull of $\{u_\grr\,|\,\grr\in R\}$.  We
require that the cones $\grs_R:R\in\Phi$ are distinct, strictly convex (they
contain no nontrivial linear subspace) and simplicial ($\dim\grs_R$ is the
cardinality of $R$), are closed under intersections, and have union $\gt$.
Then $\grS:=\{\grs_R:R\in\Phi\}$ is a \emph{complete simplicial fan}, and
there is a bijection between the cones in the fan and the orbits of
$\bbc_\cals^\times$ in $\bbc_{\cals,\Phi}$, hence the orbits of $\bbt^c$ in $M$. 
Let $\grS_r$ denote the set of $r$-dimensional cones in $\grS$; it is convenient to
identify $\cals$ with $\grS_1$, the set of $1$-dimensional cones, or rays, in
$\grS$; then $u_\grr\in \grr\cap\grL$, and is a multiple $m_\grr$ of the
unique primitive vector of $\grL$ in $\grr$, where $2\pi/m_\grr$ is the cone
angle of the orbifold singularity along the corresponding $\bbt^c$-orbit in
$M$, whose closure $D_{u_\grr}$ is a $\bbt^c$-invariant divisor in $M$. We refer
to $u_\grr\colon \grr\in\cals$ as the \emph{normals} of the fan: if $M$ is
smooth (or has no orbifold singularities in codimension one), the fan $\grS$
determines $(\cals,\bfu,\Phi)$, hence $M$; in general the normals are extra
data.

\smallbreak

Applying these methods to the Bott tower $M_n(A)$, we now see that the role of
the matrix $A$ in~\eqref{Tnact}--\eqref{Amatrix} is to define an inclusion of
lattices $\bigl(\begin{smallmatrix} \BOne_n\\ A \end{smallmatrix}\bigr)
\colon\bbz^n\hookrightarrow\bbz^{2n}$, where $\BOne_n$ is the $n\times n$ identity
matrix, and hence real and complex subtori $(S^1)^n\hookrightarrow(S^1)^{2n}$
and $(\bbc^\times)^n\hookrightarrow(\bbc^\times)^{2n}$ respectively of the
standard $2n$-torus and its complexification acting on $\bbc^{2n}$. We thus
identify $\bbz_\cals$ with $\bbz^{2n}$, although it is convenient to take
$\cals=\{1,\ldots,n,1',\ldots,n'\}$ to be the disjoint union of two copies of
$\{1,\ldots,n\}$ and define $w_j:=z_{j'}$, so that $\bbc_\cals\cong \bbc^{2n}$
with coordinates $(z_1,\ldots,z_n,w_1,\ldots,w_n)$.

Now the image of the inclusion $\bigl(\begin{smallmatrix} \BOne_n\\ A
\end{smallmatrix}\bigr)$ is the kernel of the map
$(-A{\:\:}\BOne_n)\colon \bbz^{2n}\to \bbz^n$.  The fan $\grS$ therefore has
normals $u_1,\ldots, u_n,v_1,\ldots,v_n\in\grL$ with $u_j=-\sum_{i=1}^n A_i^j
v_i$ and $v_j=u_{j'}$; however $v_1,\ldots, v_n$ is a $\bbz$-basis for $\grL$,
which we may use to identify $\grL$ with $\bbz^n$. In particular the quotient
torus $\bbt^c\cong(\bbc^\times)^n$ acts on $M_n(A)$ and the images of the
coordinate hyperplanes $z_k=0$ or $w_k=0$ are $\bbt^c$-invariant divisors in
$M_n(A)$.

We can describe these invariant divisors by noting that the quotient of
$\{(z_i,w_i)_{i=1}^n\in (S^3)^n: z_j=0\}$ by the $j$th circle in $(S^1)^n$ is
(since $|w_j|=1$) $(S^3)^{n-1}$ with the $j$th $3$-sphere removed, and
similarly for $\{(z_i,w_i)_{i=1}^n\in (S^3)^n: w_j=0\}$. The image of $z_j=0$
or $w_j=0$ in $M_n(A)$ is thus the height $n-1$ Bott tower obtained by
removing the $j$th row and $j$th column of $A$. In terms of the Bott tower
structure, the $\bbt^c$ action on $M_n(A)$ is a lift of the the fiberwise
$\bbc^\times$ actions acting on each stage $M_k(A)\to M_{k-1}(A)$, and the
invariant divisors in $M_n(A)$ given by $z_k=0$ and $w_k=0$ are the inverse
images $\hat D_k^\infty$ and $\hat D_k^0$ in $M_n(A)$ of $D_k^\infty$ and
$D_k^0$, whose homology classes are Poincar\'e dual to $x_k$ and
\begin{equation}\label{xyeq}
y_k = x_k+\gra_k =\sum_{j=1}^n A^j_k x_j.
\end{equation}
It will be convenient later to write this in vector notation $\bfy=A\bfx$.

There are $2n$ such divisors coming in opposite pairs $(\hat D_k^0,\hat
D_k^\infty):k\in\{1,\ldots, n\}$, labelled by the Bott tower structure, and
only opposite pairs have empty intersection. The divisors thus have the
structure of an \emph{$n$-cross} or \emph{cross-polytope} (the dual of an
$n$-cube) and the cones of the fan are therefore cones over the faces of an
$n$-cross in $\gt$ with vertex set $\cals$, the rays of the fan
\cite{Civ05}.

The symmetry group of an $n$-cross (or $n$-cube) is the Coxeter group
$BC_n\cong \mathrm{Sym}_n\ltimes\bbz_2^n$ acting on $\cals$, where the $i$th
generator of the $\bbz_2^n$ normal subgroup interchanges $i$ and $i'$ in
$\cals$, while the quotient group $\mathrm{Sym}_n$ permutes the pairs
$(1,1'),\ldots,(n,n')$.

\begin{remark} In~\cite[Theorem 3.4]{MaPa08}, it is shown that stage $n$ Bott
manifolds are the only toric complex manifolds whose fan is (the cone over) an
$n$-cross. A corresponding result does not hold in the orbifold case, even for
$n=2$.
\end{remark}

\subsection{Equivalences and the Bott tower groupoid}\label{equivsect}

It is natural to ask when two height $n$ Bott towers determine the same stage
$n$ Bott manifold, i.e., their total spaces are biholomorphic. Since a Bott
manifold $M_n$ is a complete toric variety, its biholomorphism group
$\Aut(M_n)$ is a linear algebraic group, and so all maximal tori in
$\Aut(M_n)$ are conjugate. Hence if $M_n$ and $\tilde M_n$ are biholomorphic,
there is a $\phi$-equivariant biholomorphism $f\colon M_n\to\tilde M_n$ for
some isomorphism $\phi\colon \bbt^c\to\tilde\bbt^c$ of the canonical complex
tori acting on $M_n$ and $\tilde M_n$ (i.e., $f(t\cdot z) =\phi(t)\cdot f(z)$
for all $t\in \bbt^c$, $z\in M_n$). In other words, $M_n$ and $\tilde M_n$ are
equivalent as toric complex manifolds.

We also use the fact that any height $n$ Bott tower is isomorphic to $M_n(A)$
for a unique $n\times n$ matrix $A$, and is toric with respect to a fixed
complex $n$-torus $\bbt^c\cong(\bbc^\times)^n$. Thus the set of all
isomorphism classes of Bott towers of dimension $n$ can be identified with the
set $\BT_0$ of unipotent lower triangular $n\times n$ matrices $A$ over $\bbz$
(which is in bijection with $\bbz^{n(n-1)/2}$) by associating to $A$ the
isomorphism class of $M_n(A)$.

\begin{definition}\label{def:BTgroupoid} The $n$-dimensional
\emph{Bott tower groupoid} is the groupoid with object set $\BT_0$, where the
morphisms from $A$ to $A'$ are the biholomorphisms
$M_n(A)\to M_n(A')$. We denote the set of morphisms by $\BT_1$.

The orbit of $\BT_1$ through $A$ is thus the set of all $A'$ such that
$M_n(A')$ that is \emph{equivalent} (i.e., biholomorphic) to $M_n(A)$.  The
quotient space of orbits $\BT_0/\BT_1$ is then in bijection with the set $\BM$
of (biholomorphism classes of) Bott manifolds $M_n$.
\end{definition}

As noted before the definition, we may restrict attention to
\emph{$\phi$-equivariant} equivalences $f\colon M_n(A)\to M_n(A')$ (for
$\phi\colon\bbt^c\to\bbt^c$). Such an equivalence pulls back
$\bbt^c$-invariant divisors to $\bbt^c$-invariant divisors preserving their
intersections, i.e., it induces an element of the combinatorial symmetry group
$BC_n\cong \mathrm{Sym}_n\ltimes\bbz_2^n$. It remains to understand which
elements of $BC_n$ are induced by equivalences. For this, we follow Masuda and
Panov~\cite{MaPa08} (although these authors work in the smooth context).

We first note that if we only know a Bott manifold $M_n$ as a toric complex
manifold, then to write it as a quotient of $(\bbc^2_*)^n\subset\bbc_\cals$ we
have to choose a bijection between $\cals=\{1,\ldots, n,1',\ldots, n'\}$ and
the $\bbt^c$-invariant divisors in $M_n$ (so that all pairs $(j,j')$
correspond to opposite divisors). Now a general $n$-dimensional subtorus
$G^c\leq \bbc^\times_\cals$ is defined by the kernel of maximal rank block
matrix $(B\:\:C)\colon\bbz^{2n}\to \bbz^n$ (where $B,C$ are $n\times n$).
This kernel is the image of $\bigl(\begin{smallmatrix}
  \BOne_n\\ A \end{smallmatrix}\bigr) \colon\bbz^n\hookrightarrow\bbz^{2n}$ if
$B+CA=0$, which forces $C$ to be invertible and $A=-C^{-1}B$. The automorphism
group GL$(n,\bbz)$ acts on $(B\:\:C)\colon\bbz^{2n}\to \bbz^n$ by left matrix
multiplication. Left multiplication by $C^{-1}$ thus provides the normal form
$(-A{\:\:}\BOne_n)\colon \bbz^{2n}\to \bbz^n$ for the quotient map that we have
used.

In $M_n(A)$, the stabilizers of $\hat D_k^0$ and $\hat D_k^\infty$ in $\bbt^c$
are the oriented $\bbc^\times$ subgroups generated by the images of $e_k$ and
$e_{k'}$ under $(-A{\:\:}\BOne_n)$. Hence the action of $BC_n$ on divisors lifts
to the natural action on $\bbz^{2n}$ by permutation matrices of the form
$\bigl(\begin{smallmatrix} P & Q \\ Q & P \end{smallmatrix}\bigr)$. Such a
matrix acts on $(-A{\:\:}\BOne_n)$ by right multiplication to give
$(Q-AP\:\:P-AQ)$. It follows that such an element of $BC_n$ is induced by an
equivalence of $M_n(A)$ with some $M_n(A')$ if and only if
$A':=(P-AQ)^{-1}(AP-Q)$ is lower triangular, and then it is induced by an
equivalence $M_n(A)\to M_n(A')$.

\begin{lemma}\label{0tlemma} Any element of $\bbz_2^n\trianglelefteq BC_n$
is induced by an equivalence $M_n(A)\to M_n(A')$ for some $A'$. In particular,
the Bott towers $M_n(A)$ and $M_n(A^{-1})$ are equivalent.
\end{lemma}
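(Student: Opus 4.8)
The plan is to apply the criterion recorded in the paragraph preceding the lemma: an element of $BC_n$ represented by a permutation matrix $\bigl(\begin{smallmatrix} P & Q \\ Q & P \end{smallmatrix}\bigr)$ on $\bbz^{2n}$ is induced by an equivalence $M_n(A)\to M_n(A')$ if and only if $A':=(P-AQ)^{-1}(AP-Q)$ is lower triangular, in which case such an equivalence exists. It therefore suffices to check this triangularity when the chosen element lies in the normal subgroup $\bbz_2^n\trianglelefteq BC_n$.

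First I would parametrize $\bbz_2^n$ by subsets: the element indexed by $S\subseteq\{1,\ldots,n\}$ interchanges $i$ and $i'$ exactly for $i\in S$, and is represented by the block matrix whose $P$ is the diagonal matrix with entries in $\{0,1\}$ satisfying $P_{ii}=0$ iff $i\in S$, and whose $Q$ equals $I_n-P$. The feature that matters is that $P$ and $Q$ are diagonal and complementary, so $P_{ii}-Q_{ii}\in\{+1,-1\}$ for every $i$.

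The heart of the matter is then a short computation with triangular matrices. Since $A$ is unipotent lower triangular and $P,Q$ are diagonal, $AP$ and $AQ$ are lower triangular with diagonals $(P_{ii})_i$ and $(Q_{ii})_i$; hence $P-AQ$ and $AP-Q$ are both lower triangular, each with diagonal $(P_{ii}-Q_{ii})_i$. In particular $\det(P-AQ)=\pm1$, so $P-AQ\in\mathrm{GL}(n,\bbz)$ and $(P-AQ)^{-1}$ is again integral and lower triangular, with diagonal $\bigl((P_{ii}-Q_{ii})^{-1}\bigr)_i=(P_{ii}-Q_{ii})_i$. Consequently $A'=(P-AQ)^{-1}(AP-Q)$ is an integral lower triangular matrix, and its diagonal entries are all $(P_{ii}-Q_{ii})^2=1$; that is, $A'$ is unipotent, so $A'\in\BT_0$. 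By the criterion, the chosen element of $\bbz_2^n$ is induced by an equivalence $M_n(A)\to M_n(A')$.

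Finally, for the last sentence I would specialize to $S=\{1,\ldots,n\}$, so that $P=0$ and $Q=I_n$; then $P-AQ=-A$ and $AP-Q=-I_n$, whence $A'=(-A)^{-1}(-I_n)=A^{-1}$, which lies in $\BT_0$ since the inverse of a unipotent lower triangular integer matrix is again one. Thus $M_n(A)$ and $M_n(A^{-1})$ are equivalent. I do not anticipate any genuine obstacle here: essentially the only point needing justification is that $P-AQ$ is invertible over $\bbz$, and that is immediate from its lower-triangular form with $\pm1$ on the diagonal.
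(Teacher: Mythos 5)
Your argument is correct and rests on the same key mechanism the paper uses, namely that the matrix $(P-AQ)^{-1}(AP-Q)$ is automatically lower triangular when $P,Q$ arise from $\bbz_2^n$. You differ from the paper in two small but real ways. First, the paper reduces to the generators $\grt_k$ and then notes (implicitly using that the Bott tower groupoid is closed under composition of equivalences) that a general element is a product of generators; you instead handle an arbitrary element of $\bbz_2^n$ in one pass, which is cleaner and also gives you unipotence of $A'$ for free from the diagonal computation $(P_{ii}-Q_{ii})^2=1$. Second, for the ``in particular'' clause the paper argues cohomologically: $\grt_1\cdots\grt_n$ swaps $\bfx\leftrightarrow\bfy$, and since $\bfy=A\bfx$ for $M_n(A)$ while $\bfy'=A^{-1}\bfx'$ for $M_n(A^{-1})$, the target tower is $M_n(A^{-1})$. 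Your direct substitution $P=0$, $Q=I_n$ giving $A'=(-A)^{-1}(-I_n)=A^{-1}$ is an honest shortcut that avoids invoking the cohomology ring and fits more naturally with the rest of your computation. Both routes are valid; the paper's cohomological version makes the geometric meaning (swapping all zero and infinity sections) more visible, while yours is more self-contained and computational.
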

\begin{proof} It suffices to show that for each $1\leq k\leq n$, the
generator $\grt_k$ which interchanges the $k$th column of $-A$ with the $k$th
column of $\BOne_n$ is induced by an equivalence. However, here $Q-AP$ and $P-AQ$
are lower triangular, hence so is $A'=(P-AQ)^{-1}(AP-Q)$.

For the second part, we consider $\grt_1\grt_2\cdots\grt_n$, which sends
$(\hat D_k^0,\hat D_k^\infty)$ to $(\hat D_k^\infty,\hat D_k^0)$ for each
$k\in\{1,\ldots,n\}$; hence,
$\bfx':=\grt_1\grt_2\cdots\grt_n(\bfx)=\bfy$ and
$\bfy':=\grt_1\grt_2\cdots\grt_n(\bfy)=\bfx$. But for the Bott tower
$M_n(A)$ we have $\bfy=A\bfx$, whereas, for $M_n(A^{-1})$ we have
$\bfy'=A^{-1}\bfx'$.
\end{proof}
The equivalence inducing $\grt_k$ is easy to understand as a fiber
inversion at the $k$th stage: if $M_k(A)\cong P(\BOne\oplus\call_k)$, then
$M_k(A')\cong P(\BOne\oplus\call_k^{-1})$ and in homogeneous coordinates in
the fibers, the equivalence sends $(z_1,z_2)$ to $(z_1^{-1},z_2^{-1})$, or
equivalently, in a local trivialization of $\call_k$, to $(z_2,z_1)$, swapping
zero and infinity sections. The equivalences between matrices $A,A'$ induced
by the fiber inversion maps can be worked out in principle in terms of minor
determinants. Here we only do this for certain special examples.

Before turning to examples, we consider when $\sigma\in\mathrm{Sym}_n$ is
induced by an equivalence $M_n(A)\to M_n(A')$. In this case we can take $Q=0$,
so $P$ is the permutation matrix defined by $\sigma$, and hence lifts to an
equivalence $M_n(A)\to M_n(P^{-1}AP)$ if and only if $P^{-1}AP$ is lower
triangular. Since $\mathrm{Sym}_n$ is generated by transpositions, the
following case is of particular interest.

\begin{lemma}\label{trans} A transposition $\sigma=(i~j)$ with $1\leq i<j\leq n$
is induced by an equivalence of $M_n(A)\to M_n(P^{-1}AP)$ for the
corresponding permutation matrix $P$ if and only if $A^i_k=0$ for $i<k\leq j$ and
$A^k_j=0$ for $i\leq k<j$.  In particular, when $i=j-1$, this holds if and
only if $A^{j-1}_j=0$.
\end{lemma}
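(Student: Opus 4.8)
The plan is to reduce the claim to an explicit description of when conjugating a unipotent lower triangular matrix by a transposition permutation matrix keeps it lower triangular, using the criterion established just before the statement: $\sigma=(i\ j)$ is induced by an equivalence $M_n(A)\to M_n(P^{-1}AP)$ precisely when $P^{-1}AP$ is lower triangular, where $P$ is the permutation matrix of $\sigma$. So the entire content is the linear-algebra fact that $P^{-1}AP$ is lower triangular if and only if $A^i_k=0$ for $i<k\le j$ and $A^k_j=0$ for $i\le k<j$.

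First I would recall that conjugation by the transposition matrix $P=P_{(i\,j)}$ simultaneously swaps rows $i$ and $j$ and columns $i$ and $j$ of $A$; equivalently, $(P^{-1}AP)^a_b = A^{\pi(a)}_{\pi(b)}$ where $\pi$ is the transposition $(i\ j)$. Since $A$ is lower triangular, $A^p_q=0$ whenever $p<q$, so the only entries of $A$ that can become "wrongly placed" under the swap are those whose index pair $(p,q)$ has $p\ge q$ but $(\pi(p),\pi(q))$ has $\pi(p)<\pi(q)$. A short case analysis on how $\pi$ moves a pair $(p,q)$ with $p\ge q$ shows that this strict reversal of order happens exactly when either $\{p,q\}=\{i,k\}$ for some $k$ with $i<k\le j$ (so $p=k$, $q=i$, and after swapping $q\mapsto j$, we get $k<j$ or $k=j$ landing weakly/strictly above the diagonal), or $\{p,q\}=\{k,j\}$ for some $k$ with $i\le k<j$. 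Tracking both the off-diagonal and the potential new-diagonal cases carefully gives that $P^{-1}AP$ is lower triangular if and only if all the entries $A^i_k$ ($i<k\le j$) and $A^k_j$ ($i\le k<j$) vanish. (The pair $\{i,j\}$ itself contributes $A^j_i$, which is below the diagonal and stays below after the double swap, so it imposes no condition; the entries with $k=j$ or $k=i$ in the two families overlap consistently.)

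The main obstacle — really the only delicate point — is bookkeeping the boundary indices $k=i$ and $k=j$ in the two index ranges so that one neither over- nor under-counts the vanishing conditions, and checking that the diagonal $1$'s of $A$ are not disturbed (they are not, since $\pi$ permutes diagonal positions among themselves). Once the general statement is in hand, the case $i=j-1$ is immediate: the range $i<k\le j$ forces $k=j$, giving the single condition $A^{j-1}_j=0$, and the range $i\le k<j$ forces $k=i=j-1$, giving the same condition $A^{j-1}_j=0$; so the criterion collapses to $A^{j-1}_j=0$, as claimed. I would close by noting that when the criterion holds the resulting matrix $A'=P^{-1}AP$ is indeed unipotent lower triangular (unipotence is preserved by conjugation), so $M_n(A')$ is a genuine Bott tower and the induced map is an honest equivalence, completing the proof.
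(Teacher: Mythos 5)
Your argument is correct and fleshes out what the paper leaves as ``immediate'': reduce to the criterion established just before the lemma (a transposition is induced by an equivalence iff $P^{-1}AP$ remains lower triangular), then run the case analysis on which below-diagonal entries of $A$ move strictly above the diagonal under the double row/column swap; the paper's $4\times4$ illustration after the lemma is exactly this computation in an example. One small slip to correct in your parenthetical: the pair $\{i,j\}$ contributes the entry $A^i_j$ (row $j$, column $i$, below the diagonal), not $A^j_i$, and after the double swap it lands at row $i$, column $j$, strictly above the diagonal, so it \emph{does} impose the condition $A^i_j=0$; this is, as you note, precisely the common boundary term $k=j$ of the first family and $k=i$ of the second, so your final conditions are unaffected.
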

The proof of this lemma is immediate; we illustrate it in examples below.
It has the following consequence, which is one of the main steps in the proof
of~\cite[Lemma~3.1]{ChSu11}.

\begin{prop}\label{tw-equiv} A Bott tower $M_n(A)$ with twist $\leq t$ is
equivalent to a Bott tower $M_n(A')$ such that the first $n-t$ rows of
$A'-\BOne_n$ are identically zero, i.e., $M_{n-t}(A')=
(\bbc\bbp^1)^{n-t}$. Similarly, a Bott tower $M_n(A)$ with cotwist $\leq t$ is
equivalent to a Bott tower $M_n(A')$ such that the last $n-t$ columns 
of $A'-\BOne_n$ are
identically zero, i.e., $M_n(A')$ is a $(\bbc\bbp^1)^{n-t}$ bundle over
$M_t(A')$.
\end{prop}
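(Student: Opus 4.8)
The plan is to prove both halves simultaneously by induction on $t$, using Lemma~\ref{trans} to move the nonzero rows (respectively columns) of $A-\BOne_n$ into the desired positions. For the twist statement, suppose $M_n(A)$ has twist $\leq t$, so $A-\BOne_n$ has at most $t$ nonzero rows. Let $r$ be the index of the topmost nonzero row of $A-\BOne_n$; I claim we may assume $r\geq n-t+1$. If $r\leq n-t$, then rows $r+1,\ldots,n-t$ are candidates to be interchanged with row $r$: indeed, since there are at most $t$ nonzero rows in total and only rows $r,r+1,\ldots,n$ can be nonzero, at least one of the rows $r,\ldots,n-t$ (there are $n-t-r+1$ of them, more than will fit among the $\leq t$ nonzero rows once we also account for rows below) — more carefully, pick the smallest $j>r$ with row $j-1$ of $A-\BOne_n$ zero and $j\leq $ something; the cleanest route is: among rows $1,\ldots,n-t$ at least one, say row $k$ with $k<n-t+1$, has $A^{k-1}_k$-type entries vanishing appropriately. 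The key observation that makes this work is the ``in particular'' clause of Lemma~\ref{trans}: the adjacent transposition $(j-1\ \ j)$ is induced by an equivalence precisely when $A^{j-1}_j=0$, and it conjugates $A$ by the corresponding permutation matrix, which for an \emph{adjacent} transposition simply swaps rows $j-1,j$ and columns $j-1,j$ of $A-\BOne_n$ while keeping it lower triangular and unipotent.

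So the induction step is: given $A$ with topmost nonzero row $r\leq n-t$, I will find an adjacent transposition moving that nonzero row strictly downward. Since $A-\BOne_n$ has $\leq t$ nonzero rows among rows $r,\ldots,n$, and $n-r\geq t$, there exists some $j$ with $r\leq j-1<j\leq n$ such that row $j-1$ of $A-\BOne_n$ vanishes; choosing the least such $j$ makes rows $r,\ldots,j-2$ consecutive nonzero rows, so row $j-1=r$ forces $r=j-1$ — i.e.\ we can always choose $j$ so that the first nonzero row is immediately above a zero row. Wait — more simply: let $j-1$ be the \emph{first} zero row with index $\geq r$; then rows $r,\ldots,j-2$ are nonzero and row $j-1$ is zero, but also $A^{j-1}_k=0$ for all $k$, so in particular the entry of $A-\BOne_n$ in position $(j-1,j)$ vanishes, hence $A^{j-1}_j=0$ and the adjacent transposition $(j-1\ \ j)$ is admissible. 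Applying it swaps the zero row $j-1$ with row $j$; iterating, we can bubble every zero row among $1,\ldots,n$ up past the block of nonzero rows, equivalently bubble the $\leq t$ nonzero rows down into positions $n-t+1,\ldots,n$. One must check the column swaps don't destroy the row structure being built, but since the nonzero rows all sit below any column being swapped once we work top-down, and unipotence/lower-triangularity is preserved by Lemma~\ref{trans}, this goes through.

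The cotwist statement is entirely dual: $A-\BOne_n$ has $\leq t$ nonzero columns, and the same adjacent transpositions swap columns $j-1,j$; the ``in particular'' criterion $A^{j-1}_j=0$ is now read as the statement that a certain entry in the column we wish to move is zero, and bubbling the nonzero columns to the right into positions $n-t+1,\ldots,n$ leaves $M_n(A')$ a tower in which stages $n-t+1,\ldots$ are the only ones with nontrivial line bundles — equivalently, $M_n(A')\to M_t(A')$ is a pullback from $M_t(A')$, i.e.\ a $(\bbc\bbp^1)^{n-t}$-bundle over $M_t(A')$ (using the parenthetical remark in the Definition of cotwist that the $k$th column of $A-\BOne_n$ vanishes iff $M_n(A)\to M_k(A)$ is a pullback from $M_{k-1}(A)$). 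Alternatively, one deduces the cotwist case directly from the twist case via Lemma~\ref{0tlemma}: the equivalence $M_n(A)\simeq M_n(A^{-1})$ interchanges the roles of rows and columns up to the combinatorics of matrix inversion of a unipotent lower-triangular matrix (which preserves the zero/nonzero pattern of rows versus columns appropriately), though spelling this out is slightly delicate and I would prefer the direct column-bubbling argument.

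The main obstacle is purely bookkeeping: one must verify that the sequence of admissible adjacent transpositions can be chosen so that each is still admissible \emph{after} the previous ones have been applied, i.e.\ that the entry $A^{j-1}_j$ one needs to vanish really does vanish at each stage. This is where the ``first zero row below the current top nonzero row'' selection rule is essential — it guarantees at each step that the row (or column) immediately below (or to the left of) the one we are moving is genuinely zero in the \emph{current} matrix, not just the original one. With that selection rule fixed, the induction closes and the result follows. Since Lemma~\ref{trans}'s proof was declared immediate, no further input is needed beyond careful indexing.
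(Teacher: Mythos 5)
Your plan --- bubble the zero rows of $A-\BOne_n$ to the top with the adjacent transpositions supplied by Lemma~\ref{trans}, and dually for columns --- is exactly the paper's approach. But there is a genuine indexing error in the key step that breaks the argument as written.

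You take $j-1$ to be the first zero row and apply the adjacent transposition $(j-1\ j)$, justifying admissibility by the claim ``$A^{j-1}_k=0$ for all $k$, hence $A^{j-1}_j=0$.'' In the paper's convention (see the display~\eqref{Amatrix}), $A^i_j$ occupies \emph{row} $j$, \emph{column} $i$: thus $A^{j-1}_j$ sits in row $j$, column $j-1$, whereas the hypothesis that row $j-1$ of $A-\BOne_n$ vanishes says only that $A^i_{j-1}=0$ for $i<j-1$. So row $j-1$ being zero gives no information about $A^{j-1}_j$, and $(j-1\ j)$ need not be admissible. (Your statement ``$A^{j-1}_k=0$ for all $k$'' actually asserts that \emph{column} $j-1$ is zero, which is the cotwist hypothesis, not the twist one.) Moreover, the swap $(j-1\ j)$ would move a zero row at position $j-1$ \emph{downward}, contradicting your announced goal of bubbling zero rows up.

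The fix is small. To move a zero row at position $j-1$ upward, apply $(j-2\ j-1)$, which by Lemma~\ref{trans} is admissible iff $A^{j-2}_{j-1}=0$ --- an entry in row $j-1$, which does vanish since that row is zero. After the swap the zero row is at $j-2$ and the same reasoning applies to $(j-3\ j-2)$, and so on down to $(1\ 2)$; this is precisely the paper's decomposition $(1\ 2\cdots j)=(1\ 2)(2\ 3)\cdots(j-1\ j)$ applied to a zero row at position $j$, where $(j-1\ j)$ is applied first and is admissible because $A^{j-1}_j$ lies in the zero row $j$. Repeat for each zero row. By contrast your column argument (moving a zero column $j-1$ to the right via $(j-1\ j)$, using $A^{j-1}_j=0$ coming from column $j-1$ being zero) is correct --- which underlines that the twist case of your write-up has inadvertently invoked the cotwist criterion.
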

\begin{proof} If the $j$th row of $A-\BOne_n$ is identically zero then
iterating Lemma~\ref{trans} shows that the permutation $(1~2\cdots j)
=(1~2)(2~3)\cdots(j-1~j)$ is induced by an equivalence. Now apply this
argument to all such rows. The proof for columns is similar.
\end{proof}

\begin{example}\label{1tequiv} Proposition~\ref{tw-equiv} shows that
any Bott tower $M_{N+1}(A)$ of twist $\leq 1$ is equivalent to $M_{N+1}(\bfk)$
for some $\bfk\in\bbz^N$.  The only fiber inversion that yields a nontrivial
equivalence of $M_{N+1}(\bfk)$ is $\grt_{N+1}$, which interchanges $\hat
D_{N+1}^\infty$ and $\hat D_{N+1}^0$, and is induced by an equivalence between
$M_{N+1}(\bfk)$ and $M_{N+1}(-\bfk)$. Also we have $M_{N+1}(\grs(\bfk))\sim
M_{N+1}(\bfk)$ for any permutation $\grs$ that lies in the subgroup
$\mathrm{Sym}_{N}\subset \mathrm{Sym}_{N+1}$ which permutes the first $N$
opposite pairs of invariant divisors. Since $M_{N+1}(\bfk)$ is a non-trivial
$\bbc\bbp^1$ bundle over the product $(\bbc\bbp^1)^N$, these equivalences are
just permuting the $N$ factors of the base.
\end{example}

\begin{example}\label{2tequiv} Proposition~\ref{tw-equiv} also shows that
any Bott tower $M_{N+1}(A)$ of twist $\leq 2$ is equivalent to
$M_{N+1}(\bfl,\bfk)$ for some $(\bfl,\bfk)\in\bbz^{N-1}\times\bbz^N$.  There
are now two non-trivial fiber inversion maps, namely, those interchanging
$\hat D_{N}^\infty$ and $\hat D_{N}^0$ and those interchanging $\hat
D_{N+1}^\infty$ and $\hat D_{N+1}^0$, giving 
equivalences
\[
M_{N+1}(\bfl,\bfk)\sim M_{N+1}(\bfl,-\bfk)\sim M_{N+1}(-\bfl,\bfk')
\sim M_{N+1}(-\bfl,-\bfk')
\]
where $\bfk'=(k_1-l_1k_N,\ldots, k_{N-1}-l_{N-1}k_N,k_N)$. A twist $2$ Bott
tower of the form $M_{N+1}(\bfl,\bfk)$ can be viewed as a fiber bundle over
the product $(\bbc\bbp^1)^{N-1}$ whose fiber is a Hirzebruch surface. So
permuting the $N-1$ factors of the base induce equivalences
$M_{N+1}(\grs(\bfl,\bfk))\sim M_{N+1}(\bfl,\bfk)$ where $\grs\in
\mathrm{Sym}_{N-1}\subset \mathrm{Sym}_{N+1}$ where $ \mathrm{Sym}_{N-1}$ is
the subgroup that permutes the first $N-1$ opposite pairs of invariant
divisors.
\end{example}

\begin{example}\label{st3equiv}
For the height $3$ Bott tower $M_3(A)=M_3(a,b,c)$, the fiber inversion maps
give rise to equivalences $\grt_1\colon M_3(a,b,c)\to M_3(a,b,c)$,
$\grt_2\colon M_3(a,b,c)\to M_3(-a,b-ac,c)$ and $\grt_3\colon M_3(a,b,c)\to
M_3(a,-b,-c)$. Hence we have equivalences:
\[
M_3(a,b,c)\sim M_3(a,-b,-c)\sim M_3(-a,b-ac,c)\sim M_3(-a,-(b-ac),-c)
=M_3(A^{-1}).
\]
The transposition $(1\:3)$ is order reversing, so is induced by an equivalence
only when $a=b=c=0$. The $3$-cycles $(1\:2\:3)$ and $(1\:3\:2)$ are induced by
equivalences when $b=c=0$ and $a=b=0$ respectively.  The transposition $(1\:
2)$ is induced by an equivalence when $a=0$ and the transposition $(2\: 3)$ is
induced by an equivalence when $c=0$.  We conclude that
\[
M_3(a,b,0)\sim M_3(b,a,0)\quad\text{and}\quad  M_3(0,b,c)\sim M_3(0,c,b),
\]
hence in particular $M_3(a,0,0)\sim M_3(0,a,0)\sim M_3(0,0,a)$. These
equivalences have straightforward interpretations: $M_3(a,b,0)$ has cotwist
$\leq 1$, i.e., is a $\bbc\bbp^1\times\bbc\bbp^1$ bundle over $\bbc\bbp^1$, and the
equivalence interchanges the two factors in the fibers; similarly $M_3(0,b,c)$
has twist $\leq 1$, i.e., is a $\bbc\bbp^1$ bundle over
$\bbc\bbp^1\times\bbc\bbp^1$, and the equivalence interchanges the two factors
in the base.
\end{example}

\begin{example} To illustrate Lemma~\ref{trans}, observe that transposing
the second and fourth rows and columns of a unipotent $4\times 4$ lower
triangular matrix $A$ yields
\[
A' =\begin{pmatrix}
1   & 0 &0 & 0\\
A^1_4& 1 &A^3_4 & A^2_4 \\
A^1_3& 0 &1 & A^2_3 \\
A^1_2& 0 &0 & 1
\end{pmatrix}
\]
and so the transposition $(2\:4)$ is induced by an equivalence of $M_4(A)$
with $M_4(A')$ if and only if $A^2_3 = A^2_4 = A^3_4 = 0$.
\end{example}

\section{The topology of Bott manifolds}\label{top}

\subsection{Topological twist}

There is a close interplay between the topological and biholomorphic theory
of Bott manifolds. One example is the following result.

\begin{prop}[Choi--Suh~\cite{ChSu11}]\label{top-to-hol} In a Bott
tower $M_n(A)$, the $\bbc\bbp^1$ bundle $M_k(A)=\bbp(\BOne\oplus\call_k)\to
M_{k-1}(A)$ is topologically trivial if and only if $\gra_k:=\sum_{j=1}^{k-1}
A^j_k x_j\equiv 0$ mod $2$ and $\gra_k^2=0$.  In this case $M_n(A)$ is
diffeomorphic to a Bott tower $M_n(A')$ such that $M_k(A')=\bbc\bbp^1\times
M_{k-1}(A)$ is holomorphically trivial over $M_{k-1}(A')=M_{k-1}(A)$.
\end{prop}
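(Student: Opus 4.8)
The statement has two halves: a triviality criterion for stage $k$, and the fact that triviality there can be realised ``on the nose'' inside a Bott tower. I would handle the criterion first, through the structure group. A $\bbc\bbp^1$-bundle has structure group $PU(2)$, and the inclusion $PU(2)\hookrightarrow\mathrm{Diff}^+(S^2)$ is a homotopy equivalence (Smale), so $M_k(A)\to M_{k-1}(A)=\bbp(\BOne\oplus\call_k)$ is topologically trivial if and only if it is trivial as a $PU(2)$-bundle. Applying the fibre sequence $BU(1)\to BU(2)\to BPU(2)$ coming from the centre $U(1)\hookrightarrow U(2)$ (whose classifying map is $N\mapsto N\oplus N$): the classifying map $M_{k-1}(A)\to BPU(2)$ is null iff some $BU(2)$-lift of it — necessarily of the form $(\BOne\oplus\call_k)\otimes L$ for a line bundle $L$ — factors through $BU(1)\to BU(2)$, i.e.\ iff $\BOne\oplus\call_k\cong N\oplus N$ topologically for some topological line bundle $N$ on $M_{k-1}(A)$.

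For the ``only if'' direction this is immediate: such an isomorphism forces $\gra_k=c_1(\call_k)=2c_1(N)$ and $0=c_2(\BOne\oplus\call_k)=c_1(N)^2$, hence $\gra_k\equiv 0$ mod $2$ and, since $H^*(M_{k-1}(A),\bbz)$ is torsion-free, $\gra_k^2=4c_1(N)^2=0$. (Equivalently: a trivial bundle has vanishing $w_2$ and $p_1$ of its associated $SO(3)$-bundle $\underline{\bbr}\oplus(\call_k)_{\bbr}$, and these classes are $\gra_k$ mod $2$ and $c_1(\call_k)^2=\gra_k^2$.)

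The ``if'' direction is where the work lies. Given $\gra_k\equiv 0$ mod $2$ and $\gra_k^2=0$, torsion-freeness gives $\gra_k=2\grb$ with $\grb\in H^2(M_{k-1}(A),\bbz)$ and $\grb^2=0$; let $N$ be the line bundle with $c_1(N)=\grb$ (it is even holomorphic, since $M_{k-1}(A)$ is a smooth complete toric variety, so $\mathrm{Pic}$ surjects onto $H^2$). Now $\BOne\oplus\call_k$ and $N\oplus N$ have the same Chern classes, but over a base of dimension $>4$ this does not by itself force an isomorphism, so the higher obstructions to an isomorphism $\BOne\oplus\call_k\cong N\oplus N$ must be killed — this is the main obstacle. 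I would follow Choi--Suh \cite{ChSu11}: induct on the height of the tower below stage $k$, writing $M_{k-1}(A)=\bbp(\BOne\oplus\call_{k-1})\to M_{k-2}(A)$ and using the Leray--Hirsch splitting of $H^*(M_{k-1}(A),\bbz)$ over $H^*(M_{k-2}(A),\bbz)$ together with the fact that Bott manifolds carry CW structures with only even-dimensional cells, to show the relevant obstruction classes vanish. Granting this, $\bbp(\BOne\oplus\call_k)\cong\bbp(N\oplus N)\cong M_{k-1}(A)\times\bbc\bbp^1$.

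Finally, for the last assertion, suppose stage $k$ is topologically trivial and fix a bundle trivialisation $\phi\colon M_k(A)\xrightarrow{\sim}M_{k-1}(A)\times\bbc\bbp^1$ over $M_{k-1}(A)$. Let $A'$ agree with $A$ in rows $1,\dots,k-1$, vanish below the diagonal in row $k$, and have rows $k+1,\dots,n$ defined recursively as follows; then $M_{k-1}(A')=M_{k-1}(A)$ and $M_k(A')=M_{k-1}(A)\times\bbc\bbp^1$, with $\phi$ a diffeomorphism $M_k(A)\cong M_k(A')$ over $M_{k-1}(A)$. Inductively, given a diffeomorphism $M_{j-1}(A)\cong M_{j-1}(A')$ (for $k\le j-1<n$), transport $\call_j$ to a topological line bundle on $M_{j-1}(A')$, realise its first Chern class by a holomorphic line bundle $\tilde\call_j$ (again $M_{j-1}(A')$ is smooth complete toric), set $M_j(A')=\bbp(\BOne\oplus\tilde\call_j)$, and observe that $\bbp(\BOne\oplus\call_j)$ is the pullback of $\bbp(\BOne\oplus\tilde\call_j)$ under the diffeomorphism — because $\BOne\oplus L$ depends topologically only on $c_1(L)$ — giving $M_j(A)\cong M_j(A')$ over the previous diffeomorphism. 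At $j=n$ this yields $M_n(A)\cong M_n(A')$ with $M_k(A')=\bbc\bbp^1\times M_{k-1}(A)$ holomorphically trivial over $M_{k-1}(A')=M_{k-1}(A)$, as claimed.
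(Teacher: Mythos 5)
Your proof is correct and takes essentially the same approach as the paper's. Both reduce the triviality criterion to the Choi--Suh fact that a Whitney sum of line bundles over a Bott manifold is topologically determined by its total Chern class, then perform the same Chern-class computation and construct $M_n(A')$ by pulling the remaining stages back along the trivialising diffeomorphism; your extra packaging (Smale's theorem, the $BU(1)\to BU(2)\to BPU(2)$ fibre sequence) is simply a more explicit rendering of the paper's opening observation that $\bbp(\BOne\oplus\call_k)$ is topologically trivial iff $(\BOne\oplus\call_k)\otimes\call$ is trivial for some line bundle $\call$.
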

\begin{proof} $\bbp(\BOne\oplus\call_k)$ is topologically trivial if and only
if $(\BOne\oplus\call_k)\otimes\call$ is topologically trivial for some line
bundle $\call$. Choi and Suh~\cite{ChSu11} show that a sum of line bundles over
a Bott manifold is topologically trivial if and only if its total Chern class
is trivial. If $c_1(\call)=\lambda$ then
\[
c(\call\oplus\call_k\otimes\call)=c(\call)c(\call_k\otimes\call)
=(1+\lambda)(1+\alpha_k+\lambda)=1+\alpha_k+2\lambda+\lambda^2.
\]
This is trivial if $\alpha_k=-2\lambda$ and $\lambda^2=0$, i.e.,
$\alpha_k\equiv 0$ mod $2$ and $\alpha_k^2=0$ (since the cohomology ring has
no torsion). For the last part, we pullback $M_n(A)\to M_k(A)$ by the
diffeomorphism $M_k(A)\to\bbc\bbp^1\times M_{k-1}(A)$ trivializing $M_k(A)$
over $M_{k-1}(A)$.
\end{proof}

The construction in the proof does not affect the topological triviality
of the other fibrations in $M_n(A)$, which prompts the following definition.

\begin{definition} The \emph{topological twist} of a Bott manifold $M_n$
is the minimal (holomorphic) twist among Bott towers $M_n(A)$ diffeomorphic to
$M_n$.
\end{definition}

Proposition~\ref{top-to-hol} shows that the topological twist of $M_n$ is also
the minimal number of topologically nontrivial stages among Bott towers
diffeomorphic to $M_n$. In fact Choi and Suh show~\cite[Theorem 3.2]{ChSu11}
that the topological twist is the number of topologically nontrivial stages in
\emph{any} Bott tower diffeomorphic to $M_n$ (and this is their definition of
``twist''). Furthermore, by Proposition~\ref{tw-equiv} (cf.~\cite[Lemma
  3.1]{ChSu11}), a Bott manifold with topological twist $t$ is diffeomorphic
to a Bott tower $M_n(A)$ where the first $n-t$ rows of $A-\BOne_n$ are
identically zero, i.e., a holomorphic fiber bundle over $(\bbc\bbp^1)^{n-t}$
whose fiber is a stage $t$ Bott manifold.

The topological twist of a Bott manifold has implications for its Pontrjagin
classes.
\begin{lemma}[\cite{ChMaMu15,ChSu11}]\label{Pontt}
Let $M_n$ be a Bott manifold with topological twist $\leq t$.
\begin{enumerate}
\item If $M_n$ is diffeomorphic to $M_n(A)$, its total Pontrjagin class
$p(M_n)$ is given by
\begin{equation}\label{totPont}
p(M_n)=\prod_{j=n+1-t}^n(1+\gra_j^2);
\end{equation}
in particular $p_k(M_n)$ vanishes for $k>t$.
\item $p_k(M_n)=0$ if $k\geq \frac{n}{2}$.
\end{enumerate}
\end{lemma}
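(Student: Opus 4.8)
The plan is to prove both parts by exploiting the computation of the total Pontrjagin class $p(M_n)$ in terms of the admissible presentation of a Bott tower $M_n(A)$ with the first $n-t$ rows of $A-\BOne_n$ vanishing, so that $M_n(A)$ is a holomorphic fiber bundle over $(\bbc\bbp^1)^{n-t}$ with fiber a stage $t$ Bott manifold, as guaranteed by Proposition~\ref{tw-equiv}. First I would establish formula~\eqref{totPont}: writing $M_n(A)$ as an iterated $\bbc\bbp^1$ bundle, the tangent bundle splits (up to a trivial summand from the base, since $(\bbc\bbp^1)^{n-t}$ has trivial Pontrjagin class) as a direct sum of the vertical bundles $VM_j$ for $j\in\{n+1-t,\ldots,n\}$. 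Each $VM_j = \calo(2)_{\BOne\oplus\call_j}\otimes\pi_j^*\call_j$ is a complex line bundle whose first Chern class is (the pullback of) $PD(D_j^0+D_j^\infty)$, i.e.\ $x_j+y_j = 2x_j+\gra_j$; but the relation $x_jy_j=0$ in $H^*(M_n,\bbz)$ gives $(x_j+y_j)^2 = (x_j-y_j)^2 = \gra_j^2$, so $p(VM_j)=1+c_1(VM_j)^2 = 1+\gra_j^2$. Since Pontrjagin classes are multiplicative mod $2$-torsion (and the cohomology of a Bott manifold is torsion-free by the Leray--Hirsch argument in the excerpt), the total Pontrjagin class is the product $\prod_{j=n+1-t}^n(1+\gra_j^2)$. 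Degree reasons then immediately give $p_k(M_n)=0$ for $k>t$, since the product has at most $t$ factors and each contributes in degree $4$.

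For part (2) I would use the constraint coming from the filtration: each $\gra_j = \sum_{i=1}^{j-1}A^i_j x_i$ is a pullback from $M_{j-1}$, and in fact from the sub-Bott-manifold on coordinates $x_1,\ldots,x_{j-1}$; more importantly, since the first $n-t$ rows vanish we actually have $\gra_j$ lying in the span of $x_1,\ldots,x_{n-t}$ for each relevant $j$ — wait, that is not quite right, so instead the cleaner route is: after the normalization of Proposition~\ref{tw-equiv}, the classes $\gra_{n+1-t},\ldots,\gra_n$ all lie in $H^2$ of the base-type factor, but the real input is simply that each $\gra_j^2$ is a class in $H^4$ and the whole of $p(M_n)$ is concentrated in the subring generated by $\gra_{n+1-t},\ldots,\gra_n$. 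The point for the bound $k\ge n/2$ is that $p_k(M_n)$ is the degree-$2k$ (real codimension $4k$) part of the product, so it is a sum of products of $k$ distinct classes $\gra_{j_1}^2\cdots\gra_{j_k}^2$; each $\gra_j^2$ is a polynomial in $x_1,\ldots,x_{j-1}$, hence (using $x_i^2=-\gra_ix_i$ repeatedly to reduce) lies in the span of squarefree monomials $x_{i_1}\cdots x_{i_\ell}$, and in particular $\gra_j^2\in H^4$ is a combination of $x_ix_{i'}$ with $i\ne i'$ and both $<j$. A product of $k$ such terms, after reduction, is a combination of squarefree monomials of length $\le 2k$ in $x_1,\ldots,x_n$; but $2k\ge n$ when $k\ge n/2$, and the top nonzero squarefree monomial in $H^*(M_n)$ is $x_1\cdots x_n$, which lives in degree $2n$. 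Tracking which variables can appear more carefully, one shows the product must involve at least $k+1$ distinct indices among $\{1,\ldots,n\}$ of which at least $k$ are used as "outer" indices $j$; combined with the triangularity ($\gra_j$ only involves $x_i$ with $i<j$), a counting argument forces at least one repeated squarefree variable when $2k > n$, hence the product vanishes, giving $p_k(M_n)=0$.

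I expect the main obstacle to be the vanishing argument for $k\ge n/2$ in part (2): making precise the claim that a product $\gra_{j_1}^2\cdots\gra_{j_k}^2$ of $k$ of these degree-$4$ classes must vanish in $H^{4k}(M_n,\bbz)$ once $4k > 2(n-?)$. The cleanest packaging is probably to observe that each $\gra_j$ is the pullback of a class from the stage $j-1$ Bott manifold via the filtration noted after~\eqref{Bcohring}, and that on the $t$-dimensional fiber over $(\bbc\bbp^1)^{n-t}$ the classes $\gra_{n+1-t}^2,\ldots,\gra_n^2$ generate only a part of $H^*$; since that fiber is a stage $t$ Bott manifold of real dimension $2t$, any product of $\ge \lceil t/1\rceil$... — more carefully, one restricts to the fiber $F\cong M_t(A')$, notes $p(M_n)=\pi^*p(F)$ where $\pi\colon M_n\to(\bbc\bbp^1)^{n-t}$ (the base contributes nothing), so $p_k(M_n)$ is the pullback of $p_k(F)\in H^{4k}(F)$, which vanishes once $4k > 2t$, i.e.\ $k > t/2$. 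Since $t\le n-? $ — actually $t\le n$ gives only $k>t/2$, which is weaker than $k\ge n/2$ unless $t$ is close to $n$; but combined with part (1)'s vanishing for $k>t$ this is not yet enough. The honest resolution: for the bound in (2), use that $p_k(M_n)$ as a polynomial in the $x_i$ is a sum of squarefree monomials (after reduction modulo the ideal $\cali$) each of length exactly $2k$ in distinct variables, because each $\gra_j^2$ reduces to a combination of length-$2$ squarefree monomials and the product of $k$ of these can be written using distinct variables only — once $2k>n$ this is impossible since there are only $n$ variables, so $p_k=0$. Thus the key technical lemma to nail down is that $\gra_j^2$ reduces modulo $\cali$ to a $\bbz$-combination of $x_px_q$ with $p\ne q$, both $<j$, which follows directly from $x_i^2\equiv-\gra_ix_i\pmod\cali$ and an easy induction on $j$; once that is in hand the counting is elementary. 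I would present part (1) in full and then derive (2) as a corollary via this monomial-length bound, noting the alternative geometric restriction-to-the-fiber argument as a remark.
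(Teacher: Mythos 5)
Your derivation of part (1) via the vertical line bundles $VM_j$ is correct and is more explicit than the paper's own proof, which for the formula~\eqref{totPont} simply cites \cite{ChMaMu15}. The identity $c_1(VM_j)=x_j+y_j$ together with $x_jy_j=0$ (so $(x_j+y_j)^2=(x_j-y_j)^2=\gra_j^2$) is exactly the computation behind that reference, and the multiplicativity of $p$ is legitimate here because $H^*(M_n,\bbz)$ is torsion-free.

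For part (2) you have isolated the right technical lemma — that modulo $\cali$ each $\gra_j^2$ is a $\bbz$-combination of $x_px_q$ with $p\neq q$ and both $p,q<j$ — and this is precisely the paper's observation, phrased there as: $p_m(M_{2m})=\prod_{j=2}^{2m}\gra_j^2$ ``does not contain $x_{2m}$'', while $H^{4m}$ is spanned by $x_1\cdots x_{2m}$. But your counting has an off-by-one slip that leaves the boundary case open. You assert, twice, that the product of $k$ such quadratics is impossible ``once $2k>n$, since there are only $n$ variables''. That is the strict inequality $k>n/2$, which is already handled by plain degree reasons (the paper notes this too). The statement to be proved is $k\ge n/2$, and for even $n=2m$ with $k=m$ the squarefree monomial $x_1\cdots x_n$ of length $2k=n$ \emph{does} exist, so the ``only $n$ variables'' count does not rule it out. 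To close this case you must use the strict inequality in your own lemma: since every outer index $j$ satisfies $j\le n$, all $x$-indices that can appear are $\le n-1$, so only $n-1$ variables are available; a nonzero squarefree monomial of length $2k$ then forces $2k\le n-1$, i.e.\ $k<n/2$. Replace ``$n$ variables'' by ``$n-1$ variables (all $<n$)'' and the argument is complete and matches the paper's. You were also right to discard the restriction-to-the-fiber idea: the classes $\gra_{j}$ for $j>n-t$ are not pullbacks from the fiber, so $p(M_n)$ is not $\pi^*p(F)$, and in any case that route only gives $k>t/2$, which is weaker than what is needed.
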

\begin{proof}
For (1), the computation of~\eqref{totPont} is straightforward~\cite{ChMaMu15}
and the last part follows by taking $M_n(A)$ to have twist $\leq t$. For (2)
we note that the strict inequality $k>\frac{n}{2}$ follows for dimensional
reasons. For the equality we set $n=2m$ and and observe that the class
$p_m(M_{2m})$ is a multiple of $x_1\cdots x_{2m}$, so when $t=2m-1$ we have
\[
p_m(M_{2m})=\prod_{j=2}^{2m}\gra_j^2
=\prod_{j=2}^{2m}\biggl(\sum_{i=1}^{j-1}A^i_jx_i\biggr)^2
\]
which does not contain $x_{2m}$.
\end{proof}

For later use, we also note that the total Chern class of $M_n(A)$ is
\begin{equation}\label{totChern}
c(M_n(A))=\prod_{j=1}^n(1+x_j+y_j)=\prod_{j=1}^n(1+2x_j+\gra_j),
\end{equation}
cf.~\cite{Abc13}. In particular, for the first Chern class we get
\begin{equation}\label{c1}
c_1(M_n(A))=\sum_{j=1}^n(x_j+y_j).
\end{equation}

\subsection{Cohomological rigidity of Bott manifolds}

In recent years, research on Bott manifolds has centered around the {\it
  cohomological rigidity problem} which asks if the integral cohomology ring
of a toric complex manifold determines its diffeomorphism (or homeomorphism)
type~\cite{ChMaSu11,ChMa12,Choi15,MaPa08}. This problem is still open even for
Bott manifolds, but has an affirmative answer in important special cases, in
particular for Bott manifolds with topological twist $\leq 1$. This gives a
lot of information about the topology of these Bott manifolds.

\subsubsection*{Bott manifolds with topological twist $0$}

A Bott manifold $M_n$ has topological twist $0$ if and only if it is
diffeomorphic to $(\bbc\bbp^1)^n$. In this case there is the following
characterization.
\begin{theorem}[Masuda--Panov~\cite{MaPa08}]\label{mapathm}
A stage $n$ Bott manifold $M_n$ is diffeomorphic to $(\bbc\bbp^1)^n$ if and
only if $H^*(M_n,\bbz)\cong H^*((\bbc\bbp^1)^n,\bbz)$ as graded rings.
Furthermore, $M_n(A)$ is diffeomorphic to $(\bbc\bbp^1)^n$ if and only if the
matrix $A$ takes the form\footnote{Comparing notations we note that our $A$ is
  $-A^t$ in \cite{MaPa08} with $a_{ij}=-A^i_j$; this transposition also reverses
the order of the $C_k$'s.}
\begin{equation}\label{0teqn}
A=2C_n\cdots C_1-\BOne_n
\end{equation}
where $C_k$ is a lower triangular unipotent integer-valued matrix with at most
one non-zero element $C^{i_k}_k$ below the diagonal and that lies in the $k$th
row.
\end{theorem}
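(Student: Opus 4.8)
The plan is to prove both directions of the first claim (the cohomology ring detects $(\bbc\bbp^1)^n$ up to diffeomorphism), and then the explicit matrix normal form \eqref{0teqn}. The ``only if'' direction of the cohomological statement is immediate: diffeomorphic manifolds have isomorphic integral cohomology rings. For the ``if'' direction, suppose $H^*(M_n,\bbz)\cong H^*((\bbc\bbp^1)^n,\bbz)$ as graded rings. I would work inductively up the Bott tower, using the presentation \eqref{Bcohring}: the relation at stage $k$ is $X_k^2 + \tilde\gra_k(X_1,\ldots,X_{k-1})X_k$, where $\gra_k = \sum_{j<k}A^j_k x_j$. The hypothesis means there is a ring isomorphism to $\bbz[Y_1,\ldots,Y_n]/(Y_1^2,\ldots,Y_n^2)$. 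The key linear-algebra observation is that in $H^2(M_n,\bbz)$ the squaring map $v\mapsto v^2 \in H^4$ is a quadratic form, and the classes $x_k$ together with the $y_k = x_k+\gra_k$ are exactly the $2n$ classes whose square vanishes and which are ``extremal'' in a suitable sense (they Poincar\'e-dualize the $\bbt^c$-invariant divisors, which form an $n$-cross). One shows that the set of primitive classes $v\in H^2$ with $v^2=0$ that are part of a basis in which all relations are of the pure-square form is precisely $\{\pm x_k, \pm y_k\}$, and pairs $(x_k,y_k)$ are characterized by $x_k y_k = 0$.

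From here I would argue that the existence of a cohomology isomorphism to that of $(\bbc\bbp^1)^n$ forces, for each $k$, that $\gra_k$ (hence $\alpha_k^2$) and $\gra_k \bmod 2$ are controlled: concretely, $y_k^2 = 0$ gives $2x_k\gra_k + \gra_k^2 = 0$, and combined with $x_k^2=0$ one extracts that $\gra_k\equiv 0 \bmod 2$ and $\gra_k^2 = 0$ is \emph{not} automatic, but rather the existence of \emph{some} such isomorphism is equivalent to being able to choose, at each stage, a line bundle twist making the bundle topologically trivial. This is exactly the content of Proposition~\ref{top-to-hol}: $M_k(A)\to M_{k-1}(A)$ is topologically trivial iff $\gra_k\equiv 0\bmod 2$ and $\gra_k^2=0$, and then $M_n(A)$ is diffeomorphic to a Bott tower in which stage $k$ is holomorphically trivial. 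So the strategy is: the cohomology ring being that of $(\bbc\bbp^1)^n$ forces every stage to satisfy the topological-triviality criterion (this is where the main work lies, using that there is no torsion and comparing the ideals of relations), and then Proposition~\ref{top-to-hol} applied stage by stage produces a diffeomorphism $M_n(A)\cong(\bbc\bbp^1)^n$. The topological twist $0$ characterization then just unwinds the definition.

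For the matrix normal form \eqref{0teqn}, I would proceed as follows. The condition $\gra_k\equiv 0\bmod 2$ at stage $k$ says that row $k$ of $A-\BOne_n$ is even. The condition $\gra_k^2 = 0$ in $H^*(M_{k-1},\bbz)$ is a quadratic constraint on the entries $A^1_k,\ldots,A^{k-1}_k$, using the already-known structure of $H^*(M_{k-1},\bbz)$ (itself, by induction, a Bott tower diffeomorphic to $(\bbc\bbp^1)^{k-1}$). Writing $\gra_k = 2\beta_k$ with $\beta_k$ a linear combination of $x_1,\ldots,x_{k-1}$, the vanishing $\beta_k^2 = 0$ in a ring of the form $\bbz[x_1,\ldots,x_{k-1}]/(x_i x_j\text{-type relations})$ forces $\beta_k$ to be supported on a single ``independent'' direction — i.e., after the triangular change of basis recorded by $C_1,\ldots,C_{k-1}$, the class $\beta_k$ is (up to sign and the lower-triangular shearing) a single basis vector. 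Packaging these one-parameter choices at each stage into the factorization $A = 2C_n\cdots C_1 - \BOne_n$ with each $C_k$ having a single off-diagonal entry in row $k$ is then a bookkeeping computation: one checks that $2C_n\cdots C_1 - \BOne_n$ is lower triangular unipotent with even off-diagonal part, and that conversely every such $A$ arises this way, by reading off $C_k$ from row $k$ inductively (peeling off $C_n$ first). The main obstacle is the ``if'' direction's first half — showing that an abstract graded-ring isomorphism to $H^*((\bbc\bbp^1)^n)$ really does force the topological-triviality conditions at every stage, rather than merely at the top; this requires identifying the canonical pairs of divisor classes intrinsically from the ring structure (via the $n$-cross combinatorics and the vanishing/non-vanishing of products), and then comparing ideals of relations, which is the technical heart of Masuda--Panov's argument that I would cite or reproduce in the form above.
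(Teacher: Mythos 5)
The paper gives no proof of Theorem~\ref{mapathm}: it is quoted directly from Masuda--Panov \cite{MaPa08} as a black box (and indeed is then \emph{used} in the proof of Proposition~\ref{c0}). So there is no in-paper argument to compare against; I assess your outline on its own.

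Your high-level plan — show that a graded-ring isomorphism $H^*(M_n(A),\bbz)\cong H^*((\bbc\bbp^1)^n,\bbz)$ forces every stage of the Bott tower to satisfy the topological-triviality criterion of Proposition~\ref{top-to-hol}, iterate that proposition to get the diffeomorphism, and extract the factorization $A=2C_n\cdots C_1-\BOne_n$ from the stage-by-stage constraints on $\gra_k$ — is the right shape. But the middle paragraph contains a genuine error that would derail the argument as written. You claim the $2n$ classes $\pm x_k,\pm y_k$ ``are exactly the classes whose square vanishes,'' and later write ``$y_k^2=0$'' and ``$x_k^2=0$.'' None of this holds in general: the relation in the ring \eqref{Bcohring} is $x_ky_k=0$, equivalently $x_k^2=-\gra_kx_k$ and $y_k^2=\gra_ky_k$, neither of which vanishes unless $\gra_k=0$. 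The primitive square-zero classes of $H^2(M_n,\bbz)$ are not the divisor classes $\pm x_k,\pm y_k$ but the classes $\beta_k$ of Lemma~\ref{primsq0}/\eqref{betas}, namely $x_k+\tfrac12\gra_k$ (if $\gra_k$ is even) or $2x_k+\gra_k$; these exist only when $\gra_k^2=0$. This is not a cosmetic slip: the whole characterization of square-zero classes is the engine of the proof, and running it on $x_k,y_k$ instead of $\beta_k$ gives false conclusions. You also overestimate the step you call ``the technical heart.'' Once Lemma~\ref{primsq0} is in hand, deducing $\gra_k^2=0$ is immediate ($\bbq$-triviality is implied by the ring isomorphism, and the paper notes this is equivalent to $\gra_k^2=0$ for all $k$), and $\gra_k$ even follows because any graded isomorphism sends $\{\pm Y_1,\ldots,\pm Y_n\}$ (the square-zero primitives of $\bbz[Y_i]/(Y_i^2)$) bijectively to $\{\pm\beta_1,\ldots,\pm\beta_n\}$, so $\beta_1\cdots\beta_n$ is primitive, whereas an easy induction gives $\beta_1\cdots\beta_n=\bigl(\prod_j m_j\bigr)x_1\cdots x_n$ with $m_j=2$ whenever $\gra_j$ is odd.

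What is genuinely missing is the matrix-form step and the termination of the iteration. You need (i) the remark following Proposition~\ref{top-to-hol} — that trivializing stage $k$ preserves topological triviality at the other stages — to justify iterating, and (ii) an actual argument, not a ``bookkeeping computation,'' that $\gra_k^2=0$ (with $M_{k-1}(A)$ already of the form \eqref{0teqn}) forces row $k$ of $(A+\BOne_n)/2$, minus $e_k$, to be an integer multiple of a single earlier row of $(A+\BOne_n)/2$. This is exactly where Lemma~\ref{primsq0} is used inductively: the class $\gra_k/2$ lies in $H^2(M_{k-1},\bbz)$, is square-zero, and hence is an integer multiple of some $\beta_{i_k}$ with $i_k<k$; since $\beta_{i_k}$ corresponds to row $i_k$ of $(A+\BOne_n)/2$ and $\gra_k/2$ to row $k$ minus $e_k$, this scalar multiple is precisely the off-diagonal entry $C^{i_k}_k$ of the factor $C_k$. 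Without that identification there is no proof of \eqref{0teqn}; with it, both directions of the matrix normal form follow.
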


Note that $C_1$ is the identity matrix. It follows immediately from
\eqref{0teqn} that all off-diagonal elements of $A$ are multiples of $2$
(in accordance with Proposition~\ref{top-to-hol}).

\begin{example}
As an example we consider stage $3$ Bott manifolds with topological twist
zero. It follows from~\eqref{0teqn} (and also Proposition~\ref{c0} below) that
they can be represented by the Bott towers $M_3(2a,2b,0)$ and
$M_3(2a,2ac,2c)$. The former has cotwist $\leq 1$, hence is a
$\bbc\bbp^1\times \bbc\bbp^1$ bundle over $\bbc\bbp^1$, whereas the latter has
twist and cotwist $2$. However, the $A$ matrices for $M_3(2a,2b,2c)$ with
$c(b-ac)\neq 0$ do not satisfy \eqref{0teqn}; they have non-vanishing first
Pontrjagin class.
\end{example}

\subsubsection*{$\bbq$-trivial Bott manifolds}

The cohomological rigidity of Bott manifolds with topological twist $0$
generalizes to Bott manifolds $M_n$ which are \emph{$\bbq$-trivial}, i.e.,
with $H^*(M_n,\bbq)\cong H^*((\bbc\bbp^1)^n,\bbq)$ as graded
rings~\cite{ChMa12}. A key ingredient in establishing this is the following
observation.

\begin{lemma}[\cite{ChMa12}]\label{primsq0} Let $\lambda x_j + u$ be a
primitive element of $H^2(M_n,\bbz)$ with $\lambda\neq 0$ and $u$ in the span
of $x_i:i<j$. Then $(\lambda x_j + u)^2=0$ if and only if $\gra_j^2=0$ and
$2u=\lambda\gra_j$.
\end{lemma}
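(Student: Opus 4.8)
The plan is to carry out the squaring directly in the presentation \eqref{Bcohring} of $H^*(M_n,\bbz)$ and then read off the two required equations by using the module structure over a lower piece of the filtration noted after \eqref{Bcohring}.

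First I would record the relation $x_j^2=-\gra_j x_j$ coming from the generator $X_j^2+\tilde\gra_j(X_1,\ldots,X_{j-1})X_j$ of $\cali$, and observe that $u$ and $\gra_j$ both lie in the pullback of $H^2(M_{j-1},\bbz)$ (for $u$ this is the hypothesis, for $\gra_j$ it is its definition). Expanding,
\[
(\lambda x_j+u)^2=\lambda^2 x_j^2+2\lambda u x_j+u^2=\lambda x_j\,(2u-\lambda\gra_j)+u^2,
\]
where now $\lambda(2u-\lambda\gra_j)$ and $u^2$ both lie in the pullback of $H^*(M_{j-1},\bbz)$.

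Next I would invoke the Leray--Hirsch structure: $H^*(M_n,\bbz)$ is free over the pullback of $H^*(M_j,\bbz)$, and $H^*(M_j,\bbz)$ is free over the pullback of $H^*(M_{j-1},\bbz)$ on the basis $\{1,x_j\}$. Hence an element of the form $a+b x_j$, with $a,b$ pulled back from $M_{j-1}$, vanishes in $H^*(M_n,\bbz)$ if and only if $a=0$ and $b=0$ already in $H^*(M_{j-1},\bbz)$. Applying this to the expression above shows $(\lambda x_j+u)^2=0$ if and only if $\lambda(2u-\lambda\gra_j)=0$ and $u^2=0$ in $H^*(M_{j-1},\bbz)$.

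Finally, since $H^*(M_{j-1},\bbz)$ is a free abelian group, hence torsion-free, and $\lambda\neq0$, the first equation is equivalent to $2u=\lambda\gra_j$; and once this holds, $4u^2=\lambda^2\gra_j^2$, so by torsion-freeness again $u^2=0$ is equivalent to $\gra_j^2=0$. This yields both implications (for the converse: $2u=\lambda\gra_j$ kills the $x_j$-term, and $\gra_j^2=0$ forces $4u^2=0$, hence $u^2=0$). I expect the only point needing care is the separation of the two equations, which rests on the freeness statements just cited; note that the primitivity of $\lambda x_j+u$ is not actually used.
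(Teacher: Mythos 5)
Your proof is correct and follows essentially the same route as the paper: expand the square using $x_j^2=-\gra_j x_j$, separate the $x_j$-component from the pulled-back component via the Leray--Hirsch free module structure, and then trade $u^2=0$ for $\gra_j^2=0$ via $4u^2=\lambda^2\gra_j^2$ in a torsion-free ring. You actually spell out two steps the paper leaves implicit (the freeness justification for splitting the equation, and the conversion from $u^2=0$ to $\gra_j^2=0$), and your observation that primitivity is not used in the argument is accurate.
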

\begin{proof} By the assumptions on $\lambda$ and $u$,
$(\lambda x_j + u)^2=\lambda^2 x_j^2 + 2\lambda u x_j+u^2= \lambda(2u-\lambda
  \gra_j)x_j + u^2=0$ if and only if $2u-\lambda\gra_j=0$ and $u^2=0$.
\end{proof}
Thus the primitive square-zero elements of $H^2(M_n,\bbz)$ have the form $2x_j
+\gra_j$ or $x_j+\gra_j/2$ up to sign. It follows easily~\cite{ChMa12}
that $M_n$ is $\bbq$-trivial if and only if $\gra_k^2=0$ for all
$k\in\{1,\ldots, n\}$. Note that $\gra_1=0$ and $\gra_2^2=0$, so stage $2$
Bott manifolds, i.e., Hirzebruch surfaces, are always $\bbq$-trivial, and it
is well known that there are precisely two diffeomorphism types, distinguished
by the parity of $A^1_2$.

More generally, Choi and Masuda~\cite{ChMa12} show that $\bbq$-trivial Bott
manifolds are distinguished by their integral cohomology rings with $\bbz$
coefficients, and in this case there are exactly $P(n)$ diffeomorphism types
where $P(n)$ is the number of partitions of $n$. Further, for any
$\bbq$-trivial Bott manifold $M_n$, there is a partition
$\lambda_1\geq\lambda_2\geq\cdots \geq \lambda_m\geq 1$ of $n$ such that
$M_n\cong M_{(\lambda_1)}\times\cdots \times M_{(\lambda_m)}$ where
$M_{(\lambda)}$ is the stage $\lambda$ Bott manifold associated to the matrix
$A$ with $A^1_k=1$ for all $k$ and $A^j_k=\delta^j_k$ for $j\geq 2$.

Thus, by the famous formula of Hardy and Ramanujan the number of
diffeomorphism types of stage $n$ $\bbq$-trivial Bott manifolds grows
asymptotically like
\[
\frac{\exp(\pi(2/3)^{1/2}\sqrt{n})}{4n\sqrt{3}} \qquad
\text{as $n\to\infty$}.
\]
However, for $n\geq 3$, it follows from the formula~\eqref{totPont} for the
total Pontrjagin class that there are infinitely many diffeomorphism types of
stage $n$ Bott manifolds that are not $\bbq$-trivial. Nevertheless, the
cohomological rigidity problem has an affirmative answer for stage $n$ Bott
manifolds with $n\leq 4$ \cite{ChMaSu10,Choi15}.

\subsubsection*{Bott manifolds with topological twist $1$}

Any stage $N+1$ Bott manifold $M_{N+1}$ with topological twist $\leq 1$ is
diffeomorphic to $M_{N+1}(\bfk)$ for some $\bfk=(k_1,\ldots,k_N)\in\bbz^N$ as
in Example~\ref{twist1-examples}. These manifolds have each have one
nonvanishing Pontrjagin class, viz.
\begin{equation}\label{1tPont}
p_1(M_{N+1}(\bfk))=2\sum_{i<j}k_ik_jx_ix_j.
\end{equation}
Their first Chern class is given by
\[
c_1(M_{N+1}(\bfk))=\sum_{i=1}^{N}(2 +k_i)x_i +2x_{N+1},
\]
so the second Stiefel--Whitney class is 
\[
w_2(M_{N+1}(\bfk))=\sum_{i=1}^{N}k_ix_i \mod 2.
\]
In this case the diffeomorphism type is determined by the graded cohomology
ring $H^*(\bbp(\BOne\oplus \calo(k_1,\ldots,k_N)),\bbz)$.

\begin{theorem}[\cite{ChSu11}]\label{ChSuthm}
Let $M_{N+1}(\bfk)$ and $M_{N+1}(\bfk')$ be two Bott towers whose $A$
matrix has the form \eqref{genkexample}. Then $M_{N+1}(\bfk)$ and
$M_{N+1}(\bfk')$ are diffeomorphic if and only if there is a permutation
$\grs$ of $\{1,\ldots,N\}$ such that $k'_{\grs(i)}\equiv k_i\mod 2$ for all
$i$ and $k'_{\grs(i)}k'_{\grs(j)} =\pm k_ik_j$ for all $i\neq j$.
\end{theorem}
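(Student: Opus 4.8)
The plan is to reduce the assertion to an algebraic comparison of graded cohomology rings and then invoke the fact recalled just above --- that for a Bott manifold of topological twist $\le 1$ the graded integral cohomology ring determines the diffeomorphism type~\cite{ChSu11,ChMaSu10}. So I would first show that $H^*(M_{N+1}(\bfk),\bbz)$ and $H^*(M_{N+1}(\bfk'),\bbz)$ are isomorphic as graded rings if and only if the two displayed congruences hold for some $\grs\in\mathrm{Sym}_N$, working with the distinguished degree-$2$ generators $x_1,\dots,x_{N+1}$ and $x'_1,\dots,x'_{N+1}$ (relations $x_i^2=0$ for $i\le N$ and $x_{N+1}(x_{N+1}+\sum_{i=1}^N k_ix_i)=0$, and similarly in the primed ring). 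I would dispose separately of the \emph{degenerate} case in which at most one of the $k_i$, or at most one of the $k'_i$, is nonzero: there $M_{N+1}(\bfk)$ is diffeomorphic to a product of a Hirzebruch surface --- or $(\bbc\bbp^1)^{N+1}$ if all $k_i$ vanish --- with copies of $\bbc\bbp^1$, so its diffeomorphism type depends only on the parity of the single relevant integer (equivalently on $w_2$, since $p_1$ vanishes here by~\eqref{1tPont}), and a direct check matches this to the stated condition. So I may assume at least two of the $k_i$ are nonzero, equivalently $\gra_{N+1}^2\ne 0$, equivalently $p_1(M_{N+1}(\bfk))\ne 0$, and likewise for $\bfk'$.

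For the ``only if'' part, given a graded ring isomorphism $\phi\colon H^*(M_{N+1}(\bfk),\bbz)\to H^*(M_{N+1}(\bfk'),\bbz)$, I would apply Lemma~\ref{primsq0} to both rings: since $\gra_j=0$ for $j\le N$ while $\gra_{N+1}^2\ne 0$, the primitive square-zero elements of $H^2$ are precisely $\pm x_1,\dots,\pm x_N$ (resp.\ $\pm x'_1,\dots,\pm x'_N$), so $\phi$ must restrict to a signed permutation of these, $\phi(x_i)=\eps_i x'_{\grs(i)}$ ($i\le N$) for a permutation $\grs$ and signs $\eps_i=\pm1$; as $\phi$ then maps the summand $\langle x_1,\dots,x_N\rangle$ onto $\langle x'_1,\dots,x'_N\rangle$ it induces an isomorphism of the rank-one quotients, forcing $\phi(x_{N+1})=\delta x'_{N+1}+\sum_i b_ix'_i$ with $\delta=\pm1$, $b_i\in\bbz$. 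Then I would apply $\phi$ to $x_{N+1}^2=-x_{N+1}\sum_i k_ix_i$, expand both sides using $(x'_{N+1})^2=-x'_{N+1}\sum_j k'_jx'_j$ and $(x'_j)^2=0$, and compare coefficients: the $x'_p x'_{N+1}$ coefficients give $k'_p=\delta(2b_p+k_{\grs^{-1}(p)}\eps_{\grs^{-1}(p)})$, hence $k'_{\grs(i)}\equiv k_i\bmod 2$; the $x'_px'_q$ coefficients with $p,q\le N$, after eliminating the $b_i$ with the previous identity, give $k'_{\grs(i)}k'_{\grs(j)}=\eps_i\eps_j k_ik_j=\pm k_ik_j$. (These two conclusions are precisely the statements that $\grs$ carries $w_2=\sum k_ix_i\bmod 2$ to $w_2$ and, modulo the sign freedom, $p_1$ to $p_1$, by~\eqref{1tPont}.)

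For the ``if'' part, given the two congruences I would write $k'_{\grs(i)}k'_{\grs(j)}=\eta_{ij}k_ik_j$ with $\eta_{ij}=\pm1$ when $k_ik_j\ne0$, note that on $\{i:k_i\ne0\}$ one has $\eta_{ij}\eta_{jl}\eta_{li}=(k'_{\grs(i)}k'_{\grs(j)}k'_{\grs(l)}/k_ik_jk_l)^2>0$ so the $\eta_{ij}$ are a coboundary there, $\eta_{ij}=\eps_i\eps_j$ for suitable signs $\eps_i$ (extended by $1$ on indices with $k_i=0$); the congruence $k'_{\grs(i)}\equiv k_i\bmod 2$ makes $b_{\grs(i)}:=\tfrac12(k'_{\grs(i)}-\eps_i k_i)$ an integer, and I would then define $\phi(x_i):=\eps_i x'_{\grs(i)}$ ($i\le N$), $\phi(x_{N+1}):=x'_{N+1}+\sum_i b_ix'_i$ and verify, by running the coefficient comparison above in reverse and using $k'_{\grs(i)}k'_{\grs(j)}=\eps_i\eps_j k_ik_j$, that $\phi$ respects all relations; as $\phi$ is visibly invertible on $H^2$ (its matrix is a signed permutation with one extra column) it is a graded ring isomorphism, and the cohomological rigidity quoted above then yields the diffeomorphism. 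I would remark that the sub-cases $\eps_i\equiv1$ and $\eps_i\equiv-1$ are realized concretely by permuting, respectively permuting and fibre-inverting ($M_{N+1}(\bfk)\sim M_{N+1}(-\bfk)$, Example~\ref{1tequiv}), but that the general case --- e.g.\ $M_3(\bfk)\cong M_3(\bfk')$ for $\bfk=(2,3)$, $\bfk'=(6,1)$ in the notation of Example~\ref{twist1-examples} --- genuinely uses the rigidity input, since $\bfk'$ is not a permuted sign change of $\bfk$.

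The coefficient bookkeeping in the defining relation is routine; the parts I expect to need care are the degenerate case (where extra primitive square-zero classes appear, so $\grs$ need no longer fix the top index, and one falls back on recognizing the manifolds as explicit products) and the cocycle-to-coboundary step that assembles the pairwise signs $\eta_{ij}$ into a single sign system. The one genuinely substantial input is the cohomological rigidity of topological-twist-$\le1$ Bott manifolds, which I am treating as given; modulo that, the argument is elementary.
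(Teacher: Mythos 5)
The paper states this theorem by citation to Choi--Suh \cite{ChSu11} and does not reproduce a proof; the preceding paragraph does, however, supply the intended reduction --- for topological twist $\le 1$ the diffeomorphism type is determined by the graded cohomology ring --- and your argument correctly implements exactly that reduction, matching the argument you would find in \cite{ChSu11}.

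Your analysis is correct in all its parts.  In the non-degenerate case, since $\gra_j=0$ for $j\le N$ while $\gra_{N+1}^2\ne 0$, Lemma~\ref{primsq0} shows the primitive square-zero classes in degree $2$ are exactly $\pm x_1,\dots,\pm x_N$, so any graded ring isomorphism acts on them by a signed permutation and hence sends $x_{N+1}$ to $\delta x'_{N+1}+\sum_i b_ix'_i$ with $\delta=\pm1$; applying the iso to the single remaining relation and matching coefficients of $x'_{N+1}x'_p$ and $x'_px'_q$ gives precisely $k'_p=\delta(2b_p+\eps_{\grs^{-1}(p)}k_{\grs^{-1}(p)})$ and then $k'_pk'_q=\eps_{\grs^{-1}(p)}\eps_{\grs^{-1}(q)}k_{\grs^{-1}(p)}k_{\grs^{-1}(q)}$, which are the two displayed conditions.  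For the converse, your cocycle-to-coboundary step is justified because the $\eta_{ij}$ are defined on a complete graph on $\{i:k_i\neq0\}$, and since $k'_{\grs(i)}k'_{\grs(j)}=\pm k_ik_j$ forces $k_i=0\iff k'_{\grs(i)}=0$ (outside the degenerate case), the extension by $1$ is harmless; the reverse coefficient check does make the defined map a ring isomorphism, and you then invoke cohomological rigidity.  The degenerate case (at most one nonzero $k_i$) is correctly reduced to the even/odd Hirzebruch dichotomy, and the displayed product condition automatically forces $\bfk'$ to be degenerate as well.  The only input not reproven is the rigidity theorem for twist $\le 1$, which is a separate theorem of Choi--Suh (quoted in the paper as part of the surrounding discussion), so that dependence is as expected.
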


This theorem characterizes the case that $M_{N+1}(\bfk)$ has topological twist
$0$, i.e., is diffeomorphic to $(\bbc\bbp^1)^{N+1}$: this happens precisely
when $\bfk$ has at most one non-vanishing component and it is even; thus
$M_{N+1}(\bfk)$ is the product of an even Hirzebruch surface with
$(\bbc\bbp^1)^{N-1}$.

When $N=2$ we have a stage $3$ Bott manifold with twist $\leq 1$ which will be
treated in Section~\ref{stage3}. In this case the number of Bott manifolds in
a given diffeomorphism type is determined by the prime decomposition of
$k_1k_2$. For $N>2$, Theorem~\ref{ChSuthm} has the following refinement.

\begin{theorem}\label{1twistcor} If $N>2$ and all $k_j$ are nonvanishing,
then $M_{N+1}(\bfk)$ is diffeomorphic to $M_{N+1}(\bfk')$ if and only if there
is a permutation $\grs$ of $\{1,\ldots,N\}$ such that $k'_{\grs(i)}=\pm k_i$
for all $i=1,\ldots,N$.  Moreover, there are generically $2^{N-1}$
inequivalent such Bott manifolds in each diffeomorphism class.
\end{theorem}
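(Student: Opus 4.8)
The plan is to derive the diffeomorphism criterion from the Choi--Suh classification (Theorem~\ref{ChSuthm}) by a short arithmetic argument exploiting the hypothesis $N>2$, and then to count biholomorphism classes using the equivalences recorded in Example~\ref{1tequiv}.

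For the diffeomorphism statement, the ``if'' direction is immediate: if $k'_{\grs(i)}=\pm k_i$ for all $i$, then $k'_{\grs(i)}\equiv k_i\mod 2$ and $k'_{\grs(i)}k'_{\grs(j)}=(\pm k_i)(\pm k_j)=\pm k_ik_j$, so Theorem~\ref{ChSuthm} produces a diffeomorphism. For ``only if'', suppose $M_{N+1}(\bfk)$ and $M_{N+1}(\bfk')$ are diffeomorphic; by Theorem~\ref{ChSuthm} there is a permutation $\grs$ with $\tilde k_i\tilde k_j=\pm k_ik_j$ for all $i\neq j$, where $\tilde k_i:=k'_{\grs(i)}$. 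No $\tilde k_i$ vanishes, since $\tilde k_i\tilde k_j=\pm k_ik_j\neq 0$ for any $j\neq i$. Now fix $i$; because $N\geq 3$ we may choose distinct indices $j,l$ both different from $i$, and multiplying $\tilde k_i\tilde k_j=\pm k_ik_j$ by $\tilde k_i\tilde k_l=\pm k_ik_l$ and dividing by $\tilde k_j\tilde k_l=\pm k_jk_l$ gives $\tilde k_i^2=\pm k_i^2$; since both sides are positive, $\tilde k_i^2=k_i^2$, i.e., $k'_{\grs(i)}=\pm k_i$. As $i$ was arbitrary, this is exactly the asserted condition.

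For the counting statement, observe that the topological twist is a diffeomorphism invariant, equal to $1$ here (all entries of $\bfk$ being nonzero), so by Proposition~\ref{tw-equiv} every Bott manifold in the diffeomorphism class of $M_{N+1}(\bfk)$ is some $M_{N+1}(\bfk')$, and by the first part the admissible $\bfk'$ form precisely the orbit of $\bfk$ under $G:=\mathrm{Sym}_N\ltimes\{\pm1\}^N$ acting by permutations and independent sign changes of the entries. By Example~\ref{1tequiv}, $M_{N+1}(\bfk')\sim M_{N+1}(\bfk'')$ if and only if $\bfk''=\pm\grs(\bfk')$ for some $\grs\in\mathrm{Sym}_N$, i.e., if and only if $\bfk'$ and $\bfk''$ lie in the same orbit of the subgroup $H:=\mathrm{Sym}_N\times\{\pm1\}\leq G$ consisting of permutations together with the global sign. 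Hence the number of biholomorphism classes inside the diffeomorphism class equals the number of $H$-orbits in the $G$-orbit of $\bfk$. In the generic case where the $|k_i|$ are pairwise distinct, comparison of absolute values shows that $G$, and a fortiori $H$, acts freely on this orbit, so the number of $H$-orbits is $[G:H]=(N!\cdot 2^N)/(2\cdot N!)=2^{N-1}$; if some $|k_i|$ coincide this count can only decrease, which is why the statement is qualified by ``generically''.

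The step I expect to demand the most care is the ``only if'' direction of the equivalence criterion just invoked: that permutations of the base factors together with the single global sign change exhaust the biholomorphisms among the towers $M_{N+1}(\bfk')$ with all entries nonzero. This follows the scheme of Section~\ref{equivsect}: any biholomorphism of toric Bott manifolds is, after a torus automorphism, induced by an element of $BC_{N+1}$ carrying the defining matrix to lower-triangular normal form, and one analyses these as in Example~\ref{1tequiv}. The fiber inversions $\grt_i$ at the trivial stages $i\leq N$ act as the identity on $M_{N+1}(\bfk')$; $\grt_{N+1}$ realizes $\bfk'\mapsto-\bfk'$; and Lemma~\ref{trans} shows that no transposition --- hence no permutation --- moving the last index $N+1$ can be realized once the entries $k'_i,\dots,k'_N$ are all nonzero, which holds throughout the diffeomorphism class. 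With this in hand the orbit count above is routine.
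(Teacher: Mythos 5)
Your proof is correct and takes essentially the same route as the paper: for the classification you rederive $k'_{\grs(i)}=\pm k_i$ from the Choi--Suh product relations via the same three-index trick that exploits $N>2$, and your orbit count $[G:H]=2^{N-1}$ is the paper's argument (fix the order by a permutation, observe $2^N$ sign choices, mod out by the global sign coming from $\grt_{N+1}$, note no further identifications generically) recast in group-theoretic language. One small caveat: the aside that Proposition~\ref{tw-equiv} shows ``every Bott manifold in the diffeomorphism class of $M_{N+1}(\bfk)$ is some $M_{N+1}(\bfk')$'' overreaches, since that proposition concerns holomorphic twist and knowing the topological twist is $1$ does not by itself give an arbitrary diffeomorphic tower holomorphic twist $1$; but this remark is unused in your count, which correctly enumerates biholomorphism classes among towers $M_{N+1}(\bfk')$ of the given form, which is what the theorem asserts.
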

\begin{proof} By the theorem $M_{N+1}(\bfk)$ and $M_{N+1}(\bfk')$ are
diffeomorphic if and only if there is a permutation $\grs$ such that $\mu_i
\mu_j = \pm 1$ for all $i\neq j$, where $\mu_i=k_{\grs (i)}'/k_i$; hence $\mu_i
=\pm 1/\mu_j$ for $i\neq j$. For $N>2$, we have also $\mu_j=\pm 1/\mu_k$ and
$\mu_k=\pm 1/\mu_i$ for $k\neq i,j$, which implies that $\mu_i = \pm 1$, i.e.,
$k'_{\grs(i)}=\pm k_i$ for all $i$. This proves the first part of the theorem.

For the second part we note that the only fiber inversion that yields a
nontrivial equivalence is $\grt_{N+1}$, which interchanges $\hat
D_{N+1}^\infty$ and $\hat D_{N+1}^0$, and is induced by an equivalence between
$M_{N+1}(\bfk)$ and $M_{N+1}(-\bfk)$. Now in the diffeomorphism class of
$M_{N+1}(\bfk)$, there $2^N$ choices of sign for the components of $\bfk$, but
the equivalence of $M_{N+1}(\bfk)$ and $M_{N+1}(-\bfk)$ makes half of them
equivalent. There are no further equivalences unless $k_i=\pm k_j$ for some
$i\neq j$ (in which case some sign choices are identified by transposing the
$i$th and $j$th factors in the base). Thus there are generically $2^{N-1}$
inequivalent Bott manifolds in each diffeomorphism class.
\end{proof}

Now we let $\cala(\bfk)$ be the set of lower triangular unipotent matrices
over $\bbz$ such that the Bott tower $M_{N+1}(A_\bfk)$ for
$A_\bfk\in\cala(\bfk)$ is diffeomorphic to $M_{N+1}(\bfk)$.  In the degenerate
case when some of the $k_j$s vanish we can without loss of generality assume
that $k_j\neq 0$ for $j=1,\ldots,m$, but $k_j=0$ for $j=m+1,\ldots,N$. In this
case we have a biholomorphism for $m=2,\ldots,N+1$
\begin{equation}\label{1tdeg}
M_{N+1}(A_\bfk)\cong M_{m}(A_{\tilde{\bfk}})\times
\overbrace{\bbc\bbp^1\times\cdots\times\bbc\bbp^1}^{N+1-m}
\end{equation}
where none of the components $k_1,\ldots,k_{m-1}$ of $\tilde{\bfk}$ vanish.

\subsubsection*{Bott manifolds with topological twist $2$}

A Bott manifold with topological twist $\leq 2$ is diffeomorphic to a Bott
tower of the form $M_{N+1}(\bfl,\bfk)$ as in Example~\ref{twist2-examples}.
Less is known about the topology in this case; their only non-vanishing
$\gra_j$ are for $j=N,N+1$ with
\[
\gra_N^2=2\sum_{i<j}^{N-1}l_il_jx_ix_j, \quad \gra_{N+1}^2=2\sum_{i<j}^{N-1}k_ik_jx_ix_j +k_N\sum_{i=1}^{N-1}(2k_i -k_Nl_i)x_ix_N.
\]
The total Pontrjagin class $p(M_{N+1}(\bfl,\bfk))$ is a diffeomorphism invariant and from Lemma \ref{Pontt} there are at most only 2 non-vanishing classes: 
\begin{equation}\label{2tdiffinv}
p_1(M_{N+1}(\bfl,\bfk))=\gra_N^2+\gra_{N+1}^2,\qquad p_2(M_{N+1}(\bfl,\bfk))=\gra_N^2\gra_{N+1}^2.
\end{equation}
Notice also by Lemma \ref{Pontt} that for $N=3$ we have $p_2(M_{4}(\bfl,\bfk))=0$.

We also have 
\[
c_1(M_{N+1}(\bfl,\bfk))=\sum_{i=1}^{N-1}(2+l_i +k_i)x_i +(2+k_N)x_N+2x_{N+1}
\]
from which we obtain the second Stiefel--Whitney class
\[
w_2(M_{N+1}(\bfl,\bfk))=\sum_{i=1}^{N-1}(l_i +k_i)x_i +k_Nx_N \mod 2.
\]
This implies that $w_2=0$ if and only if $k_N$ is even and $l_i,k_i$ have the same parity for all $i=1,\ldots,N-1$.

\subsection{Topological classification of stage $3$ Bott
manifolds}\label{stage3}

Let us consider in detail the topology of height $3$ Bott towers
$M_3(a,b,c)$. From \eqref{totPont} we have
\begin{equation}\label{p1}
p_1(M_3)=\gra_3^2 = c(2b-ac)x_1x_2,
\end{equation}
while from \eqref{c1} we have
\begin{equation}\label{B3c11}
c_1(M_3)=(2+a+b)x_1+(2+c)x_2+2x_3.
 \end{equation}
The mod 2 reduction of $c_1$ is the second Stiefel--Whitney class $w_2$ so we
see that
\begin{equation}\label{w2}
w_2(M_3)\equiv (a+b)x_1+cx_2 \mod 2.
\end{equation}
\begin{lemma}\label{diff}
The Bott tower $M_3(a,b,c)$ is $\bbq$-trivial if and only if $p_1(M_3)=0$
if and only if $c(2b-ac)=0$. Furthermore $w_2(M_3)=0$ if and only if $c$ and
$a+b$ are even.
\end{lemma}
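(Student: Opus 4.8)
The plan is to establish the two biconditionals in Lemma~\ref{diff} by direct computation, reading off everything from formulas already in the excerpt. For the first chain of equivalences, recall from Lemma~\ref{Pontt}(1) (or the displayed formula~\eqref{p1}) that $p_1(M_3)=\gra_3^2=c(2b-ac)x_1x_2$, and that $x_1x_2$ is a nonzero element of $H^4(M_3,\bbz)$ --- indeed by Example~\ref{3stageBottex} the classes $1,x_1,x_2,x_3,x_1x_2,x_2x_3,x_3x_1,x_1x_2x_3$ form a $\bbz$-basis, so $p_1(M_3)=0$ if and only if the integer $c(2b-ac)$ vanishes. This gives $p_1(M_3)=0 \iff c(2b-ac)=0$ immediately.

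For the remaining equivalence $M_3(a,b,c)$ is $\bbq$-trivial $\iff p_1(M_3)=0$, I would invoke the criterion recorded just after Lemma~\ref{primsq0}: a Bott manifold $M_n$ is $\bbq$-trivial if and only if $\gra_k^2=0$ for all $k\in\{1,\ldots,n\}$. Here $\gra_1=0$ automatically, and $\gra_2^2=0$ always (stage $2$ Bott manifolds are Hirzebruch surfaces, hence $\bbq$-trivial), so the only condition is $\gra_3^2=0$, which is exactly $p_1(M_3)=0$ by~\eqref{p1}. Thus all three statements in the first sentence are equivalent.

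The second sentence is the mod-$2$ computation: by~\eqref{w2} we have $w_2(M_3)\equiv (a+b)x_1+cx_2 \bmod 2$, and since $x_1,x_2$ are part of a $\bbz$-basis of $H^2(M_3,\bbz)$ (again by Example~\ref{3stageBottex}), their reductions are linearly independent over $\bbz_2$, so $w_2(M_3)=0$ in $H^2(M_3,\bbz_2)$ if and only if $a+b$ and $c$ are both even.

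No step here is a genuine obstacle; the only thing requiring a moment's care is the appeal to the explicit $\bbz$-basis of $H^*(M_3,\bbz)$ from Example~\ref{3stageBottex} to conclude that the relevant monomials (respectively their mod-$2$ reductions) are nonzero and independent, so that vanishing of a cohomology class is equivalent to vanishing of its integer (respectively mod-$2$) coefficients. Everything else is quotation of Lemma~\ref{Pontt}, Lemma~\ref{primsq0} and its corollary, and formulas~\eqref{p1} and~\eqref{w2}.
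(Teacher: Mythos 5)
Your argument is correct and is exactly the intended one: the paper leaves Lemma~\ref{diff} without a written proof because it is an immediate consequence of~\eqref{p1},~\eqref{w2}, and the $\bbq$-triviality criterion ($\gra_k^2=0$ for all $k$) stated just after Lemma~\ref{primsq0}, together with the explicit $\bbz$-basis from Example~\ref{3stageBottex}. You have simply supplied the intended, straightforward reasoning.
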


By Lemma~\ref{primsq0}, the primitive square-zero elements of $H^2(M_3,\bbz)$
are, up to sign,
\begin{equation}\label{betas}
\beta_1=x_1, \quad\beta_2=\begin{cases} x_2 + \frac12 a x_1& a \text{ even}\\
2x_2 + a x_1 & a \text{ odd,}\end{cases} \quad
\beta_3= \begin{cases} x_3 + \frac12 b x_1 + \frac12 cx_2 & b,c\text{ even}\\
2x_3 + b x_1 + c x_2 &\text{otherwise,}\end{cases}
\end{equation}
with $\beta_3$ only occuring in the $\bbq$-trivial case. The topological twist
$0$ case is characterized as follows.

\begin{proposition}\label{c0} A Bott tower $M_3(a,b,c)$ is diffeomorphic
to $(\bbc\bbp^1)^3$ if and only if it is $\bbq$-trivial \textup(i.e.,
$c(2b-ac)=0$\textup) and $a,b,c$ are all even.
\end{proposition}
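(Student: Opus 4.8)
The plan is to prove the two implications separately, using Theorem~\ref{mapathm} (Masuda--Panov) for necessity of the conditions and Proposition~\ref{top-to-hol} for their sufficiency. For necessity, suppose $M_3(a,b,c)$ is diffeomorphic to $(\bbc\bbp^1)^3$. Then their integral cohomology rings are isomorphic, hence so are their rational cohomology rings, so $M_3(a,b,c)$ is $\bbq$-trivial; by Lemma~\ref{diff} this is equivalent to $p_1(M_3)=0$, and hence by~\eqref{p1} to $c(2b-ac)=0$. Moreover Theorem~\ref{mapathm} forces the matrix $A=\bigl(\begin{smallmatrix}1&0&0\\ a&1&0\\ b&c&1\end{smallmatrix}\bigr)$ to have the form $2C_3C_2C_1-\BOne_3$ of~\eqref{0teqn}, and since every off-diagonal entry of $2C_3C_2C_1$ is twice an integer, the off-diagonal entries $a,b,c$ of $A$ are all even.

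For sufficiency, suppose $c(2b-ac)=0$ and $a,b,c$ are all even. Then the stage-$3$ class $\gra_3=bx_1+cx_2$ is divisible by $2$, and $\gra_3^2=c(2b-ac)x_1x_2=0$ by~\eqref{p1}, so Proposition~\ref{top-to-hol} applies at the top stage: the bundle $M_3(A)\to M_2(A)$ is topologically trivial, and $M_3(a,b,c)$ is diffeomorphic to a Bott tower of the form $\bbc\bbp^1\times M_2(A')$ with $M_2(A')=M_2(A)=\calh_a$. Since $a$ is even, $\calh_a$ is diffeomorphic to $\calh_0=\bbc\bbp^1\times\bbc\bbp^1$ (the classical diffeomorphism classification of Hirzebruch surfaces, which may also be recovered from a second application of Proposition~\ref{top-to-hol}, as the stage-$2$ class $\gra_2=ax_1$ is divisible by $2$ with $\gra_2^2=0$ because $x_1^2=0$). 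Hence $M_3(a,b,c)$ is diffeomorphic to $(\bbc\bbp^1)^3$.

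I do not expect a genuine obstacle, since the argument merely assembles the cited results; the one point needing care is that trivializing the top stage must not disturb the topological triviality of the stage-$2$ fibration, which is guaranteed by the remark following Proposition~\ref{top-to-hol} and is anyway transparent here because $M_2(A')=M_2(A)$. As an independent check one can proceed purely by matrix algebra: a short computation identifies the $A$-matrices of the form $2C_3C_2C_1-\BOne_3$ (with $C_1,C_2,C_3$ as in Theorem~\ref{mapathm}) as exactly those with $(a,b,c)=(2s,2t,0)$ or $(a,b,c)=(2s,2st,2t)$ for integers $s,t$, and this pair of families coincides with the set of triples satisfying $c(2b-ac)=0$ with $a,b,c$ all even.
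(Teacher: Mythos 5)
Your proof is correct, and it takes a genuinely different route from the paper's. For necessity, the paper argues more elementarily: from the isomorphism of cohomology rings it reads off that the product $\beta_1\beta_2\beta_3$ of the primitive square-zero classes in~\eqref{betas} must itself be primitive, which forces $a,b,c$ even; you instead invoke the full classification half of Theorem~\ref{mapathm} to conclude $A=2C_3C_2C_1-\BOne_3$, from which evenness of the off-diagonal entries is immediate. Both are valid. For sufficiency the routes diverge more substantially. The paper works inside the cohomology ring: it lifts the three square-zero classes $\beta_1,\beta_2,\beta_3$ to polynomials $\xi_1,\xi_2,\xi_3$, verifies directly that $\xi_1^2,\xi_2^2,\xi_3^2$ generate the defining ideal $\cali$, deduces that $H^*(M_3,\bbz)\cong\bbz[\xi_1,\xi_2,\xi_3]/(\xi_1^2,\xi_2^2,\xi_3^2)$, and then appeals to the cohomological-rigidity half of Theorem~\ref{mapathm}. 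You instead give a bundle-theoretic argument via Proposition~\ref{top-to-hol}: the hypotheses make $\gra_3$ even with $\gra_3^2=0$, so the top stage trivializes topologically and $M_3(a,b,c)$ is diffeomorphic to $\bbc\bbp^1\times\calh_a$, and $a$ even trivializes the Hirzebruch surface. Your version is more geometric and avoids the ring-theoretic verification; the paper's version has the merit of explicitly exhibiting the isomorphism of cohomology rings, which is philosophically closer to the cohomological-rigidity theme of Section~\ref{top}. Your observation that $M_2(A')=M_2(A)$ under the trivialization is the correct resolution of the one point that needs care, and your independent matrix check correctly identifies the two families $(2s,2t,0)$ and $(2s,2st,2t)$ parametrizing $A$-matrices of the form $2C_3C_2C_1-\BOne_3$.
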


\begin{proof} Clearly if the integral cohomology ring of $M_3$ is isomorphic
to that of $(\bbc\bbp^1)^3$ then $M_3$ is $\bbq$-trivial, and since
$\beta_1\beta_2\beta_3$ is primitive, $a,b$ and $c$ are all even.  Conversely,
these conditions imply that $\beta_1,\beta_2,\beta_3$ have square zero and span
$H^2(M,\bbz)$, so that they generate $H^*(M,\bbz)$. Let $\xi_1,\xi_2,\xi_3$ be
the lifts of $\beta_1,\beta_2,\beta_3$ to $\bbz[x_1,x_2,x_3]$ using the
explicit expressions~\eqref{betas}. It is straightforward to check that
$\xi_1^2,\xi_2^2$ and $\xi_3^2$ generate the ideal $\cali$. Hence
$H^*(M,\bbz)\cong\bbz[\xi_1,\xi_2,\xi_3]/\bigl(\xi_1^2,\xi_2^2,\xi_3^2)$ and
the result follows by Theorem~\ref{mapathm}.
\end{proof}

\begin{remark}
Note that Proposition \ref{c0} in particular implies that the total space of
$\bbp(\BOne\oplus \calo(k_1,k_2))\to\bbc\bbp^1\times \bbc\bbp^1$ with $k_1k_2\neq 0$ is never diffeomorphic to $(\bbc\bbp^1)^3$.
\end{remark}

From \cite{ChMa12} we know that for $\bbq$-trivial stage $3$ Bott manifolds
there are precisely three diffeomorphism types. The above proposition shows
that $M_3(a,b,c)\cong M_{(1)}^3=(\bbc\bbp^1)^3$ if and only if $a,b,c$ are all
even.  The other two diffeomorphism types can be distinguished by the second
Stiefel--Whitney class $w_2(M_3)$. If $a,b$ are odd and $c$ is even then
$w_2(M_3)=0$ and $M_3$ is diffeomorphic to $M_{(3)}$. Otherwise, $w_2(M_3)\neq
0$, $M_3$ is diffeomorphic to $M_{(1)}\times M_{(2)}$, and the equation
$c(2b-ac)=0$ implies that either $c$ is even and $a+b$ is odd, with
$w_2(M_3)\equiv \beta_2\equiv x_1$, or $c$ is odd and $a$ is even, with
\[
w_2(M_3)\equiv\beta_3\equiv\begin{cases} x_2 & b \text{ even}\\
x_1+x_2 & b \text{ odd.} 
\end{cases}
\]
Thus the isomorphism of cohomology rings between $\bbq$-trivial height $3$ Bott
towers $M_3(a,b,c)$ and $M_3(a',b',c')$ with $w_2\neq 0$ maps $\beta_j\to
\beta_j'$ for all $j$ if $a$ and $a'$ have the same parity; otherwise it maps
$\beta_2$ to $\beta_3'$ and $\beta_3$ to $\beta_2'$.

\begin{proposition}\label{stage3diff} 
Let $M_3(a,b,c)$ and $M_3(a',b',c')$ be Bott towers
which are not $\bbq$-trivial. Then $M_3(a,b,c)$ and $M_3(a',b',c')$ are
diffeomorphic if and only if $c(2b-ac)=\pm c'(2b'-a'c')$, $(a,a')$ have the
same parity, and\textup:
\begin{itemize}
\item if $(a,a')$ are both even, then $((1+b)(1+c),(1+b')(1+c'))$ have the
same parity\textup;
\item if $(c,c')$ are both even, then $(b,b')$ have the same parity.
\end{itemize}
\end{proposition}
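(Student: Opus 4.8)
The plan is to prove this by analyzing the cohomology rings and applying the cohomological rigidity results, together with explicit bookkeeping of the available biholomorphism equivalences from Example~\ref{st3equiv}.

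First I would recall that since $M_3(a,b,c)$ is not $\bbq$-trivial, Lemma~\ref{diff} gives $p_1(M_3)=c(2b-ac)x_1x_2\neq 0$, and by the affirmative answer to cohomological rigidity for $n\leq 3$ (cf.~\cite{ChMaSu10,Choi15}), $M_3(a,b,c)$ and $M_3(a',b',c')$ are diffeomorphic if and only if their integral cohomology rings are isomorphic as graded rings. So the entire proof reduces to deciding when the graded rings in \eqref{B3cohring} are isomorphic. The first Pontrjagin class is a diffeomorphism invariant, so $p_1(M_3)=\pm p_1(M_3')$, i.e.\ $c(2b-ac)=\pm c'(2b'-a'c')$, is necessary (the sign reflecting that an orientation-reversing identification of the degree-$4$ part is possible a priori, though one should check whether it actually occurs — in dimension $6$ the top class $x_1x_2x_3$ pins down a preferred generator up to sign, and $p_1\in H^4$ is then determined up to sign by which generator of $H^2\otimes H^2\cap(\text{products})$ it equals). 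Next, $w_2$ is a diffeomorphism invariant, and from \eqref{w2} $w_2=0$ iff $c$ and $a+b$ are even; more refined parity information must be extracted from the ring structure.

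The key structural step is to understand $H^2(M_3,\bbz)$ together with its squaring map and the filtration by $\langle x_1\rangle\subset\langle x_1,x_2\rangle\subset H^2$. By Lemma~\ref{primsq0}/\eqref{betas}, the primitive square-zero classes are $\pm\beta_1=\pm x_1$ and $\pm\beta_2$, with $\beta_2=x_2+\tfrac12 a x_1$ or $2x_2+ax_1$ according to the parity of $a$ (and $\beta_3$ does not exist in the non-$\bbq$-trivial case). Thus a ring isomorphism must send $x_1\mapsto\pm x_1$ and $\beta_2\mapsto\pm\beta_2'$; examining whether $\beta_2$ is primitive-with-coefficient-$1$-on-$x_2$ (the ``$a$ odd'' case) versus divisible structure (the ``$a$ even'' case) forces $a\equiv a'\pmod 2$. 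This is where the two bulleted sub-cases enter: when $a,a'$ are even one works modulo $2$ with the generators $x_1,x_2,x_3$ and the relations $X_2(aX_1+X_2)$, $X_3(bX_1+cX_2+X_3)$, and the remaining freedom is $x_3\mapsto \pm x_3 + (\text{lower})$; tracking the relation $X_3^2\equiv -(bX_1+cX_2)X_3$ and how it transforms shows the obstruction to matching is precisely the parity of $(1+b)(1+c)$ (equivalently whether both $b,c$ are odd). When instead $c,c'$ are even one can use the equivalence $M_3(a,b,c)\sim M_3(a,-b,-c)$ and the biholomorphisms $M_3(a,b,0)\sim M_3(b,a,0)$ etc.\ from Example~\ref{st3equiv}, or argue directly, to reduce to matching $\gra_3=bx_1+cx_2$ modulo the automorphisms of the ring fixing the flag, leaving the parity of $b$ as the only invariant.

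Conversely, given the numerical conditions I would construct the isomorphism explicitly: lift the $\beta_j$ as in \eqref{betas} (or suitable analogues of $\beta_3$ when $w_2\neq 0$), send generators to generators, and verify the ideal $\cali$ maps to $\cali'$ — a finite check as in the proof of Proposition~\ref{c0}, handling separately the sign ambiguity in $p_1$ by precomposing with the sign change $x_i\mapsto -x_i$ on an appropriate subset of indices (which is realized by a product of fiber-inversion equivalences $\grt_k$, hence by an actual biholomorphism, so no orientation subtlety survives). I expect the main obstacle to be the bookkeeping in the ``$a$ even'' case: one must show that after normalizing $x_1\mapsto\pm x_1$, $x_2\mapsto \pm x_2 + (\text{mult of }x_1)$, the residual freedom in choosing the image of $x_3$ cannot absorb a discrepancy in the parity of the coefficients $(b,c)$ in the relation $X_3^2+(bX_1+cX_2)X_3$ unless $(1+b)(1+c)\equiv(1+b')(1+c')\pmod 2$; this requires carefully using that $x_1^2=0$ but $x_1x_2\neq 0$ and that $c(2b-ac)\neq0$ so the third relation genuinely involves $x_2$, pinning down the parity data rather than letting it be conjugated away.
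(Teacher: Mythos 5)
Your forward-direction reasoning is essentially the same as the paper's: reduce via cohomological rigidity to a graded ring isomorphism, observe that $p_1$, $w_2$ and the set $\{\pm\beta_1,\pm\beta_2\}$ of primitive square-zero classes are invariants, and extract the parity conditions. (One small caveat: your worry about an ``orientation-reversing identification of the degree-$4$ part'' is unnecessary noise — the sign in $c(2b-ac)=\pm c'(2b'-a'c')$ simply reflects that a ring isomorphism may send $x_1x_2\mapsto -x_1'x_2'$, which is why the paper normalizes by $x_1\mapsto -x_1$ at the outset of the converse.) The paper obtains both bullet conditions directly from ``whether $w_2$ vanishes or not'' once $a\equiv a'\pmod 2$ is known, which is slicker than tracking the relation $X_3^2\equiv -(bX_1+cX_2)X_3$; either route works for the necessity half.

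The genuine gap is in the converse. ``Lift the $\beta_j$, send generators to generators, and verify the ideal maps over'' modeled on Proposition~\ref{c0} cannot work as stated: in the non-$\bbq$-trivial case there is no $\beta_3$, and there is no ``suitable analogue'' — the ring is \emph{not} freely generated by three square-zero classes, which is precisely what made the $\bbq$-trivial proof go through. The paper instead builds the isomorphism by an explicit sequence of $\mathrm{GL}_3(\bbz)$ substitutions: $x_1\mapsto -x_1$ to fix the sign of $p_1$; $x_2\mapsto x_2+\lambda x_1$ to normalize $a=a'$; and, crucially, in the $a$-even case the substitution $x_1\mapsto x_2+(a/2)x_1$, $x_2\mapsto (1-a^2/4)x_1-(a/2)x_2$, which exchanges the roles of $\beta_1$ and $\beta_2$ and is needed whenever $(c,c')$ (and possibly $(b,b')$) have opposite parity while $(1+b)(1+c)\equiv(1+b')(1+c')$. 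Nothing in your sketch anticipates this swap — ``send generators to generators'' implicitly preserves the flag $\langle x_1\rangle\subset\langle x_1,x_2\rangle$, but the required isomorphism sometimes must not. The residual discrepancy in $b,c$ is then killed by $x_3\mapsto x_3-\tfrac12(b-b')x_1-\tfrac12(c-c')x_2$, which requires the prior parity matching that your sketch does not establish. Finally, the appeal to $M_3(a,b,0)\sim M_3(b,a,0)$ for the ``$c,c'$ even'' case is inadmissible: that biholomorphism requires $c=0$, which forces $p_1=0$ and hence $\bbq$-triviality, contradicting the standing hypothesis.
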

\begin{proof} If $M_3(a,b,c)$ and $M_3(a',b',c')$ are diffeomorphic, there is
an isomorphism $\psi$ of their integral cohomology rings intertwining their
Pontrjagin and Stieffel--Whitney classes. Since $x_1x_2$ and $x_1'x_2'$ are
primitive, this implies $c(2b-ac)=\pm c'(2b'-a'c')$. Since the two Bott
towers are not $\bbq$-trivial, $\psi$ induces a bijection between
$\{\pm\beta_1,\pm\beta_2\}$ and $\{\pm\beta_1',\pm\beta_2'\}$ and hence
$\beta_1\beta_2$ is mapped to $\pm\beta_1'\beta_2'$. Thus $(a,a')$ have the same
parity, and the last two conditions follow by considering whether $w_2$
vanishes or not.

Conversely, given the assumptions on $a,b,c$ and $a',b',c'$, it suffices
by~\cite{ChMaSu10} to show that $M_3(a,b,c)$ and $M_3(a',b',c')$ have
isomorphic integral cohomology rings. Replacing $x_1$ by $-x_1$, we see that
$M_3(a,b,c)$ and $M(-a,-b,c)$ have isomorphic cohomology, so we may assume
that $c(2b-ac)=c'(2b'-a'c')$. Replacing $x_2$ by $x_2+\lambda x_1$ for
$\lambda\in\bbz$, we see that $M_3(a,b,c)$ and $M_3(a+2\lambda, b+ c\lambda,c)$
have isomorphic cohomology, so we may assume $a=a'$. Thus
$2bc-2b'c'=a(c+c')(c'-c)$. If $a$ is odd then $c$ and $c'$ have the same
parity and hence $bc\equiv b'c'$ mod $2$, so that $b$ and $b'$ have the same
parity (by assumption if $c$ and $c'$ are even). If $a=2\tilde a$ then
$bc-b'c'=\tilde a(c+c')(c'-c)$ hence $(1+b)(1+c)-(1+b')(1+c') +(b'-b) =
(\tilde a(c+c')-1) (c'-c)$. Since the first two terms have the same parity,
$(b,b')$ have the same parity if and only if $(c,c')$ do. In the opposite
parity case, $\tilde a$ must be even, and it follows that $(b,c')$ and
$(c,b')$ have the same parity. Now replacing $x_1$ by $x_2+(a/2) x_1$ and
$x_2$ by $(1-a^2/4) x_1 - (a/2) x_2$, we see that $M_3(a,b,c)$ and
$M_3(a,c+(a/2) b -(a^2/4) c, b-(a/2) c)$ have isomorphic cohomology.
Hence we may assume that $(b,b')$ have the same parity and $(c,c')$
have the same parity (as in the case $a$ odd).

Finally, replacing $x_3$ by $x_3 - \frac12(b-b') x_1 - \frac12(c-c') x_2$,
we conclude that $M_3(a,b,c)$ and $M_3(a',b',c')$ have isomorphic integral
cohomology.
\end{proof}

The cotwist $\leq 1$ case, when $M_3$ is a $\bbc\bbp^1\times\bbc\bbp^1$ bundle
over $\bbc\bbp^1$, is $c=0$, and the ($\bbq$-trivial) cohomology ring reduces
to
\[
H^*(M_3,\bbz)=\bbz[x_1,x_2,x_3]/\bigl(x_1^2, x_2(ax_1+x_2), x_3(bx_1+x_3)\bigr),
\]
whereas the twist $\leq 1$ case, when $M_3$ is a $\bbc\bbp^1$ bundle over the
product $\bbc\bbp^1\times \bbc\bbp^1$, is $a=0$, and the cohomology ring
reduces to
\begin{equation}\label{B3cohring2}
H^*(M_3(0,b,c),\bbz)
=\bbz[x_1,x_2,x_3]/\bigl(x_1^2, x_2^2, x_3(bx_1+cx_2+x_3)\bigr).
\end{equation}
In the twist $\leq 1$ case, the parameters $b,c$ are the bidegree of the line
bundle $\calo(b,c)$ over $\bbc\bbp^1\times \bbc\bbp^1$, so $M_3(0,b,c)$ can be
realized as the projectivization $\bbp(\BOne\oplus \calo(b,c))$, which fits
into the general admissible construction of \cite{ACGT08}. This case includes
the K\"ahler--Einstein examples of Koiso and Sakane~\cite{KoSa86} with
$(b,c)=(1,-1)$ as well as other extremal and CSC metrics in \cite{Hwa94,Gua95}
as briefly discussed above.

More precisely, $p_1(M_3)$ is the multiple $p=2bc$ of the primitive integral
class $x_1x_2=\frac{1}{2}c_1(\calo(1,1))^2$ (pulled back to $M_3$) and
$w_2(M_3)\equiv bx_1+cx_2$ mod $2$. Hence the number of Bott manifolds with
twist $\leq 1$ in a fixed diffeomorphism type is the number of factorizations
$bc$ of $p$ with fixed parity of $(1+b)(1+c)$ (which is odd if and only if
$b,c$ are both even).  This may be determined easily from the prime
decomposition of $p$.
\begin{example}\label{type2diffeo}
For example consider $bc=\pm 24$. Proposition \ref{stage3diff} gives two
distinct diffeomorphism types with diffeomorphisms 
$M_3(0,24,1)\cong M_3(0,24,-1)\cong M_3(0,8,3)\cong M_3(0,8,-3)$ and
$M_3(0,12,2)\cong M_3(0,12,-2)\cong M_3(0,6,4)\cong M_3(0,6,4)$. The
first set has $w_2\neq 0$ while the second has vanishing $w_2$, so the two
sets are distinct even as homotopy types.  It is interesting to note that when
$bc$ is negative, we have, as mentioned previously, CSC K\"ahler metrics
\cite{Hwa94}.
\end{example}

\section{The complex viewpoint}\label{complex}

\subsection{The automorphism group}\label{aut-group}

The isotropy subgroup $\Aut(M_n(A))$ of $\BT_1$ of the Bott tower $M_n(A)\in
\BT_0$ is by definition the automorphism group $\Aut(M_n)$ of the underlying
Bott manifold $M_n$. We let $\Aut_0(M_n(A))$ denote its identity component.

There are many Bott towers $M_n(A)$ (e.g., a Hirzebruch surface $\calh_a$ with
$a\neq 0$) such that $\Aut_0(M_n(A))$ is not reductive. So by the
Matushima--Lichnerowicz criterion~\cite{Lic58}, many Bott manifolds do not
admit K\"ahler metrics of constant scalar curvature. As pointed out by Mabuchi
\cite{Mab87}, it follows from the work of Demazure \cite{Dem70} that the
reductivity of $\Aut_0(M_\grS)$ for a toric complex manifold $M_\grS$ is
equivalent to a balancing condition on a certain set $R(\grS)$ of \emph{roots}
associated to its fan $\grS$. A (Demazure) root of $M_\grS$ is an element
$\chi\in \gt^*$ which is dual to some normal $u_\grr\in \gt$ (for
$\grr\in\grS_1$) in the sense that $\chi(u_\grr)=1$ and $\chi(u_{\grr'})\leq
0$ for all $\grr'\in \grS_1\setminus\{\grr\}$. We then have the following
result (see also the Demazure Structure Theorem in \cite[p.~140]{Oda88}).

\begin{proposition}[Demazure]\label{Demprop}
Let $M_\grS$ be a smooth complete toric complex manifold with fan $\grS$. 
\begin{enumerate}
\item The set $R(\grS)\subset \gt^*$ of roots of $M_\grS$ is the root system
  of the algebraic group $\Aut_0(M_\grS)$ with respect to the maximal torus
  $\bbt^c$\textup;
\item $\Aut_0(M_\grS)$ is reductive if and only if and only if $R(\grS)=-R(\grS)$. 
\end{enumerate}
\end{proposition}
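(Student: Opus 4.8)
The plan is to identify the Lie algebra of $\Aut_0(M_\grS)$ with the space $H^0(M_\grS,\mathcal{T}_{M_\grS})$ of global holomorphic vector fields, compute its weight decomposition under the maximal torus $\bbt^c$ to obtain (1), and then deduce (2) from the structure theory of linear algebraic groups. As recalled above for Bott manifolds (and valid for any complete smooth toric variety), $\Aut(M_\grS)$ is a linear algebraic group with $\mathrm{Lie}\,\Aut(M_\grS)=H^0(M_\grS,\mathcal{T}_{M_\grS})$, and $\bbt^c$ is a maximal torus in it: since $\bbt^c$ already acts with a dense orbit and is centralized by any torus containing it, such a torus preserves that orbit, where $\Aut_0$ restricts to the translation torus $\bbt^c$. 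Under the adjoint action of $\bbt^c$, the space $\mathfrak{g}:=H^0(M_\grS,\mathcal{T}_{M_\grS})$ becomes a $\bbt^c$-weight module whose zero-weight space consists of the $\bbt^c$-invariant vector fields, i.e.\ the translation-invariant vector fields on the dense orbit, which is exactly $\gt\otimes_\bbz\bbc$.

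For the nonzero weights I would compute $H^0(M_\grS,\mathcal{T}_{M_\grS})$ from the generalized Euler sequence $0\to\mathcal{O}_{M_\grS}^{\oplus(|\cals|-n)}\to\bigoplus_{\grr\in\cals}\mathcal{O}_{M_\grS}(D_\grr)\to\mathcal{T}_{M_\grS}\to0$ together with the lattice-point description of $H^0(\mathcal{O}_{M_\grS}(D_\grr))$ (equivalently, by patching over the affine charts $U_\grs$ of the fan): a nonzero weight $\chi\in\gt^*$ occurs precisely when the monomial vector field $\bigl(\prod_{\grr'\neq\grr}z_{\grr'}^{-\chi(u_{\grr'})}\bigr)\partial_{z_\grr}$ on $\bbc_\cals$ is regular on the Cox-construction open set and descends to $M_\grS$, which forces, and is forced by, the existence of a necessarily unique ray $\grr$ with $\chi(u_\grr)=1$ and $\chi(u_{\grr'})\le0$ for $\grr'\neq\grr$ --- that is, $\chi\in R(\grS)$ --- in which case the weight space is one-dimensional, spanned by that vector field. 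Thus $\mathfrak{g}=\gt\otimes_\bbz\bbc\oplus\bigoplus_{\chi\in R(\grS)}\mathfrak{g}_\chi$ with $\dim\mathfrak{g}_\chi=1$, so $R(\grS)$ is precisely the root system of $(\Aut_0(M_\grS),\bbt^c)$. It is worth making the corresponding root groups explicit, since they are needed in (2) and give the easy half of the computation: for $\chi\in R(\grS)$ with distinguished ray $\grr$, put $a_{\grr'}=-\chi(u_{\grr'})\ge0$ for $\grr'\neq\grr$; then $z_\grr\mapsto z_\grr+\lambda\prod_{\grr'\neq\grr}z_{\grr'}^{a_{\grr'}}$ (and $z_{\grr'}\mapsto z_{\grr'}$ otherwise) is a unipotent automorphism of $\bbc_\cals$ preserving the Cox open set, and since $\bfu^*(\chi)=e_\grr^*-\sum_{\grr'\neq\grr}a_{\grr'}e_{\grr'}^*$ lies in the image of $\bfu^*$ it is exactly $G^c$-equivariant, hence descends to a one-parameter unipotent subgroup $U_\chi\cong\bbg_a$ of $\Aut_0(M_\grS)$ normalized by $\bbt^c$ with weight $\chi$; by Demazure's structure theorem $\Aut_0(M_\grS)$ is then generated by $\bbt^c$ and the $U_\chi$, $\chi\in R(\grS)$.

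For (2) I would invoke general structure theory: for a connected linear algebraic group $G$ over $\bbc$ with maximal torus $T$ and set of roots $\Phi=\Phi(G,T)$ (the nonzero $T$-weights on $\mathrm{Lie}\,G$), the unipotent radical $R_u(G)$ is connected, normal, and meets $T$ trivially, so $\mathrm{Lie}\,R_u(G)=\bigoplus_{\alpha\in\Psi}\mathfrak{g}_\alpha$ for some $T$-stable $\Psi\subseteq\Phi$; being an ideal, if $\alpha\in\Psi$ and also $-\alpha\in\Phi$, then $[\mathfrak{g}_\alpha,\mathfrak{g}_{-\alpha}]\subseteq\mathrm{Lie}\,R_u(G)$ contains a nonzero element of $\mathrm{Lie}\,T$, contradicting $R_u(G)\cap T=\{1\}$. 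Hence $\Phi=-\Phi$ forces $\Psi=\varnothing$, so $R_u(G)$ is trivial and $G$ is reductive; the converse (reductive groups have symmetric root systems) is standard. Applying this with $G=\Aut_0(M_\grS)$, $T=\bbt^c$, and $\Phi=R(\grS)$ by (1) gives exactly the statement.

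The main obstacle is the weight-space computation in (1): proving Demazure's assertion that $H^0(M_\grS,\mathcal{T}_{M_\grS})$ carries no weights beyond $0$ (with multiplicity $n$) and the roots $\chi\in R(\grS)$ (each with multiplicity one). The explicit automorphisms $U_\chi$ give the inclusion $\gt\otimes_\bbz\bbc\oplus\bigoplus_{R(\grS)}\mathfrak{g}_\chi\subseteq\mathfrak{g}$ for free; the content is the reverse inclusion, which requires the cohomological input --- the Euler sequence together with the lattice-point description of sections of $\mathcal{O}_{M_\grS}(D_\grr)$, or a direct patching argument over the charts $U_\grs$ --- plus the short observation that a Demazure root has a unique distinguished ray. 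The remaining ingredients (maximality of $\bbt^c$, the explicit unipotent one-parameter subgroups, and the group-theoretic deduction for (2)) are routine.
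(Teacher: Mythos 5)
The paper does not prove this proposition: it is stated as a citation of Demazure~\cite{Dem70} and the Demazure Structure Theorem~\cite[p.~140]{Oda88}, so there is no in-text proof to compare against. Your sketch of part~(1) follows the standard route (Euler sequence / Cox-coordinate weight computation to identify the nonzero $\bbt^c$-weights of $H^0(M_\grS,\calt_{M_\grS})$ with the Demazure roots, each with a one-dimensional weight space), and you are upfront that the reverse inclusion is the substantive cohomological step; that is a fair summary of what Demazure actually proves.

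Part~(2), however, contains a genuine gap. You invoke, as general structure theory for a connected linear algebraic group $G$ with maximal torus $T$, the implication that $\Phi(G,T)=-\Phi(G,T)$ forces the unipotent radical to be trivial, the key step being that $[\mathfrak g_\alpha,\mathfrak g_{-\alpha}]$ ``contains a nonzero element of $\mathrm{Lie}\,T$.'' This is false in general: take $G=(\bbg_a\times\bbg_a)\rtimes\bbg_m$ with $\bbg_m$ acting with weights $1$ and $-1$ on the two $\bbg_a$ factors. Then $\bbg_m$ is a maximal torus, the nonzero weights are $\{\pm1\}$ each with one-dimensional weight space, $[\mathfrak g_1,\mathfrak g_{-1}]=0$, and $G$ has unipotent radical $\bbg_a^2\neq 1$. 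So the symmetry of the root set does \emph{not} imply reductivity for arbitrary linear algebraic groups. What rescues the argument in the toric case is exactly the explicit form of the root vectors you have already written down: if $\chi\in R(\grS)$ has distinguished ray $\grr_1$ and $-\chi\in R(\grS)$ has distinguished ray $\grr_2$, then necessarily $\chi(u_\rho)=0$ for $\rho\neq\grr_1,\grr_2$, so the corresponding vector fields on $\bbc_\cals$ are $X_\chi=z_{\grr_2}\partial_{z_{\grr_1}}$ and $X_{-\chi}=z_{\grr_1}\partial_{z_{\grr_2}}$, whose bracket $z_{\grr_2}\partial_{z_{\grr_2}}-z_{\grr_1}\partial_{z_{\grr_1}}$ descends to the element $u_{\grr_2}-u_{\grr_1}\neq 0$ of $\gt\otimes_\bbz\bbc$. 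With that one-line computation in place of the (false) general claim, your contradiction with $\mathrm{Lie}\,R_u(G)\cap\gt\otimes\bbc=0$ goes through; alternatively one can appeal directly to the full Demazure structure theorem, which identifies $R_u(\Aut_0(M_\grS))$ as generated precisely by the $U_\chi$ with $-\chi\notin R(\grS)$.
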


To apply this proposition to $M_n(A)$, let $\eps_1,\ldots, \eps_n$ be the
basis of $\gt^*$ dual to $v_1,\ldots, v_n$.
\begin{lemma}\label{ichar} We have $\eps_i\in R(M_n(A))$ if and only if all
entries of the $i$th row of $A$ are non-negative. In particular, $\eps_1 \in
R(M_n(A))$. Similarly, $-\eps_i\in R(M_n(A))$ if and only if all entries of
the $i$th row of $A$ \textup(except $A^i_i=1$\textup) are non-positive, and
$-\eps_1\in R(M_n(A))$.
\end{lemma}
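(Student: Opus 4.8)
The plan is to apply the Demazure description of the roots (Proposition~\ref{Demprop}) directly, using the explicit normals of the fan of $M_n(A)$ recorded above. Recall that these normals are $v_1,\dots,v_n$, the $\bbz$-basis of $\grL$ dual to $\eps_1,\dots,\eps_n$, together with $u_j=-\sum_{k=1}^n A^j_k v_k$ for $j\in\{1,\dots,n\}$; since $A$ is lower triangular unipotent this reads $u_j=-v_j-\sum_{k>j}A^j_k v_k$. So I would first record the only pairings needed, namely $\eps_i(v_j)=\delta_{ij}$ and $\eps_i(u_j)=-A^j_i$, keeping in mind that $A^j_i=0$ whenever $j>i$ and $A^i_i=1$.

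Examining $\eps_i$: it takes the value $1$ on $v_i$ and $0$ on every other $v_j$, so since a root is strictly positive on exactly one normal, if $\eps_i$ is a root then $v_i$ must be the normal to which it is dual. Hence $\eps_i\in R(M_n(A))$ if and only if $\eps_i(u_j)=-A^j_i\le 0$ for all $j\in\{1,\dots,n\}$, i.e.\ $A^j_i\ge 0$ for all $j$. Because $A^j_i=0$ for $j>i$ and $A^i_i=1$, this is precisely the statement that the below-diagonal entries in the $i$th row of $A$ are non-negative, equivalently that every entry of the $i$th row is non-negative.

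The argument for $-\eps_i$ is the mirror image: $-\eps_i$ takes the value $-1$ on $v_i$, $0$ on the other $v_j$, and $A^j_i$ on $u_j$, so in particular $-\eps_i(u_i)=A^i_i=1$. Thus if $-\eps_i$ is a root it must be dual to $u_i$, and $-\eps_i\in R(M_n(A))$ if and only if $A^j_i\le 0$ for all $j\ne i$ (the remaining constraint $-\eps_i(v_j)\le 0$ being automatic); since $A^j_i=0$ for $j>i$, this says exactly that the below-diagonal entries of the $i$th row are non-positive. Both special cases $\pm\eps_1\in R(M_n(A))$ then follow at once: the first row of $A$ is $(1,0,\dots,0)$ and has no below-diagonal entries, so both sign conditions hold vacuously.

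The computation is routine; the one point deserving attention — and the closest thing to an obstacle — is the observation that a root is strictly positive on exactly one normal. That is what pins down which normal $\pm\eps_i$ must be dual to, and it also rules out degenerate configurations: for instance $A^j_i=-1$ would make $\eps_i$ positive on both $v_i$ and $u_j$, so $\eps_i$ would fail to be a root, consistently with the $i$th row then containing a negative entry.
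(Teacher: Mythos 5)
Your proof is correct and follows essentially the same route as the paper's: compute $\eps_i(v_j)=\delta_{ij}$ and $\eps_i(u_j)=-A^j_i$ from the normals of the fan, then read off the Demazure root condition. Your extra remark that a root takes the value $1$ on a unique normal (so $\eps_i$ must be dual to $v_i$ and $-\eps_i$ to $u_i$) is left implicit in the paper but is exactly the point being used.
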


\begin{proof}
By definition $\eps_i(v_j)=\grd_{ij}$, and
\[
\eps_i(u_j)=-\eps_i\biggl(\,\sum_{k=j+1}^nA^j_iv_k\biggr)=-A^j_i
\]
which is $\leq 0$ for all $j$ if and only if $A^j_i\geq 0$ for all $j$.
Similarly $-\eps_i(u_i)=A^i_i=1$ and $-\eps_i$ is $\leq 0$ on all other
normals if $A^j_i\leq 0$ for all $j\neq i$.
\end{proof}

In particular, $\eps_2$ is a root if and only if $A^1_2\geq 0$ and $-\eps_2$
is a root if and only if $A^1_2\leq 0$.
\begin{corollary}\label{A12}
If for any row all the entries below the diagonal have the same sign and are
not all zero, then $\Aut_0(M_n(A))$ is not reductive. In particular, if
$A^1_2\neq 0$ then $\Aut_0(M_n(A))$ is not reductive.
\end{corollary}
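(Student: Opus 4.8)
The plan is to combine the root-theoretic reductivity criterion of Proposition~\ref{Demprop}(2) with the explicit description of the coordinate roots $\pm\eps_i$ supplied by Lemma~\ref{ichar}: since $\Aut_0(M_n(A))$ is reductive if and only if $R(M_n(A))=-R(M_n(A))$, it suffices to produce a single root $\chi$ for which $-\chi\notin R(M_n(A))$.

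Suppose the hypothesis holds for the $i$th row, so that the subdiagonal entries $A^1_i,\ldots,A^{i-1}_i$ (the only potentially nonzero off-diagonal entries of row $i$, since $A$ is lower triangular) all have the same sign and are not all zero. Consider first the case where these entries are all $\geq 0$ with at least one strictly positive. Then every entry of the $i$th row of $A$ is non-negative ($A^i_i=1$ and $A^k_i=0$ for $k>i$), so Lemma~\ref{ichar} gives $\eps_i\in R(M_n(A))$. However, $-\eps_i\in R(M_n(A))$ would require every off-diagonal entry of the $i$th row to be non-positive, contradicting the presence of a strictly positive $A^k_i$; hence $-\eps_i\notin R(M_n(A))$. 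Therefore $R(M_n(A))\neq -R(M_n(A))$ and $\Aut_0(M_n(A))$ is not reductive by Proposition~\ref{Demprop}(2). The case in which the subdiagonal entries of row $i$ are all $\leq 0$ with at least one strictly negative is identical after interchanging the roles of $\eps_i$ and $-\eps_i$. Note that the hypothesis ``not all zero'' is exactly what prevents both $\eps_i$ and $-\eps_i$ from being roots simultaneously.

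For the ``in particular'' assertion, if $A^1_2\neq 0$ then the second row has a single nonzero subdiagonal entry $A^1_2$, which trivially meets the hypothesis, so the claim follows from the first part; equivalently, this is the observation recorded just before the corollary that $\eps_2\in R(M_n(A))$ iff $A^1_2\geq 0$ and $-\eps_2\in R(M_n(A))$ iff $A^1_2\leq 0$, and these cannot both hold when $A^1_2\neq 0$. I do not expect a genuine obstacle here — all of the substance already lives in Lemma~\ref{ichar} and Proposition~\ref{Demprop}; the only point requiring care is matching the index conventions of $A^j_i$ (which index records the row) so that Lemma~\ref{ichar} is applied to the intended row, and reading ``below the diagonal'' correctly for the lower-triangular shape of $A$.
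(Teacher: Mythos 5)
Your proof is correct and follows exactly the route the paper intends: it combines Proposition~\ref{Demprop}(2) with Lemma~\ref{ichar} to exhibit a root $\pm\eps_i$ whose negative is not a root, and you handle both sign cases and the index conventions correctly. The paper leaves this deduction implicit, so there is nothing further to compare.
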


This has a strong implication for Bott towers with topological twist zero.
\begin{proposition}\label{0tred}
Let $M_n(A)$ be a Bott tower with topological twist $0$ and $\Aut_0(M_n(A))$
reductive. Then $A=\BOne_n$ and the Bott manifold is the product
$(\bbc\bbp^1)^n$.
\end{proposition}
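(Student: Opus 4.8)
The plan is to combine Corollary~\ref{A12} with the structure theorem for topological twist zero Bott manifolds (Theorem~\ref{mapathm}) and an inductive argument on the stage $n$. By Theorem~\ref{mapathm}, since $M_n(A)$ has topological twist $0$, we may write $A = 2C_n\cdots C_1 - \BOne_n$ where each $C_k$ is lower triangular unipotent with at most one nonzero below-diagonal entry $C^{i_k}_k$, lying in the $k$th row (and $C_1 = \BOne_n$). However, reductivity of $\Aut_0$ is a biholomorphism invariant of the Bott manifold, not of the particular matrix $A$, so one should be careful: the hypothesis gives us one matrix $A$ representing the manifold with $\Aut_0(M_n(A))$ reductive, and we want to conclude $M_n \cong (\bbc\bbp^1)^n$, equivalently that $A$ can be taken to be $\BOne_n$.

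First I would set up the induction on $n$, the base case $n=1$ being trivial since $M_1 = \bbc\bbp^1$. For the inductive step, consider the top stage: $M_n(A) \to M_{n-1}(A)$ is the $\bbc\bbp^1$-bundle $\bbp(\BOne \oplus \call_n)$, and $M_{n-1}(A)$ is the stage $n-1$ Bott manifold obtained from the principal $(n-1)\times(n-1)$ submatrix $A'$ of $A$. A key point is that $\Aut_0(M_{n-1}(A'))$ must also be reductive: indeed, the roots $R(M_{n-1}(A'))$ are precisely those roots of $R(M_n(A))$ that are supported away from the top stage, i.e., lie in the span of $\eps_1,\ldots,\eps_{n-1}$ and annihilate nothing involving $v_n$ — this follows from Lemma~\ref{ichar} since the $i$th row condition for $A'$ (for $i \le n-1$) involves only entries $A^j_i$ with $j \le n-1$, which are the same as for $A$. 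So the root system of $M_{n-1}(A')$ embeds compatibly and symmetry $R = -R$ for $M_n(A)$ forces symmetry for $M_{n-1}(A')$. By the inductive hypothesis, $M_{n-1}(A') \cong (\bbc\bbp^1)^{n-1}$, so using the equivalences in Section~\ref{equivsect} we may assume $A' = \BOne_{n-1}$, i.e., all off-diagonal entries of $A$ in the first $n-1$ rows vanish.

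Next, having reduced to $A$ with only its last row $(A^1_n, \ldots, A^{n-1}_n, 1)$ possibly nontrivial, the topological twist zero hypothesis together with Proposition~\ref{top-to-hol} (or directly Theorem~\ref{mapathm}) forces all these entries to be even, and moreover $\gra_n^2 = 0$. But now $M_n(A) = M_{n}(\bfk)$ with $\bfk = (A^1_n,\ldots,A^{n-1}_n)$ all even, and by Theorem~\ref{ChSuthm}/the discussion after it, topological triviality of this single stage requires $\bfk$ to have at most one nonzero component. Then I invoke Corollary~\ref{A12}: the last row of $A$ has all entries below the diagonal of the same sign (trivially, since at most one is nonzero) — but if exactly one is nonzero this would make $\Aut_0(M_n(A))$ non-reductive, a contradiction. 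Hence $\bfk = 0$ and $A = \BOne_n$, completing the induction.

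The main obstacle I anticipate is the bookkeeping around biholomorphism invariance versus matrix representative: one must argue that after applying the inductive hypothesis to the base, the equivalences of Section~\ref{equivsect} used to normalize $A'$ to $\BOne_{n-1}$ can be realized at the level of the full matrix $A$ (extending a permutation/fiber-inversion of the first $n-1$ factors to all of $\cals$) without disturbing reductivity — which is automatic since reductivity is a biholomorphism invariant — and that the resulting new matrix still has topological twist zero with only the last row nontrivial. A cleaner alternative, avoiding some of this, is to work directly with the Masuda--Panov normal form $A = 2C_n\cdots C_1 - \BOne_n$ and show that reductivity forces each $C_k = \BOne_n$: if some $C_k$ has a nonzero entry $C^{i_k}_k$, one tracks which $\eps_i$ or $-\eps_i$ fails to be a root via Lemma~\ref{ichar}, using that the product structure of the $C_k$'s makes the sign pattern of a row of $A$ controlled. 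I would likely present the inductive argument as the main line and remark on the normal-form alternative.
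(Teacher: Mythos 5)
Your inductive argument is essentially sound, but it follows a genuinely different route from the paper's: the published proof is direct and non-inductive. It takes $A = 2C_n\cdots C_1 - \BOne_n$ (Theorem~\ref{mapathm}), lets $k$ be minimal with $C_k\neq \BOne_n$, and observes that rows $1,\ldots,k$ of $C_n\cdots C_1$ equal those of $C_k$, so row $k$ of $A$ has exactly one nonzero below-diagonal entry $2C^{i_k}_k$. Corollary~\ref{A12} then immediately forbids reductivity. This is precisely the ``cleaner alternative'' you gesture at in your last paragraph; the paper carries it out rather than the induction. What the induction buys is a reduction to the single top stage $M_n(\bfk)$ over $(\bbc\bbp^1)^{n-1}$, at the cost of two extra verifications you gloss over: (i) that topological twist $0$ descends to $M_{n-1}(A')$, which does hold because restriction to the top-left $(n-1)\times(n-1)$ block is compatible with products of lower-triangular matrices (so $A' = 2C'_{n-1}\cdots C'_1 - \BOne_{n-1}$), but you should say this; and (ii) that $\Aut_0(M_{n-1}(A'))$ is reductive. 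For (ii), your appeal to Lemma~\ref{ichar} only covers the roots $\pm\eps_i$, which is enough to run Corollary~\ref{A12} but not, as stated, to verify the full hypothesis $R=-R$ needed to invoke the inductive hypothesis (which assumes reductivity, not merely symmetry of the $\pm\eps_i$ roots). The stronger fact is true -- any root $\chi\in R(M_n(A))$ with $\chi(v_n)=0$ restricts to a root of $M_{n-1}(A')$ and conversely, since $u_n=-v_n$ and $\chi(u_j)=\chi(u'_j)$ for $j<n$ -- but you need to say so explicitly, or else reformulate the induction around the weaker $\pm\eps_i$ symmetry. Finally, your anticipated ``main obstacle'' about normalizing $A'$ to $\BOne_{n-1}$ via the equivalences of Section~\ref{equivsect} is a phantom: the inductive hypothesis is the proposition itself for $n-1$, whose conclusion is literally $A'=\BOne_{n-1}$, so no equivalences are needed and nothing has to be ``realized at the level of the full matrix.''
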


\begin{proof}
Suppose to the contrary that $\Aut_0(M_n(A))$ is reductive and $A\neq
\BOne_n$.  By Theorem~\ref{mapathm}, for such a Bott tower the $A$ matrix
takes the form $A=2C_n\cdots C_1-\BOne_n$ where $C_k$ is a lower triangular
unipotent matrix with at most one non-zero element $C^{i_k}_k$ below the
diagonal and that lies in the $k$th row.  Since $A\neq \BOne_n$ there exists
$k\in\{2,\ldots,n\}$ such that $C_k\neq \BOne_n$ but $C_i=\BOne_n$ for
$i<k$. So the $k$th row of $C_k$ has the all $0$ entries below the diagonal
except in column $j<k$ whose entry is $C^j_k\neq 0$. But then from the form of
$A$ we see that the first row of $A$ with non-zero entries below the diagonal
has all zeroes except for $A^j_k=2C^j_k\neq 0$. But this contradicts Corollary
\ref{A12}.
\end{proof}

\begin{corollary}\label{ttred}
Let $M_n(A)$ be a Bott tower with twist $t$ and
\begin{equation}\label{Attmatrix}
A=\begin{pmatrix}
&\\ \tilde A & \BZero_t \\ &\\
\begin{matrix}
A^1_{n-t+1} &\cdots & A^{n-t}_{n-t+1}\\
\vdots &\ddots & \vdots \\
A^1_n   &\cdots & A^{n-t}_n
\end{matrix} &
\begin{matrix}
1 & \cdots & 0 \\
\vdots &\ddots & \vdots \\
A^{n-t+1}_n & \cdots & 1
\end{matrix}\end{pmatrix}, \qquad A^i_j\in\bbz,
\end{equation}
where $\tilde{A}\neq \BOne_{n-t}$ has topological twist $0$. Then
$\Aut_0(M_n(A))$ is not reductive.
\end{corollary}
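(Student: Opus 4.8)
The plan is to reduce the statement to Corollary~\ref{A12} by producing a single row of $A$ whose entries below the diagonal are all of the same sign and not all zero. The block form~\eqref{Attmatrix} is exactly what makes this possible: the first $n-t$ rows of $A$ coincide with the rows of $\tilde A$, each padded on the right by $t$ extra zeros, so any row of $\tilde A-\BOne_{n-t}$ with the required property is automatically such a row of $A-\BOne_n$. Thus it suffices to locate a suitable row inside $\tilde A$.

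First I would apply the Masuda--Panov structure theorem (Theorem~\ref{mapathm}) to the $(n-t)\times(n-t)$ matrix $\tilde A$, which has topological twist $0$ by hypothesis: thus $\tilde A=2C_{n-t}\cdots C_1-\BOne_{n-t}$, where each $C_k$ is lower triangular unipotent with at most one nonzero subdiagonal entry $C^{i_k}_k$, lying in its $k$th row. Since $\tilde A\neq\BOne_{n-t}$, not every $C_k$ is the identity; arguing exactly as in the proof of Proposition~\ref{0tred}, if $k$ is the smallest index with $C_k$ not the identity, then $C_1,\dots,C_{k-1}$ are all identity matrices, the later factors $C_{k+1},\dots,C_{n-t}$ do not disturb the $k$th row under left multiplication, and hence the first row of $\tilde A-\BOne_{n-t}$ carrying a nonzero subdiagonal entry is its $k$th row, which has a \emph{single} nonzero entry, namely $\tilde A^{j}_k=2C^{j}_k\neq 0$ for some column $j<k$. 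Padding with zeros as in~\eqref{Attmatrix}, the $k$th row of $A-\BOne_n$ then has exactly one nonzero entry below the diagonal, $A^{j}_k=2C^{j}_k$; in particular its subdiagonal entries share a common sign and are not all zero, so Corollary~\ref{A12} applies and yields that $\Aut_0(M_n(A))$ is not reductive.

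I do not expect a genuine obstacle here: the corollary is essentially a repackaging of Theorem~\ref{mapathm}, the row analysis already carried out for Proposition~\ref{0tred}, and Corollary~\ref{A12}. The only point requiring a little care is the bookkeeping (done already in the proof of Proposition~\ref{0tred}) that, in the Masuda--Panov normal form, the first nontrivial row of $\tilde A$ acquires only a single off-diagonal entry; this rests on the minimality of $k$ together with the fact that left multiplication by $C_j$ alters only the $j$th row of a lower triangular unipotent matrix.
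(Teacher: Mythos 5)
Your proof is correct and follows what is clearly the paper's intended route: the corollary is stated immediately after Proposition~\ref{0tred} precisely because the same row analysis applies verbatim to the top-left block, and your observation that padding with the $\BZero_t$ block does not disturb the relevant row supplies the only extra bookkeeping step needed before invoking Corollary~\ref{A12}.
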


\begin{example}\label{root3}
We can obtain complete results in the case of a stage $3$ Bott tower. Recall
that the set $\grS_1$ of rays or $1$-dimensional cones in $\grS$ is
\[
\grS_1=\{\bbr_{\geq 0}v_1,\bbr_{\geq 0}v_2,\bbr_{\geq 0}v_3,\bbr_{\geq 0}(-v_3),\bbr_{\geq 0}(-v_2-cv_3), \bbr_{\geq 0}(-v_1-av_2-bv_3)\}.
\]
Applying Lemma \ref{ichar} and Corollary \ref{A12}, the reductive cases are
as follows.
\begin{corollary}\label{3red}
The connected component $\Aut_0(M_3(a,b,c))$ of the automorphism group of a
stage $3$ Bott tower $M_3(a,b,c)$ is reductive if and only if $a=0$ and
$bc<0$, or $a=b=c=0$.
\end{corollary}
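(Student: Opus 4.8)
The plan is to recast the problem entirely in terms of Demazure roots and then read off the answer from the fan of $M_3(a,b,c)$. By Proposition~\ref{Demprop}, $\Aut_0(M_3(a,b,c))$ is reductive exactly when the root set $R(\grS)$ is symmetric, i.e.\ $R(\grS)=-R(\grS)$; here $\grS$ has the six normals $v_1,v_2,v_3$ and $u_1=-v_1-av_2-bv_3$, $u_2=-v_2-cv_3$, $u_3=-v_3$ recorded just above in Example~\ref{root3}. For necessity I would invoke Lemma~\ref{ichar} (equivalently Corollary~\ref{A12}): applied to the second row of $A$, whose sole subdiagonal entry is $a$, it shows $a\neq 0$ forces non-reductivity, so $a=0$; applied to the third row it shows that if $b,c$ are both $\geq 0$ but not both zero then $\eps_3\in R(\grS)$ while $-\eps_3\notin R(\grS)$, and symmetrically if they are both $\leq 0$, so reductivity forces $bc<0$ or $b=c=0$. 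This already gives the forward implication.

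For sufficiency I would split into two cases. If $a=b=c=0$ then $M_3(0,0,0)=(\bbc\bbp^1)^3$ by Proposition~\ref{c0}, whose automorphism group $\mathrm{PGL}(2,\bbc)^3$ is reductive; equivalently, its fan is stable under $-\mathrm{id}$ on $\gt$, so $R(\grS)=-R(\grS)$. If instead $a=0$ and $bc<0$, I would use the biholomorphism $M_3(0,b,c)\sim M_3(0,c,b)$ of Example~\ref{st3equiv} to reduce to $b>0>c$, and then enumerate $R(\grS)$ directly. Writing a character as $\chi=p\eps_1+q\eps_2+r\eps_3$ with $p,q,r\in\bbz$, its pairings against $v_1,v_2,v_3,u_1,u_2,u_3$ are $p$, $q$, $r$, $-p-br$, $-q-cr$, $-r$, and $\chi$ is a root precisely when exactly one of these equals $1$ and the rest are $\leq 0$. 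A short case split on $r=\chi(v_3)$ then finishes it: $|r|\geq 2$ makes $\chi(v_3)$ or $\chi(u_3)$ exceed $1$; $r=1$ would force (via $\chi(u_2)\leq 0$) $q\geq -c>0$, contradicting $q=\chi(v_2)\leq 0$; $r=-1$ would force (via $\chi(u_1)\leq 0$) $p\geq b>0$, contradicting $p=\chi(v_1)\leq 0$; and $r=0$ leaves only $\chi\in\{\pm\eps_1,\pm\eps_2\}$. Hence $R(\grS)=\{\pm\eps_1,\pm\eps_2\}$ is symmetric, so $\Aut_0(M_3(0,b,c))$ is reductive.

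The step I expect to be the main obstacle is the completeness of the root enumeration in the $bc<0$ case: Lemma~\ref{ichar} only governs roots of the shape $\pm\eps_i$, so I must make sure no further Demazure roots appear, and this is exactly what the case analysis on $r=\chi(v_3)$ secures --- the crucial point being that when $b$ and $c$ have opposite signs the two \emph{crossed} constraints $q\geq -cr$ and $p\geq -br$ become incompatible with $p,q\leq 0$ for $r=\pm1$. A secondary, routine point is to confirm that the normalisation $b\leftrightarrow c$ I use is genuinely induced by a biholomorphism (immediate from Example~\ref{st3equiv}), so that reductivity of $\Aut_0$ --- a biholomorphism invariant --- is preserved.
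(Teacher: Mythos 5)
Your proposal is correct and follows the same strategy as the paper, namely reading reductivity off the symmetry $R(\grS)=-R(\grS)$ of the Demazure root system via Proposition~\ref{Demprop}. The paper's one-line justification (``Applying Lemma~\ref{ichar} and Corollary~\ref{A12}'') really only controls roots of the form $\pm\eps_i$, which handles necessity, but for a general Bott tower there can be Demazure roots that are not of this form (for $\calh_a$ with $a>0$ the roots dual to $v_2$ are $-k\eps_1+\eps_2$ for $0\leq k\leq a$), so sufficiency requires an actual enumeration; your case split on $r=\chi(v_3)$, using that $b>0>c$ makes the inequalities $p\geq b$ (from $\chi(u_1)\leq 0$ when $r=-1$) and $q\geq -c$ (from $\chi(u_2)\leq 0$ when $r=1$) clash with $p,q\leq 0$, supplies precisely the step the paper leaves implicit and shows $R(\grS)=\{\pm\eps_1,\pm\eps_2\}$.
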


Thus, we see that the Bott tower $M_3(a,b,c)$ can admit a CSC K\"ahler metric
only if $a=0$ and either $bc<0$ or $b=c=0$.

\end{example}

\subsection{Divisors, support functions and primitive collections}

If $M$ is a complex toric manifold then any Weil divisor is
Cartier;\footnote{This also holds for rational divisors on orbifolds, and some
  of the general facts below have extensions to the orbifold case.}
furthermore, any (integral, rational or real) divisor is linearly equivalent
to a $\bbt^c$-invariant divisor, which in turn is a linear combination
$\sum_{\grr\in\cals} s_\grr D_{u_\grr}$ (with each $s_\grr$ in $\bbz$, $\bbq$ or
$\bbr$ respectively) indexed by the set $\cals$ of rays in the fan as in
Section~\ref{s:toric}. Thus we may identify both the Picard group
$\mathrm{Pic}(M)$ and the Chow group $A_{n-1}(M)$ with the quotient of
$\bbz^\cals\cong\mathrm{Hom}_\bbz(\bbz_\cals,\bbz)$ by the inclusion
$\bfu^\top\colon\mathrm{Hom}_\bbz(\grL,\bbz)\to\mathrm{Hom}_\bbz(\bbz_\cals,\bbz)$
sending $\grl$ to the principal ($\bbz$-)divisor $\sum_{\grr\in\cals}
\grl(u_\grr) D_{u_\grr}$.  Similarly, we have an exact sequence,
\begin{equation}\label{eq:h2}
0 \to \gt^* \to \bbr_\cals^* \to H^2(M,\bbr)\to 0,
\end{equation}
where we use $\gt^*=\mathrm{Hom}_\bbr(\gt,\bbr)$ and
$H^2(M,\bbr)\cong\mathrm{Pic}(M)\otimes_\bbz \bbr$. This last isomorphism
identifies the K\"ahler cone $\calk(M)$ in $H^2(M,\bbr)$ with the ample cone
$\cala(M)$ in $\mathrm{Pic}(M)\otimes_\bbz \bbr\cong
A_{n-1}(M)\otimes_\bbz\bbr$. We note also that the Picard number $\grr(M_n)=n$
for any Bott manifold $M_n$.

Let $PL(\grS)$ be the set of continuous real valued functions on $\gt$
which are linear on each cone in $\grS$. Since $\grS$ is simplicial, the map
$PL(\grS)\to \bbr_\cals^*\cong \bbr_\cals$ sending $\psi$ to
$(\psi(u_\grr))_{\grr\in\cals}$ is a bijection. Hence any invariant divisor
$D$ has a unique \emph{support function} $\psi_D\in PL(\grS)$ such that
$D=\sum_{\grr\in\cals} \psi_D(u_\grr) D_{u_\grr}$, and $D$ is principal iff $\psi_D$
is linear. Hence $H^2(M,\bbr)\cong PL(\grS)/\gt^*$. A function $\psi\in
PL(\grS)$ is {\it convex}\footnote{Following \cite{CoRe09} the convention used
  here is opposite of the convention used in \cite{CoLiSc11}.} if for all
$x,y\in \gt$,
\[
\psi(x)+\psi(y)\geq \psi(x+y);
\]
then $\psi$ is \emph{strictly convex} if it is a different linear function on
each maximal cone $\grs\in\grS_n$. It is a classical result~\cite{Dan78} that
$[D]$ is ample if and only if $\psi_D$ is strictly convex. Then
\[
P_D:=\{\xi\in\gt^*\,|\, \xi\leq \psi_D\}
\]
is a convex polytope dual to $\grS$, i.e., its vertices correspond to maximal
cones in $\grS$ by assigning $\sigma\in\grS_n$ the linear form
$\psi_D\big|_\sigma\in\gt^*$; $P_D$ is often called a \emph{Delzant polytope}.

As observed in~\cite{AsGrMo93,Bat93,Cox97}, the following notion of
Batyrev~\cite{Bat91} (see also \cite{CoRe09,CoLiSc11}) leads to a more
explicit description of the K\"ahler cone $\calk(M)$: we say
$R\subseteq\cals$, or the corresponding collection $\{u_\grr\,|\,\grr\in R\}$
of normals, is a \emph{primitive collection} for a simplicial fan $\grS$, if
$R$ is a minimal element of $P(\cals)\setminus\Phi$, i.e.,
$\{u_\grr\,|\,\grr\in R\}$ does not span a cone of $\grS$, but every proper
subset does. Note that the primitive collections determine $\Phi$, and that
the open subset $\bbc_{\cals,\Phi}$ is the complement of the coordinate planes
(closures of $\bbc_{\cals,R}$) corresponding to primitive collections $R$.

\begin{theorem}[\cite{Bat91,CoRe09}] \label{batthm}
A function $\psi\in PL(\grS)$ is convex if and only if for any primitive
collection $R\subseteq\cals$,
\[
\sum_{\grr\in R}\psi(u_\grr)\geq \psi\Bigl(\,\sum_{\grr\in R} u_{\grr}\Bigr).
\]
It is strictly convex if this inequality is strict for all such $R$.
\end{theorem}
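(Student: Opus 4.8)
The plan is to establish the two implications separately: ``convex $\Rightarrow$ the primitive-collection inequalities'' is essentially formal, whereas the converse is the substantive half and is where I expect the real work to be.

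\smallskip
\noindent\textbf{Convexity implies the inequalities.} If $\psi$ is convex and $R=\{\grr_1,\dots,\grr_k\}$ is a primitive collection, iterating the defining inequality $\psi(x)+\psi(y)\ge\psi(x+y)$ gives
\[
\sum_{i=1}^{k}\psi(u_{\grr_i})\ \ge\ \psi(u_{\grr_1}+u_{\grr_2})+\sum_{i\ge 3}\psi(u_{\grr_i})\ \ge\ \cdots\ \ge\ \psi\Bigl(\textstyle\sum_i u_{\grr_i}\Bigr).
\]
For the strict statement, note first that a primitive collection has $|R|\ge 2$ since every singleton lies in $\Phi$. Fix $\grr_0\in R$; the proper subset $R\setminus\{\grr_0\}$ spans a cone $\grs_0\in\grS$, and $v:=\sum_{\grr\in R\setminus\{\grr_0\}}u_\grr$ lies in the relative interior of $\grs_0$, being a positive sum of all the rays of a simplicial cone. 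Then $u_{\grr_0}$ and $v$ lie in no common cone of $\grS$: a cone $\omega$ containing $v$ meets the relative interior of $\grs_0$ in the face $\omega\cap\grs_0$, hence contains all of $\grs_0$; so a cone containing both $u_{\grr_0}$ and $v$ would have every $u_\grr$ $(\grr\in R)$ among its rays, and $R$ would span a face of it, contradicting primitivity. Strict convexity thus gives $\psi(u_{\grr_0})+\psi(v)>\psi(u_{\grr_0}+v)=\psi\bigl(\sum_{\grr\in R}u_\grr\bigr)$, while ordinary convexity gives $\psi(v)\le\sum_{\grr\in R\setminus\{\grr_0\}}\psi(u_\grr)$; adding $\psi(u_{\grr_0})$ and combining produces the strict inequality.

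\smallskip
\noindent\textbf{The inequalities imply convexity.} I would invoke the standard characterisation that $\psi\in PL(\grS)$ on a complete fan is convex iff $\psi\le\ell_\grs$ on all of $\gt$ for every maximal cone $\grs$, where $\ell_\grs\in\gt^*$ agrees with $\psi$ on $\grs$ (and strictly convex iff these are strict off $\grs$). Since $\psi$ and $\ell_\grs$ are linear on each maximal cone and coincide on $\grs$, this reduces to the inequalities $\psi(u_\grr)\le\ell_\grs(u_\grr)$ over all rays $\grr$. Replacing $\psi$ by $\psi-\ell_\grs$ changes neither the convexity of $\psi$ nor the primitive-collection inequalities, because $\sum_{\grr\in R}(\psi-\ell_\grs)(u_\grr)-(\psi-\ell_\grs)\bigl(\sum_{\grr\in R}u_\grr\bigr)=\sum_{\grr\in R}\psi(u_\grr)-\psi\bigl(\sum_{\grr\in R}u_\grr\bigr)$ by linearity of $\ell_\grs$. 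So we may assume $\psi$ vanishes on a fixed maximal cone $\grs$ and must deduce $\psi(u_\grr)\le 0$ for every ray $\grr$. Equivalently it suffices to verify convexity of $\psi$ across each wall $\tau=\grs_+\cap\grs_-$, i.e.\ the linear inequality attached to the (unique up to scale) relation $\alpha u_{\grr_+}+\beta u_{\grr_-}=\sum_{\grr\in\tau(1)}c_\grr u_\grr$ with $\alpha,\beta>0$ among the $n+1$ vectors indexed by $\{\grr_+,\grr_-\}\cup\tau(1)$.

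\smallskip
\noindent\textbf{Where the difficulty lies.} Deducing these wall inequalities from the primitive-collection inequalities is the crux. The key combinatorial observation is that the configuration $R:=\{\grr_+,\grr_-\}\cup\{\grr\in\tau(1):c_\grr<0\}$ does \emph{not} span a cone of $\grS$: otherwise the wall relation would exhibit a point of the relative interior of that simplicial cone lying in $\tau$, forcing $\grr_+\in\tau(1)$, which is false. Hence $R$ contains a primitive collection, and a short argument rules out its being contained in $\grs_+(1)$ or $\grs_-(1)$, so it contains both $\grr_+$ and $\grr_-$. Feeding its primitive-collection inequality, together with those for the smaller configurations obtained by peeling off the cone occurring in its primitive relation and iterating, back through the linearity of $\ell_\grs$ recovers the wall inequality (and the strict hypotheses propagate at each stage). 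The main technical hurdle is organising this induction cleanly; an alternative is to appeal to toric Mori theory, in which the relevant cone of curves is generated by the classes of the primitive relations so that the inequalities of the theorem are precisely the nef (resp.\ ample) conditions — but the elementary route of \cite{Bat91,CoRe09} avoids that input.
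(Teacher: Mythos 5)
The paper does not prove this theorem; it is quoted directly from Batyrev~\cite{Bat91} (see also Cox--von Renesse~\cite{CoRe09}), which is why the label carries those citations and no proof follows. So there is no in-paper argument to compare against. Judging your attempt on its own terms: the \emph{forward} direction is essentially sound. Iterating two-term convexity gives the inequality, and for strictness your identification that $u_{\grr_0}$ and $v=\sum_{\grr\in R\setminus\{\grr_0\}}u_\grr$ share no cone is correct once one adds the fan axiom that intersections of cones are common faces (so a cone containing the ray $\grr_0$ has it as a one-dimensional face, hence among its rays); this, combined with $v$ lying in the relative interior of $\grs_0$, yields the contradiction with primitivity of $R$ exactly as you say.

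The \emph{converse} direction, however, is not actually proven. You reduce correctly to showing $\psi(u_\grr)\le\ell_\grs(u_\grr)$ for all rays $\grr$ and maximal cones $\grs$, or equivalently to verifying convexity across each wall, and you correctly observe that the set $R=\{\grr_+,\grr_-\}\cup\{\grr\in\tau(1):c_\grr<0\}$ cannot span a cone and hence contains a primitive collection through both $\grr_+$ and $\grr_-$. But the final sentence — peeling off the cone in the primitive relation and iterating, while tracking the strict inequalities — is a sketch of a plan, not an argument; you explicitly flag it as ``the main technical hurdle'' rather than resolving it. This is precisely the substantive half of the theorem: one needs to produce, from the primitive-collection inequalities, a telescoping chain of inequalities that terminates at the wall relation, and the combinatorics of which primitive collections appear and why the process terminates (this is the content of Batyrev's argument, and of the cone-of-curves reformulation you mention in passing) are what make the statement nontrivial. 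As written the proposal establishes the easy implication and leaves the hard one at the level of a heuristic.
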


Since $M_n(A)$ is a quotient of $(\bbc^2_*)^n$, we see that the primitive
collections in this case are the pairs $\{u_j,v_j\}$ for $j\in\{1,\ldots,n\}$.
Hence the cones of the fan are the convex hulls of any subset of normals which
does not contain any pair $\{u_j,v_j\}$. There are thus $2^n$ maximal cones,
each containing precisely one normal from each pair $\{u_j,v_j\}$,
$j\in\{1,\ldots,n\}$. Any such family of normals is a $\bbz$-basis of $\grL$.

\subsection{The ample cone and K\"ahler cone}\label{ampleKahlercone}

By applying Theorem~\ref{batthm} to a Bott tower $M_n(A)$, we obtain a
description of its ample cone $\cala(M_n(A))$ and hence its K\"ahler cone
$\calk(M_n(A))$. It will be of convenience later to consider polarized Bott
towers $(M_n(A),[D])$ where $D$ is an ample (integral) $\bbt^c$-invariant Weil
divisor. Since we are interested in the K\"ahler geometry of $M_n(A)$ we
describe such polarized Bott towers in terms of the corresponding (Poincar\'e
dual) K\"ahler class $c_1([D])$.

\begin{corollary}\label{Batcor}
A $\bbt^c$-invariant $\bbr$-divisor $D$ defines an ample
class $[D]$ on $M_n(A)$ if and only if
\begin{equation}\label{divineq}
\psi_D(u_j)+\psi_D(v_j)> \psi_D(u_j+v_j)=\psi_D\biggl(-\sum_{i=j+1}^nA^j_iv_i\biggr)
\end{equation}
for all $j\in\{1,\ldots,n\}$. 
\end{corollary}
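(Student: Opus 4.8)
The plan is to assemble three facts already in place in the text. First, by the classical result of Danilov \cite{Dan78} recalled above, an invariant $\bbr$-divisor $D$ defines an ample class if and only if its support function $\psi_D\in PL(\grS)$ is strictly convex. Second, Theorem~\ref{batthm} translates strict convexity of $\psi_D$ into the family of strict inequalities $\sum_{\grr\in R}\psi_D(u_\grr)>\psi_D\bigl(\sum_{\grr\in R}u_\grr\bigr)$, one for each primitive collection $R\subseteq\cals$ of the fan $\grS$. Third, as noted just after Theorem~\ref{batthm}, since $M_n(A)$ is the geometric quotient of $(\bbc^2_*)^n$, its primitive collections are exactly the $n$ opposite pairs $\{u_j,v_j\}$ for $j\in\{1,\ldots,n\}$.

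Putting these together, $[D]$ is ample if and only if $\psi_D(u_j)+\psi_D(v_j)>\psi_D(u_j+v_j)$ for every $j\in\{1,\ldots,n\}$, which is the strict inequality in \eqref{divineq}. It then remains only to evaluate the point $u_j+v_j$ in the $\bbz$-basis $v_1,\ldots,v_n$ of $\grL$. From $u_j=-\sum_{i=1}^n A^j_i v_i$ together with $A^j_j=1$ and $A^j_i=0$ for $i<j$ (lower triangular unipotency of $A$), one gets
\[
u_j+v_j=-\sum_{i=j+1}^n A^j_i v_i,
\]
which is the point appearing on the right of \eqref{divineq} (and which is $\BZero$ precisely when the $j$th column of $A-\BOne_n$ vanishes). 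Substituting this into the inequality above yields the corollary.

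This is a bookkeeping consequence of earlier results rather than a statement with a hard core: all the substantive input (Danilov's ampleness criterion, Batyrev's convexity theorem, and the explicit description of the fan of a Bott tower) has already been set up. The one place to take a little care is the translation between the two index conventions for the entries of $A$, so as to be sure that $u_j+v_j$ really is the combination of the $v_i$ with $i>j$ weighted by the below-diagonal entries in the $j$th column of $A$; this is exactly the check that turns Batyrev's abstract inequality into the concrete form \eqref{divineq}.
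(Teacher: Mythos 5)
Your proof is correct and follows exactly the route the paper intends: the paper states Corollary~\ref{Batcor} without any further argument because it is an immediate consequence of Danilov's ampleness criterion, Theorem~\ref{batthm}, and the preceding observation that the primitive collections of the fan of $M_n(A)$ are the opposite pairs $\{u_j,v_j\}$; your computation of $u_j+v_j=-\sum_{i=j+1}^n A^j_i v_i$ from $u_j=-\sum_{i=1}^n A^j_i v_i$ and the lower-triangular unipotency of $A$ is the right bookkeeping step.
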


If $\eps_1,\ldots,\eps_n$ is the dual basis to $v_1,\ldots,v_n$, then
$\eps_i(u_j)=-A_i^j$ and so the invariant principal divisors are spanned by
\begin{equation}\label{chardiv}
\sum_{\grr\in\cals}\eps_i(u_\grr) D_{u_\grr} = D_{v_i}-\sum_{j=1}^n A_i^j D_{u_j}.
\end{equation}
Note that the divisors $D_{v_i}$ and $D_{u_i}$ are the zero and infinity
sections of $\pi_i\colon \bbp(\BOne\oplus \call_i)\to M_{i-1}$
respectively. In particular, $D_{u_i}:i\in\{1,\ldots, n\}$ is a basis for the
Chow group $A_{n-1}(M_n)$ of Weil divisors on $M_n$, and the Poincar\'e dual of
$D_{u_i}$ can be identified with $x_i$. Hence the $x_i$'s are algebraic classes.

We now write an arbitrary invariant divisor as
\begin{equation}\label{invdiv}
D=\sum_{\grr\in\cals} s_\grr D_{u_\grr} = \sum_i s_i D_{u_i}+\sum_it_i D_{v_i},
\end{equation}
where we denote the coefficients $s_{i'}$ by $t_i$ (just as we denote $u_{i'}$
by $v_i$). From~\eqref{chardiv} we have the relations
\begin{equation}\label{invprindiv2}
D_{v_i}\sim D_{u_i}+\sum_{j=1}^{i-1}A^j_iD_{u_j}
\end{equation}
so that, with $\psi_D(u_i)=s_i$ and $\psi_D(v_i)=t_i$, the ampleness
condition \eqref{divineq} becomes
\begin{equation}\label{ampleineq}
s_j+t_j> \psi_D\biggl(\sum_{i=j+1}^{n}-A^j_iv_i\biggr).
\end{equation}
If $A^j_i\leq 0$ for all $j<i$, then the summands on the right hand side all
belong to the cone spanned by the $v_i$, on which $\psi_D$ is linear, and
so~\eqref{ampleineq} becomes
\begin{equation}\label{ampleineq2}
s_j+t_j + \sum_{i=j+1}^{n}A^j_it_i> 0.
\end{equation} 
Now using \eqref{invprindiv2} we have
\begin{equation}\label{arbdivu}
D\sim \sum_is_iD_{u_i}+\sum_it_i\biggl(D_{u_i}+\sum_{j=1}^{i-1}A^j_iD_{u_j}\biggr)=
\sum_{j=1}^n\biggl(s_j+t_j+\sum_{i=j+1}^nt_iA^j_i\biggr)D_{u_j}.
\end{equation}
Thus, if we define $r_j=s_j+t_j+\sum_{i=j+1}^nt_jA^j_i$, we arrive at

\begin{proposition}\label{Kahcone}
If $A^j_i\leq 0$ for $j<i$ then the ample cone $\cala(M_n(A))$ is the entire
first orthant of divisor classes $[D]=\sum_{j=1}^n r_j [D_{u_j}]$ defined by
$r_j>0$ for all $j=1,\ldots,n$.  Hence $\calk(M_n(A))$ is the entire first
orthant in the basis $x_i:i\in\{1,\ldots, n\}$.
\end{proposition}

We do not have to restrict to the basis $[D_{u_i}]:i\in\{1,\ldots, n\}$: from
\eqref{invprindiv2} we see that there are $2^{n-1}$ $\bbt^c$-invariant bases
$[D_1],\ldots, [D_n]$ of $A_{n-1}(M_n(A))$, with $D_1=D_{u_1}\sim D_{v_1}$ and
either $D_i=D_{u_i}$ or $D_i=D_{v_i}$.

\begin{lemma}\label{l:everypos} A $\bbt^c$-invariant divisor $D$ is ample if
and only if in every $\bbt^c$-invariant basis $[D_1],\ldots, [D_n]$ of
$A_{n-1}(M_n(A))$ with $D_1=D_{u_1}$ and either $D_i=D_{u_i}$ or
$D_i=D_{v_i}$, the coefficients $r_i$ of $[D]=\sum_{i=1}^n r_i [D_i]$ are all
positive.
\end{lemma}
\begin{proof} By the Toric Nakai Criterion~\cite[Theorem 2.18]{Oda88},
a $\bbt^c$-invariant divisor $D$ is ample if and only if $D\cdot C>0$ for all
$\bbt^c$-invariant curves $C$ corresponding to some cone in $\grS(n-1)$.
Equivalently, in $H^2(M_n(A),\bbr)$, $c_1([D])$ is in $\calk(M_n(A))$ if and
only if $c_1([D])z_1\cdots \hat z_j\cdots z_n>0$, where $j\in\{1,\ldots, n\}$
and for all $z_i\in H^2(M_n(A),\bbr)$ with $z_i\in\{x_i,y_i\}$.  However, if
we express $[D]$ in the basis $[D_1],\ldots,[D_n]$ with $D_1=D_{u_1}$ and
$D_i=D_{v_i}$ (i.e., $c_1([D_i])=y_i$) if and only if $z_i=x_i$ (and
$D_i=D_{u_i}$ otherwise), then using $x_i y_i=0$, this cup product is the
coefficient of $D_j$ in this basis.
\end{proof}

Hence in every such basis, the ample cone is contained in the first orthant.
Proposition~\ref{Kahcone} shows that in some cases, it can be equal to the
entire first orthant in some such basis. However, as we shall see, this need
not always be the case. We now explore this through a series of examples
covering the special cases we are interested in.

\begin{example}\label{Hirex} We first consider the well-understood Hirzebruch
surfaces $\calh_a$.  A Hirzebruch surface $\calh_a=M_2(a)$, as a Bott tower
with matrix $A$, $A^1_2=a$, has normals $u_1=-v_1-av_2,u_2=-v_2,v_1,v_2$.  A
diagram of the fan is given in~\cite[Example 4.1.8]{CoLiSc11}.
\footnote{Note that our convention is the opposite to the convention used in
  \cite{CoLiSc11}: our $\calh_a$ is their $\calh_{-a}$.}
There are two primitive collections, $\{u_1,v_1\}$ and
$\{u_2,v_2\}$, and by~\eqref{chardiv}, the principal divisors are spanned by
$D_{v_1}-D_{u_1}$ and $D_{v_2}-(a D_{u_1}+D_{u_2})$. Then an invariant divisor
takes the form
\begin{equation*}
D=\sum_{\grr\in\cals} s_\grr D_{u_\grr} = r_1 D_{u_1}+r_2 D_{2}
\end{equation*}
where we can take $D_2$ to be either $D_{u_2}$ or $D_{v_2}$.  By
\eqref{ampleineq} $D$ defines an ample class if $r_2>0$ and $r_1>
\psi_D(-av_2)$. So if $a\leq 0$ we take $D_2=D_{u_2}$ which gives $r_1>0$ and
shows that in this case the ample cone $\cala(\calh_a)$, and hence the
K\"ahler cone $\calk(\calh_a)$, is the entire first quadrant $r_1>0,r_2>0$.

If on the other hand $a>0$ we take $D_2=D_{v_2}$ giving
$r_1>\psi_D(-av_2)=\psi_D(au_2)=0$ which again gives $\cala(\calh_a)$ as the
entire first quadrant.
\end{example}

Next we consider the case of stage $3$ Bott manifolds.
\begin{example}\label{stage3ex} 
We compute the ample cone for $n=3$ in which case we have
$a=A^1_2,b=A^1_3,c=A^2_3$.  For a height $3$ Bott tower $M_3(a,b,c)$, the
fan $\grS_{a,b,c}$ has normals
\begin{equation}\label{norveceq}
u_1=-v_1-av_2-bv_3,\quad u_2 = -v_2-cv_3,\quad u_3=-v_3,\quad
v_1,\quad v_2,\quad v_3,
\end{equation}
and by~\eqref{chardiv} the invariant principal divisors are spanned by
\begin{equation}\label{3stagedivrel}
D_{v_1} - D_{u_1},\quad
D_{v_2} - (a D_{u_1} + D_{u_2}),\quad 
D_{v_3} - (b D_{u_1} + c D_{u_2} + D_{u_3}).
\end{equation}
Writing $\psi_D(u_i)=s_i$ and $\psi_D(v_i)=t_i$ (for an invariant divisor
$D$), we see by~\eqref{divineq} that the ample cone (or equivalently the
K\"ahler cone) is determined by the inequalities
\begin{equation}\label{stage3ineq}
s_1+t_1>\psi_D(-av_2-bv_3),\quad s_2+t_2>\psi_D(-cv_3),\quad s_3+t_3>0.
\end{equation}
If $a,b,c\leq 0$, the first two relations reduce to $s_1 + t_1 + at_2+bt_3 >0$
and $s_2+t_2 + c t_3 >0$. Using~\eqref{3stagedivrel} we
may write
\begin{equation}\label{arbdiv3}
D=\sum_{\grr\in\cals}s_\grr D_{u_\grr}
=\bigl(s_1+t_1+a t_2+bt_3\bigr)D_{u_1}+(s_2+t_2 +ct_3)D_{u_2} +(s_3+t_3)D_{u_3}
\end{equation}
and so if we parametrize $H^2(M_3,\bbr)$ by $r_1 PD([D_{u_1}])+ r_2
PD([D_{u_2}])+ r_3 PD([D_{u_3}]$, the K\"ahler cone is the first octant,
$r_1>0,r_2>0,r_3>0$. Equivalently, $\{[D_{u_1}],[D_{u_2}],[D_{u_3}]\}$ is a
set of generators for the Chow group $A_{2}(M_3)$ of Weil divisors on $M_3$.

Now consider the general case where $a,b,c$ have any signs. We consider the
possible primitive collections whose elements are generators of
$A_{2}(M_3)$. We note from the first of Equations \eqref{3stagedivrel} that
$[D_{u_1}]$ and $[D_{v_1}]$ are equal, so there are only four such sets of
generators, namely
\begin{gather}\
\label{bases1} \{[D_{u_1}],[D_{u_2}],[D_{u_3}]\}, \\
\{[D_{u_1}],[D_{u_2}],[D_{v_3}]\}, \\
\{[D_{u_1}],[D_{v_2}],[D_{u_3}]\}, \\
\label{bases4} \{[D_{u_1}],[D_{v_2}],[D_{v_3}]\}. 
\end{gather}
\begin{proposition}\label{3stamcone}
For a height $3$ Bott tower $M_3(a,b,c)$ there is a choice of invariant divisors \eqref{bases1}--\eqref{bases4} such that the ample cone is the full first
octant if and only if one of the following two conditions hold\textup:
\begin{itemize}
\item $a\leq 0$ and $bc\geq 0$\textup; or
\item $a\geq 0$ and $(b-ac)c\geq 0$.
\end{itemize}
\end{proposition}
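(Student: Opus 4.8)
The plan is to render the ampleness inequalities \eqref{stage3ineq} as explicit linear inequalities in the coordinates $s_i=\psi_D(u_i)$, $t_i=\psi_D(v_i)$, and then to ask, in terms of the signs of $a,b,c$ and of $b-ac$, for which parameters one of the four coordinate systems on $H^2(M_3,\bbr)$ associated to the generating sets \eqref{bases1}--\eqref{bases4} converts these inequalities into $r_1>0$, $r_2>0$, $r_3>0$.

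First I would evaluate the two support-function values on the right of \eqref{stage3ineq}. Since $\psi_D$ is linear on each cone of $\grS_{a,b,c}$, this is just a matter of locating the points $-cv_3$ and $-av_2-bv_3$ in the fan. The point $-cv_3$ lies on the ray $\bbr_{\geq 0}v_3$ when $c\leq 0$ and on $\bbr_{\geq 0}u_3=\bbr_{\geq 0}(-v_3)$ when $c\geq 0$, so $\psi_D(-cv_3)=-c\,t_3$ or $c\,s_3$ accordingly. For $-av_2-bv_3$ I would observe that the plane $V:=\langle v_2,v_3\rangle$ meets the eight maximal cones of $\grS_{a,b,c}$ precisely in the four two-dimensional cones $\langle w_2,w_3\rangle$, $w_i\in\{u_i,v_i\}$ (the $v_1$-component of a point of $\langle w_1,w_2,w_3\rangle$ forces the $w_1$-coefficient to vanish on $V$), and that these four cones tile $V$; writing $-av_2-bv_3$ in the basis $v_2,v_3$ and determining which cone it lies in --- this is precisely where the sign of $b-ac$ enters, via $-av_2-bv_3=a\,u_2+(b-ac)u_3$ or $a\,u_2+(ac-b)v_3$ when $a\geq 0$ --- gives $\psi_D(-av_2-bv_3)$ as one of $-at_2-bt_3$ (if $a,b\leq 0$), $-at_2+bs_3$ (if $a\leq 0\leq b$), $as_2+(b-ac)s_3$ (if $a\geq 0$ and $b\geq ac$), or $as_2-(b-ac)t_3$ (if $a\geq 0$ and $b\leq ac$).

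Next, using the relations \eqref{3stagedivrel} I would record the coordinates $(r_1,r_2,r_3)$ of a general invariant divisor $D=\sum_\grr s_\grr D_\grr$ in each of the four bases \eqref{bases1}--\eqref{bases4}; in all four cases $r_3=s_3+t_3$, while $r_1$ and $r_2$ take eight different linear forms that are easy to tabulate. For the ``if'' direction I would then run through the sign regimes covered by the two stated conditions and, in each, point to the basis whose $r_1$ and $r_2$ match the right-hand forms of \eqref{stage3ineq} found above: basis \eqref{bases1} when $a,b,c\leq 0$; basis \eqref{bases2} when $a\leq 0\leq b$ and $c\geq 0$; basis \eqref{bases4} when $a\geq 0$, $b\geq ac$ and $c\geq 0$; and basis \eqref{bases3} when $a\geq 0$, $b\leq ac$ and $c\leq 0$ (the boundary cases $c=0$ and $b=ac$ being subsumed). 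In each such case the three ampleness inequalities become exactly $r_1>0$, $r_2>0$, $r_3>0$, so the ample cone is the first octant.

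For the converse I would argue by contraposition: if neither condition holds then, splitting on the sign of $a$, one has $a<0$ and $bc<0$, or $a=0$ and $bc<0$, or $a>0$ and $(b-ac)c<0$; in particular $c\neq 0$. Here I would use the elementary fact that if an intersection of three open half-spaces equals the open positive orthant, then the three defining linear functionals are positive multiples of the three coordinate functionals; combined with the observation that the middle inequality of \eqref{stage3ineq} never involves $s_1,t_1$ while $r_3=s_3+t_3$ in all four bases, this forces the three inequalities to match $r_1,r_2,r_3$ in order. It then suffices to check that in each of the three bad regimes no basis \eqref{bases1}--\eqref{bases4} has $r_2$ equal to the relevant form $s_2+t_2-cs_3$ or $s_2+t_2+ct_3$ \emph{together with} $r_1$ equal to the relevant first-coordinate form: matching $r_2$ already confines $\beta$ to two of the four bases, and for those two the required $r_1$ is then obstructed by the non-vanishing of $b$, of $a$, or of $b-ac$. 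I expect no conceptual difficulty here; the real work --- and the main source of possible error --- is the bookkeeping: correctly evaluating $\psi_D(-av_2-bv_3)$ in all sign regimes and verifying that the case divisions in both directions are exhaustive and consistent on the boundaries.
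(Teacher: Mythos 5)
Your proof is correct and takes essentially the paper's approach: apply the Batyrev criterion via~\eqref{stage3ineq}, evaluate the support function at $-cv_3$ and $-av_2-bv_3$ by locating which cone of $\grS_{a,b,c}$ they lie in (with $b-ac$ arising from rewriting $-av_2$ in terms of $u_2$), and match the resulting linear inequalities against the four generating sets \eqref{bases1}--\eqref{bases4}. The only tactical differences are that the paper treats $a\leq 0$ and then invokes the biholomorphism $M_3(a,b,c)\sim M_3(-a,b-ac,c)$ to dispatch $a\geq 0$, whereas you tabulate all sign regimes directly, and that your converse direction spells out more explicitly than the paper (via the observation that three open half-spaces cutting out the open positive octant must be the coordinate half-spaces, forcing the three inequalities to reduce to $r_1>0,r_2>0,r_3>0$ in order) why the bad regimes obstruct every basis choice.
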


\begin{proof}
It is enough to check this on the four sets of generators given above, that is
we write $D=r_1D_1+r_2D_2+r_3D_3$ where $D_1=D_{u_1}$, $D_2$ is either
$D_{u_2}$ or $D_{v_2}$, and $D_3$ is either $D_{u_3}$ or $D_{v_3}$. Then
\eqref{stage3ineq} becomes
\begin{equation}\label{stage3ineq2}
r_1>\psi_D(-av_2-bv_3),\quad r_2>\psi_D(-cv_3),\quad r_3>0.
\end{equation}
First suppose that $a\leq 0$; then in order that $r_1$ should give the full
range $0<r_1<\infty$ the first of \eqref{stage3ineq2} implies that we should
choose $D_2=D_{u_2}$. In addition for both $r_1$ and $r_2$ to give the full
range \eqref{stage3ineq2} implies that $b$ and $c$ cannot have opposite
signs. If instead $b,c\leq 0$, then choosing
$D=r_1D_{u_1}+r_2D_{u_2}+r_3D_{u_3}$ shows that the ample cone is the entire
first octant; whereas, if $b\geq 0,~c>0$ or $b>0,~c\geq 0$ we can choose
$D=r_1D_{u_1}+r_2D_{u_2}+r_3D_{v_3}$ to get the ample cone to be the entire
first octant. This gives the first two conditions of the proposition. The case
$a\geq 0$ is analogous, but also follows immediately from the equivalence
$M_3(a,b,c)\sim M_3(-a,b-ac,c)$.
\end{proof}

Both of the conditions in Proposition \ref{3stamcone} imply that $(2b-ac)c
\geq 0$, i.e., $p_1(M)$ is a nonnegative multiple of $x_1 x_2 = (1/2)
c_1(O(1,1))^2$.

\begin{remark}\label{nooct}
Here we enumerate the complimentary inequalities to those of Proposition
\ref{3stamcone}, that is, those cases where the ample cone is not the full
first octant. If $a<0$ we must take $D=r_1D_{u_1}+r_2D_{u_2}+r_3D_3$, and from
the proposition, we can obtain the entire first octant unless $b<0$ and $c>0$,
or $b>0$ and $c<0$. In the first case ($b<0$, $c>0$), if we choose
$D_3=D_{u_3}$, we get $r_2>\psi_D(c u_3)=c r_3$, while if we choose
$D_3=D_{v_3}$, we get $r_1> -b r_3$. Hence neither choice yields the entire
first octant. In the second case ($b>0$, $c<0$), if we choose $D_3=D_{v_3}$,
we get $r_2>\psi_D(-cv_3)=-cr_3$, while if we choose $D_3=D_{u_3}$ we get
$r_1>br_3$, and again neither choice yields the entire first octant.

A similar analysis holds for $a>0$. In this case we must take
$D=r_1D_{u_1}+r_2D_{v_2}+r_3D_3$, and we obtain the entire first octant unless
$c<0$ and $b-ac>0$, or $c>0$ and $b-ac<0$. For $c<0$ and $b-ac>0$, if we
choose $D_3=D_{u_3}$ we get
\[
r_1>\psi_D(-av_2-bv_3)=\psi_D(a(u_2+cv_3)-bv_3)=\psi_D(au_2-(b-ac)v_3)
=\psi_D(au_2+(b-ac)u_3)
\]
which implies $r_1>(b-ac)r_3, r_2>0$ when $b>ac$; whereas, if we choose
$D_3=D_{v_3}$ this and the second of Equations \eqref{stage3ineq} gives
$r_1>0,r_2>-cr_3$. Thus neither choice yields the entire first octant.
For $c>0$ and $b-ac<0$, we use instead $r_1>\psi_D(au_2-(b-ac)v_3)$
and $r_2>\psi_D(c u_3)$ to obtain the same conclusion.
\end{remark}
\end{example}

\begin{example}\label{1twist2}
Here we consider again the Bott towers $M_{N+1}(\bfk)$ of
Example~\ref{twist1-examples} whose $A$ matrix takes the form
\eqref{genkexample}. For these Bott manifolds we write
$D=\sum_{i=1}^Nr_iD_{u_i}+r_{N+1}D_{N+1}$. Then from \eqref{ampleineq} the
ample cone is determined by inequalities
\[
r_j>\psi_D(-k_jv_{N+1})=\psi_D(k_ju_{N+1})~~\text{and}~r_{N+1}>0.
\]
Hence, if $k_j\leq 0$ for all $j=1,\ldots,N$ we choose $D_{N+1}=D_{u_{N+1}}$
giving $\cala(M_{N+1}(A))$ to be the entire first orthant; whereas, if
$k_j\geq 0$ for all $j=1,\ldots,N$ we choose $D_{N+1}=D_{v_{N+1}}$ again
giving $\cala(M_{N+1}(\bfk))$ to be the entire first orthant. However, when
not all $k_j$ have the same sign, we do not get the entire first
orthant. Without loss of generality we assume that $k_j\leq 0$ for
$j=1,\ldots,m$ and $k_j>0$ for $j=m+1,\ldots,N$. If $m\leq \bigl\lfloor
\frac{N}{2}\bigr\rfloor$ we choose $D_{N+1}=D_{v_{N+1}}$, and if $m>\bigl\lfloor
\frac{N}{2}\bigr\rfloor$ we choose $D_{N+1}=D_{u_{N+1}}$. In the former case the
ample cone is given by $r_j>-k_jr_{N+1}$ for $j=1,\dots,m$ and $r_j>0$ for
$j=m+1,\dots,N+1$; whereas, in the latter case it is determined by $r_j>0$ for
$j=1,\ldots,m,N+1$ and $r_j>k_jr_{N+1}$ for $j=m+1,\ldots,N$.
\end{example}

\begin{example} 
For the Bott towers $M_{N+1}(\bfl,\bfk)$ of Example~\ref{twist2-examples}, the
primitive collections are $\{u_j,v_j\}:j\in\{1,\ldots, N+1\}$ where
$u_j=-v_j-l_jv_N-k_jv_{N+1}$ for $j=1,\ldots,N-1$, $u_N=-v_N-k_Nv_{N+1}$ and
$u_{N+1}=-v_{N+1}$.  So an invariant divisor can be written as
\[
D=\sum_{i=1}^{N-1}r_iD_{u_i}+r_{N}D_{N}+r_{N+1}D_{N+1}
\]
where  $D_N$ is $D_{v_N}$ or $D_{u_N}$ and $D_{N+1}$  is $D_{v_{N+1}}$ or $D_{u_{N+1}}$.
So from \eqref{ampleineq} we have $r_{N+1}>0$, and
$r_{N}>\psi_D(-k_Nv_{N+1})=\psi_D(k_Nu_{N+1})$ and for $j=1,\ldots,N-1$,
\begin{equation}\label{2tample}
 r_j> \psi_D(-l_jv_{N}-k_jv_{N+1})=\psi_D(-l_jv_{N}+k_ju_{N+1})
=\psi_D(l_j u_N + (k_j-l_j k_N) u_{N+1})
\end{equation}
since $v_{N+1}=-u_{N+1}$ and $v_N=-u_N+k_Nu_{N+1}$. One can proceed with an
analysis similar to Example \ref{stage3ex} to determine for which
$M_{N+1}(\bfl,\bfk)$ the K\"ahler cone is the entire first orthant, and for
which it is not; however, here we just consider a special case of interest in
Proposition \ref{2textremal} below.

\begin{subexample}
We take $N=2m+1$ with the matrix
\begin{equation}\label{2tspecial}
A=\begin{pmatrix}
\BOne_m&\BZero_m& {\bf 0}_m & {\bf 0}_m \\
\BZero_m & \BOne_m & {\bf 0}_m & {\bf 0}_m \\
q{\bf 1}_m& -q{\bf 1}_m & 1 & 0 \\
q(k+1){\bf 1}_m& q(k-1){\bf 1}_m &2&1\\
\end{pmatrix}
\end{equation}
with $q,k-1\in\bbz^+$, and where $\BOne_m$ is the $m$ by $m$ identity matrix,
$\BZero_m$ the $m$ by $m$ zero matrix, ${\bf 0}_m$ is the zero column vector
in $\bbr^m$, and ${\bf 1}_m =(1\;1\cdots1)$ is a row vector in $\bbr^m$.

\begin{proposition}\label{2tkcone}
The ample cone, hence the K\"ahler cone, of the Bott tower $M_{2m+2}(A)$ with $A$ given by \eqref{2tspecial} is represented by
\begin{gather*}
r_j>0\quad \text{for $j\in\{1,\ldots,m\}$};
\qquad r_j>qr_{2m+1}\quad\text{for $j\in\{m+1,\ldots,2m\}$};\\
r_{2m+1}>0; \qquad r_{2m+2}>0.
\end{gather*}
\end{proposition}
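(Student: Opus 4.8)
The plan is to apply Corollary~\ref{Batcor} to the matrix in \eqref{2tspecial}, which is the twist~$2$ Bott tower $M_{N+1}(\bfl,\bfk)$ of Example~\ref{twist2-examples} with $N=2m+1$, $l_j=q$ for $1\le j\le m$, $l_j=-q$ for $m<j\le 2m$, $k_j=q(k+1)$ for $1\le j\le m$, $k_j=q(k-1)$ for $m<j\le 2m$, and $k_N=k_{2m+1}=2$. Thus the normals of the fan are $u_j=-v_j-l_jv_{2m+1}-k_jv_{2m+2}$ for $1\le j\le 2m$, together with $u_{2m+1}=-v_{2m+1}-2v_{2m+2}$ and $u_{2m+2}=-v_{2m+2}$, and the primitive collections are the opposite pairs $\{u_j,v_j\}$.

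First I would work in the set of invariant generators $\{[D_{u_1}],\dots,[D_{u_{2m}}],[D_{v_{2m+1}}],[D_{v_{2m+2}}]\}$ of $A_{2m+1}(M_{2m+2})$ (one of the four distinct choices, since rows $1,\dots,2m$ of $A-\BOne_{2m+2}$ vanish, so $[D_{v_j}]=[D_{u_j}]$ for $j\le 2m$), using the principal divisor relations \eqref{chardiv} associated with $\eps_{2m+1}$ and $\eps_{2m+2}$ to rewrite a general invariant divisor $D=\sum_i s_iD_{u_i}+\sum_i t_iD_{v_i}$ in this basis. A short computation yields the coordinates
\begin{align*}
r_j&=s_j+t_j-qs_{2m+1}-q(k-1)s_{2m+2}\quad(1\le j\le m),\\
r_j&=s_j+t_j+qs_{2m+1}-q(k+1)s_{2m+2}\quad(m<j\le 2m),\\
r_{2m+1}&=s_{2m+1}+t_{2m+1}-2s_{2m+2},\qquad r_{2m+2}=s_{2m+2}+t_{2m+2}.
\end{align*}

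Next I would evaluate the support function $\psi_D$, normalised by $\psi_D(u_\rho)=s_\rho$ and $\psi_D(v_\rho)=t_\rho$, on the vectors $u_j+v_j$ appearing in the ampleness inequality \eqref{divineq}. Since $q>0$ and $k\ge 2$, one has
\[
u_{2m+1}+v_{2m+1}=-2v_{2m+2}=2u_{2m+2},\qquad
u_j+v_j=q\,u_{2m+1}+q(k-1)u_{2m+2}\quad(j\le m),
\]
\[
u_j+v_j=q\,v_{2m+1}+q(k-1)u_{2m+2}\quad(m<j\le 2m),
\]
and in each case the right-hand side is a nonnegative combination of normals that do not form an opposite pair, hence span a cone of the fan, on which $\psi_D$ is linear. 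Therefore \eqref{divineq} reads $s_{2m+2}+t_{2m+2}>0$; $s_{2m+1}+t_{2m+1}>2s_{2m+2}$; $s_j+t_j>qs_{2m+1}+q(k-1)s_{2m+2}$ for $j\le m$; and $s_j+t_j>qt_{2m+1}+q(k-1)s_{2m+2}$ for $m<j\le 2m$. Feeding in the formulas above, the first three families become exactly $r_{2m+2}>0$, $r_{2m+1}>0$ and $r_j>0$ for $j\le m$, while in the last family substituting $s_j+t_j=r_j-qs_{2m+1}+q(k+1)s_{2m+2}$ turns the inequality into $r_j>q(s_{2m+1}+t_{2m+1}-2s_{2m+2})=qr_{2m+1}$. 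As Corollary~\ref{Batcor} is an equivalence, this is precisely the claimed description of $\cala(M_{2m+2}(A))=\calk(M_{2m+2}(A))$.

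I expect the only real obstacle to be the middle step: finding the correct ``twisted'' decompositions of each $u_j+v_j$ into normals of a single cone — it is exactly here that $q>0$ and $k\ge 2$ are needed, to keep all coefficients nonnegative — and then, in the range $m<j\le 2m$, recognising that after the substitution the surviving combination $q(s_{2m+1}+t_{2m+1}-2s_{2m+2})$ is $qr_{2m+1}$ rather than some other linear expression in the $s$'s and $t$'s. The rest is routine linear bookkeeping with the relations \eqref{chardiv}.
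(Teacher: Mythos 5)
Your proof is correct and takes essentially the same route as the paper's: both apply the primitive-collection criterion of Corollary~\ref{Batcor}, decompose each $u_j+v_j$ as a nonnegative combination of normals spanning a cone (this is where $q>0$ and $k\geq 2$ enter), and work in the basis $\{[D_{u_1}],\dots,[D_{u_{2m}}],[D_{v_{2m+1}}],[D_{v_{2m+2}}]\}$. The paper is just terser: it fixes the representative $D=\sum_{i\le 2m}r_iD_{u_i}+r_{2m+1}D_{v_{2m+1}}+r_{2m+2}D_{v_{2m+2}}$, so that $\psi_D$ vanishes on $v_1,\dots,v_{2m},u_{2m+1},u_{2m+2}$ and the inequalities read off immediately, whereas you carry general $s_i,t_i$ and perform the change of basis explicitly — the bookkeeping is longer but lands at the same place.
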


\begin{proof}
To determine the ample cone, we note that $r_{2m+2}>0$, $r_{2m+1}>\psi_D(-2v_{2m+2})=\psi_D(2u_{2m+2})=0$ if $D_{2m+2}=D_{v_{2m+2}}$. For $j\in\{1,\ldots,m\}$ we get
from \eqref{2tample} that
\[
r_j > \psi_D(-qv_{2m+1}-q(k+1)v_{2m+2})=\psi_D(qu_{2m+1}+q(k-1)u_{2m+2}),
\]
which is zero if we also choose $D_{2m+1}=D_{v_{2m+1}}$. But then for
$j\in\{m+1,\ldots,2m\}$ we get
$r_j>\psi_D(qv_{2m+1}+q(k-1)u_{2m+2})=qr_{2m+1}$ as stated.
\end{proof}

Since the last row of the matrix \eqref{2tspecial} has all positive entries,
the connected component of the automorphism group of $M_{2m+2}(A)$ is not
reductive by Corollary \ref{A12}. It follows that $M_{2m+2}(A)$ does not
admit a CSC K\"ahler metric.

\end{subexample}
\end{example}

\subsection{Fano Bott manifolds}

It follows immediately from the fact that Bott manifolds are compact toric or
from Equations \eqref{Amatrix}, \eqref{xyeq}, and \eqref{c1} that the first
Chern class of a Bott manifold is either positive semidefinite or
indefinite,\footnote{If $M_n(A)$ admits a K\"ahler metric with negative
  semidefinite Ricci form, then every holomorphic vector field is parallel,
  contradicting the presence of nontrivial holomorphic vector fields with
  zeros.} and for most it is indefinite. Recall that a complex manifold
$(M,J)$ is {\it Fano} if the first Chern class $c_1(M)$ is positive definite,
that is, $c_1(M)$ lies in the K\"ahler cone, in which case, the K\"ahler
classes $[\omega]$ in the span of $c_1(M)$ (and any K\"ahler metrics $\omega$
in such a class) are said to be \emph{monotone}.  Equivalently, the
anti-canonical divisor lies in the ample cone.  There is a lot known about
toric Fano manifolds with $n\leq 4$ (see for example
\cite{Bat81,WaWa82,Sak86,Mab87,Bat91,Nak93,Nak94,Baty99,BatSe99}).

\subsubsection*{Fano Bott manifolds with topological twist $0$}
\begin{proposition}\label{otFprop}
The only Fano Bott tower $M_n(A)$ with topological twist $0$ is the product
$(\bbc\bbp^1)^n$ represented by $A=\BOne_n$.
\end{proposition}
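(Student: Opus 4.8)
The plan is to apply the ampleness criterion of Corollary~\ref{Batcor} to the anticanonical divisor and combine it with the structural description of topological twist $0$ Bott towers from Theorem~\ref{mapathm}. Since $-K_{M_n(A)}=\sum_{\grr\in\cals}D_\grr$ has every coefficient equal to $1$, its support function satisfies $\psi_{-K}(u_\grr)=1$ for every ray $\grr\in\cals$, so by Corollary~\ref{Batcor} the Bott tower $M_n(A)$ is Fano if and only if
\[
2=\psi_{-K}(u_j)+\psi_{-K}(v_j)>\psi_{-K}\Bigl(-\sum_{i=j+1}^nA^j_iv_i\Bigr)
\]
for every $j\in\{1,\dots,n\}$. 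It therefore suffices to produce, whenever $A\neq\BOne_n$, a single index $j$ for which this strict inequality fails.

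Assume $M_n(A)$ has topological twist $0$. By Theorem~\ref{mapathm} (and the remark immediately following it) every off-diagonal entry of $A$ is even, so any nonzero off-diagonal entry has absolute value at least $2$. Suppose, for contradiction, that $A\neq\BOne_n$, and let $k$ be the \emph{largest} index such that the $k$th column of $A-\BOne_n$ is nonzero below the diagonal, that is, $A^k_l\neq0$ for some $l>k$. By maximality of $k$, every column of $A-\BOne_n$ past the $k$th vanishes below the diagonal, which by the description of the fan normals $u_l=-\sum_i A^l_i v_i$ means $u_l=-v_l$ for all $l>k$.

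Next I would exhibit a maximal cone of the fan detecting the failure of ampleness at $j=k$. Set $w_l=v_l$ for $l\le k$, and for $l>k$ set $w_l=v_l$ if $A^k_l\le0$ and $w_l=u_l=-v_l$ if $A^k_l>0$; writing $w_l=\epsilon_l v_l$ with $\epsilon_l\in\{\pm1\}$ for $l>k$, the choice is arranged so that $A^k_l\epsilon_l=-|A^k_l|$. The vectors $w_1,\dots,w_n$ form a $\bbz$-basis of $\grL$ and span a maximal cone $\sigma$, on which $\psi_{-K}$ agrees with the linear functional $m_\sigma$ determined by $m_\sigma(w_l)=1$, so that $m_\sigma(v_l)=1$ for $l\le k$ and $m_\sigma(v_l)=\epsilon_l$ for $l>k$. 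One checks that $u_k+v_k=-\sum_{l>k}A^k_lv_l$ lies in $\sigma$, because its coefficient on the generator $\epsilon_lv_l$ is $-A^k_l\epsilon_l=|A^k_l|\ge0$ (and its coefficient on $v_l$ for $l\le k$ is $0$). Hence
\[
\psi_{-K}(u_k+v_k)=m_\sigma\Bigl(-\sum_{l>k}A^k_lv_l\Bigr)=-\sum_{l>k}A^k_l\epsilon_l=\sum_{l>k}|A^k_l|\ge 2,
\]
the last inequality holding because the $k$th column of $A-\BOne_n$ contains a nonzero even entry. This contradicts the ampleness criterion at $j=k$, so $-K$ is not ample, $M_n(A)$ is not Fano, and we conclude $A=\BOne_n$, i.e., $M_n(A)=(\bbc\bbp^1)^n$.

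The only delicate point is verifying that $u_k+v_k$ genuinely lies in the maximal cone $\sigma$ we constructed, so that $\psi_{-K}$ can be evaluated there via the single linear functional $m_\sigma$; this is exactly where the maximality of $k$, i.e., the relations $u_l=-v_l$ for $l>k$, is used. Everything else is bookkeeping with the fan normals of $M_n(A)$ and the evenness of the off-diagonal entries guaranteed by Theorem~\ref{mapathm}.
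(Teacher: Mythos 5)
Your proof is correct, but it is a genuinely different argument from the paper's. The paper's proof never touches the fan directly: it uses the product factorization $A=2C_n\cdots C_1-\BOne_n=\BOne_n+2L$ from Theorem~\ref{mapathm}, the equivalence $M_n(A)\sim M_n(A^{-1})$ of Lemma~\ref{0tlemma} to get the dual factorization $A^{-1}=\BOne_n+2L'$, and the two expansions of $c_1$ in the $\bfx$ and $\bfy$ bases to force both $L$ and $L'$ to have non-negative entries, which contradicts $AL'=-L$ unless $L=0$. You instead go straight to Batyrev's ampleness criterion (Corollary~\ref{Batcor}) for $-K_{M_n(A)}$ and exhibit a maximal cone on which the required strict inequality fails, namely the cone picked out by the signs of the last nontrivial column of $A-\BOne_n$. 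The only input your argument takes from Theorem~\ref{mapathm} is the evenness of the off-diagonal entries of $A$ (which is what makes $\sum_{l>k}|A^k_l|\geq 2$), whereas the paper's argument leans on the full $C_k$-factorization and the inverse tower. Your route is more explicit and more geometric; it also makes transparent exactly where the twist-$0$ hypothesis enters, since without the evenness one can have $\sum_{l>k}|A^k_l|=1$ and the inequality would not be violated (as indeed it must not be, given Fano examples like $\calh_1$ or $M_3(0,1,-1)$). Both proofs are valid; yours is a reasonable, arguably cleaner alternative.
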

\begin{proof}
If $M_n(A)$ is a Bott tower with topological twist $0$, then by
Theorem~\ref{mapathm}, $A$ must satisfy $A=2C_n\cdots C_1-\BOne_n$ where $C_1$
is the identity matrix and for each $k$ the unipotent integer-valued matrix
$C_k$ can have at most one non-zero element below the diagonal and it is in
the $k$th row.  Hence any off-diagonal element of $A$ must be a multiple of
$2$, so the non-zero off-diagonal elements have absolute value at least $2$.

Now the Fano condition is that the first Chern class
\[
c_1(M_n)=\sum_{i=1}^n(x_i+y_i)
\]
is ample, hence by Lemma~\ref{l:everypos}, it has positive coefficients with
respect to each of the $2^{n-1}$ invariant bases of $H^2(M_n,\bbz)$. With
respect to the basis $x_i:i\in\{1,\ldots, n\}$ we have
\begin{equation}\label{c1xi}
c_1(M_n)=(2+A^1_2+\cdots +A^1_n)x_1+\cdots +(2+A^{n-1}_n)x_{n-1}+2x_n.
\end{equation}
To obtain $c_1(M_n)$ in the other bases we use the general formulae
\begin{equation}\label{ykxk}
y_k=x_k+A^1_kx_1+\cdots +A^{k-1}_kx_{k-1}. 
\end{equation}
We begin by making the substitution
$$x_n=y_n -(A^1_nx_1+\cdots +A^{n-1}_nx_{n-1})$$
in \eqref{c1xi} which gives
\begin{equation}\label{c1xyn}
c_1(M_n)=(2+A^1_2+\cdots -A^1_n)x_1+\cdots +(2-A^{n-1}_n)x_{n-1}+2y_n.
\end{equation}
Then the positivity of $c_1(M_n)$ in both \eqref{c1xi} and \eqref{c1xyn}
together with the fact that the off diagonal elements of $A$ are even implies
$A^{n-1}_n=0$. We can now make the substitution for $x_{n-1}$ in the Equations
\eqref{c1xi} and \eqref{c1xyn} giving the 4 equations for $c_1(M_n)$ ending
with a choice of the invariant pairs $\{x_{n-1},y_{n-1}\}$ and
$\{x_n,y_n\}$. These four equations then imply $A^{n-2}_k=0$ for
$k=n-1,n$. Now assume by induction that $A^{j'}_k=0$ for all $j'>j$ and all
$k>j'$. Then in the $x_i$ basis $c_1(M_n)$ takes the form
$$c_1(M_n)=(2+A^1_2+\cdots +A^1_n)x_1+\cdots +(2+A^{j}_{j+1}+\cdots +A^j_n)x_{j}+2x_{j+1}+\cdots +2x_n.$$
The $2^{n-j}$ substitutions using \eqref{ykxk} gives the coefficient of $x_j$ with all combinations of signs in front of $A^j_k$ and these must all be positive. This implies that $A^j_k=0$ for all $k>j$ which completes the induction.
\end{proof}

\begin{remark}
Notice that in the proof, the 0 topological twist condition has only been used
to show that the off diagonal coefficients of $A$ are all even. So for the
general Fano case with higher topological twist, there must be an off-diagonal
coefficient equal to $\pm1$. In fact one can give a characterization of all
Fano Bott manifolds in this way, as has recently been done for generalized
Bott manifolds \cite{Suy18}.
\end{remark}

\subsubsection*{Fano Bott manifolds with topological twist $1$}
For the Bott towers $M_{N+1}(\bfk)$, the {\it monotone} case when
$c_1(M_{N+1}(\bfk))$ is a K\"ahler class which has been well studied
\cite{KoSa86,Koi90}.

\begin{proposition}\label{1tFano}
A Fano Bott tower of the type $M_{N+1}(\bfk)$ can be put in the form
\[
M_{N+1}(\bfk)\cong M_{m}((\overbrace{\pm 1,\ldots,\pm 1}^{m-1}))\times
\overbrace{\bbc\bbp^1\times\cdots\times\bbc\bbp^1}^{N+1-m}
\]
for some $m=2,\ldots,N+1$. Moreover, $M_{N+1}(\bfk)$ with the monotone K\"ahler class admits a K\"ahler--Ricci soliton which is K\"ahler--Einstein if and only if the number of $-1$ equals the number of $+1$ in $\bfk$.
\end{proposition}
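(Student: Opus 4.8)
The plan is to read off the Fano locus from Batyrev's ampleness criterion, split off the trivial $\bbc\bbp^1$ factors, and then reduce the K\"ahler--Einstein question to a barycenter computation, invoking Wang and Zhu's solution of the K\"ahler--Ricci soliton problem on toric Fano manifolds.

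\emph{Fano condition and product form.} For the fan of $M_{N+1}(\bfk)$ the normals are $v_1,\dots,v_{N+1}$, $u_j=-v_j-k_jv_{N+1}$ for $1\le j\le N$, and $u_{N+1}=-v_{N+1}$. Since $-K=\sum_\rho D_\rho$, the support function $\psi_{-K}$ satisfies $\psi_{-K}(u_\rho)=1$ on every ray (in accordance with \eqref{c1}), so $\psi_{-K}(t\,v_{N+1})=|t|$. By Corollary~\ref{Batcor} applied to $D=-K$, the primitive collection $\{u_j,v_j\}$ with $j\le N$ forces $2=\psi_{-K}(u_j)+\psi_{-K}(v_j)>\psi_{-K}(-k_jv_{N+1})=|k_j|$, while $\{u_{N+1},v_{N+1}\}$ imposes nothing; hence $M_{N+1}(\bfk)$ is Fano if and only if every $k_j\in\{-1,0,1\}$. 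If some $k_j=0$, then \eqref{1tdeg} splits off a $\bbc\bbp^1$ factor, and iterating produces $M_{N+1}(\bfk)\cong M_m((\pm1,\dots,\pm1))\times(\bbc\bbp^1)^{N+1-m}$ with $m=1+\#\{j:k_j\ne0\}$, which lies in $\{2,\dots,N+1\}$ unless $\bfk=0$ (in which case the manifold is $(\bbc\bbp^1)^{N+1}$).

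\emph{Reduction to a barycenter.} By the theorem of Wang and Zhu, every toric Fano manifold carries a K\"ahler--Ricci soliton, unique up to automorphism (Tian--Zhu) and lying in the monotone class; moreover it is K\"ahler--Einstein if and only if the barycenter $\mathrm{bar}(P_{-K})=\int_{P_{-K}}\xi\,d\xi\,/\,\mathrm{vol}(P_{-K})$ of the anticanonical polytope is the origin. (Necessity is Mabuchi's observation~\cite{Mab87} that on a toric Fano the Futaki invariant restricted to $\gt$ is a nonzero multiple of $\xi\mapsto\langle\mathrm{bar}(P_{-K}),\xi\rangle$; sufficiency is the content of Wang--Zhu.) So it remains to evaluate $\mathrm{bar}(P_{-K})$ for $M_{N+1}(\bfk)$ with each $k_j\in\{-1,0,1\}$.

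\emph{The barycenter.} In the basis $v_1,\dots,v_{N+1}$ of $\grL$ the normals above give
\[
P_{-K}=\bigl\{\xi\in\bbr^{N+1}:\ \xi_i\le1\ (1\le i\le N+1),\ \ \xi_{N+1}\ge-1,\ \ \xi_j\ge-1-k_j\xi_{N+1}\ (1\le j\le N)\bigr\},
\]
which fibers over $\xi_{N+1}=t\in[-1,1]$ with a rectangular box fiber whose $j$th edge is $[-1,1]$, $[-1-t,1]$ or $[-1+t,1]$ according as $k_j$ is $0$, $1$ or $-1$. Integrating out the box directions, each coordinate of $\int_{P_{-K}}\xi\,d\xi$ is an explicit multiple of the single integral $I:=\int_{-1}^{1}t\,(2+t)^p(2-t)^q\,dt$, where $p=\#\{j:k_j=1\}$, $q=\#\{j:k_j=-1\}$, $r=N-p-q$: the coordinates with $k_j=0$ vanish, while those with $k_j=1$, with $k_j=-1$, and the $(N+1)$st equal $-2^{r-1}I$, $2^{r-1}I$ and $2^rI$. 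Hence $\mathrm{bar}(P_{-K})=0$ if and only if $I=0$. Finally $I=0$ when $p=q$ (the integrand $t(4-t^2)^p$ is odd), while for $q>p$ the substitution $t\mapsto-t$ gives $I=\int_0^1 t(4-t^2)^p\bigl[(2-t)^{q-p}-(2+t)^{q-p}\bigr]\,dt<0$ since the bracket is negative on $(0,1)$ when $q-p\ge1$, and symmetrically $I>0$ for $p>q$. Therefore $M_{N+1}(\bfk)$ with the monotone class is K\"ahler--Einstein exactly when $\bfk$ has equally many entries $+1$ and $-1$. The one point demanding attention is getting $P_{-K}$ right in the sign conventions of Section~\ref{complex}; once that is done, the box fibration and the parity argument for $I$ are routine, and there is no substantive obstacle beyond the cited results of Wang--Zhu and Mabuchi.
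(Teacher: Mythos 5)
Your argument is correct, and it differs from the paper's route in two places. For the Fano condition, the paper simply reads off $c_1(M_{N+1}(\bfk))=\sum_{i}(2+k_i)x_i+2x_{N+1}=\sum_{i}(2-k_i)x_i+2y_{N+1}$ and appeals to the description of the ample cone from Example~\ref{1twist2} (which itself needs the sign case analysis on $\bfk$) to conclude $k_i\in\{0,\pm1\}$; you instead apply Corollary~\ref{Batcor} directly to $D=-K$, getting $2>|k_j|$ in one stroke, which is cleaner and avoids the basis gymnastics. For the K\"ahler--Einstein statement, the paper cites \cite[Section 3]{ACGT08b} together with \cite{Koi90,WaZh04}, i.e., it relies on the momentum-profile/extremal-polynomial analysis within the admissible construction to detect exactly when the K\"ahler--Ricci soliton is Einstein; you instead reduce to Mabuchi's toric characterization of the Futaki invariant as a barycenter pairing, fiber $P_{-K}$ over the last coordinate $\xi_{N+1}=t$ into boxes of side lengths $2$, $2+t$, $2-t$, and collapse everything to the parity of the single integral $I=\int_{-1}^{1}t(2+t)^p(2-t)^q\,dt$. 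Your computation is self-contained (given Wang--Zhu for existence) and makes the symmetry $p\leftrightarrow q$ responsible for the vanishing transparent; the paper's route is shorter to state but delegates the ``only if'' to the cited machinery. Both are valid; I would only flag that you should make explicit (as you implicitly do) that the zeros in $\bfk$ contribute only the harmless factor $2^r$ and that the reflexive normalization of $P_{-K}$ under the paper's support-function convention is $\{\langle\xi,u_\rho\rangle\le1\}$, which is the negative of the usual reflexive polytope but has the same barycenter-vanishing locus.
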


\begin{proof}
For the Bott towers $M_{N+1}(\bfk)$ we have
\begin{equation*}
c_1(M_{N+1}(\bfk))=\sum_{i=1}^{N}(2 +k_i)x_i +2x_{N+1}
=\sum_{i=1}^{N}(2 -k_i)x_i +2y_{N+1}.
\end{equation*}
So by Lemma~\ref{l:everypos}, $M_{N+1}(\bfk)$ is Fano if and only if $k_i=0,\pm 1$. By an element of $BC_n$ we can put $\bfk$ in the form $(1,\ldots,1,-1,\ldots,-1,0\ldots,0)$ where the number of $0$'s is $N+1-m$, the number of $+1$'s is $r$ and the number of $-1$'s is $s$ with $r+s=m-1$ which implies the given form. 

It follows from \cite[Section 3]{ACGT08b} together with \cite{Koi90,WaZh04} that $M_{N+1}(\bfk)$ admits a K\"ahler--Ricci soliton which is K\"ahler--Einstein if and only if the number of $-1$'s in $\bfk$ equals the number of $+1$'s in $\bfk$.
\end{proof}

\subsubsection*{Fano Bott manifolds with topological twist $2$}

For the Bott towers $M_{N+1}(\bfl,\bfk)$ we have
\begin{align}\label{2tChern}
c_1(M_{N+1}(\bfl,\bfk))&=\sum_{i=1}^{N-1}(2+l_i +k_i)x_i +(2+k_N)x_N+2x_{N+1} \\
            \notag&=\sum_{i=1}^{N-1}(2+l_i -k_i)x_i +(2-k_N)x_N+2y_{N+1} \\
           \notag&=\sum_{i=1}^{N-1}(2-l_i +k_i-k_Nl_i)x_i +(2+k_N)y_N+2x_{N+1} \\
           \notag&=\sum_{i=1}^{N-1}(2-l_i -k_i+k_Nl_i)x_i +(2-k_N)y_N+2y_{N+1} 
\end{align}

\begin{lemma}\label{2tineq}
If $M_{N+1}(\bfl,\bfk)$ is Fano then $l_i=0,\pm 1$ and $k_i=0,\pm 1$. 
\end{lemma}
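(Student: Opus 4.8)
The plan is to read each coefficient occurring in the four expressions for $c_1=c_1(M_{N+1}(\bfl,\bfk))$ in \eqref{2tChern} as the intersection number of $c_1$ with a suitable torus-invariant curve; since $M_{N+1}(\bfl,\bfk)$ is Fano these numbers must all be positive, and the bounds $l_i,k_i\in\{0,\pm1\}$ then drop out by elementary arithmetic. First I would recall that, for a smooth complete toric variety, a divisor class is ample if and only if it pairs positively with every torus-invariant curve, these curves being indexed by the walls $\tau\in\grS_N$: if $\tau$ lies between the maximal cones $\tau+\langle u_a\rangle$ and $\tau+\langle u_b\rangle$ with wall relation $u_a+u_b+\sum_{\rho\in\tau}c_\rho u_\rho=0$, then $D_{u_a}\!\cdot C_\tau=D_{u_b}\!\cdot C_\tau=1$, $D_{u_\rho}\!\cdot C_\tau=c_\rho$ for $\rho\in\tau$, and every other invariant divisor meets $C_\tau$ trivially, so $c_1\cdot C_\tau=2+\sum_{\rho\in\tau}c_\rho$.

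Next I would write down the relevant curves. Recall that the primitive collections of $M_{N+1}(\bfl,\bfk)$ are the opposite pairs $\{u_j,v_j\}$, with $u_j=-v_j-l_jv_N-k_jv_{N+1}$ for $j\le N-1$, $u_N=-v_N-k_Nv_{N+1}$, $u_{N+1}=-v_{N+1}$, while $v_1,\dots,v_{N+1}$ is a basis of $\grL$. Fix $i\in\{1,\dots,N-1\}$; omitting the pair $\{u_i,v_i\}$, keeping $v_j$ for every other $j\le N-1$, and adjoining a choice $w_N\in\{u_N,v_N\}$ and $w_{N+1}\in\{u_{N+1},v_{N+1}\}$ produces a wall of $\grS$ (any set of normals with no opposite pair spans a cone), lying between the two maximal cones obtained by adding $u_i$ or $v_i$. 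Solving the wall relation $u_i+v_i+\sum c_\rho w_\rho=0$ in each of the four cases — substituting the formulas for $u_N,u_{N+1}$, the coefficients of the remaining $v_j$ being zero — gives $c_1\cdot C_\tau$ equal, for the choices $(v_N,v_{N+1}),(v_N,u_{N+1}),(u_N,v_{N+1}),(u_N,u_{N+1})$ respectively, to
\[
2+l_i+k_i,\qquad 2+l_i-k_i,\qquad 2-l_i+k_i-k_Nl_i,\qquad 2-l_i-k_i+k_Nl_i,
\]
which are exactly the four coefficients of $x_i$ across the four lines of \eqref{2tChern} (equivalently, $C_\tau$ is the curve dual to $x_i$ in the corresponding basis of $H^2$ built from $x_1,\dots,x_{N-1}$ and the chosen $N$th and $(N+1)$st divisor classes). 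Likewise, omitting the pair $\{u_N,v_N\}$ yields torus-invariant curves with $c_1\cdot C_\tau=2\pm k_N$.

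Finally I would run the arithmetic. Fano-ness forces all of the above numbers to be positive: $2\pm k_N>0$ gives $k_N\in\{0,\pm1\}$; adding the first two of the four displayed inequalities gives $l_i>-2$ and adding the last two gives $l_i<2$, so $l_i\in\{0,\pm1\}$. With $l_i$ and $k_N$ restricted to $\{0,\pm1\}$, the four inequalities become $k_i>\max\{-2-l_i,\ l_i(1+k_N)-2\}$ and $k_i<\min\{2+l_i,\ 2-l_i(1-k_N)\}$, and checking the finitely many pairs $(l_i,k_N)\in\{0,\pm1\}^2$ shows $k_i\in\{0,\pm1\}$ in every case (for instance if $l_i=-1$ the first two already force $k_i=0$, while if $l_i=1$ the last two force $k_i=k_N$). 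I expect the only real work to be the middle step: organising the four wall relations and confirming each cone configuration genuinely occurs; one could instead feed $D=-K$ (so $\psi_{-K}\equiv1$ on all normals) into Corollary~\ref{Batcor}, but that forces a case split on the signs of $l_i,k_i$ to locate $u_i+v_i=-l_iv_N-k_iv_{N+1}$ in the fan and recovers the same four inequalities.
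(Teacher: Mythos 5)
Your proof is correct and takes essentially the same approach as the paper: the arithmetic (adding the first/second and third/fourth coefficient conditions to pin down $l_i$, then a case split in $(l_i,k_N)$ to pin down $k_i$) is exactly what the paper does. The one place you go beyond the paper's terse proof is in justifying \emph{why} all four sets of coefficients in \eqref{2tChern} must be positive: you identify them explicitly as intersection numbers $c_1\cdot C_\tau$ of the anticanonical class with the torus-invariant wall curves obtained by omitting $\{u_i,v_i\}$ and choosing one ray from each of $\{u_N,v_N\}$, $\{u_{N+1},v_{N+1}\}$, which is a clean form of the toric Nakai criterion; the paper leaves this step implicit, relying on the earlier discussion of the ample cone in terms of the bases \eqref{bases1}--\eqref{bases4}. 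Your computed wall relations and intersection numbers check out against \eqref{2tChern} line by line.
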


\begin{proof}
That $k_N=0,\pm 1$ is obvious, and that $l_i=0,\pm 1$ follows from adding the
first two and the last two equations. Then one checks that $k_i=0,\pm 1$ as
well.
\end{proof}

However, not all possibilities in Lemma \ref{2tineq} can occur. For example,
from the first two of Equations \eqref{2tChern} we see that for
$i=1,\ldots,N-1$ if $l_i=-1$ then we must have $k_i=0$; whereas, if $l_i=1$
then we must have $k_i=k_N=-1$ or $k_i=k_N=1$. Referring to the equivalence
relations of Example \ref{2tequiv} we see that this last case implies that
$k'_i=k_i-k_N=0$. So up to fiber inversion equivalence we can assume that
$l_i=-1$ or $0$. If $l_i=0$ then all possibilities for $k_i,k_N$ can
occur. Since we have already considered Bott towers of twist $\leq 1$, we can
assume that not all $l_i$ nor all $k_i$ vanish. Up to equivalence we can
assume that $l_i=-1$ and $k_i=0$ for $i=1,\ldots,m$, and $l_i=0$ for
$i=m+1,\ldots,N-1$ with $k_i=0,\pm 1$ for $i=m+1,\ldots,N$ and not all $k_i$
vanish. So we have arrived at
\begin{proposition}\label{2tmonoprop}
The Bott tower $M_{N+1}(\bfl,\bfk)$ is Fano if and only if, up to equivalence, the matrix $A$ takes the form
\begin{equation}\label{mono2example}
A =\begin{pmatrix}
\BOne_r   & \BZero_{r\times s} &0 &\cdots & 0 &0 \\
\BZero_{s\times r}   & \BOne_s &0 &\cdots & 0 &0 \\
\vdots &\vdots&\vdots&\ddots&\vdots&\vdots \\
-{\bf 1}_r  &{\bf 1}_s & 0&\cdots &1 & 0 \\
{\bf 0}_r & k_N{\bf 1}_s& k_m& \cdots& k_N & 1
\end{pmatrix},
\end{equation}
where $\BOne_r$ is the $r$ by $r$ identity matrix, $r+s=m-1$, ${\bf 1}_r=(\overbrace{1,\ldots,1}^r)$, and for $i=m,\ldots,N$, $k_i=0,\pm 1$ and not all $k_i$ vanish.
\end{proposition}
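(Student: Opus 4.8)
The plan is to detect the Fano condition directly from Corollary~\ref{Batcor}, applied to the anticanonical class $c_1=-K_{M_{N+1}(\bfl,\bfk)}=\sum_{\grr\in\cals}D_\grr$, whose support function $\psi_{-K}$ is the unique element of $PL(\grS)$ taking the value $1$ on every normal. Thus $M_{N+1}(\bfl,\bfk)$ is Fano exactly when
\[
2=\psi_{-K}(u_j)+\psi_{-K}(v_j)>\psi_{-K}(u_j+v_j)=\psi_{-K}\Bigl(-\textstyle\sum_{i>j}A^j_iv_i\Bigr)
\]
for $j=1,\dots,N+1$. Since rows $1,\dots,N-1$ of $A$ are trivial, the argument on the right is $0$ for $j=N+1$, is $-k_Nv_{N+1}$ for $j=N$, and is $-l_jv_N-k_jv_{N+1}$ for $j\le N-1$; in every case it lies in $\mathrm{span}(v_N,v_{N+1})$, which carries the fan of the Hirzebruch fibre $\calh_{k_N}$ (with rays $v_N$, $v_{N+1}$, $u_N=-v_N-k_Nv_{N+1}$, $u_{N+1}=-v_{N+1}$). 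Evaluating $\psi_{-K}$ there turns the criterion into the statement that each of the four expressions \eqref{2tChern} for $c_1$ has all its coefficients strictly positive: for an index $j\le N-1$ these are $2+l_j\pm k_j$ and $2-l_j\pm(k_j-k_Nl_j)$, at stage $N$ they are $2\pm k_N$, and at stage $N+1$ the coefficient is $2$. I would establish this reduction once and use it for both implications.

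For the converse direction I would substitute the entries of \eqref{mono2example}: a column $i\le r$ has $(l_i,k_i)=(-1,0)$ and gives $1,1,3+k_N,3-k_N$; a column $r<i\le m-1$ has $(l_i,k_i)=(1,k_N)$ and gives $3+k_N,3-k_N,1,1$; a column $m\le i\le N-1$ has $l_i=0$, $k_i\in\{-1,0,1\}$ and gives $2\pm k_i$; and the stage-$N$ and stage-$N+1$ coefficients are $2\pm k_N$ and $2$. As $k_N,k_i\in\{-1,0,1\}$ all of these are at least $1$, so $c_1$ is ample and $M_{N+1}(\bfl,\bfk)$ is Fano; and since Bott-tower equivalences are biholomorphisms and the Fano property is a biholomorphism invariant, any Bott tower equivalent to a matrix of the form \eqref{mono2example} is Fano.

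For the forward direction, suppose $M_{N+1}(\bfl,\bfk)$ is Fano. By Lemma~\ref{2tineq}, $l_i,k_i\in\{-1,0,1\}$; in particular $k_N\in\{-1,0,1\}$. Imposing the positivity conditions of the first paragraph on each index $i\le N-1$ is a short case check and yields: $l_i=-1\Rightarrow k_i=0$; $l_i=0\Rightarrow k_i\in\{-1,0,1\}$ with no further restriction; $l_i=1\Rightarrow k_i=k_N$. So every column $i\le N-1$ is of one of the three types $(-1,0)$, $(1,k_N)$, $(0,k_i)$. Discarding the twist $\le1$ cases, already handled in Propositions~\ref{otFprop} and~\ref{1tFano}, we have $\bfl\neq 0$ and $\bfk\neq 0$, so some column is of the first or second type and some entry among $k_m,\dots,k_N$ is nonzero. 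Finally, a permutation in $\mathrm{Sym}_{N-1}$ --- which is a Bott-tower equivalence by Example~\ref{2tequiv} --- reorders the first $N-1$ columns so as to group the $r$ columns of the first type first, the $s$ columns of the second type next (with $r+s=m-1\ge 1$), and the $N-m$ columns of the third type last; the resulting matrix is exactly \eqref{mono2example}. (If a canonical representative is desired, the fibre inversions $\grt_N$ and $\grt_{N+1}$ of Example~\ref{2tequiv} can additionally be used to normalise the sign of $k_N$ and to swap the two families of nonzero-$l_i$ columns, but this is not needed for the statement.)

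The step I expect to be the genuine obstacle is the reduction carried out in the first paragraph: one has to evaluate $\psi_{-K}$ on the fan of the fibre $\calh_{k_N}$ --- whose chamber structure in $\mathrm{span}(v_N,v_{N+1})$ depends on the sign of $k_N$ --- and check that, over the relevant ranges of $l_j,k_j,k_N$, the Batyrev inequality for index $j$ is genuinely equivalent to positivity of all four coefficients at $x_j$ (a priori only the coefficient attached to the chamber containing $-l_jv_N-k_jv_{N+1}$ is directly constrained; one verifies by inspection that whenever some coefficient is nonpositive it is this one). After that, the case analysis for the forward direction and the substitution for the converse are routine.
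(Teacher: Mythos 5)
Your argument follows essentially the same route as the paper's (which is spread over Lemma~\ref{2tineq} and the paragraph preceding Proposition~\ref{2tmonoprop}): read the Fano condition off the four expressions~\eqref{2tChern} for $c_1$, impose positivity of all coefficients to get a short list of allowed column types, and then reorder the first $N-1$ columns by $\mathrm{Sym}_{N-1}$ using Example~\ref{2tequiv}. Your case classification ($l_i=-1\Rightarrow k_i=0$; $l_i=1\Rightarrow k_i=k_N$; $l_i=0\Rightarrow k_i\in\{0,\pm1\}$) and the substitution checking the converse direction are both correct, and you rightly keep both column types $(-1,0)$ and $(1,k_N)$ in the normal form, since $\grt_N$ swaps these families wholesale rather than letting one normalise them away one at a time.

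The one thing you do that the paper does not is to spell out why ``Fano'' is equivalent to positivity of all four coefficient lists, by reducing to Corollary~\ref{Batcor} and noting that each $u_j+v_j$ lands in the plane spanned by $v_N,v_{N+1}$. You flag the needed equivalence as a step to be verified ``by inspection,'' but a cleaner way to close it than a case check on $(l_j,k_j,k_N)$ is to use convexity directly: if $c_1$ is ample its support function $\psi_{-K}$ is strictly convex, hence equals $\max_\sigma\xi_\sigma$ over the maximal cones, so the Batyrev value $\psi_{-K}(u_j+v_j)$ is the \emph{maximum} of the four values $\xi_\sigma(u_j+v_j)$, and $\psi_{-K}(u_j+v_j)<2$ forces all four coefficients $2-\xi_\sigma(u_j+v_j)$ to be positive. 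Conversely one of the four coefficients is precisely $2-\psi_{-K}(u_j+v_j)$ (the one attached to the chamber containing $u_j+v_j$), so positivity of all four trivially gives the Batyrev inequality. With this in place the proof is complete and matches the paper's, with your version making explicit a justification the paper leaves implicit.
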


These Bott manifolds admit K\"ahler--Ricci solitons by \cite{WaZh04} and a
K\"ahler--Einstein metric when the Futaki invariant vanishes. It would be
interesting to see if there are any new K\"ahler--Einstein metrics in this
class of Bott manifolds. It follows from the Nakagawa's classification
\cite{Nak93,Nak94} of K\"ahler--Einstein Fano toric 4-folds that there are no
such K\"ahler--Einstein metrics when $N=3$. In fact, from the classification
we have
\begin{corollary}\label{Nakcor}
The only stage $4$ Bott manifolds that admit K\"ahler--Einstein metrics are
the standard product $(\bbc\bbp^1)^4$ and $\bbp(\BOne\oplus \calo(1,-1))\times
\bbc\bbp^1$.
\end{corollary}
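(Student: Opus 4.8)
The plan is to reduce the statement to Nakagawa's classification of Kähler--Einstein Fano toric $4$-folds \cite{Nak93,Nak94}. First I would observe that a stage $4$ Bott manifold $M_4$ carrying a Kähler--Einstein metric must be Fano: a compact Kähler manifold with a Kähler--Einstein metric has $c_1$ proportional to the Kähler class, hence $c_1$ is zero or definite, and by \eqref{xyeq}--\eqref{c1} the coefficient of $x_4$ in $c_1(M_4(A))$ equals $2$ (as $A$ is unipotent), so $c_1(M_4)\neq 0$; since, as recalled at the start of the Fano subsection, the first Chern class of a Bott manifold is positive definite or indefinite, the only surviving possibility is that $c_1(M_4)$ is positive definite, i.e. $M_4$ is a smooth Fano toric $4$-fold.

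Next, since every toric Fano manifold admits a Kähler--Ricci soliton \cite{WaZh04}, such an $M_4$ admits a Kähler--Einstein metric if and only if its Futaki invariant vanishes, and Nakagawa's tables \cite{Nak93,Nak94} record exactly which smooth Fano toric $4$-folds have this property. I would then match that list against the Bott $4$-manifolds, which by Masuda--Panov \cite[Theorem 3.4]{MaPa08} are precisely the smooth complete toric $4$-folds whose fan is the cone over a $4$-dimensional cross-polytope. It is convenient to organise the search by the twist $t\in\{0,1,2,3\}$: the twist $0$ case is $(\bbc\bbp^1)^4$ (Proposition~\ref{otFprop}), which is Kähler--Einstein; the twist $\leq 1$ Fano cases are the products $M_m((\pm1,\dots,\pm1))\times(\bbc\bbp^1)^{4-m}$ of Proposition~\ref{1tFano}, and that proposition already shows the only balanced (hence Kähler--Einstein) one besides the product is $M_3((1,-1))\times\bbc\bbp^1=\bbp(\BOne\oplus\calo(1,-1))\times\bbc\bbp^1$ (note $\calh_{\pm 1}\times(\bbc\bbp^1)^2$ is not balanced, and the length-$3$ cases cannot be balanced); the twist $2$ Fano cases with $N=3$ are the families of Proposition~\ref{2tmonoprop}; and the twist $3$ cases (if any are Fano) would be checked directly. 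Comparing with Nakagawa's list shows that none of the twist $2$ or twist $3$ Fano Bott $4$-folds is Kähler--Einstein, leaving exactly $(\bbc\bbp^1)^4$ and $\bbp(\BOne\oplus\calo(1,-1))\times\bbc\bbp^1$.

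The main obstacle is purely the bookkeeping in this last comparison: one must correctly recognise which of Nakagawa's reflexive $4$-polytopes are cross-polytopes (equivalently, which entries of his Kähler--Einstein list are Bott manifolds), and in particular verify that every Fano Bott $4$-fold of twist $2$ or $3$ has nonvanishing Futaki invariant. This is a finite but somewhat delicate check; the cleanest route is to intersect the full combinatorial classification of smooth Fano toric $4$-folds with Nakagawa's Kähler--Einstein sublist and read off the two Bott manifolds that remain.
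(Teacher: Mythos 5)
Your proposal matches the paper's argument, which is simply to invoke Nakagawa's classification of K\"ahler--Einstein Fano toric $4$-folds \cite{Nak93,Nak94}; the passage immediately preceding Corollary~\ref{Nakcor} states exactly this and leaves the verification as a read-off from Nakagawa's tables, just as you do. Your additional framing (the Fano necessity via the $x_4$-coefficient $2$ in $c_1$ together with the observation that $c_1$ of a Bott manifold is positive definite or indefinite, and the organisation by twist using Propositions~\ref{1tFano} and~\ref{2tmonoprop}) is a correct elaboration, though you could shortcut the twist $\geq 2$ cases entirely: any stage $4$ Bott manifold with $A^1_2\neq 0$ (which includes every twist-$3$ tower and many twist-$2$ ones) has non-reductive $\Aut_0$ by Corollary~\ref{A12}, hence cannot be K\"ahler--Einstein, so the whole check reduces to the twist $\leq 1$ products plus the handful of twist-$2$ Fanos with $A^1_2=0$ from Proposition~\ref{2tmonoprop}, which are then compared with Nakagawa's list.
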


We mention also that the Bott towers given by the matrix $A$ in
\eqref{2tspecial} are not Fano.

\begin{example}\label{3stageBottmono}
We specialize these results to the height $3$ Bott towers $M_3(a,b,c)$.
Here, the first Chern class in the distinguished bases is
\begin{equation}\label{B3c1}\begin{split}
c_1(M_3)&=(2+a+b)x_1+(2+c)x_2+2x_3, \\
  \notag   &=(2+a-b)x_1+(2-c)x_2+2y_3, \\
   \notag  &=(2-a+b-ac)x_1+(2+c)y_2+2x_3, \\
    \notag &=(2-a-b+ac)x_1+(2-c)y_2+2y_3.
\end{split}\end{equation}

\begin{proposition}\label{monoprop}
A height $3$ Bott tower $M_3(a,b,c)$ is Fano if and only if $(a,b,c)$ is one of the following:
\begin{itemize}
\item $(1,0,0),((1,1,1),(1,-1,-1)$,
\item $(0,b,c)$ with $b=0,\pm 1$ and $c=0,\pm 1$.
\item $(-1,0,0),(-1,0,1),(-1,0,-1)$.
\end{itemize}
\end{proposition}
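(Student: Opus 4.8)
The plan is to express the Fano condition as strict convexity of the anticanonical support function and then apply Batyrev's criterion. Since $M_3(a,b,c)$ is Fano precisely when $c_1(M_3)$ lies in the ample cone, and $c_1(M_3)$ is the class of the anticanonical divisor $-K=\sum_{\grr\in\cals}D_\grr$ (which is Cartier as $M_3$ is smooth, hence has a well-defined support function $\psi_{-K}$ with $\psi_{-K}(u_\grr)=1$ for every normal $u_\grr$), being Fano is equivalent to $\psi_{-K}$ being strictly convex. By Corollary~\ref{Batcor} (i.e.\ the inequalities~\eqref{stage3ineq} applied to $D=-K$, so $s_i=t_i=1$), this reduces to $\psi_{-K}(u_j)+\psi_{-K}(v_j)>\psi_{-K}(u_j+v_j)$ for $j=1,2,3$. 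Here $u_3+v_3=0$, so the $j=3$ inequality $2>0$ is automatic, while $u_2+v_2=-cv_3$ and $u_1+v_1=-av_2-bv_3$ give the two genuine conditions $2>\psi_{-K}(-cv_3)$ and $2>\psi_{-K}(-av_2-bv_3)$.

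The next step is to compute $\psi_{-K}$ on the plane $P$ spanned by $v_2,v_3$. The normals of $\grS_{a,b,c}$ lying in $P$ are exactly $v_2,v_3$, $u_2=-v_2-cv_3$ and $u_3=-v_3$, and the restriction of $\grS_{a,b,c}$ to $P$ is the complete fan with maximal $2$-cones $\mathrm{cone}(v_2,v_3)$, $\mathrm{cone}(v_2,u_3)$, $\mathrm{cone}(u_2,v_3)$, $\mathrm{cone}(u_2,u_3)$ --- which is nothing but the fan of the Hirzebruch surface $\calh_c$, the fiber of $M_3\to\bbc\bbp^1$. Since $\psi_{-K}$ is piecewise linear for this fan and equal to $1$ on each of the four rays, one reads off its linear form on each cone in coordinates dual to $\eps_2,\eps_3$: in particular $\psi_{-K}(-cv_3)=|c|$, so the first condition becomes $c\in\{-1,0,1\}$. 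For the second, $-av_2-bv_3$ lies in one of the four cones according to the signs of $a$, $b$ (and of $b-ac$ when $a>0$), and $2>\psi_{-K}(-av_2-bv_3)$ unwinds to: $b-a<2$ if $a\le0\le b$; $a+b>-2$ if $a\le 0$ and $b\le 0$; $a+ac-b<2$ if $a\ge0$ and $b\le ac$; and $a+b-ac<2$ if $a\ge0$ and $b\ge ac$.

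It then remains to list the integer triples $(a,b,c)$ satisfying $c\in\{-1,0,1\}$ together with the relevant inequality. The bookkeeping is lightened by noting that the Fano property is a biholomorphism invariant, so by the equivalences $M_3(a,b,c)\sim M_3(-a,b-ac,c)\sim M_3(a,-b,-c)$ of Example~\ref{st3equiv} one may assume $a\le 0$ and $c\ge 0$; there the conditions force $c\in\{0,1\}$ and $(a,b)\in\{(0,0),(0,1),(0,-1),(-1,0)\}$, giving eight triples. Transporting these eight back under the two equivalences (which flip the sign of $a$, and of $c$ and $b$, respectively) produces exactly the fifteen triples in the statement: the orbit of $(-1,0,1)$ is $\{(-1,0,1),(1,1,1),(-1,0,-1),(1,-1,-1)\}$, the orbit of $(-1,0,0)$ is $\{(-1,0,0),(1,0,0)\}$, and the remaining six reduced triples account for all nine $(0,b,c)$ with $b,c\in\{0,\pm1\}$.

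The work is entirely elementary; the only real obstacle is organizational --- correctly identifying, for every sign pattern of $(a,b,c)$, which cone of the $\calh_c$-fan contains $-av_2-bv_3$, and then enumerating the solutions without omission or repetition. A convenient sanity check is that in every surviving case $(2b-ac)c\ge 0$, i.e.\ $p_1(M_3)=c(2b-ac)x_1x_2$ (cf.~\eqref{p1}) is a nonnegative multiple of $x_1x_2$, in agreement with the last remark after Proposition~\ref{3stamcone}.
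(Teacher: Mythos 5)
Your core argument is correct and is essentially the same as the paper's implicit one: apply Batyrev's criterion~\eqref{stage3ineq} (Corollary~\ref{Batcor}) to the anticanonical divisor $-K$, for which $s_j=t_j=1$, and compute the piecewise linear form $\psi_{-K}$ on $u_j+v_j$. Your observation that the restriction of $\psi_{-K}$ to the plane $\langle v_2,v_3\rangle$ is just the anticanonical support function of the Hirzebruch fiber $\calh_c$ is a clean way of organizing the computation that the paper does not spell out (the paper's Example~\ref{3stageBottmono} instead displays $c_1$ in the four distinguished bases), but the underlying piecewise-linear arithmetic is the same. I checked the four inequalities you derive from the case analysis on which cone $-av_2-bv_3$ falls into, the reduction to $a\le 0$, $c\ge 0$ via the equivalences of Example~\ref{st3equiv}, and the resulting enumeration of fifteen triples; all of this is correct.

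The one genuine error is the closing \emph{sanity check}. It is false that every Fano triple satisfies $(2b-ac)c\ge 0$: the Koiso--Sakane example $(a,b,c)=(0,1,-1)$ is Fano (indeed K\"ahler--Einstein) and has $p_1(M_3)=c(2b-ac)x_1x_2=-2x_1x_2$, as the paper itself records right after Proposition~\ref{mono3stage}. The remark after Proposition~\ref{3stamcone} that you cite concerns the condition that the ample cone coincide with the full first octant in one of the four distinguished bases, which is a strictly stronger condition than $c_1$ merely being ample; $(0,1,-1)$ violates it while still being Fano. This slip does not affect the proof, but the sanity check should be removed.
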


The Bott manifolds are the equivalences classes described in Example
\ref{st3equiv}.  We give a representative in each equivalence
class. Explicitly,
\begin{proposition}\label{mono3stage}
Up to equivalence a Fano stage $3$ Bott manifold is one of the following
\[
M_3,(0,0,0),\; M_3(0,0,1),\;M_3(0,1,1),\;M_3(0,1,-1),\;M_3(-1,0,1).
\]
\end{proposition}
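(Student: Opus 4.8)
The plan is to take the complete list of Fano parameters $(a,b,c)$ furnished by Proposition~\ref{monoprop} and partition it into biholomorphism classes using the equivalences recorded in Example~\ref{st3equiv}. Recall from there that $M_3(a,b,c)\sim M_3(a,-b,-c)\sim M_3(-a,b-ac,c)\sim M_3(-a,-(b-ac),-c)$, that $M_3(a,b,0)\sim M_3(b,a,0)$ and $M_3(0,b,c)\sim M_3(0,c,b)$, and hence that $M_3(a,0,0)\sim M_3(0,a,0)\sim M_3(0,0,a)$; by the analysis of Section~\ref{equivsect} these generate all equivalences among stage~$3$ Bott towers. First I would apply these relations to the fifteen triples of Proposition~\ref{monoprop} and sort them into five orbits: $\{(0,0,0)\}$; the six-element orbit $\{(\pm1,0,0),(0,\pm1,0),(0,0,\pm1)\}$; $\{(0,1,1),(0,-1,-1)\}$; $\{(0,1,-1),(0,-1,1)\}$; and $\{(1,1,1),(1,-1,-1),(-1,0,1),(-1,0,-1)\}$, with respective representatives $M_3(0,0,0)=(\bbc\bbp^1)^3$, $M_3(0,0,1)$, $M_3(0,1,1)$, $M_3(0,1,-1)$ and $M_3(-1,0,1)$.

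The only identifications that are not completely immediate are those tying the $a=\pm1$ triples to $a=0$ ones: for instance $(1,1,1)\sim(-1,0,1)$ and $(1,-1,-1)\sim(-1,0,-1)$ come from $M_3(a,b,c)\sim M_3(-a,b-ac,c)$, and $(1,0,0)\sim(0,1,0)\sim(0,0,1)$ comes from $M_3(a,0,0)\sim M_3(0,a,0)\sim M_3(0,0,a)$; the rest follow from $b\mapsto-b$, $c\mapsto-c$ or from the transposition $(1\,2)$, which is available when $a=0$. Since $1+6+2+2+4=15$ exhausts the list of Proposition~\ref{monoprop}, every Fano stage~$3$ Bott manifold is equivalent to one of the five representatives, which gives the proposition.

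To see that the list is irredundant (i.e. the five representatives are pairwise inequivalent), I would note this is already automatic from the completeness of the $BC_3$-analysis of Section~\ref{equivsect}, but it can also be checked directly by invariants: by Corollary~\ref{3red}, $\Aut_0$ is reductive precisely for $M_3(0,0,0)$ and $M_3(0,1,-1)$, which separates these two from the other three; and within each group the value of $p_1=c(2b-ac)\,x_1x_2$ up to sign (a diffeomorphism invariant by Proposition~\ref{stage3diff}) distinguishes the members, taking the values $0$ and $-2x_1x_2$ on the reductive pair and $0$, $x_1x_2$, $2x_1x_2$ on $M_3(0,0,1)$, $M_3(-1,0,1)$, $M_3(0,1,1)$ respectively.

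I do not expect any genuine conceptual obstacle here: the substance of the statement lies entirely in Proposition~\ref{monoprop} and in the enumeration of equivalences in Example~\ref{st3equiv}, so the remaining work is the careful bookkeeping of verifying that no Fano triple has been missorted and no equivalence overlooked — this orbit count is the only place an error could creep in.
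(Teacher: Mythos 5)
Your proposal is correct and follows essentially the same route as the paper: list the fifteen Fano triples from Proposition~\ref{monoprop}, sort them into orbits under the equivalences of Example~\ref{st3equiv}, and read off one representative per orbit. The paper states the result without spelling out the orbit partition or verifying irredundancy (it merely lists the $p_1$ values afterward), so your explicit orbit count $1+6+2+2+4=15$ and your two-step distinguishing argument (reductivity of $\Aut_0$ via Corollary~\ref{3red}, then $p_1$ up to sign within each reductivity class) supply the bookkeeping the paper leaves implicit. Nothing is missing or incorrect.
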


The first Pontrjagin class $p_1(M_3(a,b,c))=c(2b-ac)x_1x_2$ is a diffeomorphism invariant. We have
\begin{align*}
p_1(M_3(0,0,0))&=p_1(M_3(0,0,1))=0, &p_1(M_3(0,1,1))&=2x_1x_2,, \\      
p_1(M_3(0,1,-1))&=-2x_1x_2, &p_1(M_3(-1,0,1))&=x_1x_2.
\end{align*}
The first of these manifolds is the $(\bbc\bbp^1)^3$, and the second is
$\bbc\bbp^1\times \calh_1$ which has topological twist $1$. The third and
fourth are $\bbc\bbp^1$ bundles over $\bbc\bbp^1\times \bbc\bbp^1$ which also
have topological twist $1$, while the last is a Bott manifold with topological
twist $2$. Note that the Picard number of a stage $3$ Bott manifold is $3$, so
it must be of type III in the list of the main theorem in \cite{WaWa82}. By
\cite{WaZh04} the monotone K\"ahler class admits a K\"ahler--Ricci soliton;
however, from~\cite[Remark 2.5]{Mab87} we have
\begin{corollary}\label{3KE}
The only stage $3$ Bott manifolds that admit a K\"ahler--Einstein metric are
the standard product $M_3(0,0,0)=(\bbc\bbp^1)^3$ and the Koiso--Sakane
projective bundle $M_3(0,1,-1)=\bbp(\BOne\oplus \calo(1,-1))$.
\end{corollary}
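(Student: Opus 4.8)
The plan is to combine the classification of Fano stage $3$ Bott manifolds in Proposition~\ref{mono3stage} with the Matsushima--Lichnerowicz reductivity criterion and the explicit metrics already recalled in the introduction. First I would reduce to the Fano case: a Kähler--Einstein metric satisfies $\Ric(\gro)=\lambda\gro$, so $c_1(M_3)=\tfrac{\lambda}{2\pi}[\gro]$ is a definite class; since $c_1(M_3)$ evaluates to $2$ on any $\bbc\bbp^1$ fibre of the tower (it is never zero or negative definite, by the discussion preceding Proposition~\ref{otFprop}), we must have $\lambda>0$, i.e.\ $M_3$ is Fano. By Proposition~\ref{mono3stage}, $M_3$ is then equivalent to one of $M_3(0,0,0)$, $M_3(0,0,1)$, $M_3(0,1,1)$, $M_3(0,1,-1)$ or $M_3(-1,0,1)$, so it suffices to decide these five cases.

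Next I would invoke the Matsushima--Lichnerowicz theorem \cite{Lic58}: a compact Kähler manifold admitting a constant scalar curvature (in particular Kähler--Einstein) metric has reductive identity component of its automorphism group. By Corollary~\ref{3red}, $\Aut_0(M_3(a,b,c))$ is reductive precisely when $a=b=c=0$, or $a=0$ and $bc<0$. Among the five Fano candidates, only $M_3(0,0,0)$ and $M_3(0,1,-1)$ satisfy this, so $M_3(0,0,1)$, $M_3(0,1,1)$ and $M_3(-1,0,1)$ admit no Kähler--Einstein metric. (Equivalently, one may argue directly from \cite[Remark~2.5]{Mab87}, where the Futaki invariant is computed on the toric Fano manifolds of low dimension, these three having nonvanishing Futaki invariant.)

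It remains to exhibit Kähler--Einstein metrics on the two survivors. For $M_3(0,0,0)=(\bbc\bbp^1)^3$ the product of three Fubini--Study metrics is Kähler--Einstein with positive scalar curvature. For $M_3(0,1,-1)=\bbp(\BOne\oplus\calo(1,-1))\to\bbc\bbp^1\times\bbc\bbp^1$ this is exactly the Koiso--Sakane example \cite{KoSa86}, recalled as item~(3) in the introduction. Hence these are precisely the stage $3$ Bott manifolds admitting a Kähler--Einstein metric.

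The argument has no genuinely hard step: everything rests on results already in hand (Proposition~\ref{mono3stage}, Corollary~\ref{3red}, Lichnerowicz's theorem, and the Koiso--Sakane construction). The only points needing a little care are the reduction to the Fano case and the observation that, because reductivity of $\Aut_0$ is a biholomorphism invariant and Corollary~\ref{3red} already incorporates the equivalences of Example~\ref{st3equiv}, none of the three excluded classes can secretly coincide with $M_3(0,0,0)$ or $M_3(0,1,-1)$; this is automatic, so the main (and only) thing to supply is the citation that the two candidates do carry Kähler--Einstein metrics.
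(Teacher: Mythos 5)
Your argument is correct, but it takes a genuinely different route from the paper's. The paper disposes of Corollary~\ref{3KE} essentially by citation: having listed the five Fano candidates in Proposition~\ref{mono3stage}, it appeals to Mabuchi's computation of Futaki invariants on toric Fano threefolds (\cite[Remark 2.5]{Mab87}, together with the Wang--Zhu soliton result \cite{WaZh04} and the Watanabe--Watanabe classification \cite{WaWa82}) to conclude which of them are K\"ahler--Einstein. You instead run the Matsushima--Lichnerowicz obstruction through Corollary~\ref{3red}: among the five Fano representatives, only $M_3(0,0,0)$ and $M_3(0,1,-1)$ have reductive $\Aut_0$, so the other three cannot carry CSC (hence K\"ahler--Einstein) metrics, while the two survivors are handled by the product metric and the Koiso--Sakane construction \cite{KoSa86}. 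Your route is more self-contained, resting entirely on machinery the paper itself develops (Corollary~\ref{3red} and Proposition~\ref{mono3stage}) plus one classical existence result, and it avoids invoking the Futaki-invariant tables. The paper's route is shorter on the page but outsources the nonexistence to Mabuchi. Your preliminary reduction to the Fano case is also fine (the sign of the Einstein constant is forced since $c_1$ pairs positively with each fibre $\bbc\bbp^1$, a fact the paper makes explicit just before Proposition~\ref{otFprop}), and your closing remark that the reductivity criterion of Corollary~\ref{3red} is constant on the biholomorphism classes of Example~\ref{st3equiv} correctly plugs the one potential gap.
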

\end{example}

\section{The symplectic viewpoint}\label{symp}

\subsection{Toric symplectic manifolds and finiteness}

Any K\"ahler manifold $(M,g,J,\gro)$ has an underlying symplectic manifold
$(M,\gro)$. The symplectic viewpoint studies conversely the space of complex
structures $J$ on a given symplectic manifold $(M,\gro)$ for which $\gro$ is
the K\"ahler form of a K\"ahler metric $g$ on $(M,J)$.

\begin{definition}\label{compsymp}
A complex $n$-manifold $M$ is {\it compatible} with a symplectic $2n$-manifold
$(X,\gro)$ if there is a diffeomorphism $f\colon M \to X$ such that $f^*\gro$
is a K\"ahler form on $M$.  We also say that the symplectic manifold
$(X,\gro)$ is {\it of K\"ahler type} (with respect to $M$).
\end{definition}

If $f\colon M\to X$ is such a diffeomorphism and $M$ is toric with respect to
a complex $n$-torus $\bbt^c$, then $T^n=f_*(\bbt^c)\cap\Symp(X,\gro)$ is a
(real) hamiltonian $n$-torus in $\Symp(X,\gro)$ with momentum map $\mu\colon
X\to \gt^*$ (where $\gt$ is the Lie algebra of $T^n$). Then $(X,\gro,T^n)$ is
a \emph{toric symplectic manifold} and (for $X$ compact) the image of $\mu$ is
a compact convex polytope $P$ in $\gt^*$, called the \emph{Delzant polytope}
of $(X,\omega,T^n)$, such that the collection of cones in $\gt$ dual to the
faces of $P$ is the fan of $M$.

Hamiltonian $n$-tori $T^n$ and $\tilde{T}^n$ in $\Symp(X,\gro)$ define
\emph{equivalent} toric symplectic manifolds if there is a symplectomorphism
that intertwines them, i.e., they are conjugate as subgroups of
$\Symp(X,\gro)$. Thus the set $\calc_n(X,\gro)$ of conjugacy classes of
hamiltonian $n$-tori in $\Symp(X,\gro)$ parametrizes equivalence classes of
toric symplectic structures on $(X,\gro)$.  In~\cite[Prop.~3.1]{McD11}, McDuff
shows that the set $\calc_n(X,\gro)$ is finite.

\begin{proposition}\label{finite} Let $(X,\gro)$ be a symplectic $2n$-manifold.
Then there are finitely many biholomorphism classes of toric complex manifolds
compatible with $(X,\gro)$, and this set is naturally a quotient of
$\calc_n(X,\gro)$.
\end{proposition}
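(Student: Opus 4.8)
The plan is to produce a natural surjection $\Psi\colon\calc_n(X,\gro)\twoheadrightarrow\{\text{biholomorphism classes of toric complex manifolds compatible with }(X,\gro)\}$; since $\calc_n(X,\gro)$ is finite by McDuff~\cite[Prop.~3.1]{McD11}, both the finiteness and the ``natural quotient'' assertions follow immediately. To define $\Psi$, start with a hamiltonian $n$-torus $T\leq\Symp(X,\gro)$: then $(X,\gro,T)$ is a compact toric symplectic manifold with Delzant polytope $P_T\subset\gt^*$, and the fan $\grS_{P_T}$ dual to the faces of $P_T$ is a complete smooth fan, hence defines a toric complex manifold $M_{P_T}$. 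By Delzant's theorem the symplectic toric manifold with polytope $P_T$ is unique up to $T$-equivariant symplectomorphism and carries a canonical $T$-invariant K\"ahler structure whose underlying complex manifold is $M_{P_T}$; transporting this structure back through the symplectomorphism shows $M_{P_T}$ is compatible with $(X,\gro)$. Set $\Psi([T])=[M_{P_T}]$.

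\textbf{Well-definedness.} If $T$ and $\tilde T$ are conjugate by some $\phi\in\Symp(X,\gro)$, then $\phi$ induces an isomorphism between the lattices of circle subgroups of $T$ and $\tilde T$ inside their Lie algebras, under which a momentum map for $T$ is carried to a momentum map for $\tilde T$ up to an additive constant; hence $P_T$ and $P_{\tilde T}$ differ by an element of the integral-affine group $\gt^*\rtimes\Aut(\grL)$. Consequently the dual fans $\grS_{P_T}$ and $\grS_{P_{\tilde T}}$ are isomorphic as fans-with-lattice, so $M_{P_T}$ and $M_{P_{\tilde T}}$ are equivariantly biholomorphic, in particular biholomorphic, and $\Psi$ is well defined.

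\textbf{Surjectivity.} Let $M$ be a toric complex manifold compatible with $(X,\gro)$ via a diffeomorphism $f\colon M\to X$, so $f^*\gro$ is a K\"ahler form on $M$ with respect to the given complex structure $J$. Averaging $f^*\gro$ over the maximal compact torus $T^n_M\leq\bbt^c$ yields a $T^n_M$-invariant closed positive $(1,1)$-form $\gro'$ on $M$ with $[\gro']=[f^*\gro]$; the path $(1-t)f^*\gro+t\gro'$ consists of K\"ahler (in particular symplectic) forms in a fixed cohomology class, so Moser's theorem provides a diffeomorphism $\psi\colon M\to M$ with $\psi^*\gro'=f^*\gro$. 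Then $f\circ\psi^{-1}\colon(M,\gro')\to(X,\gro)$ is a symplectomorphism, and $(M,\gro',T^n_M)$ is a toric symplectic manifold whose Delzant polytope (being the momentum image of a $T^n_M$-invariant K\"ahler form in a class in $\cala(M)$) has dual fan equal to the fan $\grS_M$ of the complex manifold $M$. Pushing $T^n_M$ forward through $f\circ\psi^{-1}$ produces a hamiltonian $n$-torus $T\leq\Symp(X,\gro)$ with $P_T$ integral-affinely equivalent to that polytope, so $\grS_{P_T}\cong\grS_M$ as fans; since a smooth complete toric variety is determined by its fan, $M_{P_T}\cong M$ as complex manifolds, i.e. $\Psi([T])=[M]$.

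\textbf{Main obstacle.} The only genuinely delicate point is the well-definedness step: one must use that a symplectomorphism normalizing a torus acts on its Lie algebra through a \emph{lattice} automorphism, so that the Delzant polytopes are related \emph{integral}-affinely rather than merely affinely, together with the fact that neither the averaging nor the Moser isotopy alters the integral-affine class of the momentum polytope. Everything else is routine bookkeeping inside the Delzant correspondence. (If one tracks \emph{equivariant} biholomorphism classes instead, the same argument exhibits that finer set as a quotient of $\calc_n(X,\gro)$, and the plain biholomorphism classes form a further quotient of it.)
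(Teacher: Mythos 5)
Your proposal follows essentially the same route as the paper: define a map from conjugacy classes of hamiltonian $n$-tori to biholomorphism classes of compatible toric complex manifolds via the Delzant polytope and its dual fan, check well-definedness from integral-affine equivalence of momentum polytopes under conjugation, check surjectivity, and invoke McDuff's finiteness of $\calc_n(X,\gro)$. The one place you go further than the paper is the surjectivity step: the paper simply asserts ``any such biholomorphism class arises in this way,'' relying on the discussion in the preamble to Proposition~\ref{finite} where the torus $T^n = f_*(\bbt^c)\cap\Symp(X,\gro)$ is introduced directly, whereas you make the underlying argument explicit by averaging $f^*\gro$ over the compact torus to obtain an invariant cohomologous K\"ahler form and then applying Moser's trick to produce the required symplectomorphism. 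That extra detail is correct and tidies up a point the paper leaves implicit, but it does not change the strategy.
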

\begin{proof} A hamiltonian $n$-torus $T^n$ in $\Symp(X,\gro)$ determines
a Delzant polytope $P$ in $\gt^*$ and hence a fan in $\gt$. Let $M$ be the
toric complex manifold constructed from the fan as a quotient of $\bbc_\cals$
where $\cals$ is the set of rays in the fan. Then Delzant's
Theorem~\cite{Del88} says that $(X,\gro)$ is $T^n$-equivariantly
symplectomorphic to a symplectic quotient of $\bbc_\cals$ canonically
diffeomorphic to $M$, in such a way that $M$ is compatible with $(X,\omega)$
and $T^n$ is the induced torus. Furthermore, if $T^n$ and $\tilde{T}^n$ are
conjugate hamiltonian $n$-tori, then by~\cite{Del88}, their Delzant polytopes
$P$ and $\tilde P$ are equivalent by an affine transformation
$\gt\to\tilde\gt$ whose linear part is integral, hence the associated fans are
equivalent, and so the corresponding toric complex manifolds are equivariantly
biholomorphic---see for example \cite[Theorem 1.13]{Oda88}.

Thus there is a well-defined map from $\calc_n(X,\gro)$ to biholomorphism
classes of toric complex manifolds compatible with $(X,\gro)$, and any such
biholomorphism class arises in this way, so the map is a surjection. The
biholomorphism classes thus form a quotient of $\calc_n(X,\gro)$, which is
finite by~\cite[Prop.~3.1]{McD11}.
\end{proof}

To apply this result to Bott towers, we let $\BT(X,\gro)$ denote the set of
Bott towers that are compatible with $(X,\gro)$ and $\BM(X,\gro)$ the set of
their biholomorphism classes (elements of $\BM=\BT_0/\BT_1$). When this set is
nonempty, we say $(X,\gro)$ has \emph{Bott type}. If $M_n(A)$ is compatible
with $(X,\gro)$, so are all elements of its biholomorphism class.

\begin{theorem}\label{conjclassthm}
Let $(X,\gro)$ be a symplectic $2n$-manifold of Bott type. Then there is a
canonical surjection $\calc_n(X,\gro) \to \BM(X,\gro)$ sending the conjugacy
class of a hamiltonian $n$-torus $T^n$ to the class of Bott towers compatible
with $(X,\gro)$ and $T^n$. In particular, $\BM(X,\gro)$ is finite.
\end{theorem}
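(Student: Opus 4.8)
The plan is to specialise to Bott towers the construction carried out in the proof of Proposition~\ref{finite}, and then read off finiteness from McDuff's theorem. Recall that a hamiltonian $n$-torus $T^n\leq\Symp(X,\gro)$ has a momentum polytope $P\subset\gt^*$, and that Delzant's theorem~\cite{Del88} builds from $P$ a toric complex manifold $M_{T^n}$ (the toric variety of the normal fan of $P$) together with a diffeomorphism $M_{T^n}\to X$ pulling $\gro$ back to a K\"ahler form and inducing $T^n$; conjugate tori give affinely (integrally) equivalent polytopes, hence equivariantly biholomorphic toric manifolds, so $[T^n]\mapsto[M_{T^n}]$ is a well-defined surjection of $\calc_n(X,\gro)$ onto the set of biholomorphism classes of toric complex manifolds compatible with $(X,\gro)$.

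Next I would single out the Bott manifolds among these via the Masuda--Panov characterisation~\cite[Theorem~3.4]{MaPa08}: $M_{T^n}$ is a Bott manifold exactly when its fan is the cone over an $n$-cross, i.e.\ when $P$ is combinatorially an $n$-cube; in that case $M_{T^n}$ is, by the uniqueness half of Delzant's theorem, the unique (up to equivariant biholomorphism) Bott tower compatible with $(X,\gro)$ and $T^n$, so its biholomorphism class is precisely the class of Bott towers compatible with $(X,\gro)$ and $T^n$, an element of $\BM(X,\gro)$. This yields the desired map onto $\BM(X,\gro)$, defined on those conjugacy classes $[T^n]$ for which $M_{T^n}$ is a Bott manifold --- equivalently, on all of $\calc_n(X,\gro)$ once one knows every compatible toric complex manifold is a Bott manifold. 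For surjectivity: since $(X,\gro)$ has Bott type there is a compatible Bott tower $M_n(A)$, say via a diffeomorphism $f$; then $T^n_A:=f_*(\bbt^c)\cap\Symp(X,\gro)$ is a hamiltonian $n$-torus whose Delzant reconstruction is equivariantly biholomorphic to $M_n(A)$, so $[T^n_A]$ maps to $[M_n(A)]$, and in this way every class in $\BM(X,\gro)$ is attained. Finally $\calc_n(X,\gro)$ is finite by \cite[Prop.~3.1]{McD11}, so the subset on which the map is defined is finite, whence its surjective image $\BM(X,\gro)$ is finite.

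The step I expect to be the main obstacle is the passage between the symplectic and the toric pictures: one must make sure that the momentum polytope of a toric structure which ``should'' come from a Bott manifold really is combinatorially an $n$-cube, so that \cite[Theorem~3.4]{MaPa08} applies and the target of the surjection is exactly $\BM(X,\gro)$ and nothing larger. A related delicate point is whether the surjection can be taken to be defined on \emph{all} of $\calc_n(X,\gro)$, i.e.\ whether on a symplectic manifold of Bott type every compatible toric complex manifold is automatically a Bott manifold; this amounts to a cohomological-rigidity statement (a smooth projective toric variety diffeomorphic to a Bott manifold is a Bott manifold), and absent such an input one simply states the theorem via the canonical surjection constructed above.
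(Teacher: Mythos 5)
You have correctly identified, in your final paragraph, exactly the point on which your argument is incomplete: in order for the map $\calc_n(X,\gro)\to\BM(X,\gro)$ to be defined on \emph{all} of $\calc_n(X,\gro)$ (as the theorem asserts), one must know that \emph{every} toric complex manifold compatible with $(X,\gro)$ is a Bott manifold, not merely that at least one is. Absent this, you only obtain a surjection from a subset of $\calc_n(X,\gro)$, which suffices for the finiteness conclusion but does not establish the ``canonical surjection'' of the statement. That is a genuine gap, not an optional refinement.

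The paper closes this gap with \cite[Theorem 4.2]{ChSu11}: since $(X,\gro)$ is of Bott type, its integral cohomology ring is a Bott quadratic algebra of rank $n$. Because the cohomology ring of any compact smooth toric manifold is (a quotient of) its Stanley--Reisner ring and hence determines the combinatorial type of its Delzant polytope, it follows that every toric complex manifold $M$ compatible with $(X,\gro)$ has Delzant polytope combinatorially equivalent to an $n$-cube; applying \cite[Theorem 3.4]{MaPa08} then shows $M$ is equivariantly biholomorphic to a Bott tower. This is precisely the cohomological-rigidity input you anticipated would be needed. To complete your proof along the lines you sketched, insert this Choi--Suh step before invoking Masuda--Panov; the rest of your argument (Delzant reconstruction giving well-definedness on conjugacy classes, surjectivity via the torus induced by a compatible Bott tower, and finiteness via McDuff) matches the paper's and is correct.
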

\begin{proof} Since $(X,\gro)$ is of Bott type, its cohomology ring is a Bott
quadratic algebra of rank $n$ by \cite[Theorem 4.2]{ChSu11}. Hence if $M$ is a
toric complex manifold compatible with $(X,\gro)$, the Delzant polytope of the
induced toric symplectic structure is combinatorially equivalent to an
$n$-cube. But then \cite[Theorem 3.4]{MaPa08} implies that $M$ is
equivariantly biholomorphic to a Bott tower $M_n(A)$. The result now follows
from Proposition~\ref{finite}.
\end{proof}

It is natural to ask whether the map $\calc_n(X,\gro) \to \BM(X,\gro)$ is a
bijection. For this, suppose $T^n$ and $\tilde T^n$ have the same image. Then
there is a diffeomorphism $f$ of $(X,\gro)$ intertwining $T^n$ and $\tilde
T^n$. Hence $(X,\gro,T^n)$ and $(X,f^*\gro,T^n)$ are toric symplectic manifolds
which are symplectomorphic (by $f$).

\subsection{Symplectic products of spheres}

We are interested in finding the compatible Bott structures to a given
symplectic manifold for certain cases.  We need to fix the diffeomorphism type
and then the symplectic form.

The product of $2$-spheres $(S^2)^n=(\bbc\bbp^1)^n$ admits symplectic forms
$\gro_\bfk=\sum_{i=1}^nk_i\gro_i$ where $k_i\in\bbr^+$ and $\gro_i$ is the
standard volume form on the $i$th factor of $(S^2)^n$. In this case we know from
Theorem \ref{mapathm} that this diffeomorphism type is determined entirely by
its integral graded cohomology ring.

\begin{lemma}\label{3inj} Let $\gro_\bfk=\sum_{i=1}^nk_i\gro_i$
and $\gro_{\bfk'}=\sum_{i=1}^nk_i'\gro_i$ be symplectic forms on $(S^2)^n$
with $k_i,k_i'\in \bbr^+$, and suppose $((S^2)^n,\gro_\bfk)$ and
$((S^2)^n,\gro_{\bfk'})$ are symplectomorphic. Then they are equivalent as
toric symplectic manifolds.
\end{lemma}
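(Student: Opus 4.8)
The plan is to deduce from a symplectomorphism that $\bfk'$ is a permutation of $\bfk$, and then to exhibit a symplectomorphism that realises this permutation and, at the same time, intertwines the two standard toric structures.

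First I would pass to cohomology. Let $x_1,\dots,x_n$ be the standard ring generators of $H^*((S^2)^n,\bbz)$ (so $x_i^2=0$ and $x_ix_j\neq0$ for $i\neq j$), normalised so that $[\gro_i]=x_i$; then $[\gro_\bfk]=\sum_ik_ix_i$. A symplectomorphism $F\colon((S^2)^n,\gro_\bfk)\to((S^2)^n,\gro_{\bfk'})$ is a diffeomorphism, so it pulls back to a graded ring isomorphism $F^*$ of $H^*((S^2)^n,\bbz)$, and since $F^*\gro_{\bfk'}=\gro_\bfk$ we have $F^*[\gro_{\bfk'}]=[\gro_\bfk]$. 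Because $(\sum_i\lambda_ix_i)^2=2\sum_{i<j}\lambda_i\lambda_jx_ix_j$ and the products $x_ix_j$ with $i<j$ are linearly independent, the primitive square-zero classes in $H^2$ are exactly $\pm x_1,\dots,\pm x_n$ (this is also Lemma~\ref{primsq0} with all $\gra_j=0$). Hence $F^*x_i=\eps_ix_{\tau(i)}$ for some permutation $\tau$ and signs $\eps_i=\pm1$. Comparing coefficients in the identity $F^*[\gro_{\bfk'}]=[\gro_\bfk]$ and using that all the $k_i$ and $k_i'$ are strictly positive forces $\eps_i=+1$ for every $i$ and $k'_m=k_{\tau(m)}$ for all $m$; in particular $\bfk'$ is obtained from $\bfk$ by the permutation $\tau$.

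Next I would realise $\tau$ symplectically. The diffeomorphism $G\colon(S^2)^n\to(S^2)^n$, $G(p_1,\dots,p_n)=(p_{\tau(1)},\dots,p_{\tau(n)})$, satisfies $G^*\gro_{\bfk'}=\gro_\bfk$ exactly by the relation $k'_m=k_{\tau(m)}$, so it is a symplectomorphism between the two manifolds. Moreover $G$ carries the standard hamiltonian $n$-torus $T^n$ of $((S^2)^n,\gro_\bfk)$ (acting by rotation on each factor) onto the standard $T^n$ of $((S^2)^n,\gro_{\bfk'})$: it intertwines the two actions up to the automorphism $t\mapsto(t_{\tau(1)},\dots,t_{\tau(n)})$ of $T^n$, so $GT^nG^{-1}=T^n$ as a subgroup of $\Symp$. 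Therefore the two standard toric symplectic structures are conjugate, which is precisely the assertion that $((S^2)^n,\gro_\bfk)$ and $((S^2)^n,\gro_{\bfk'})$ are equivalent as toric symplectic manifolds.

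The only delicate point is the first step: identifying the primitive square-zero classes of $H^2$ and then exploiting positivity of $\bfk$ and $\bfk'$ to eliminate the signs $\eps_i$. Everything after that is a routine construction; I would only remark that reparametrising $T^n$ by the permutation $\tau$ is permitted in the notion of equivalence of toric symplectic manifolds, since the associated Delzant polytopes need only agree up to an affine map with integral linear part, exactly as used in the proof of Proposition~\ref{finite}.
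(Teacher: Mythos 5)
Your argument is correct and follows essentially the same route as the paper's: both extract from a symplectomorphism a ring isomorphism of $H^*((S^2)^n,\bbz)$, use the vanishing of squares $x_i^2=0$ (in the paper via the monomial form of the matrix of $f_*$, in yours via identifying the primitive square-zero classes with $\pm x_i$) to get a signed permutation, eliminate the signs by positivity of $\bfk,\bfk'$, and realise the resulting permutation by permuting the $S^2$ factors. Your final paragraph about conjugation of the standard torus only spells out more explicitly what the paper leaves implicit in ``equivalent by permuting the factors.''
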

\begin{proof} Let $f\colon (S^2)^n\to (S^2)^n$ be a diffeomorphism such
that $f^*\gro_\bfk=\gro_{\bfk'}$. This induces a linear map $f_*$ on
$H^2((S^2)^n,\bbz)\cong\bbz^n$ via the basis $x_1,\ldots x_n$ of standard area
forms on the $S^2$ factors. Now $f_*$ must preserve the cup product relations
$x_i\cup x_i=0$. This implies that the matrix $B$ of $f_*$ with respect to
$x_1,\ldots x_n$ satisfies the relations $B^j_iB^k_i=0$ for all $i$ and $j<k$.
Since $\det B=\pm 1$, $B$ is a monomial matrix whose nonzero entries are $\pm
1$. Hence $k_i=\pm k_{\sigma(i)}'$ for some permutation $\sigma\in S_n$, and
the signs must be positive since $k_i,k_i'\in \bbr^+$. It follows that
$((S^2)^n,\gro_\bfk)$ and $((S^2)^n,\gro_{\bfk'})$ are equivalent by permuting
the factors of $(S^2)^n$.
\end{proof}

\begin{proposition}\label{symkahclass}
Let the Bott tower $M_n(A)$ be diffeomorphic to $(S^2)^n$ with a split
symplectic form $\gro_\bfk=\sum_{i=1}^nk_i\gro_i$ where $k_i\in\bbr^+$ and $\gro_i$
is the standard volume form on the $i$th factor of $(S^2)^n$. Suppose further
that $A^j_i\leq 0$ for all $j<i$. Then with an appropriate choice of order
$[\gro]$ is a K\"ahler class if and only if
$\sum_{i=1}^nk_i(\grd^j_i-m^j_i)>0$ and $\gra_j^2=0$ for each $j=1,\ldots,n$
where $m^j_i=-\frac{1}{2}A^j_i$ if $j<i$ and $m^j_i=0$ if $j\geq i$.
\end{proposition}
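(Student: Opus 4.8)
The plan is to reduce the statement to Proposition~\ref{Kahcone}, which (under the hypothesis $A^j_i\le0$) identifies $\calk(M_n(A))$ with the positive orthant in the basis $x_1,\dots,x_n$, by pinning down where the symplectic class $[\gro_\bfk]$ can sit inside $H^2(M_n(A),\bbr)$, using the rigidity of primitive square-zero classes.

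First I would unpack the hypothesis. Since $M_n(A)$ is diffeomorphic to $(S^2)^n$, Theorem~\ref{mapathm} gives $A=2C_n\cdots C_1-\BOne_n$; in particular every off-diagonal entry of $A$ is even, so each $m^j_i=-\tfrac12A^j_i$ (for $j<i$) is a non-negative integer, and $M_n(A)$ is $\bbq$-trivial, so $\gra_k^2=0$ for all $k$. Thus the condition $\gra_j^2=0$ in the statement is automatic, and the real content is the positivity inequality. By Lemma~\ref{primsq0} the primitive square-zero elements of $H^2(M_n(A),\bbz)$ are then precisely $\pm\beta_k$, $k=1,\dots,n$, where $\beta_k=x_k+\tfrac12\gra_k=\sum_j(\delta^j_k-m^j_k)x_j$; note that $\beta_k$ involves only $x_1,\dots,x_k$, with $x_k$-coefficient $1$ and lower coefficients $\le0$. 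Next, given a diffeomorphism $f\colon M_n(A)\to(S^2)^n$, let $\xi_1,\dots,\xi_n$ be the standard area generators of $H^2((S^2)^n,\bbz)$, so that $[\gro_\bfk]=\sum_i k_i\xi_i$ and $\xi_i^2=0$. Since $f^*$ is a graded ring isomorphism it sends each $\xi_i$ to a primitive square-zero class $\epsilon_i\beta_{\sigma(i)}$, and as the $f^*\xi_i$ form a $\bbz$-basis, $\sigma$ is a permutation; hence $f^*[\gro_\bfk]=\sum_k\epsilon_k k_{\pi(k)}\beta_k$ for some permutation $\pi$ and signs $\epsilon_k=\pm1$. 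Conversely, composing $f$ with permutations and orientation-reversals of the $S^2$ factors realizes every such signed, permuted combination.

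It then remains to apply Proposition~\ref{Kahcone}: $f^*[\gro_\bfk]$ is a K\"ahler class if and only if each of its $x_j$-coefficients is positive. Expanding $\sum_k\epsilon_k k_{\pi(k)}\beta_k$ in the $x$-basis, the $x_j$-coefficient is $\epsilon_j k_{\pi(j)}-\sum_{k>j}\epsilon_k k_{\pi(k)}m^j_k$. Running this downward from $j=n$ (where it is just $\epsilon_n k_{\pi(n)}$) and using $m^j_k\ge0$ and $k_i>0$, positivity forces successively $\epsilon_n=1$, then $\epsilon_{n-1}=1$, and so on down to $\epsilon_1=1$, so the sign pattern is always trivial. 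Therefore $[\gro_\bfk]$ is a K\"ahler class for $M_n(A)$ (for some identification with $(S^2)^n$) if and only if, after relabelling $k_1,\dots,k_n$ by a suitable permutation — the ``appropriate choice of order'' — one has $\sum_i k_i(\delta^j_i-m^j_i)>0$ for every $j$, which together with the automatic $\gra_j^2=0$ is the asserted criterion. The step with real content is this last one: ruling out that some exotic diffeomorphism turns a ``wrong'' signed combination of the $\beta_k$ into a K\"ahler class. That is exactly the descending induction above, which works because each $\beta_k$ is lower-triangular in the $x$-basis with positive leading coefficient.
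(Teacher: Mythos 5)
Your proposal is correct and follows essentially the same approach as the paper: identify the primitive square-zero classes $\beta_k = x_k + \tfrac12\gra_k$ via Lemma~\ref{primsq0}, expand $[\gro_\bfk]$ in the $x_j$-basis, and apply Proposition~\ref{Kahcone}, with $\gra_j^2=0$ coming for free from diffeomorphism to $(S^2)^n$. Your descending induction showing that the sign pattern $\epsilon_k$ must be trivial fills in a point the paper treats tersely (``up to order and sign \ldots we can identify $[\gro_i]$ with $x_i+\gra_i/2$''), and is a genuine clarification rather than a different route.
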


\begin{proof}
Up to order and sign $[\gro_1],\ldots, [\gro_n]$ is the unique basis of
square-zero generators of the cohomology ring $H^*(M_n(A),\bbz)$. Thus, if
$M_n(A)$ is diffeomorphic to $(S^2)^n$, by Lemma \ref{primsq0}, we can
identify $[\gro_i]$ with
\begin{equation}\label{symident}
x_i+\frac{\gra_i}{2}=x_i+\frac{1}{2}\sum_{j=1}^{i-1}A^j_ix_j.
\end{equation}
In particular, the entries $A^j_i$ with $j<i$ must be even. Let us consider
the matrix whose entries satisfy $m^j_i=-\frac{1}{2}A^j_i$ for all $j<i$ and
zero elsewhere. So the square-zero classes are $x_i-\sum_{j=1}^{i-1}m^j_ix_j$
with $m^j_i\geq 0$. Thus, using Equation \eqref{symident} we have
\[
[\gro]=\sum_ik_i[\gro_i]=\sum_ik_i(x_i-\sum_{j=1}^{n}m^j_ix_j)
=\sum_{i=1}^n\sum_{j=1}^{n}k_i(\grd^j_i-m^j_i)x_j.
\]
Then since $x_i$ is the Poincar\'e dual $PD(D_{u_i})$, Proposition
\ref{Kahcone} implies that $[\gro]$ is a K\"ahler class if and only if
$\sum_{i=1}^nk_i(\grd^j_i-m^j_i)>0$ for each $j$.

Furthermore, since $M_n$ is diffeomorphic to $(S^2)^n$ we know that the total
Pontrjagin class is trivial, which by Equation \eqref{totPont} is equivalent
to $\gra_j^2=0$ for all $j$.
\end{proof}

The compatibility problem is complicated for arbitrary $n$, so we restrict
ourselves to $n=2,3$. First we take $n=2$ with diffeomorphism type $S^2\times
S^2$, and take the split symplectic structure as
\[
\gro_{k_1,k_2}=k_1\gro_1+k_2\gro_2
\]
where $\gro_i$ is the pullback of the volume form by the projection map ${\rm
  pr}_i\colon S^2\times S^2\to S^2$ onto the $i$th factor and $k_1,k_2\in
\bbr^+$. Now we know that it is precisely the even Hirzebruch surfaces that
are diffeomorphic to $S^2\times S^2$. Let us take $a\leq 0$ and even, so we
set $a=-2m$ with $m\in\bbz_{\geq 0}$. Since $x_i$ is the Poincar\'e
dual of $[D_{u_i}]$, from \eqref{symident} we can write
\[
[\gro_{k_1,k_2}]=k_1x_1+k_2(x_2-mx_1)=
(k_1-mk_2)x_1+k_2x_2=(k_1-mk_2){\rm PD}(D_{u_1})+ k_2{\rm PD}(D_{u_2}).
\]
Thus, the symplectic manifold $(S^2\times S^2,\gro_{k_1,k_2})$ is of Bott type
with respect to $\calh_a=\calh_{-2m}$ if and only if $m<\frac{k_1}{k_2}$. So
the number of compatible complex structures is
$\bigl\lceil\frac{k_1}{k_2}\bigr\rceil$.  Karshon proves in \cite{Kar03} that
$\bigl\lceil\frac{k_1}{k_2}\bigr\rceil$ is the number of conjugacy classes of
maximal tori in the symplectomorphism group.

In the case that $a$ is positive, we put $a=2m$ and use $PD(D_{u_1})$ and
$PD(D_{v_2})$ as the algebraic generators stipulated above. Using
${\rm PD}(D_{v_2})=x_2+2mx_1$, we have that
\[
[\gro_{k_1,k_2}]=k_1x_1+k_2(x_2+mx_1)
=(k_1-mk_2){\rm PD}(D_{u_1})+k_2{\rm PD}(D_{v_2}).
\]
So for $m$ positive we have $k_2>0,k_1>mk_2$. So the symplectic manifold
$(S^2\times S^2,\gro_{k_1,k_2})$ is of Bott type with respect to $\calh_{2m}$
for any $m\in\bbz$ if and only if $k_2>0,k_1>|m|k_2$. It is known that the
cases $m$ and $-m$ are equivalent as toric complex manifolds, but not as
Bott towers.

Next we consider the case $n=3$. For the symplectic viewpoint we first fix a
diffeomorphism type. From Proposition \ref{c0} we know that $M_3(a,b,c)$ is
diffeomorphic to $(S^2)^3$ if and only if $c(2b-ac)=0$, and $a,b$ and $c$ are
all even. We now assume the latter by replacing $(a,b,c)$ by $(2a,2b,2c)$ and
consider the Bott tower $M_3(2a,2b,2c)$ which is diffeomorphic to $(S^2)^3$
with the symplectic form
\begin{equation}\label{3omega}
\gro_{k_1,k_2,k_3}=k_1\gro_1+k_2\gro_2+k_3\gro_3>0
\end{equation}
where $\gro_i$ is the standard area form on the $i$th factor $S^2$ and
$k_i\in\bbr^+$, and as before the $k_i$ are ordered $0<k_3\leq k_2\leq k_1$.

\begin{theorem}\label{splitsymcase}
The symplectic manifold $((S^2)^3,\gro_{k_1,k_2,k_3})$ is of Bott type with
respect to $M_3(2a,2b,2c)$ if and only if one of the following hold\textup:
\begin{enumerate}
\item $c=0$ with $k_1-|a|k_2-|b|k_3>0,~k_2>0,~k_3>0$.
\item $c\neq 0$ and $b=ac$ with $k_1-|a|(k_2-|c|k_3)>0,~k_2-|c|k_3>0,~k_3>0$.
\end{enumerate}
\end{theorem}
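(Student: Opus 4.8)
The plan is to reduce the statement to a question about membership in the K\"ahler cone $\calk(M_3(2a,2b,2c))$, and then to appeal to the K\"ahler‑cone computations of \S\ref{ampleKahlercone} together with Proposition~\ref{symkahclass}. First I would observe that, by Proposition~\ref{c0} applied to the matrix with off‑diagonal entries $2a,2b,2c$, the Bott manifold $M_3(2a,2b,2c)$ is diffeomorphic to $(S^2)^3$ if and only if $c(b-ac)=0$, i.e.\ $c=0$ or $b=ac$; if neither holds then $M_3(2a,2b,2c)$ is not diffeomorphic to $(S^2)^3$ at all, hence not compatible with any symplectic structure on $(S^2)^3$, and both sides of the asserted equivalence are vacuously false. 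So assume $c(b-ac)=0$; then $M_3(2a,2b,2c)$ is $\bbq$‑trivial with all off‑diagonal matrix entries even, so by Lemma~\ref{primsq0} its primitive square‑zero classes are $\pm\beta_1,\pm\beta_2,\pm\beta_3$ (the even branch of \eqref{betas}), and likewise those of $(S^2)^3$ are $\pm[\gro_1],\pm[\gro_2],\pm[\gro_3]$. Thus any diffeomorphism $f\colon M_3(2a,2b,2c)\to(S^2)^3$ carries $[\gro_{k_1,k_2,k_3}]$ to $\sum_i\eps_ik_i\beta_{\grs(i)}$ for some $\grs\in\mathrm{Sym}_3$ and signs $\eps_i\in\{\pm1\}$, and conversely every such class is realised by a diffeomorphism, since each permutation‑with‑signs of $\beta_1,\beta_2,\beta_3$ is a ring automorphism of $H^*(M_3(2a,2b,2c),\bbz)\cong H^*((S^2)^3,\bbz)$ and these are induced by self‑diffeomorphisms of $(S^2)^3$. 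Since a K\"ahler class forces all the relevant $\eps_i=+1$, compatibility amounts to $\calk(M_3(2a,2b,2c))$ containing $\sum_ik_i\beta_i$ for an appropriate ordering of $k_1,k_2,k_3$.

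For the case $c=0$, I would use the fibre inversions $\grt_2,\grt_3$ of Example~\ref{st3equiv} to replace $M_3(2a,2b,0)$ by the biholomorphic tower $M_3(-2|a|,-2|b|,0)$, all of whose off‑diagonal entries are $\leq 0$ (legitimate, as compatibility depends only on the biholomorphism class). For this tower $\gra_2$ and $\gra_3$ are multiples of $x_1$, so $\gra_j^2=0$ for every $j$, and Proposition~\ref{symkahclass} applies verbatim: with the appropriate order of $k_1,k_2,k_3$, the class is K\"ahler exactly when $k_1-|a|k_2-|b|k_3>0$, $k_2>0$ and $k_3>0$, which is case~(1).

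For the case $c\neq 0$, $b=ac$, Proposition~\ref{symkahclass} is not directly available: if $M_3(\alpha,\beta,\gamma)$ has $2\beta=\alpha\gamma$ with $\alpha,\gamma\neq 0$ then $\alpha\beta\gamma=\tfrac12\alpha^2\gamma^2>0$, so no sequence of the equivalences of Example~\ref{st3equiv} can make all three off‑diagonal entries non‑positive. Instead I would compute $\calk(M_3(2a,2ac,2c))$ directly from the ampleness inequalities \eqref{stage3ineq}, following Example~\ref{stage3ex} and in particular choosing, among the four generating sets \eqref{bases1}--\eqref{bases4} of the Chow group, the one adapted to the square‑zero classes. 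After reducing to a normal form, these generators expressed in the basis $\beta_1,\beta_2,\beta_3$ depend only on $|a|$ and $|c|$, so the ampleness conditions become linear inequalities in the $\beta$‑coordinates of the class and $\calk(M_3(2a,2ac,2c))$ emerges as an open orthant in suitable coordinates built from $|a|,|c|$; reading off the conditions for $\sum_ik_i\beta_i$ (in the order prescribed by the sizes of the $k_i$) to lie inside it yields case~(2). The degenerate sub‑case $a=0$ is treated separately, noting that $M_3(0,0,2c)$ is equivalent to the case~(1) tower $M_3(0,2c,0)$ by Example~\ref{st3equiv}.

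The reduction above and the case $c=0$ are routine given Proposition~\ref{symkahclass}. The crux is the case $c\neq 0$, $b=ac$: one must carry out the non‑orthant K\"ahler‑cone computation for $M_3(2a,2ac,2c)$, re‑express it faithfully in terms of $\beta_1,\beta_2,\beta_3$, and then decide which reorderings of $k_1,k_2,k_3$ fit inside the resulting cone. Keeping the sign conventions straight through the chain of biholomorphisms of Example~\ref{st3equiv}, and handling the degenerate and small‑value sub‑cases ($a=0$, or $|a|$ or $|c|$ equal to $1$) carefully, is where the bookkeeping is delicate.
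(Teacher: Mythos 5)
Your overall skeleton matches the paper's: reduce to the two cases $c=0$ and ($c\neq0$, $b=ac$) via Proposition~\ref{c0} and $p_1=0$, recast compatibility as membership of (a reordering and sign choice of) $\sum_ik_i\beta_i$ in $\calk(M_3(2a,2b,2c))$, and settle case~(1) by normalizing the matrix entries to be nonpositive with fiber inversions and invoking Proposition~\ref{symkahclass}. Your discussion of reorderings and signs is, if anything, more careful than what the published proof records. The gap is in case~(2), which you correctly identify as the crux but never actually prove. Your observation that the sign of the product of off-diagonal entries is invariant under $\grt_2,\grt_3$, so that $M_3(2a,2ac,2c)$ with $a,c\neq 0$ cannot be normalized into the all-nonpositive range of Proposition~\ref{symkahclass}, is correct and useful; but from there you only \emph{describe} a plan (``I would compute $\calk(M_3(2a,2ac,2c))$ directly from \eqref{stage3ineq}\dots reading off the conditions\dots yields case~(2)'') and explicitly defer the bookkeeping. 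A plan that ends with ``one must carry out the non-orthant K\"ahler-cone computation'' is not a completed proof of case~(2).

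Two further points to attend to when you do carry it out. First, there is no toric generating set ``adapted to the square-zero classes'': by Lemma~\ref{primsq0} the square-zero generators are $\beta_j=\tfrac12(x_j+y_j)$ for $j=2,3$, so $\beta_2,\beta_3$ never lie in any of the four distinguished bases~\eqref{bases1}--\eqref{bases4}. The route the paper actually takes---and the clean way to finish---is to expand $\sum_ik_i\beta_i$ via~\eqref{symident} and~\eqref{xyeq} in each of the four bases $\{x_1,x_2,x_3\}$, $\{x_1,x_2,y_3\}$, $\{x_1,y_2,x_3\}$, $\{x_1,y_2,y_3\}$ (yielding~\eqref{grobases1}--\eqref{grobases4}) and impose positivity in all four simultaneously, which reproduces the ampleness conditions without choosing a single preferred basis. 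Second, when you do this for $b=ac$, record the four $x_1$-coefficients carefully: they are $k_1\pm a(k_2+ck_3)$ and $k_1\pm a(k_2-ck_3)$, so (given $k_2-|c|k_3>0$) simultaneous positivity amounts to $k_1-|a|(k_2+|c|k_3)>0$. This does not agree with the inequality printed in the statement; as a sanity check, on $M_3(2,2,2)$ with $(k_1,k_2,k_3)=(3,2,1)$ one has $3\beta_1+2\beta_2+\beta_3=6x_1+3x_2+x_3$, and the ampleness condition from~\eqref{stage3ineq} requires $r_1>\psi_D(-2v_2-2v_3)=\psi_D(2u_2+2v_3)=2r_2=6$, which $r_1=6$ fails (a check of all six orderings of $(3,2,1)$ gives the same verdict). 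So your ``delicate bookkeeping'' is doing real work: it would also catch a sign slip in the source.
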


\begin{proof}
First applied to $M_3(2a,2b,2c)$ the $p_1(M)=0$ constraint now takes the form
$c(b-ac)=0$. So we have the two cases enunciated in the theorem which reduce
to the Bott towers $M_3(2a,2b,0)$ and $M_3(2a,2ac,2c)$, respectively.  Now
using Equations \eqref{symident} and \eqref{xyeq} we can rewrite the
symplectic class \eqref{3omega} in terms of the 4 sets of preferred bases in
$\grS_3$ giving
\begin{gather}
\label{grobases1}[\gro_{k_1,k_2,k_3}]=(k_1+ak_2+bk_3)x_1+(k_2+ck_3)x_2+k_3x_3, \\
[\gro_{k_1,k_2,k_3}]=(k_1+ak_2-bk_3)x_1+(k_2-ck_3)x_2+k_3y_3, \\
[\gro_{k_1,k_2,k_3}]=(k_1-ak_2+(b-2ac)k_3)x_1+(k_2+ck_3)y_2+k_3x_3, \\
\label{grobases4} [\gro_{k_1,k_2,k_3}]=(k_1-ak_2-(b-2ac)k_3)x_1+(k_2-ck_3)y_2+k_3y_3.
\end{gather}
To assure that $[\gro_{k_1,k_2,k_3}]$ is a K\"ahler class the coefficients
must be positive in each basis.  When $c=0$ the positivity of the coefficients
in these equations is equivalent to
$k_1-|a|k_2-|b|k_3>0,~k_2>0,~k_3>0$. However, when $c\neq 0$ and $b=ac$ the
equations reduce to
\begin{gather*}
[\gro_{k_1,k_2,k_3}]=(k_1+ak_2+ack_3)x_1+(k_2+ck_3)x_2+k_3x_3, \\
[\gro_{k_1,k_2,k_3}]=(k_1+ak_2-ack_3)x_1+(k_2-ck_3)x_2+k_3y_3, \\
[\gro_{k_1,k_2,k_3}]=(k_1-ak_2-ack_3)x_1+(k_2+ck_3)y_2+k_3x_3, \\
[\gro_{k_1,k_2,k_3}]=(k_1-ak_2+ack_3)x_1+(k_2-ck_3)y_2+k_3y_3.
\end{gather*}
The positivity of the coefficients in these equations is equivalent to
$k_1-|a|(k_2-|c|k_3)>0,~k_2-|c|k_3>0,~k_3>0.$ This completes the proof.
\end{proof}

\begin{example}
Let us consider a specific case. Here $(a,b,c)$ are one-half the $(a,b,c)$ in
the itemized list above. We take $(k_1,k_2,k_3)=(5,2,1)$ and ask which Bott
manifolds belong to the symplectic manifold
$\bigl((S^2)^3,\gro_{5,2,1})\bigr)$.  Applying Theorem \ref{splitsymcase} we
see that if $c=0$ we have one constraint, namely $5-2|a|-|b|>0$. Then using
Example \ref{st3equiv} we can take both $a$ and $b$ to be non-negative. So we
have the possibilities $a=2,b=0$, $a=1,~b=2,1,0$, and
$a=0,~b=4,3,2,1,0$. These give $M_3(4,0,0)$, $M_3(2,4,0)$, $M_3(2,2,0)$,
$M_3(2,0,0)$, $M_3(0,8,0)$, $M_3(0,6,0)$, $M_3(0,4,0)$, $M_3(0,2,0)$,
$M_3(0,0,0)$.

Now if $c\neq 0$ the constraint $2-|c| >0$ implies $c=\pm 1$ and we up to
equivalence we can take $c=1$. Then Theorem \ref{splitsymcase} gives the
constraint $0<5-|a|(2-|c|)= 5-|a|$. Again using the equivalences of Example
\ref{st3equiv} we have $a=0,1,2,3,4$ which gives the Bott towers $M_3(0,0,2)$,
$M_3(2,2,2)$, $M_3(4,4,2)$, $M_3(6,6,2)$, $M_3(8,8,2)$. So there are 14
distinct equivalence classes of Bott towers that are compatible with the
symplectic manifold $\bigl((S^2)^3,\gro_{5,2,1})\bigr)$, namely
\begin{gather*}
M_3(0,0,0), M_3(0,2,0), M_3(0,4,0), M_3(0,6,0),
M_3(0,8,0), M_3(2,0,0), M_3(4,0,0),\\
M_3(2,4,0), M_3(2,2,0), M_3(0,0,2), M_3(2,2,2),
M_3(4,4,2), M_3(6,6,2), M_3(8,8,2).
\end{gather*}
Notice that there are nine Bott manifolds with cotwist $\leq 1$, six with
twist $\leq 1$, five of which also have cotwist $\leq 1$. Finally, there are
four with twist and cotwist $2$. By Theorem \ref{conjclassthm} these
correspond to distinct conjugacy classes of maximal tori in the
symplectomorphism group.
\end{example}

Using Theorem \ref{splitsymcase} we obtain a formula for the number
$N_B(k_1,k_2,k_3)$ of Bott manifolds $M_3(2a,2b,2c)$ compatible with a given
symplectic structure $\bigl((S^2)^3,\gro_{k_1,k_2,k_3}\bigr)$ as well as a
growth estimate. For the growth estimates we put $k_3=1$ since clearly the
growth slows for larger $k_3$.

\begin{proposition}\label{numberBott}
  The number of Bott manifolds compatible with the symplectic manifold $\bigl((S^2)^3,\gro_{k_1,k_2,k_3}\bigr)$ is
  $N_B(k_1,k_2,k_3)=N_{B,0}(k_1,k_2,k_3)+N_{B,\neq 0}(k_1,k_2,k_3)$, where
\begin{align*}
N_{B,0}(k_1,k_2,k_3)&=\sum_{j=0}^{b_{max}}\left\lceil\frac{k_1-jk_3}{k_2}\right\rceil
&N_{B,\neq 0}(k_1,k_2,k_3),
&= \sum_{j=1}^{c_{max}}\left\lceil\frac{k_1}{k_2-jk_3}\right\rceil.
\end{align*}
Furthermore, when $k_1,k_2,k_3$ are positive integers we have the growth estimates
\[
\frac{k_1(k_1-1)}{2k_2}\leq N_{B,0}(k_1,k_2,1)\leq \frac{k_1(k_1-1)}{2k_2} +\frac{(k_1-1)(k_2+1)}{k_2},
\]
and when $c_{max}>0$ 
\begin{multline*}
k_1(\ln(k_2-1)+\grg +\gre_{k_2-1})\\
\leq N_{B,\neq 0}(k_1,k_2,1)\leq k_1(\ln(k_2-1)+\grg +\gre_{k_2-1})
+ k_2-1 +\ln(k_2-1)+\grg +\gre_{k_2-1}
\end{multline*}
where $\grg\approx .577$ is Euler's constant, $\gre_{1}=1-\grg$ and $\gre_{k}$ goes to $0$ as $1/2k$.
\end{proposition}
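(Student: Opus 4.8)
The plan is to read the count off directly from Theorem~\ref{splitsymcase}, after first putting each compatible Bott tower into a normal form. The fiber inversion equivalences of Example~\ref{st3equiv}, namely $M_3(a,b,c)\sim M_3(a,-b,-c)\sim M_3(-a,b-ac,c)$, show that in case (1) of Theorem~\ref{splitsymcase} (where $c=0$) we may assume $a\geq 0$ and $b\geq 0$, and in case (2) (where $c\neq 0$ and $b=ac$, so that the pair $(a,c)$ already determines the tower) we may assume $a\geq 0$ and $c\geq 1$. Since only $|a|,|b|,|c|$ enter the inequalities of Theorem~\ref{splitsymcase}, the compatibility conditions become, in case (1): $k_3>0$, $k_2>0$ and $ak_2+bk_3<k_1$; and in case (2): $k_3>0$, $ck_3<k_2$ and $a(k_2-ck_3)<k_1$. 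In particular a value $b=j$ occurs (with some admissible $a$) exactly when $0\leq j\leq b_{max}:=\lceil k_1/k_3\rceil-1$, and a value $c=j$ occurs exactly when $1\leq j\leq c_{max}:=\lceil k_2/k_3\rceil-1$.

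Next I would count lattice points. In case (1), for a fixed $b=j\in\{0,\ldots,b_{max}\}$ the admissible $a$ are the integers with $0\leq a<(k_1-jk_3)/k_2$, of which there are $\lceil(k_1-jk_3)/k_2\rceil$; summing over $j$ gives $N_{B,0}(k_1,k_2,k_3)=\sum_{j=0}^{b_{max}}\lceil(k_1-jk_3)/k_2\rceil$. Likewise, in case (2), for fixed $c=j\in\{1,\ldots,c_{max}\}$ the admissible $a$ are the integers with $0\leq a<k_1/(k_2-jk_3)$, of which there are $\lceil k_1/(k_2-jk_3)\rceil$; summing over $j$ gives $N_{B,\neq0}(k_1,k_2,k_3)=\sum_{j=1}^{c_{max}}\lceil k_1/(k_2-jk_3)\rceil$, and $N_B=N_{B,0}+N_{B,\neq0}$.

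For the growth estimates I set $k_3=1$, so $b_{max}=k_1-1$ and $c_{max}=k_2-1$, and reindex: the substitution $m=k_1-j$ gives $N_{B,0}(k_1,k_2,1)=\sum_{m=1}^{k_1}\lceil m/k_2\rceil$, and $i=k_2-j$ gives $N_{B,\neq0}(k_1,k_2,1)=\sum_{i=1}^{k_2-1}\lceil k_1/i\rceil$. For the first sum I would sandwich $\lceil m/k_2\rceil$ between $m/k_2$ and $\lfloor(m-1)/k_2\rfloor+1$, use $\sum_{m=1}^{k_1}m/k_2=k_1(k_1+1)/(2k_2)\geq k_1(k_1-1)/(2k_2)$ for the lower bound, and evaluate the block sum $\sum_{m=1}^{k_1}\lceil m/k_2\rceil=\tfrac12 k_2q(q+1)+r(q+1)$ (with $k_1=qk_2+r$, $0\leq r<k_2$) exactly for the upper bound $k_1(k_1-1)/(2k_2)+(k_1-1)(k_2+1)/k_2$. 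For the second sum I would use $k_1/i\leq\lceil k_1/i\rceil\leq k_1/i+1$ to get $k_1H_{k_2-1}\leq N_{B,\neq0}(k_1,k_2,1)\leq k_1H_{k_2-1}+(k_2-1)$, where $H_m=\sum_{i=1}^m 1/i$, and for $k_2\geq 2$ insert the classical expansion $H_m=\ln m+\grg+\gre_m$ with $\gre_m\to 0$ of order $1/(2m)$ and $\gre_1=1-\grg$; enlarging the upper bound by the positive quantity $H_{k_2-1}$ produces exactly the stated inequality.

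The lattice-point counts and the ceiling/harmonic estimates are routine. The step that needs care is the normalization: one must check that the fiber inversion equivalences of Example~\ref{st3equiv} really do bring every compatible tower into the stated normal form (and keep track of which, if any, towers are identified across the two cases by the further permutation equivalences of Example~\ref{st3equiv}), and one must verify that the exact block evaluation of $\sum_{m=1}^{k_1}\lceil m/k_2\rceil$ is sharp enough to reach the constant $(k_1-1)(k_2+1)/k_2$ in the upper bound, with the estimate understood for $k_1,k_2$ not too small. I expect this bookkeeping, rather than any conceptual difficulty, to be the main obstacle.
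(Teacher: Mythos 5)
Your proposal follows essentially the same route as the paper: read the integer lattice count directly from the inequalities of Theorem~\ref{splitsymcase} (iterating over $b$, respectively $c$, and counting admissible values of $a$ via a ceiling function), then set $k_3=1$, reindex, and bound the resulting sums by the harmonic series $H_m=\ln m+\grg+\gre_m$. Your upper bound $\lceil k_1/i\rceil\leq k_1/i+1$ is in fact slightly sharper than the paper's route via $\lceil k_1/j\rceil=\lfloor(k_1+1)/j\rfloor+1\leq (k_1+1)/j+1$ (the displayed identity in the paper is not an identity, only an inequality, but this does not affect the stated bound), and you correctly note that padding by $H_{k_2-1}$ recovers the paper's quoted inequality. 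The one substantive point you raise---whether the fiber inversion and permutation equivalences of Example~\ref{st3equiv} (e.g.\ $M_3(a,b,0)\sim M_3(b,a,0)$) might identify some of the tuples being counted---is a genuine concern that the paper's proof does not address; it simply counts triples $(a,b,c)$ satisfying the inequalities and asserts the resulting Bott towers are pairwise inequivalent, so your flagged bookkeeping step, while not resolved, is no less complete than the source.
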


\begin{proof}
According to Theorem \ref{splitsymcase} there are two cases to consider, $c=0$ and $c\neq 0$. If $c=0$ we can count the number of nonnegative integers $a$ in item (1) of Theorem \ref{splitsymcase} that contribute to each $b=0,\ldots,b_{max}$ where $b_{max}$ is the largest nonnegative integer $b$ such that $k_1-bk_3>0$. This gives $N_B(k_1,k_2,k_3)$ when $c=0$,
\[
N_{B,0}(k_1,k_2,k_3)=\sum_{j=0}^{b_{max}}\left\lceil\frac{k_1-jk_3}{k_2}\right\rceil
\]
where $\lceil\cdot\rceil$ is the ceiling function. Similarly, in the case $c>0$ we count the number of nonnegative integers $a$ in item (2) of Theorem \ref{splitsymcase} that contribute to each $c=0,\ldots,c_{max}$ where $c_{max}$ is the largest nonnegative integer $c$ such that $k_2-ck_3>0$. This gives the number of compatible Bott manifolds for each $c\neq 0$, namely
\[
N_{B,c}(k_1,k_2,k_3)=\left\lceil\frac{k_1}{k_2-ck_3}\right\rceil
\]
which implies the formula. To prove the first estimate we put $k_3=1$ and note that 
\[
N_{B,0}(k_1,k_2,1)=\sum_{j=0}^{b_{max}}\left\lceil\frac{k_1-j}{k_2}\right\rceil\geq \sum_{j=0}^{k_1-1}\frac{k_1-j}{k_2} =\frac{1}{k_2}\sum_{l=1}^{k_1-1}l=\frac{k_1(k_1-1)}{2k_2}.
\]
On the other hand using the well known relation between floor and ceiling functions we find that $N_{B,0}(k_1,k_2,1)$ equals
\begin{align*}
\sum_{j=0}^{b_{max}}\left\lceil\frac{k_1-j}{k_2}\right\rceil&\leq
\sum_{j=0}^{k_1-1}\left(\left\lfloor\frac{k_1-j+1}{k_2}\right\rfloor+1\right)\\
&\leq \sum_{l=1}^{k_1-1}\frac{l+1}{k_2} +k_1-1= \frac{k_1(k_1-1)}{2k_2} +\frac{(k_1-1)(k_2+1)}{k_2}.
\end{align*}

For the second estimate we have
\[
N_{B,\neq 0}(k_1,k_2,1)=\sum_{j=1}^{k_2-1}\left\lceil\frac{k_1}{k_2-j}\right\rceil
\geq \sum_{j=1}^{k_2-1}\frac{k_1}{k_2-j}= k_1\sum_{m=1}^{k_2-1}\frac{1}{m}
=k_1(\ln(k_2-1)+\grg +\gre_{k_2-1}).
\]
Again using the relation between floor and ceiling functions we find
\begin{align*}
N_{B,\neq 0}(k_1,k_2,1)&=\sum_{j=1}^{k_2-1}\left\lceil\frac{k_1}{j}\right\rceil
=\sum_{j=1}^{k_2-1}\left(\left\lfloor\frac{k_1+1}{j}\right\rfloor +1\right)
\leq (k_2-1)+(k_1+1)\sum_{j=1}^{k_2-1}\frac{1}{j} \\ 
&=(k_2-1)+(k_1+1)(\ln(k_2-1)+\grg +\gre_{k_2-1})
\end{align*}
which implies the estimate. 
\end{proof}

An easy corollary of Theorem \ref{conjclassthm} and Proposition
\ref{numberBott} is
\begin{corollary}\label{contgroup}
The number of conjugacy classes of maximal tori of dimension $n$ in the symplectomorphism group ${\rm Symp}((S^2)^3,\gro_{k_1,k_2,k_3})$ is at least
\[
\sum_{j=0}^{b_{max}}\left\lceil\frac{k_1-jk_3}{k_2}\right\rceil
+ \sum_{j=1}^{c_{max}}\left\lceil\frac{k_1}{k_2-jk_3}\right\rceil.
\]
\end{corollary}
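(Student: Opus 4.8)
The plan is to read the statement off directly from Theorem~\ref{conjclassthm} and Proposition~\ref{numberBott}, once the bookkeeping about maximal tori is settled. Set $X=(S^2)^3$ and $\gro=\gro_{k_1,k_2,k_3}$, so that $n=3$. The first step is to identify the conjugacy classes of maximal tori \emph{of dimension $n$} in $\Symp(X,\gro)$ with the set $\calc_n(X,\gro)$ of conjugacy classes of hamiltonian $n$-tori used in Section~\ref{symp}. Since $X=(S^2)^3$ is simply connected, $H^1(X,\bbr)=0$, so every effective symplectic torus action on $X$ is hamiltonian; and a hamiltonian torus action on the $2n$-manifold $X$ has isotropic principal orbits, hence the torus has dimension at most $n$. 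Consequently any $n$-torus in $\Symp(X,\gro)$ is automatically maximal and hamiltonian, so ``maximal torus of dimension $n$'' means precisely ``hamiltonian $n$-torus'', and conjugacy of these is exactly the equivalence relation defining $\calc_n(X,\gro)$.

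Given this identification, the second step is to invoke Theorem~\ref{conjclassthm}: the symplectic manifold $(X,\gro)$ is of Bott type, being $(\bbc\bbp^1)^n$ equipped with a K\"ahler form, so there is a canonical surjection $\calc_n(X,\gro)\to\BM(X,\gro)$, and therefore $|\calc_n(X,\gro)|\ge|\BM(X,\gro)|$. The third step is to evaluate the right-hand side using Proposition~\ref{numberBott}, which identifies $|\BM((S^2)^3,\gro_{k_1,k_2,k_3})|$ with $N_B(k_1,k_2,k_3)=\sum_{j=0}^{b_{max}}\lceil(k_1-jk_3)/k_2\rceil+\sum_{j=1}^{c_{max}}\lceil k_1/(k_2-jk_3)\rceil$. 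Chaining the two facts then gives the asserted lower bound on the number of conjugacy classes of maximal $n$-tori.

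There is essentially no computational content here; the only point that deserves care — the ``hard part'', such as it is — is the first step, namely checking that on simply connected $(S^2)^3$ there are no symplectic tori of dimension larger than $n$ and no $n$-tori outside the hamiltonian ones, so that ``maximal torus of dimension $n$'' genuinely coincides with the object counted by $\calc_n(X,\gro)$. Once that identification is in place, the corollary is immediate from Theorem~\ref{conjclassthm} and Proposition~\ref{numberBott}.
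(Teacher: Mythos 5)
Your proposal is correct and matches the paper's intent exactly: the corollary is obtained by chaining the surjection $\calc_n(X,\gro)\to\BM(X,\gro)$ from Theorem~\ref{conjclassthm} with the count $|\BM((S^2)^3,\gro_{k_1,k_2,k_3})|=N_B(k_1,k_2,k_3)$ from Proposition~\ref{numberBott}. The one thing you spell out that the paper leaves implicit — that simple connectivity of $(S^2)^3$ forces every symplectic torus action to be hamiltonian, hence of dimension at most $n$, so that ``maximal torus of dimension $n$'' and ``hamiltonian $n$-torus'' coincide — is a legitimate and worthwhile clarification, not a deviation in method.
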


Note that the inequality $k_1+ack_3-ak_2>0$ implies that the number $N_{B,\neq
  0}(k_1,k_2,k_3)$ grows as the product $ac$. In fact, when $c_{max}>0$ the
largest values of $a,b,c$ occur when $k_3=1$ in which case we find the Bott
tower $M_3\bigl(2(k_1-1),2(k_1-1)(k_2-1),2(k_2-1)\bigr)$.

When $c_{max}=k_2-1>0$ the number $N_{B,\neq 0}(k_1,k_2,1)$ grows like
$k_1\ln(k_2-1)$. Notice that the error term $k_2-1 +\ln(k_2-1)+\grg
+\gre_{k_2-1}$ is independent of $k_1$ and grows linearly with $k_2$.

\begin{example} Consider the symplectic
manifold $\bigl((S^2)^3,\gro_{11,6,1}\bigr)$. Here $N_{B,0}(11,6,1)=16$ and
$N_{B,\neq 0}(11,6,1)=27$, giving $N_{B}(11,6,1)=43$. It is also a
straightforward process to delineate the Bott manifolds. We see in the case of
$c=0$ we have the $17$ Bott manifolds $M_3(0,2b,0):b\in\{0,\ldots,10\}$ and
$M_3(2,2b,0):b\in\{0,\ldots, 5\}$; whereas, for $c\neq 0$ we have the $27$ Bott
manifolds
\begin{gather*}
M_3(2a,10a,10):a\in\{0,\ldots,10\},
M_3(2a,8a,8):a\in\{0,\ldots,5\},\\
M_3(2a,6a,6):a\in\{0,1,2,3\},
M_3(2a,4a,4):a\in\{0,1,2\},
M_3(2a,2a,2):a\in\{0,1,2\}.
\end{gather*}
The largest values of $a,b,c$ occur for the Bott manifold $M_3(20,100,10)$. 
\end{example}

\subsection{Further symplectic equivalences}

\begin{example}
Let us consider the Bott towers $M_{N+1}(\bfk)$ of twist $\leq 1$. To begin
with we fix a symplectic (actually K\"ahler) structure by fixing $k_i<0$. In
this case we know from Example \ref{1twist2} that the K\"ahler cone is the
entire first orthant with respect to the basis $x_1,\ldots,x_{N+1}$. Thus,
there is a symplectic (K\"ahler) form $\gro_\bfr$ whose class is
$[\gro_\bfr]=r_1x_1+\cdots +r_{N+1}x_{N+1}$. We denote this symplectic
manifold by $(M_{N+1}(\bfk),\gro_\bfr)$, and for simplicity we fix an order of
the $k_i$ and consider the case that they all are distinct. Then for $N>2$ by
Theorem~\ref{1twistcor} there are at most $2^{N-1}$ distinct Bott manifolds,
including $(M_{N+1}(\bfk),\gro_\bfr)$ itself that are compatible with
$(M_{N+1}(\bfk),\gro_\bfr)$ as a symplectic manifold which are obtained by
reversing the sign of $k_i$ for each $i=1,\ldots,N$ taking into account that
$M_{N+1}(-\bfk)$ is equivalent to $M_{N+1}(\bfk)$. We now choose a
diffeomorphic Bott manifold $M_{N+1}(\bfk')$ by reversing the signs of some of
the $k_i$. As in Example \ref{1twist2} by a choice of order we can assume that
$k_i'=k_i<0$ for $i=1,\ldots,m$ and $k_{i+1}'=-k_{i+1}>0$ for
$i=m+1,\ldots,N$. Then choosing $D_{N+1}=D_{v_{N+1}}$ the K\"ahler cone is
given by $r_i>-k_ir_{N+1}$ for $i=1,\ldots,m$, and $r_i>0$ for
$j=m+1,\ldots,N$. We have arrived at

\begin{proposition}\label{1twistprop}
The Bott manifold $M_{N+1}(\bfk')$ with $k_i'=k_i<0$ for $i=1,\ldots,m$ and
$k_{i+1}'=-k_{i+1}>0$ for $i=m+1,\ldots,N$ is compatible with the symplectic
manifold $(M_{N+1}(\bfk),\gro_\bfr)$ if and only if $r_i>-k_ir_{N+1}$ for
$i=1,\ldots,m$, and $r_i>0$ for $j=m+1,\ldots,N$. Thus, if the set $\{r_i\}$
satisfies these inequalities and if $N>2$ generically we have
$|\BM(M_{N+1}(\bfk),\gro_\bfr)|=2^{N-1}$.
\end{proposition}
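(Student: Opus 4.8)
The plan is to turn the compatibility statement into a question about which cohomology class lies in the K\"ahler cone $\calk(M_{N+1}(\bfk'))$ --- a question already answered in Example~\ref{1twist2} --- and then to obtain the count from Theorem~\ref{1twistcor}. By Definition~\ref{compsymp}, $M_{N+1}(\bfk')$ is compatible with $(M_{N+1}(\bfk),\gro_\bfr)$ precisely when some diffeomorphism $f\colon M_{N+1}(\bfk')\to M_{N+1}(\bfk)$ satisfies $f^*[\gro_\bfr]\in\calk(M_{N+1}(\bfk'))$. I would first pin down the candidate $f$: since $\bfk'$ is obtained from $\bfk$ by reversing the signs of $k_{m+1},\ldots,k_N$, an explicit $f$ is built from complex conjugation on the corresponding $\bbc\bbp^1$ factors of the base $(\bbc\bbp^1)^N$, composed if necessary with the fiber inversion $\grt_{N+1}$ of Lemma~\ref{0tlemma} (which exchanges $M_{N+1}(\bfk')$ with $M_{N+1}(-\bfk')$). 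Computing $f^*$ on $H^2$ is then just sign-tracking: each base conjugation sends the corresponding primitive square-zero generator $x_i$ to $-x_i'$, fixes the others, and fixes the fiber class $x_{N+1}$ up to the fiber inversion.

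Next I would transport $[\gro_\bfr]=\sum_i r_i x_i$ by this $f^*$ and re-express the result in the $\bbt^c$-invariant generators $[D_{u_1}'],\ldots,[D_{u_N}'],[D_{v_{N+1}}']$ appropriate to $M_{N+1}(\bfk')$ --- the same generators used in Example~\ref{1twist2}, with $D_{N+1}=D_{v_{N+1}}$ forced by the sign pattern of $\bfk'$. The ampleness criterion of Corollary~\ref{Batcor}, specialised as in Example~\ref{1twist2}, then reads off directly as $r_i>-k_ir_{N+1}$ for $i\le m$ and $r_i>0$ for $m<i\le N$, together with $r_{N+1}>0$ which is automatic since $\gro_\bfr$ is already K\"ahler on $M_{N+1}(\bfk)$; this gives the ``if'' direction. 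For the ``only if'' direction I would note that, for $N>2$ and (generically) pairwise distinct $|k_i|$, every graded ring isomorphism $H^*(M_{N+1}(\bfk),\bbz)\to H^*(M_{N+1}(\bfk'),\bbz)$ must permute the primitive square-zero classes $\pm x_1,\ldots,\pm x_N$, hence fixes the indexing and acts as above; running the same computation over all of them shows the inequalities are necessary as well.

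For the count, Theorem~\ref{1twistcor} says that when $N>2$ and the $k_i$ are nonzero and generic, the Bott towers diffeomorphic to $M_{N+1}(\bfk)$ are exactly the $M_{N+1}(\bfk')$ obtained by sign changes, and these fall into $2^{N-1}$ biholomorphism classes (the $2^N$ sign patterns being paired by $\bfk'\leftrightarrow-\bfk'$ via $\grt_{N+1}$). Any Bott manifold compatible with $(M_{N+1}(\bfk),\gro_\bfr)$ is in particular diffeomorphic to $M_{N+1}(\bfk)$, so $\BM(M_{N+1}(\bfk),\gro_\bfr)$ sits inside these $2^{N-1}$ classes; conversely, if $\{r_i\}$ satisfies the inequalities of the first part for each of them --- equivalently $r_i>|k_i|r_{N+1}$ for all $i=1,\ldots,N$ --- then by the first part each class is realised, so $|\BM(M_{N+1}(\bfk),\gro_\bfr)|=2^{N-1}$.

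The main obstacle is the first part: the conceptual content is small, but one must be careful with signs throughout --- both the signs of the $k_i$ and the reflections they induce on $H^2$ --- to see that the transported K\"ahler cone really matches the claimed inequalities, and one must check that the sign-change diffeomorphism can be arranged to preserve the complex orientation, so that the transported symplectic form is genuinely a K\"ahler form for the standard complex structure on $M_{N+1}(\bfk')$; this is where the fiber inversion $\grt_{N+1}$ and the hypothesis $N>2$ enter. Once the first part is secured, the counting is routine bookkeeping on top of Theorem~\ref{1twistcor}.
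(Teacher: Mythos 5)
The paper's own ``proof'' is the short discussion immediately preceding the proposition: it cites Theorem~\ref{1twistcor} to pin down the candidate Bott towers (namely those $M_{N+1}(\bfk')$ obtained from $\bfk$ by sign changes), reads off the K\"ahler cone of $M_{N+1}(\bfk')$ from Example~\ref{1twist2} with the basis $\{[D_{u_1}'],\ldots,[D_{u_N}'],[D_{v_{N+1}}']\}$, and implicitly identifies this basis with $\{x_1,\ldots,x_N,x_{N+1}\}$. It does not construct an explicit diffeomorphism at all; the existence of one is taken from the cohomological-rigidity input behind Theorem~\ref{1twistcor}. Your approach diverges here: you set out to build the diffeomorphism $f$ concretely, as a composite of complex conjugations on the base factors $m+1,\ldots,N$ of $(\bbc\bbp^1)^N$ with the fiber inversion $\grt_{N+1}$.

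That explicit construction does not work as written, and the obstruction is precisely the sign issue you flag but do not resolve. Conjugating the $i$th base factor reverses its orientation, so the lifted map $f$ satisfies $f^*x_i = -x_i'$ on each conjugated factor; consequently $f^*[\gro_\bfr]$ acquires negative coefficients on $x_{m+1}',\ldots,x_N'$. The fiber inversion $\grt_{N+1}$ cannot repair this: by Lemma~\ref{0tlemma} it is a \emph{biholomorphism} $M_{N+1}(\bfk')\to M_{N+1}(-\bfk')$, hence orientation-preserving, and on $H^2$ it swaps $x_{N+1}'$ with $y_{N+1}'$ while fixing the base classes $x_i'$ ($i\le N$). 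So neither the wrong ambient orientation (when $N-m$ is odd) nor the negative coefficients on the base classes are corrected by composing with $\grt_{N+1}$, and $f^*[\gro_\bfr]$ lands outside the K\"ahler cone of $M_{N+1}(\bfk')$ regardless of whether you also re-express things in the $\{x_i',y_{N+1}'\}$ basis. In fact, any isomorphism of the cohomology rings is constrained on degree two to permute $\pm x_1,\ldots,\pm x_N$ (by Lemma~\ref{primsq0}, there are no other primitive square-zero classes when all $k_i\neq 0$ and $M_{N+1}(\bfk')$ is not $\bbq$-trivial), and the product relations force $\epsilon_i\epsilon_j\,k_ik_j=k_i'k_j'$ on the signs $\epsilon_i$; so the choice of signs is \emph{not} free, and the particular sign pattern coming from conjugations is exactly one of the forced patterns --- none of which have all-positive transported coefficients. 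This is a genuine gap, not a bookkeeping detail: to salvage an explicit construction one would need a diffeomorphism whose action on $H^2$ is not realized by a conjugation or fiber inversion, and such a map is precisely what the cohomological-rigidity input supplies non-constructively.

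The counting step at the end, and your reduction of the count to Theorem~\ref{1twistcor}, is fine and matches the paper.
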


One can check that the equivalent Bott tower $M_{N+1}(-\bfk')$ which swaps
$D_{v_{N+1}}$ and $D_{u_{N+1}}$ gives the same data. Proposition
\ref{1twistprop} shows that when the components of $\bfk$ have different signs
generally there are constraints for a Bott manifold to be compatible with a
given symplectic structure. For example, a symplectic structure $\gro$ whose
class is $x_1+\cdots +x_{N+1}$ is only compatible with the Bott towers
$M_{N+1}(\bfk)$ with either $k_i<0$ for all $i$, or $k_i>0$ for all $i$.
\end{example}

\section{The generalized Calabi and admissible constructions}\label{calabiconstruction}

The purpose of this section is to obtain existence results for extremal
K\"ahler metrics on Bott manifolds. We use the generalized Calabi construction
to prove the general result of Theorem \ref{existence} (2), and the admissible
construction to prove existence of extremal K\"ahler metrics for some of our
running examples. We begin with a brief outline of these two constructions.

\subsection{The generalized Calabi construction}
The {\em generalized Calabi construction} was introduced in \cite[Section
  2.5]{ACGT04} and further discussed in \cite[Section 2.3]{ACGT11}.  This
construction can be used to obtain existence results for extremal K\"ahler
metrics and in particular it will be used to prove Theorem \ref{existence}
(2). In order to make this paper self-contained we review the generalized
Calabi construction for the case of ``no blowdowns'' and where the resulting
manifold is a bundle over a compact Riemann surface $\Sigma$ with fiber equal
to a toric K\"ahler manifold.

\begin{definition}\label{calabidata}
{\em Generalized Calabi data} of dimension $\ell+1$ and rank ${\ell}$ consist of the following.
\begin{enumerate}
\item A compact Riemann surface $\Sigma$ with K\"ahler structure
  $(\omega_\Sigma,g_\Sigma)$. For simplicity we assume that $g_\Sigma$ has
  constant scalar curvature $\Scal_\Sigma$ and $\omega_\Sigma/{2\pi}$ is
  primitive. Hence $\Scal_\Sigma = 4(1-\gg)$, where $\gg$ is the genus of
  $\Sigma$. If $\rho_\Sigma$ denotes the Ricci form then we have $\rho_\Sigma
  = 2(1-\gg)\omega_\Sigma$.
\item A compact toric $\ell$-dim K\"ahler manifold $(V,g_V,\omega_V)$ with Delzant polytope $\Delta \subseteq \gt^*$ and momentum map $z\colon V \rightarrow \Delta$.
\item A principal $T^\ell$ bundle, $P \rightarrow \Sigma$, with a principal
  connection of curvature $\omega_\Sigma \otimes p \in \Omega^{1,1}(\Sigma,
  \gt)$, where $T^\ell$ is the $\ell$-torus acting on $V$ and $p\in \gt$.
\item A constant $p_0\in \bbr$ such that the $(1,1)$-form $p_0\, \omega_{\Sigma} + \langle v,\omega_{\Sigma} \otimes p\rangle$ is positive for $v\in \Delta$.
\end{enumerate}
\end{definition}
From this data we may define the manifold
\[
M=P\times_{T^{\ell}}V = \mathring{M}\times_{(T^{\ell})^\bbc}V \rightarrow \Sigma,
\]
where $ \mathring{M} = P \times_{T^{\ell}} z^{-1}(\mathring{\Delta})$ and
$\mathring{\Delta}$ is the interior of $\Delta$. Since the curvature $2$-form
of $P$ has type $(1,1)$, $\mathring{M}$ is a holomorphic principal
$(T^{\ell})^\bbc$ bundle with connection $\theta \in
\Omega^1(\mathring{M},\gt)$ and $M$ is a complex manifold.

On $\mathring{M}$ we define K\"ahler structures of the form
\begin{equation}\label{compatiblemetric}
\begin{aligned}
g &= \left(p_0+\langle p,z \rangle \right) g_{\Sigma} + \langle dz , {\bf G}, dz\rangle + \langle \theta, {\bf H}, \theta \rangle\\
\omega &=  \left(p_0+\langle p,z \rangle \right) \omega_{\Sigma} + \langle dz \wedge \theta\rangle\\
d\theta &= \omega_{\Sigma}  \otimes p,
\end{aligned}
\end{equation}
where ${\bf G} = {\rm Hess} (U) = {\bf H}^{-1}$, $U$ is the symplectic
potential~\cite{Gui94b} of the chosen toric K\"ahler structure $g_V$ on $V$,
and $\langle \cdot,\cdot,\cdot \rangle$ denotes the pointwise contraction
$\gt^* \times S^2\gt \times \gt^* \rightarrow \bbr$ or the dual contraction.

The {\em generalized Calabi construction} arises from seeing
\eqref{compatiblemetric} as a blueprint for the construction of various
K\"ahler metrics on $M$ by choosing various $S^2\gt^*$-valued functions ${\bf
  H}$ on $\mathring{\Delta}$ satisfying that
\begin{itemize}
\item \textup{[integrability]} ${\bf H}={\rm Hess}(U)^{-1}$ for some smooth
  function $U$ on $\mathring{\Delta}$.

\item \textup{[smoothness]} ${\bf H}$ is the restriction to $\mathring{\Delta}$
  of a smooth $S^2{\mathfrak t}^*$-valued function on $\Delta$\textup;

\item \textup{[boundary values]} for any point $z$ on the codimension one face
  $F_i \subseteq \Delta$ with inward normal $u_i$, we have
\begin{equation}\label{eq:toricboundary}
{\bf H}_{z}(u_i, \cdot) =0\qquad {\rm and}\qquad (d{\bf H})_{z}(u_i,u_i) = 2
u_i,
\end{equation}
where the differential $d{\bf H}$ is viewed as a smooth $S^2{\mathfrak
t}^*\otimes {\mathfrak t}$-valued function on $\Delta$\textup;

\item \textup{[positivity]} for any point $z$ in the interior of a face
$F \subseteq \Delta$, ${\bf H}_{z}(\cdot, \cdot)$ is positive definite
when viewed as a smooth function with values in $S^2({\mathfrak
t}/{\mathfrak t}_F)^*$.
\end{itemize}
Then \eqref{compatiblemetric} extends to a K\"ahler metric on $M$.  Metrics
constructed this way are called {\em compatible K\"ahler metrics} and their
K\"ahler classes are called {\em compatible K\"ahler classes}.

\begin{remark} 
The toric bundle $M$ equipped with a compatible K\"ahler metric as above is a
so-called {\em rigid, semisimple} toric bundle over ${\bbc\bbp^1}$
\cite{ACGT11}.  We also use this terminology for any toric bundle
$M=P\times_{T^{\ell}}V $ where $P$ and $V$ are as defined above.
\end{remark}
\begin{remark}\label{toric}
From now on we assume that $\Sigma = \bbc\bbp^1$, so $\Scal_\Sigma = \Scal_{
  \bbc\bbp^1} =4$.  Note that, since $(g_{\bbc\bbp^1},\omega_{\bbc\bbp^1})$ is
the toric Fubini--Study K\"ahler structure, the K\"ahler metrics arising this
way are themselves toric and generalize the notion of Calabi toric metrics
from~\cite{Leg09}.
\end{remark}

It follows from \cite[Theorem 3 \& Remark 6]{ACGT11} that if $V$ admits a
toric extremal K\"ahler metric $g_V$, then $M$ admits compatible extremal
K\"ahler metrics (at least in some K\"ahler classes). We now use this result
inductively to prove Theorem~\ref{existence} (2).

\begin{proof}[Proof of Theorem~\textup{\ref{existence} (2)}]
First we note that for stage $2$ Bott manifolds, the result holds by Calabi's
original construction of extremal metrics on Hirzebruch
surfaces~\cite{Cal82}. Now consider the Bott tower
\[
M_{k+1} \xrightarrow{\pi_{k+1}} M_k \xrightarrow{\pi_k}
\cdots M_2 \xrightarrow{\pi_2} M_1=\bbc\bbp^1
\]
described by the matrix $A_{k+1}$. By construction $M_{k+1}$ is a rigid
semisimple toric bundle over $\bbc\bbp^1$. Now assume by induction that for a
fixed $k$ and for each $k\times k$ matrix $A$ of the form \eqref{Amatrix}
(with $n$ replaced by $k$) the corresponding Bott manifold $M_k$ has a
compatible extremal K\"ahler metric.  We have seen that the fiber of the map
$M_{k+1}\xrightarrow{\pi_{k+1}\circ\cdots\circ\pi_2} \bbc\bbp^1$ is the Bott
manifold $\tilde{M}_k$ obtained by removing the first row and column of the
matrix defining $M_{k+1}$. We can now apply~\cite[Theorem 3 \& Remark
  6]{ACGT11} to conclude that $M_{k+1}$ admits a compatible toric extremal
K\"ahler metric in some K\"ahler class.
\end{proof}

\subsection{The admissible construction}

When $V$ is $\bbc\bbp^1$ in Definition~\ref{calabidata}, we have Calabi's
construction of extremal K\"ahler metrics on Hirzebruch surfaces \cite{Cal82}.
This construction is a special case of the so-called {\em admissible
  construction} \cite{ACGT08}.  Even though the admissible construction as
summarized below (again for ``no blowdowns'') is not universally useful for
Bott manifolds, there are still some interesting explicit examples occurring
in the literature \cite{KoSa86,Hwa94,Gua95} which we exhibit below in the
context of Bott manifolds. These provide some further existence results for
extremal K\"ahler metrics on Bott manifolds with twist $\leq 2$.

Let $S$ be a compact complex manifold admitting a local K\"ahler product
metric, whose components are K\"ahler metrics denoted $(\pm g_{a}, \pm
\omega_{a})$, and indexed by $a \in {\cala} \subseteq \bbz^{+}$. Here $(\pm
g_{a},\pm \omega_{a})$ is the K\"ahler structure. In this notation we allow
for the tensors $g_{a}$ to possibly be negatively definite---a parametrization
given later justifies this convention.  Note that in all our applications,
each $\pm g_a$ will be CSC.  The real dimension of each component is denoted
$2 d_{a}$, while the scalar curvature of $\pm g_{a}$ is given as $\pm 2 d_{a}
s_{a}$.  Next, let $\call$ be a hermitian holomorphic line bundle over $S$,
such that $c_{1}(\call) = \sum_{a \in\cala} [\omega_{a}/2\pi]$, and let
$\BOne$ denote the trivial line bundle over $S$.  Then, following
\cite{ACGT08}, the total space of the projectivization $M=P(\BOne \oplus
\call) \rightarrow S$ is called an {\it admissible manifold}.

Let $M$ be an admissible manifold of complex dimension $d$. An {\it admissible
  K\"ahler metric} on $M$ is a K\"ahler metric constructed as
follows. Consider the circle action on $M$ induced by the standard circle
action on $\call$. It extends to a
holomorphic $\bbc^*$ action. The open and dense set $M_0$ of stable points
with respect to the latter action has the structure of a principal $\bbc^*$-bundle over the stable quotient.  
The hermitian norm on the fibers induces via
a Legendre transform a function $z\colon M_0\rightarrow (-1,1)$ whose
extension to $M$ has critical submanifolds $z^{-1}(1)=P(\BOne \oplus 0)$ and
$z^{-1}(-1)=P(0 \oplus \call)$.  Letting $\theta$ be a connection one form for
the Hermitian metric on $M_0$, with curvature $d\theta =
\sum_{a\in\cala}\omega_a$, an admissible K\"ahler metric and form are given up
to scale by the respective formulas
\begin{equation}\label{g}
g=\sum_{a\in\cala}\frac{1+\x_az}{\x_a}g_a+\frac {dz^2}
{\Theta (z)}+\Theta (z)\theta^2,\quad
\omega = \sum_{a\in\cala}\frac{1+\x_az}{\x_a}\omega_{a} + dz \wedge
\theta,
\end{equation}
valid on $M_0$. Here $\Theta$ is a smooth function with domain containing
$(-1,1)$ and $\x_{a}$, $a \in \cala$ are real numbers of the same sign as
$g_{a}$ and satisfying $0 < |\x_a| < 1$. The complex structure yielding this
K\"ahler structure is given by the pullback of the base complex structure
along with the requirement $Jdz = \Theta \theta$. The function $z$ is a
hamiltonian for the Killing vector field $K= J\grad z$, hence a momentum map
for the circle action, so $M$ decomposes into the free orbits $M_{0} =
z^{-1}((-1,1))$ and the special orbits $z^{-1}(\pm 1)$. Finally, $\theta$
satisfies $\theta(K)=1$.

For an admissible metric~\eqref{g}, the $2$-form
\[
\phi = \sum_{a\in\cala}-\frac{1+\x_az}{\x_a^2}\omega_a + z dz \wedge \theta
\]
is a \emph{hamiltonian $2$-form} in the sense that
\[
\nabla_{X} \phi = \frac{1}{2}(d\tr_\omega\phi \wedge JX - d^c\tr_\omega\phi  
\wedge X)
\]
for any vector field $X$, where $\tr_\omega\phi=\langle \phi,\omega\rangle$ is
the trace with respect to $\omega$. The theory of~\cite{ApCaGa06} implies that
the metrics~\eqref{g} are (up to scale) the general form of K\"ahler metrics
admitting a hamiltonian $2$-form of order $1$.

For such $g$ to define a metric on $M$, $\Theta$ must satisfy
\begin{align}
\label{positivity}
\text{(i) }\Theta(z) > 0, \quad -1 < z <1,\quad
\text{(ii) }\Theta(\pm 1) = 0,\quad
\text{(iii) }\Theta'(\pm 1) = \mp 2.
\end{align}
where (ii) and (iii) are necessary and sufficient for $g$ to extend to $M$,
while (i) ensures positive definiteness.

The K\"ahler class $\Omega_{\x} = [\omega]$ of an admissible metric is also
called {\it admissible} and is uniquely determined by the parameters $\x_{a}:a
\in \cala$, once the data associated with $M$ (i.e., $d_a$, $s_a$, $g_a$ etc.)
is fixed. The $\x_{a}:a \in \cala$, together with the data associated with $M$
is called {\it admissible data}---see \cite[Section 1]{ACGT08} for
further background on this set-up. Note that different choices of $\Theta$
(with all else being fixed) define different compatible complex structures
with respect to the same symplectic form $\omega$. However, as is discussed in
\cite{ACGT08}, up to a fiber preserving $S^1$-equivariant diffeomorphism, we
may consider the complex structure fixed; then functions $\Theta$ satisfying
\eqref{positivity} determine K\"ahler metrics within the same K\"ahler class.

It is useful to define a function $F(z)$ by the formula $\Theta(z)=
F(z)/p_c(z)$, where $p_c(z) = \prod_{a \in \cala} (1 + \x_{a} z)^{d_{a}}$.
With this notation $g$ has scalar curvature
\begin{equation}\label{scalarcurvature}
\Scal_g = \sum_{a\in\cala}\frac{2 d_a s_a\x_a}{1+\x_az} - \frac{F''(z)}{p_c(z)}
\end{equation}
One may now check \cite{ApCaGa06,ACGT08} that with fixed admissible data,
there is precisely one function, the \emph{extremal polynomial} $F(z)$, so
that the right hand side of \eqref{scalarcurvature} is an affine function of
$z$ and the corresponding $\Theta(z)$ satisfies (ii) and (iii) of
\eqref{positivity}.  This means that as long as also (i) of \eqref{positivity}
is satisfied, i.e., the extremal polynomial is positive for $z\in (-1,1)$, we
would have an admissible extremal metric (and CSC if the affine function is
constant).  However, there is in general no guarantee that the extremal
polynomial is positive for $z\in (-1,1)$.  In the special case where
$\Scal_{\pm g_a}$ is non-negative, results of Hwang~\cite{Hwa94} and
Guan~\cite{Gua95} show that positivity is satisfied and thus every admissible
K\"ahler class has an admissible extremal metric determined by its {\it
  extremal polynomial} $F(z)$.

\subsection{Applications of the admissible construction to Bott manifolds}\label{extradmBott}

We begin by considering the Bott towers $M_{N+1}(\bfk)$ of twist $\leq 1$ first
described in Example \ref{twist1-examples}, which are $\bbc\bbp^1$ bundles
over the product complex manifold $(\bbc\bbp^1)^N$, realized as the
projectavization $\bbp(\BOne \oplus \calo(k_1,k_2,\ldots,k_N)) \rightarrow
(\bbc\bbp^1)^N$. Making contact with the admissible construction we note that
$\cala=\{1,2,\ldots,N\}$, i.e., for $a\in\{1,2,\ldots,N\}$, $\pm g_{a}$ is the
Fubini--Study metric on $\bbc\bbp^1$ with positive constant scalar curvature
$\pm 2 s_{a} = \pm 4/k_a$ for some non-zero integer $k_a$.  By the discussion
above, every admissible K\"ahler class has an admissible extremal metric and
if there is at least one pair $i,j\in\{1,2,\ldots,N\}$ such that $k_i\,k_j<0$
then, by \cite[Corollary 1.2 \& Theorem 2]{Hwa94}, some of these classes
admit admissible CSC metrics. Moreover, since in this case every K\"ahler class
is admissible (see \cite[Remark 2]{ACGT08}), we may conclude as follows.

\begin{proposition}\label{1textprop}
Any stage $N+1$ Bott manifold $M_{N+1}(\bfk)$, with matrix of the form
\eqref{genkexample} such that $\displaystyle \prod_{a=1}^N k_a \neq 0$, admits
an admissible extremal K\"ahler metric in every K\"ahler class. Moreover, when
the $k_1,\ldots,k_N$ do not all have the same sign, some of these metrics are
CSC.
\end{proposition}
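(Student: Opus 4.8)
The plan is to recognize $M_{N+1}(\bfk)$ as an admissible manifold in the sense just described and then appeal directly to the results of Hwang~\cite{Hwa94} and Guan~\cite{Gua95}. First I would take $S=(\bbc\bbp^1)^N$, $\cala=\{1,\dots,N\}$ and $\call=\calo(k_1,\dots,k_N)$, so that $M_{N+1}(\bfk)=\bbp(\BOne\oplus\call)\to S$ and the $a$-th local product factor on $S$ is $\pm g_a$, a rescaling of the Fubini--Study metric on the $a$-th $\bbc\bbp^1$ whose K\"ahler class is $|k_a|$ times the standard generator, the sign being $\mathrm{sign}(k_a)$ so that $c_1(\call)=\sum_{a\in\cala}[\omega_a/2\pi]$. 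Since $\prod_{a} k_a\neq 0$, each $\pm g_a$ is a genuine CSC metric with $d_a=1$ and $\Scal_{\pm g_a}=4/|k_a|>0$; in particular all the $\Scal_{\pm g_a}$ are non-negative.

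Next I would invoke the positivity statement recalled above: because $\Scal_{\pm g_a}\ge 0$ for every $a$, the extremal polynomial $F(z)$ attached to any admissible data on $M$ is positive on $(-1,1)$ (Hwang~\cite{Hwa94}, Guan~\cite{Gua95}), so $\Theta(z)=F(z)/p_c(z)$ satisfies all three conditions in~\eqref{positivity} and~\eqref{g} defines an extremal K\"ahler metric in the corresponding admissible class. To pass from ``every admissible class'' to ``every K\"ahler class'', I would use that for $M_{N+1}(\bfk)$ the base $S$ is exactly the product of the factors indexed by $\cala$, with no extra (blown-down or rigid) piece; then \cite[Remark~2]{ACGT08} gives that the parameters $(\x_a)_{a\in\cala}$ together with the overall scaling range over the whole K\"ahler cone, which by Example~\ref{1twist2} is the relevant orthant. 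This yields the first assertion.

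For the final sentence, I note that, given $\prod_{a} k_a\neq 0$, the hypothesis that $k_1,\dots,k_N$ do not all have the same sign is equivalent to the existence of $i,j$ with $k_ik_j<0$. Under this condition I would quote \cite[Corollary~1.2 \& Theorem~2]{Hwa94}: one can choose admissible data for which the affine function on the right-hand side of~\eqref{scalarcurvature} is constant while the associated extremal polynomial remains positive on $(-1,1)$, so the corresponding admissible K\"ahler class carries an admissible CSC metric.

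The whole argument is essentially bookkeeping matching the Bott-tower data to the admissible set-up, and the genuine analytic content is already contained in \cite{Hwa94,Gua95}. The only steps requiring real care---rather than a quotation---are the sign conventions (checking that each $\pm g_a$ is a bona fide positive CSC metric, so that the Hwang--Guan positivity of $F$ applies) and the claim that the admissible K\"ahler classes exhaust $\calk(M_{N+1}(\bfk))$; I expect the latter, i.e.\ reconciling the admissible parameters with the description of the ample cone in Example~\ref{1twist2}, to be the main point to get right.
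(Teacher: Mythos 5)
Your proposal is correct and is essentially the argument the paper gives in the paragraph preceding the proposition: identify $M_{N+1}(\bfk)=\bbp(\BOne\oplus\calo(k_1,\dots,k_N))\to(\bbc\bbp^1)^N$ as admissible with $\cala=\{1,\dots,N\}$ and positive constant scalar curvature $\pm g_a$, invoke the Hwang--Guan positivity of the extremal polynomial, note via \cite[Remark~2]{ACGT08} that all K\"ahler classes are admissible, and apply \cite[Corollary~1.2 \& Theorem~2]{Hwa94} for the CSC statement. The sign bookkeeping and the admissible-vs.-K\"ahler-cone identification you flagged as the points needing care are handled exactly as in the paper.
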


\begin{remark}\label{CSCargument}
More specifically, by \cite[Proposition 6]{ACGT08}, the K\"ahler classes of admissible CSC K\"ahler metrics alluded to in Proposition \ref{1textprop} are the ones satisfying that $\alpha_0\beta_1-\alpha_1\beta_0 =0$ with here
\begin{equation}\label{betas2}
\begin{split}
\alpha_p  &:= \int_{-1}^{1} 
p_c(t)t^p dt, \\
\beta_p &:= p_c(1)+(-1)^p p_c(-1)
+\smash[t]{\int_{-1}^1 \biggl(\sum_{a} \frac{2 r_a}{k_a(1+r_a t)}\biggr)}
\vphantom{\Big|}p_c(t) t^p dt.
\end{split}
\end{equation}
and $p_c(t) = \prod_{a=1}^{N} (1+r_at)$.
\end{remark}

Now, if we let $x_a$ denote the pullback from the $a^{th}$ factor of
$\bbc\bbp^1 \times \cdots \times \bbc\bbp^1$ of the generator of
$H^2(\bbc\bbp^1,\bbz)$, i.e., $x_a = \frac{[\omega_a]}{2\pi k_a}$, then the
discussion in \cite[Section 1.4]{ACGT08} allows us to write the (admissible)
K\"ahler classes as
\[
\Omega_\x = 2\pi \left[2 x_{N+1} + \sum_{a=1}^N k_a(1+1/\x_a) x_a  \right],
\]
where $x_{N+1}$ is defined to be $PD(D_{N+1}^\infty)$ as usual. 

By the famous result of Koiso and Sakane \cite{KoSa86}, when $N=2m$, 
\[
k_a = \left\{ \begin{array}{ccl}
1&\text{if}& 1\leq a \leq m\\
-1&\text{if} & m+1 \leq a \leq 2m
\end{array} \right.
\]
and $\x_a=k_a/2$, the admissible extremal metric in $\Omega_\x$ is
K\"ahler--Einstein. 

More generally, when $N=2m$,
\[
k_a = \left\{ \begin{array}{ccl}
q&\text{if}& 1\leq a \leq m\\
-q&\text{if} & m+1 \leq a \leq 2m
\end{array} \right.
\]
for some $q\in \bbz^+$, and
\[
\x_a = \left\{ \begin{array}{ccl}
\x&\text{if}& 1\leq a \leq m\\
-\x&\text{if} & m+1 \leq a \leq 2m
\end{array} \right.
\]
for any $\x\in(0,1)$, we have that \eqref{betas2} in Remark \ref{CSCargument} simplifies to
\[
\begin{split}
\alpha_p  &:= \int_{-1}^{1} 
(1-r^2t^2)^mt^p dt, \\
\beta_p &:= (1+(-1)^p)(1-r^2)^m
+\smash[t]{\int_{-1}^1 \biggl( \frac{4 mr}{q}\biggr)}
\vphantom{\Big|}(1-r^2t^2)^{m-1} t^p dt.
\end{split}
\]
and thus $\alpha_1=\beta_1=0$ giving us $\alpha_0\beta_1-\alpha_1\beta_0 =0$.
Thus the admissible extremal metric in $\Omega_\x$ is CSC with positive scalar curvature. 

As a special case of the above, suppose for example that $\x=1/k$ for $k\in \bbz^+\setminus\{1\}$ in the above. Then 
\[
\Omega_\x/(2\pi) =  \left[2 x_{2m+1} +q(k+1) \sum_{a=1}^{m} x_a  +  q(k-1) \sum_{a=m+1}^{2m} x_a\right]
\]
is an integer K\"ahler class and hence defines a line bundle.
If we take the admissible CSC representative of $\Omega_\x$  on the admissible stage $2m+1$ Bott manifold
\[
M_{2m+1}= \bbp(\BOne \oplus \calo(\overbrace{q,\ldots,q}^m,\overbrace{-q,\ldots,-q}^m) \rightarrow 
\overbrace{\bbc\bbp^1\times \cdots \times \bbc\bbp^1}^{2m}
\]
and feed it into another admissible construction, with $M_{2m+1}$ now playing
the role of the base $S$, we get explicit extremal admissible metrics on the
resulting stage $2m+2$ Bott manifold $M_{2m+2} = \bbp(\BOne \oplus
\call_{2m+2}) \rightarrow M_{2m+1}$ in all K\"ahler classes spanned by
\[
\left(1+\frac{1}{\x_{2m+1}}\right) \left[2 x_{2m+1} +q(k+1) \sum_{a=1}^{m} x_a  +  q(k-1) \sum_{a=m+1}^{2m} x_a\right] +  2 x_{2m+2},
\]
where $\x_{2m+1}\in(0,1)$ and $x_{2m+2} = PD(D_{2m+2}^\infty)$. Thus we may conclude
as follows.

\begin{proposition}\label{2textremal}
Assume that $q \in \bbz^+$ and $k\in \bbz^+\setminus\{1\}$ and consider the
height $2m+2$ Bott tower $M_n(A)$ with
\[
A=\begin{pmatrix}
\BOne_m&0_m&\cdots & 0 \\
0_m & \BOne_m & \vdots & \vdots \\
q{\bf 1}_m& -q{\bf 1}_m & 1 & 0 \\
q(k+1){\bf 1}_m& q(k-1){\bf 1}_m &2&1\\
\end{pmatrix}
\]
Then there is a $2$-dimensional subcone of the K\"ahler cone of $M_n(A)$ whose
elements admit admissible extremal metrics.  Furthermore, the stage $2m+1$
Bott manifold obtained by removing the last column and the last row of $A$
admits admissible extremal K\"ahler metrics in every K\"ahler class of the
entire K\"ahler cone and admissible CSC K\"ahler metrics in some K\"ahler
classes. If $q=1$ this stage $2m+1$ Bott manifold also admits a
K\"ahler--Einstein metric.

\textup(Here admissibility is with respect the $\bbc\bbp^1$-fibration of
$M_n(A)$ over the previous stage $M_{n-1}(A)$ in the Bott tower.\textup)
\end{proposition}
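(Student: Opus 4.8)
The plan is to iterate the admissible construction: first produce the required constant scalar curvature metric on the stage $2m+1$ Bott manifold sitting inside $M_n(A)$, and then feed that metric back in as base data for a second admissible construction that yields extremal metrics on $M_n(A)=M_{2m+2}(A)$ itself.

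First I would identify the relevant Bott manifolds. Reading off the matrix, the stage $2m+1$ tower $M'$ obtained by deleting the last row and column of $A$ has first $2m$ rows equal to $\BOne_{2m}$ and bottom row $(q\mathbf{1}_m,-q\mathbf{1}_m,1)$; hence $M'=M_{2m+1}(\bfk)$ with $\bfk=(q,\ldots,q,-q,\ldots,-q)$ ($m$ entries of each sign), i.e.\ $M'=\bbp(\BOne\oplus\calo(q,\ldots,q,-q,\ldots,-q))\to(\bbc\bbp^1)^{2m}$. Since $\prod_a k_a=(-1)^m q^{2m}\neq 0$, Proposition~\ref{1textprop} applies directly: $M'$ admits an admissible extremal K\"ahler metric in every K\"ahler class, and because the $k_a$ do not all have the same sign, some of these classes carry an admissible CSC metric; when $q=1$ the admissible data $\x_a=k_a/2=\pm\tfrac12$ gives the Koiso--Sakane K\"ahler--Einstein metric~\cite{KoSa86}. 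This already establishes the second and third assertions.

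For the first assertion I would single out one CSC metric on $M'$ and feed it back in. With $k_a=\pm q$ as above, the discussion preceding the proposition shows that choosing $\x_a=1/k$ for $a\le m$ and $\x_a=-1/k$ for $m<a\le 2m$ yields an admissible CSC metric $\omega_{\mathrm{CSC}}$ of positive scalar curvature on $M'$; this uses $k\in\bbz^+\setminus\{1\}$ so that $1/k\in(0,1)$ and $q(k-1)>0$. Its K\"ahler class works out to
\[
[\omega_{\mathrm{CSC}}]/(2\pi)=2x_{2m+1}+q(k+1)\sum_{a=1}^{m}x_a+q(k-1)\sum_{a=m+1}^{2m}x_a=\gra_{2m+2},
\]
which is exactly $c_1(\call_{2m+2})$, the line bundle defining the top stage $M_{2m+2}(A)=\bbp(\BOne\oplus\call_{2m+2})\to M'$. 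In particular this class is integral, so $(M',\omega_{\mathrm{CSC}})$ is valid admissible base data (with $\omega_{\mathrm{CSC}}$ as the single product factor, trivially a local K\"ahler product) for the projectivization $M_{2m+2}(A)$. Since this base factor is CSC with non-negative scalar curvature, the positivity results of Hwang~\cite{Hwa94} and Guan~\cite{Gua95} guarantee that every admissible K\"ahler class on $M_{2m+2}(A)$ carries an admissible extremal metric; by the class formula these classes are, up to scale, $(1+1/\x_{2m+1})\gra_{2m+2}+2x_{2m+2}$ for $\x_{2m+1}\in(0,1)$, and together with the scaling they sweep out a $2$-dimensional subcone of $\calk(M_{2m+2}(A))$ (contained in the cone computed in Proposition~\ref{2tkcone}).

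The step that needs genuine care is the middle one: checking that the chosen admissible CSC class on $M'$ is integral and coincides exactly with $c_1(\call_{2m+2})$, so that the nested admissible construction is performed over the correct complex manifold $M_{2m+2}(A)$ and not a deformation of it. This is precisely where the arithmetic hypotheses enter ($k\ge 2$ makes $1/k\in(0,1)$ and $q(k-1)$ a positive integer), and where one must note that although $M'$ is not itself a product of Riemann surfaces, it still serves as admissible base data because its CSC metric is a (one-factor) local K\"ahler product of constant non-negative scalar curvature --- which is all that the construction and the Hwang--Guan positivity estimate require.
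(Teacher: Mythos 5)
Your argument reproduces the paper's proof essentially step for step: you identify $M'$ as $\bbp(\BOne\oplus\calo(q,\ldots,q,-q,\ldots,-q))\to(\bbc\bbp^1)^{2m}$ and apply Proposition~\ref{1textprop} and the Koiso--Sakane case; you select the admissible CSC class from the $\x_\pm=\pm 1/k$ family; you observe it is an integral class equal to $c_1(\call_{2m+2})=\gra_{2m+2}$; and you feed that CSC metric back into a second one-factor admissible construction, invoking Hwang--Guan positivity (valid since the base factor is CSC of positive scalar curvature) to cover the resulting one-parameter family of admissible classes, which together with scalings form the $2$-dimensional subcone. The only slight difference is that you make explicit the identification $[\omega_{\mathrm{CSC}}]/2\pi=\gra_{2m+2}$, which the paper leaves implicit; this is a welcome clarification rather than a deviation.
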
 

\begin{remark}
A similar result to Proposition \ref{1textprop} holds in the degenerate case when some of the components of $\bfk$ vanish. These Bott towers as in \eqref{1tdeg} have the form 
\begin{equation}\label{1tdeg2}
M_{N+1}(\bfk)\cong M_{m}({\tilde{\bfk}})\times
\overbrace{\bbc\bbp^1\times\cdots\times\bbc\bbp^1}^{N+1-m}
\end{equation}
where none of the components $k_1,\ldots,k_{m-1}$ of $\tilde{\bfk}$ vanish. 
\end{remark}

\subsection{Relation with c-projective geometry}\label{cprojconnection}

We now present a curious observation in the case of the stage $3$ Bott
manifold of the Bott tower with matrix
\[
A=\begin{pmatrix} 1&0&0\\ 0&1&0\\ 1&-1&1 \end{pmatrix}.
\]
That is, we take a closer look at admissible CSC K\"ahler metrics on
$M_3(0,1,-1)= \bbp(\BOne \oplus \calo(-1,1)) \rightarrow \bbc\bbp^1\times
\bbc\bbp^1 $.

It turns out that on this special case of a stage $3$ Bott manifold, there
exist infinitely many pairs of c-projectively equivalent CSC K\"ahler metrics
which are not affinely equivalent. For background on c-projective geometry we
refer the reader to \cite{CEMN16}.  Here we focus on the c-projective
equivalent admissible metrics.

Let $(g,\omega, J)$ be a K\"ahler metric as in \eqref{g} and for $ \alpha,
\beta \in\bbr$ consider the hamiltonian $2$-form
\[
\phi_{\alpha,\beta} := (\alpha \phi + \beta \omega) =  \sum_{a\in\cala}\frac{(\beta \x_a - \alpha)(1+\x_az)}{\x_a^2}\omega_a+(\alpha z + \beta) dz \wedge
\theta.
\]
According to the general theory \cite{CEMN16}, we can form a symmetric and
$J$-invariant tensor $g_{\alpha,\beta}^{-1}\colon T^*M \rightarrow TM$ by
\[
g_{\alpha,\beta}^{-1} = (\det C)^{1/2} C \circ g^{-1},
\]
where $C\colon TM \rightarrow TM$ is the $g$-selfadjoint tensor defined by
\[
\phi_{\alpha,\beta}(\cdot, J \cdot)= g (C \cdot, \cdot).
\]
If $g_{\alpha,\beta}^{-1}$ is invertible, then $g_{\alpha,\beta}$ defines a
K\"ahler metric $(g_{\alpha,\beta}, J, \hat{\omega})$ such that $g$ and
$\hat{g}$ are c-projectively equivalent. In that case, $(g_{\alpha,\beta}, J,
\hat{\omega})$ also admits a hamiltonian $2$-form (of same order) and so
should be admissible up to scale and in appropriate coordinates.

Now we see that
\begin{equation*}
(\alpha \phi + \beta \omega)(\cdot, J \cdot) =
\sum_{a\in\cala}\frac{(\beta \x_a - \alpha)(1+\x_az)}{\x_a^2}g_a
+ (\alpha z + \beta)\Bigl(\frac {dz^2}{\Theta (z)}+\Theta (z)\theta^2\Bigr).
\end{equation*}
If we (locally) extend $dz, \theta$ to a basis of $1$-forms by pullbacks from
the base, then we have corresponding dual vector fields $Z, K$ and may write
\[
g^{-1} = \sum_{a\in\cala}\frac{\x_a}{1+\x_az}g_a^{-1} +\Theta (z) Z^2 + \frac {1}
{\Theta (z)} K^2.
\]
Now $C\,g^{-1}$  must be equal to
\[
(\alpha \phi + \beta \omega)(\cdot, J \cdot)^\sharp=
\sum_{a\in\cala}\frac{(\beta \x_a - \alpha)}{(1+\x_az)}g_a^{-1}+(\alpha z + \beta)(\Theta (z) Z^2 + \frac {1}
{\Theta (z)} K^2).
\]
Therefore
\[
\det(C)^{\frac{1}{2}} = 
\frac{(\det(\alpha \phi + \beta \omega)(\cdot, J \cdot)^\sharp)^{\frac{1}{2}}}
{\det(g^{-1})^{\frac{1}{2}}}
= \frac{(\alpha z + \beta)\prod_{a\in\cala}(\beta \x_a - \alpha)^{d_a}}
{\prod_{a\in\cala}\x_a^{d_a}}
\]
and so (since $g_{\alpha,\beta}^{-1}=\det(C)^{\frac{1}{2}} C\,g^{-1}$) we have
(for appropriate choices of $\alpha$ and $\beta$) that the metric
$g_{\alpha,\beta}$ given up to scale by
\begin{equation}\label{admissiblenewg}
\frac{1}{(\alpha z + \beta)}\left(
 \sum_{a\in\cala}\frac{(1+\x_az)}{(\beta \x_a - \alpha)}g_a
 +(\alpha z + \beta)^{-1}\biggl(\frac {dz^2}
{\Theta (z)}+\Theta (z)\theta^2\biggr)\right)
\end{equation}

Now consider the change of coordinates
\[
\tilde{z} = \frac{\alpha + \beta z}{\alpha z + \beta},\quad
z = \frac{\alpha - \beta \tilde{z}}{\alpha \tilde{z} - \beta},
\quad
dz =  \frac{\beta^2-\alpha^2 }{(\alpha \tilde{z} - \beta)^2}\, d\tilde{z}.
\]
Then $\tilde{z} \in [-1,1]$ if and only if $z \in [-1,1]$ and moreover
\[
1+\x_a z = \frac{(\alpha - \beta \x_a)\tilde{z} + \alpha \x_a - \beta}{\alpha \tilde{z} - \beta}.
\]
We now rewrite \eqref{admissiblenewg} according to the change of coordinates and get
\[
 \frac{1}{( \beta^2-\alpha^2 )}\left( \sum_{a\in\cala} 
\frac{((\beta \x_a-\alpha)\tilde{z} + \beta- \alpha \x_a)}{(\beta \x_a - \alpha)}g_a
+ \frac {(\beta^2-\alpha^2)d\tilde{z}^2}
{(\alpha \tilde{z} - \beta)^2\Theta (z)}+\frac{(\alpha \tilde{z} - \beta)^2\Theta (z)}{( \beta^2-\alpha^2 )}\theta^2 \right).
\]
Now, with
\begin{equation*}
\Theta_{\alpha,\beta}(\tilde{z}) = \frac{(\alpha \tilde{z} - \beta)^2\Theta (z)}{( \beta^2-\alpha^2 )} = \frac{d\tilde{z}}{dz} \Theta(z)
\quad\text{and} \quad 
(\x_a)_{\alpha,\beta} = \frac{\beta \x_a - \alpha}{\beta - \alpha \x_a},
\end{equation*}
we recognize this as an admissible metric.

It is easy to check that the end point conditions~\eqref{positivity} are
satisfied for $\Theta_{\alpha,\beta}(\tilde{z})$ at $\tilde{z} = \pm 1$ and
the complex structure remains compatible with the admissible set-up:
\[
J d\tilde{z} = \frac{d\tilde{z}}{dz}J dz =  \frac{d\tilde{z}}{dz}\Theta(z) \theta = \Theta_{\alpha,\beta}(\tilde{z})\theta.
\]
Instead of looking at 
\[
\Theta_{\alpha,\beta}(\tilde{z}) = \frac{(\alpha \tilde{z} - \beta)^2\Theta (z)}{( \beta^2-\alpha^2 )}
\]
we consider the corresponding change in $F(z)$ as defined in Section
\ref{extradmBott}. From
\[
F_{\alpha,\beta}(\tilde z) =\Theta_{\alpha,\beta}(\tilde{z})\prod_{a \in \cala} (1 + (\x_a)_{\alpha,\beta} \tilde{z})^{d_{a}}
\quad\text{and}\quad
\Theta(z)=\frac{F(z)}{\prod_{a \in \cala} (1 + \x_{a} z)^{d_{a}}}
\]
it easily follows that
\begin{equation}\label{admissiblechangeofF}
F_{\alpha,\beta}(\tilde z)  = \frac{(\beta - \alpha \tilde{z})^{d+1} F( \frac{\alpha - \beta \tilde{z}}{\alpha \tilde{z} - \beta})}{(\beta^2-\alpha^2)\left(\prod_{a \in \cala} (\beta -\alpha \x_{a})^{d_{a}}\right)}.
\end{equation}

\begin{remark} Note that $\langle \phi,  \omega \rangle$ is not a constant,
so as long as $\alpha \neq 0$; then $g$ and $g_{\alpha,\beta}$ are not
affinely equivalent.
\end{remark}

We now consider admissible metrics on $P({\mathcal O} \oplus {\mathcal
  O}(1,-1)) \to \bbc\bbp^1 \times \bbc\bbp^1$ by choosing $\cala=\{ 1,2\}$
with $g_1$ and $-g_2$ being K\"ahler--Einstein metrics, $s_1 = 2$, $s_2 = -2$,
and $d_i = 1$.  As discussed in Section \ref{extradmBott}, each (admissible)
K\"ahler class, determined by $0<\x_1<1$ and $-1<\x_2<0$, admits an admissible
extremal K\"ahler metric with extremal polynomial $F(z)$.  Setting $q=1$ in
\cite[Theorem 9]{ACGT08}, $F(z)$ satisfies the CSC conditions if and only if
$\x_2=-\x_1$ or $\x_2 = -1 + \x_1$. In each of these families the
K\"ahler--Einstein condition on $F(z)$ is satisfied precisely when
$\x_2=-\x_1=-1/2$ which corresponds to the Koiso--Sakane K\"ahler--Einstein
metric \cite{KoSa86}.

\begin{remark}\label{KSsecondfamilyremark}
Note that we  already discussed the $\x_2=-\x_1$ solutions in Section \ref{extradmBott} and saw that this family of solutions have higher dimensional analogues on
\[
M_{2m+1}= \bbp(\BOne \oplus \calo(\overbrace{1,\ldots,1}^m,\overbrace{-1,\ldots,-1}^m) \rightarrow 
\overbrace{\bbc\bbp^1\times \cdots \times \bbc\bbp^1}^{2m}.
\]
The second family of solutions, namely $\x_2 = -1 + \x_1$, also seems to have
an analogue in higher dimensions. In the notation of Section \ref{extradmBott}
assume that $\Omega_r$ is the (admissible) K\"ahler class on $M_{2m+1}$ given
by
\[
\x_a = \left\{ \begin{array}{ccl}
\x_+&\text{if}& 1\leq a \leq m\\
\x_-&\text{if} & m+1 \leq a \leq 2m,
\end{array} \right.
\]
where $0<\x_+<1$ and $-1<\x_-<0$. Then, using~\cite{ACGT08} or Remark \ref{CSCargument}, we observe that $\Omega_r$ has an admissible CSC representative if and only if
$\alpha_0 \beta_1 -\alpha_1 \beta_0 =0$, where
\begin{align*}
\alpha_0 & = \int_{-1}^{1} (1+\x_+ z)^m(1+\x_- z)^m\,dz\\
\alpha_1 & = \int_{-1}^{1} z(1+\x_+ z)^m(1+\x_- z)^m\,dz\\
\beta_0 & = (1+\x_+)^m(1+\x_- )^m +  (1-\x_+)^m(1- \x_- )^m\\
&\qquad+ 2m(\x_+-\x_-) \int_{-1}^{1} (1+\x_+ z)^{m-1}(1+\x_- z)^{m-1}\,dz\\
\beta_1 & = (1+\x_+)^m(1+\x_- )^m -  (1-\x_+)^m(1- \x_- )^m\\
&\qquad+ 2m(\x_+-\x_-) \int_{-1}^{1} z(1+\x_+ z)^{m-1}(1+\x_- z)^{m-1}\,dz.
\end{align*}
For $m=1$ this recovers the two families of solutions above. In general, for any positive integer $m$, it is easy to confirm the first family of solutions $\x_- = -\x_+$.
Computer aided calculations for $m=2, m=3$, and $m=4$ show a second family of solutions for these cases as well and we conjecture that this happens for any positive integer 
$m$. We have not been able to confirm this directly. Figure~\ref{fig1} shows the behavior of the second family for $m=1,2,3$, and $m=4$.

\begin{figure}
\begin{center}
\includegraphics[width=0.5\textwidth]{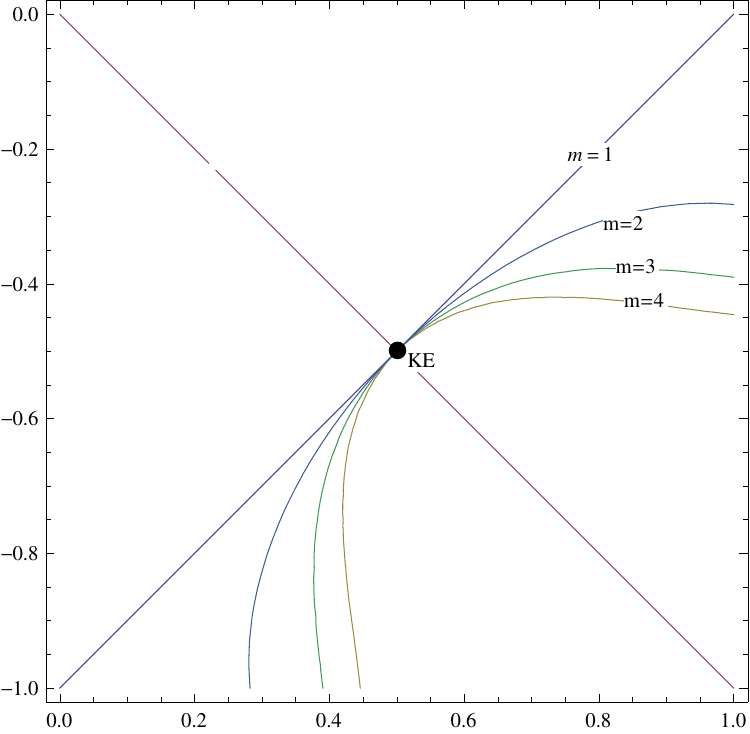}
\end{center}
\caption{Behaviour of the second family for $1\leq m\leq 4$.}\label{fig1}
\end{figure}
\end{remark}

Returning to the $m=1$ case, let us look at the second family of CSC K\"ahler
metrics described above. We parametrize it by $0<\x<1$ by setting $\x_1=\x$
and $\x_2= -1+\x$. Let us denote this family by ${\mathcal KS}$. For a given
choice of $0<\x< 1$, the CSC K\"ahler metric corresponds to the extremal
polynomial
\begin{equation}\label{kspol}
\begin{split}
F_\x(z)&  =  (1-z^2)((1+\x_1 z)(1+\x_2 z) + \tfrac12 \x_1 \x_2 (1-z^2)) \\
&= \tfrac12 (1-z^2)(2-\x+\x^2+(4\x-2)z+\x(\x-1)z^2).
\end{split}
\end{equation}

Now, we start with a CSC admissible metric $g \in {\mathcal KS}$ as above,
determined by a parameter $0<\x<1$. If we choose
\[
\alpha = \frac{2\x-1}{1-\x+\x^2}
\quad\text{and}\quad
\beta = 1
\]
then it is not hard to check that the c-projectively equivalent metric
$g_{\alpha,\beta}$ is defined.  Further, since $\x_1=\x$ and $\x_2=-1+\x$, we
calculate that, with this choice of $\alpha$ and $\beta$,
\[
(\x_1)_{\alpha,\beta} =  \frac{\beta \x - \alpha}{\beta - \alpha \x} = 1-\x
\quad\text{and}\quad
(\x_2)_{\alpha,\beta} =  \frac{\beta (-1+\x) - \alpha}{\beta-\alpha (-1+\x)}= -\x.
\]
Thus the new metric $g_{\alpha,\beta}$ corresponds (up to scale) to a K\"ahler
class of one of the metrics belonging to ${\mathcal KS}$ determined by the
parameter $\x_{\alpha,\beta} = 1-\x$.

Figure~\ref{fig2} illustrates the K\"ahler cone (up to scaling) as
parametrized by $0<\x_1<1$ and $-1<\x_2<0$. The two intersecting line segments
correspond to the classes with CSC K\"ahler metrics and their intersection
point is the K\"ahler--Einstein class. The line segment going from $(-1,0)$ to
$(1,0)$ (not including the endpoints of course) correspond to the K\"ahler
classes of the metrics in ${\mathcal KS}$. The trajectories represent how the
K\"ahler classes change as we move in a c-projective class. The bold dots
illustrate the above observation about c-projectively equivalent CSC metrics.

\begin{figure}
\begin{center}
\includegraphics[width=0.5\textwidth]{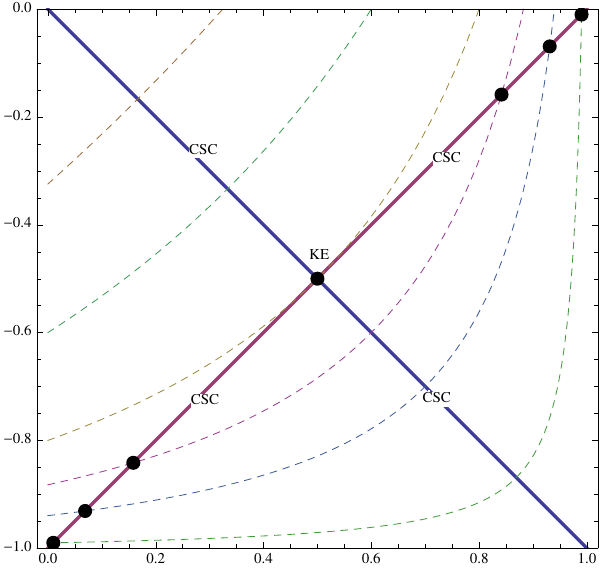}
\end{center}
\caption{CSC K\"ahler classes.}\label{fig2}
\end{figure}

Now adapting \eqref{admissiblechangeofF} to our case we have
\begin{equation*}
\begin{split}
F_{\alpha,\beta}(\tilde z) &= 
\frac{(\beta - \alpha \tilde{z})^{4} F_\x( \frac{\alpha - \beta \tilde{z}}{\alpha \tilde{z} - \beta})}{(\beta^2-\alpha^2)\left( (\beta-\alpha \x_{1})(\beta-\alpha \x_{2})\right)} 
=  \frac{(\beta - \alpha \tilde{z})^{4} F_\x( \frac{\alpha - \beta \tilde{z}}{\alpha \tilde{z} - \beta})}{(\beta^2-\alpha^2)\left( (\beta-\alpha \x)(\beta-\alpha (-1+\x))\right)}\\
&= \tfrac12 (1-\tilde{z}^2)(2-\x+\x^2+(2-4\x){\tilde{z}+\x(\x-1)}{\tilde z}^2)
 = F_{1-\x}(\tilde{z}).
\end{split}
\end{equation*}
Therefore $g_{\alpha,\beta} \in {\mathcal KS}$ as well. Since $\alpha \neq 0$,
we then have an example of a c-projective class of K\"ahler metrics with more
affinely inequivalent CSC metrics.

\begin{proposition}
On the stage $3$ Bott manifold $M_3(0,1,-1)$ of twist $1$ there exist an
infinite number of pairs of c-projectively equivalent CSC K\"ahler metrics
which are not affinely equivalent.
\end{proposition}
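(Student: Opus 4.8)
The plan is to assemble the computations of Section~\ref{cprojconnection} into a one-parameter family of examples. Fix $\x\in(0,1)$ with $\x\neq\tfrac12$, and let $g=g_\x$ be the CSC K\"ahler metric in the family $\mathcal{KS}$ on $M_3(0,1,-1)=\bbp(\BOne\oplus\calo(-1,1))\to\bbc\bbp^1\times\bbc\bbp^1$ corresponding to $\x_1=\x$, $\x_2=-1+\x$, with hamiltonian $2$-form $\phi$ and extremal polynomial $F_\x$ as in~\eqref{kspol}. I would then apply the c-projective deformation described above with $\alpha=\frac{2\x-1}{1-\x+\x^2}$ and $\beta=1$. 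The reason for this choice is twofold. First, an elementary inequality (equivalent to $(\x-1)(\x-2)>0$ and $\x(1+\x)>0$ on $(0,1)$) gives $|\alpha|<1$, so $\alpha z+\beta$ is nowhere zero for $z\in[-1,1]$; combined with the formula $\det(A)^{1/2}=(\alpha z+\beta)\prod_{a}(\beta\x_a-\alpha)^{d_a}/\prod_{a}\x_a^{d_a}$, this shows $g_{\alpha,\beta}^{-1}=\det(A)^{1/2}A\circ g^{-1}$ is invertible, so by the general theory of~\cite{CEMN16} the tensor $g_{\alpha,\beta}$ is a genuine K\"ahler metric c-projectively equivalent to $g$ and again admitting a hamiltonian $2$-form of order $1$.

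Next I would identify $g_{\alpha,\beta}$ explicitly. Performing the change of variable $\tilde z=(\alpha+\beta z)/(\alpha z+\beta)$ and applying~\eqref{admissiblechangeofF} gives, as in the computation above, that $g_{\alpha,\beta}$ is up to scale the admissible metric with parameters $(\x_1)_{\alpha,\beta}=1-\x$ and $(\x_2)_{\alpha,\beta}=-\x$ and extremal polynomial $F_{\alpha,\beta}(\tilde z)=F_{1-\x}(\tilde z)$. Since $1-\x\in(0,1)$ and the pair $(1-\x,-\x)$ again satisfies the CSC relation $\x_2=-1+\x_1$ defining $\mathcal{KS}$, and since within a fixed admissible K\"ahler class the CSC admissible metric is unique and determined by its extremal polynomial, it follows that $g_{\alpha,\beta}$ is, up to scale, the CSC metric in $\mathcal{KS}$ with parameter $1-\x$; in particular $g_{\alpha,\beta}$ is CSC. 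Thus $(g_\x,g_{\alpha,\beta})$ is a pair of CSC K\"ahler metrics on $M_3(0,1,-1)$ which are c-projectively equivalent. The genuinely non-formal point here is that $g_{\alpha,\beta}$ is positive-definite and globally defined on the compact manifold rather than a mere tensor on an open set; this is exactly what the identification $F_{\alpha,\beta}=F_{1-\x}$ buys us, since $F_{1-\x}$ is a positive extremal polynomial on $(-1,1)$ satisfying the endpoint conditions~\eqref{positivity}.

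It remains to check non-affine-equivalence and that distinct $\x$ give distinct pairs. For the former I would invoke the Remark preceding the proposition: because $\x\neq\tfrac12$ we have $\alpha\neq0$, and since $\langle\phi,\omega\rangle$ is not constant on $M_3(0,1,-1)$, the c-projective criterion of~\cite{CEMN16} shows $g=g_\x$ and $g_{\alpha,\beta}$ have distinct Levi-Civita connections. For distinctness, the admissible K\"ahler class of $g_\x$ is determined by $(\x_1,\x_2)=(\x,-1+\x)$ and that of $g_{\alpha,\beta}$ by $(1-\x,-\x)$, so the unordered pair of K\"ahler classes varies with $\x$; letting $\x$ range over $(0,\tfrac12)$ therefore produces infinitely many mutually inequivalent pairs of c-projectively equivalent CSC K\"ahler metrics that are not affinely equivalent. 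I expect the main obstacle to be purely bookkeeping: verifying $|\alpha|<1$ and tracking the admissible data through the coordinate change, both of which are short.
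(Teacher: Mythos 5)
Your proposal reconstructs the paper's own argument faithfully: the same one-parameter family $\mathcal{KS}$, the same choice $\alpha=\tfrac{2\x-1}{1-\x+\x^2}$, $\beta=1$, the same coordinate change $\tilde z=(\alpha+\beta z)/(\alpha z+\beta)$ yielding $(\x_1,\x_2)\mapsto(1-\x,-\x)$ and $F_{\alpha,\beta}=F_{1-\x}$, and the same invocation of $\alpha\neq0$ with non-constant $\langle\phi,\omega\rangle$ for affine inequivalence. The only additions are the explicit verification of $|\alpha|<1$ (which the paper leaves as ``not hard to check'') and the explicit restriction to $\x\in(0,\tfrac12)$ for distinctness, both correct.
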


\begin{remark}
At the moment it seems that the example above is rather special. From Remark
\ref{KSsecondfamilyremark} it is natural to speculate whether the same
phenomenon occurs for the presumed second family of CSC admissible metrics on
\[
M_{2m+1}=
\bbp(\BOne \oplus \calo(\overbrace{1,\ldots,1}^m,\overbrace{-1,\ldots,-1}^m)
\rightarrow\overbrace{\bbc\bbp^1\times \cdots \times \bbc\bbp^1}^{2m}
\]
for $m>1$.  Due to the non-explicit nature of the second family for $m>1$ it seems rather intractable to check in general, but a computer aided calculation suggests that the phenomenon does not occur for $m=2, m=3,$ and $m=4$.
\end{remark}

\subsection{Extremal almost K\"ahler metrics on certain stage $3$ Bott manifolds}

Extremal metrics on almost K\"ahler manifolds have been discussed
in~\cite{ApDr03,Don02,ACGT11} and studied in detail by Lejmi \cite{Lej10}.  We
now consider constructions of smooth almost K\"ahler extremal metrics on
certain stage $3$ Bott manifolds. Essentially we use the generalized Calabi
construction set-up from the beginning of Section \ref{calabiconstruction},
but we do not attempt to satisfy the integrability condition on $\mathbf H$. We
refer to \cite[Appendix A \& Section 4.3]{ACGT11} for definitions and
justification of the adaption of the generalized Calabi construction.

Consider the case when the fiber is the product $\bbc\bbp^1 \times
\bbc\bbp^1$.  In this case $V$ of Definition \ref{calabidata} is simply
$\bbc\bbp^1 \times \bbc\bbp^1$ and with the identification of $\gt$ and
$\gt^*$ with $\bbr^2$, the polytope is a rectangle. We use the notation $z =
(z_1,z_2)$ and for simplicity, we assume that the polytope is the square
$[0,1] \times [0,1]$ with normals
\[
u_1 = \langle 1,0 \rangle, \quad u_2 = \langle -1,0 \rangle, \quad u_3 = \langle 0, -1 \rangle, \quad u_4 = \langle 0, 1 \rangle
\]
on $F_1 = \{ (0,z_2)\,|\,0 \leq z_2 \leq 1\}$, $F_2 = \{ (1,z_2)\,|\,0 \leq
z_2 \leq 1\}$, $F_3 = \{ (z_1,1)\,|\,0 \leq z_1 \leq 1\}$, and $F_4 = \{
(z_1,0)\,|\,0 \leq z_1 \leq 1\}$ respectively.  An $S^2\gt^*$-valued function
$H$ on $\mathring{\Delta}$ may then be viewed as a matrix ${\bf H} =
(H_{ij})$, where each entry $H_{ij}$ is a smooth function of
$(z_1,z_2)$. Likewise $p \in \gt$ is just $p=(p_1,p_2)$, where here $p_1$ and
$p_2$ must be integers.  Note that $p_1$ and $p_2$ may be identified with $a$
and $b$ in the notation from Section \ref{stage3}.  Of course $c$ from Section
\ref{stage3} vanishes here.  Now $p_0$ from Definition \ref{calabidata} must
be a real constant such that $Q= Q(z_1,z_2) = p_0 + p_1 z_1 + p_2 z_2$ is
positive for $(z_1,z_2) \in [0,1] \times [0,1]$. In particular, we need to
assume that $p_0 \in \bbr^+$, $p_0+p_1>0$, $p_0+p_2>0$, and $p_0+p_1+p_2>0$.

The boundary conditions \eqref{eq:toricboundary} on ${\bf H}=
\left[ \begin{matrix} H_{11} & H_{12} \\ H_{12} & H_{22} \end{matrix}
  \right]$ are
\begin{align}\label{endpointHsquare1}
H_{11}(0,z_2) &= H_{12}(0,z_2)=0 &&\text{and}& H_{11,1}(0,z_2) &= 2 &&\text{for}\quad 0 \leq z_2 \leq 1,\\
\label{endpointHsquare2}
H_{11}(1,z_2) &= H_{12}(1,z_2)=0,&&\text{and}& H_{11,1}(1,z_2) &= -2 &&\text{for}\quad 0 \leq z_2 \leq 1,\\
\label{endpointHsquare3}
H_{22}(z_1,1) &= H_{12}(z_1,1)=0,&&\text{and}& H_{22,2}(z_1,1) &= -2 &&\text{for}\quad 0 \leq z_1 \leq 1,\\
\label{endpointHsquare4}
H_{22}(z_1,0) &= H_{12}(z_1,0)=0,&&\text{and}& H_{22,2}(z_1,0) &= 2 &&\text{for}\quad 0 \leq z_1 \leq 1,
\end{align}
where $H_{ij,k} := \frac{\partial}{\partial z_k} H_{ij}$.  To get a genuine
positive definite metric out of this we also need that ${\bf H}$ is positive
definite.

To complete the picture, observe that the integrability condition on $J$ is as
follows: if ${\bf H}^{-1}= (H^{ij})$, then we must have that
\begin{equation}\label{integralitysquare}
\frac{\partial H^{11}}{\partial z_2} = \frac{\partial H^{12}}{\partial z_1}\quad\text{and}\quad \frac{\partial H^{22}}{\partial z_1} = \frac{\partial H^{12}}{\partial z_2}.
\end{equation}

The extremal condition is 
\[
4 - \frac{\partial^2}{\partial z_i \partial z_j} \left( QH_{ij} \right) = (A_1 z_1+A_2 z_2 + A_3) Q
\]
for some constants $A_1,A_2, A_3$ with CSC corresponding to $A_1=A_2 = 0$.

Now assume that for any $i\leq j \in \{1,2\}$, $QH_{ij}$ is a polynomial
$P_{ij}$ of degree $4$ in $z_1, z_2$.  This gives the following equation for
extremality of the corresponding compatible metric.
\begin{equation}\label{polyextrsquare}
4-P_{11,11} - 2P_{12,12} - P_{22,22} = (A_1 z_1+A_2 z_2 + A_3) (p_0+p_1 z_1 + p_2 z_2),
\end{equation}
where $P_{ij,kl} := \frac{\partial^2P_{ij}}{\partial z_k \partial z_l}$.
Conditions \eqref{endpointHsquare1}--\eqref{endpointHsquare4} mean that
for $0 \leq z_1\leq 1$ and $0\leq z_2 \leq 1$,
\begin{align*}
P_{11}(0,z_2) &= P_{12}(0,z_2)=0, & P_{11,1}(0,z_2) &= 2 Q(0,z_2) = 2(p_0+p_2 z_2)\\
P_{11}(1,z_2) &= P_{12}(1,z_2)=0, & P_{11,1}(1,z_2) &= -2 Q(1,z_2) = -2(p_0+p_1+p_2 z_2)\\
P_{22}(z_1,1) &= P_{12}(z_1,1)=0,& P_{22,2}(z_1,1) &= -2 Q(z_1,1) = -2(p_0+p_1 z_1 + p_2) \\
P_{22}(z_1,0) &= P_{12}(z_1,0)=0,& P_{22,2}(z_1,0) &= 2 Q(z_1,0) = 2(p_0+p_1 z_1).
\end{align*}
These equations imply that
\begin{equation}\label{whatPisSquare}
\begin{split}
P_{11} &= z_1(1-z_1) (2Q + a_{11} z_1(1-z_1))\\
P_{12} & = a_{12}z_1(1-z_1)z_2(1-z_2)\\
P_{22} & =  z_2(1-z_2)(2 Q + a_{22} z_2(1-z_2)),
\end{split}
\end{equation}
for some constants $a_{ij} \in \bbr$. 

Now we substitute these functions into the left hand side of \eqref{polyextrsquare} to get
\begin{multline*}
 (4 - 4p_1-4p_2-2a_{11} - 2a_{22} - 2a_{12} + 8 p_0) + (12a_{11} + 4 a_{12} + 16 p_1) z_1\\
+ (12a_{22} + 4 a_{12} + 16 p_2) z_2 - 12 a_{11} z_1^2 - 8a_{12} z_1z_2 - 12 a_{22} z_2^2.
\end{multline*}
Expanding the right hand side of \eqref{polyextrsquare}  yields
\[
 p_0A_3 +  (p_0A_1 + p_1 A_3) z_1 + (p_0A_2 + p_2 A_3) z_2+ p_1A_1z_1^2 + (p_1A_2+p_2A_1)z_1z_2 + p_2A_2 z_2^2.
\]
It is then clear that - assuming $p_0,p_1, p_2$ fixed - we need to solve the linear system of $6$ equations with $6$ unknowns:
\[
\left[ \begin{matrix}
-2 & -2& -2& 0 & 0 & -p_0\\
12 &4&0&-p_0&0 &-p_1 \\
0& 4& 12 & 0 & -p_0 & - p_2\\
-12&0&0&-p_1&0&0\\
0&-8&0&-p_2&-p_1&0\\
0&0&-12&0&-p_2&0
\end{matrix}\right] \left[ \begin{matrix} a_{11}\\ a_{12}\\a_{22}\\A_1\\A_2\\A_3 \end{matrix} \right] = \left[ \begin{matrix}
-4+4p_1+4p_2-8p_0\\
-16 p_1\\
-16p_2\\
0\\
0\\
0 \end{matrix} \right].
\] 
This has a unique solution for $(a_{11},a_{12},a_{22},A_1,A_2,A_3)$ since $Q$
is positive over $[0,1]\times[0,1]$, so the determinant $6 p_0^2+6 p_0
p_1+p_1^2+6 p_0 p_2+3 p_1 p_2+p_2^2$ is positive.

We find (not unexpectedly) that $A_1=A_2=0$ is only possible if $p_1=p_2=0$
which yields the K\"ahler product $\bbc\bbp^1\times
\bbc\bbp^1\times\bbc\bbp^1$.  In other words, none of the solutions yield
non-trivial CSC almost K\"ahler metrics.

Since $Q$ is assumed positive over $[0,1]\times [0,1]$, checking positive definiteness of ${\bf H}$ over $(0,1)\times(0,1)$ is equivalent to ensuring that ${\bf P}$ is positive definite over $(0,1)\times(0,1)$. The latter condition amounts to checking that 
\[
\det{ \bf P} = P_{11} P_{22} - P_{12}^2 >0\quad \text{and} \quad  P_{11} >0
\]
over $(0,1)\times(0,1)$. It is easy to see that for fixed $p_1$ and $p_2$ this
is satisfied for sufficiently large and positive $p_0$ values. This is seen by
the fact that the limit, $p_0\rightarrow +\infty$ corresponds to having the
product of Fubini--Study metrics on the fiber $\bbc\bbp^1 \times
\bbc\bbp^1$. This is not unexpected in light of \cite[Theorem 3]{ACGT11}.

To be more specific, observe now that $P_{11} P_{22} - P_{12}^2$ is
\begin{equation*}
z_1(1-z_1)z_2(1-z_2)\bigl( \,(2Q + a_{11} z_1(1-z_1))\,
(2 Q  + a_{22} z_2(1-z_2)) - a_{12}^2z_1(1-z_1)z_2(1-z_2)\,\bigr)
\end{equation*}
and recall that
\[
P_{11} =  z_1(1-z_1) (2Q + a_{11} z_1(1-z_1)).
\]
Thus for positivity in a specific case we need
\begin{gather}\label{detpossquare}
(2Q + a_{11} z_1(1-z_1))(2 Q + a_{22} z_2(1-z_2))-a_{12}^2z_1(1-z_1)z_2(1-z_2)>0,\\
\label{p11possquare}
(2Q + a_{11} z_1(1-z_1))>0.
\end{gather}
Now assume also that $p_1,p_2$ are positive. In particular, each of them is at
least one.  It is not hard to check that the left hand side of
\eqref{p11possquare} may be viewed as a second order polynomial in $z_1$ which
is positive at $z_1=0$ and $z_1=1$ and has positive derivative at both these
points as well.  Thus \eqref{p11possquare} easily follows. The left hand side
of \eqref{detpossquare} is a polynomial in $p_0, p_1,p_2, z_1$, and $z_2$.
Viewing this as a polynomial of degree $6$ in $p_0$, it is easy to check that
the coefficients of $p_0^3, p_0^4, p_0^5, p_0^6$ are all positive.  Using
carefully that $p_1,p_2 \in \bbz^+$ and $(z_1,z_2) \in (0,1) \times (0,1)$, it
is also a relatively straightforward exercise to check that the coefficients
of $p_0^0$ (the constant term w.r.t. $p_0$), $p_0$, and $p_0^2$ are positive.

We conclude that for $p_1,p_2 >0$ we have an explicit almost K\"ahler extremal
metric for each $p_0 \in \bbr^+$. Using computer aided algebra one may check
that for these examples it is not possible to satisfy
\eqref{integralitysquare}. Thus, the examples are non-K\"ahler, almost
K\"ahler.

If we combine the constructions above, for $p_1$ and $p_2$ even, with the result in Proposition \ref{c0} we have the following result:

\begin{proposition}
The smooth manifold $(\bbc\bbp^1 )^3$ admits uncountably many
explicit non-CSC, non-K\"ahler, extremal almost K\"ahler metrics.
\end{proposition}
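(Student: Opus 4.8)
The plan is to assemble the almost K\"ahler extremal metrics constructed in this subsection on the $\bbc\bbp^1\times\bbc\bbp^1$-bundle over $\bbc\bbp^1$ with twist data $(p_1,p_2)$ and base-shift parameter $p_0$, specialize to $p_1,p_2$ even and positive, and transport the resulting metrics to $(\bbc\bbp^1)^3$ using the diffeomorphism furnished by Proposition~\ref{c0}. The continuous parameter $p_0\in\bbr^+$ will produce the uncountable family.

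In detail, fix $p_1,p_2$ even and positive. For each $p_0\in\bbr^+$ the function $Q=p_0+p_1z_1+p_2z_2$ is positive on $[0,1]\times[0,1]$, so the generalized Calabi data of Definition~\ref{calabidata} are in place, and the polynomial ansatz $QH_{ij}=P_{ij}$ with the $P_{ij}$ of~\eqref{whatPisSquare} determines, through the $6\times 6$ linear system displayed above (whose determinant $6p_0^2+6p_0p_1+p_1^2+6p_0p_2+3p_1p_2+p_2^2$ is positive), a unique solution satisfying both the extremality equation~\eqref{polyextrsquare} and the endpoint conditions~\eqref{endpointHsquare1}--\eqref{endpointHsquare4}. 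The inequalities~\eqref{detpossquare} and~\eqref{p11possquare}, already verified above for $p_1,p_2\in\bbz^+$ and $(z_1,z_2)\in(0,1)\times(0,1)$, make $\mathbf H$ positive definite on the interior, so the formula~\eqref{compatiblemetric}, with the integrability requirement on $\mathbf H$ dropped, extends to a genuine extremal almost K\"ahler structure $(g,J,\omega)$ on the bundle. It is not CSC, because the linear system forces $A_1=A_2=0$ only when $p_1=p_2=0$; and it is not K\"ahler, because~\eqref{integralitysquare} fails (computer algebra check). The underlying complex manifold is exactly the stage $3$ Bott manifold with parameters $a=p_1$, $b=p_2$, $c=0$, i.e.\ $M_3(p_1,p_2,0)$; since $c=0$ is even, $p_1,p_2$ are even, and $c(2b-ac)=0$, Proposition~\ref{c0} yields a diffeomorphism $M_3(p_1,p_2,0)\cong(\bbc\bbp^1)^3$. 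Pulling $(g,J,\omega)$ back along it gives an explicit non-CSC, non-K\"ahler, extremal almost K\"ahler metric on $(\bbc\bbp^1)^3$.

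To see that uncountably many pairwise non-isometric metrics arise this way, hold $p_1,p_2$ fixed and let $p_0$ vary over $\bbr^+$. Writing $\omega=Q\,\omega_\Sigma+\langle dz\wedge\theta\rangle$ and using $\omega_\Sigma^2=0$ together with $(dz_i\wedge\theta_i)^2=0$, one computes that $\int_M\omega^3$ is a strictly increasing affine function of $p_0$ (its $p_0$-coefficient is a positive multiple of $\int_{[0,1]\times[0,1]}dz_1\,dz_2$). Hence the total volumes take uncountably many distinct values; as isometric metrics have equal volume, the family contains uncountably many isometry classes, proving the proposition.

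All the substantive verifications are carried out in the preceding text, so the proof is largely a matter of collecting them; the one genuinely delicate point is the uniform positive-definiteness of $\mathbf P$ (equivalently of $\mathbf H$) over the open square as $p_0$ ranges over all of $\bbr^+$ and $p_1,p_2$ over $\bbz^+$, which is the coefficient analysis behind~\eqref{detpossquare}--\eqref{p11possquare} and uses crucially that $p_1,p_2$ are positive integers.
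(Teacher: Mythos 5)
Your proof is correct and follows essentially the same route as the paper: take the explicit extremal almost K\"ahler metrics built in that subsection for $p_1,p_2>0$ and $p_0\in\bbr^+$, specialize to $p_1,p_2$ even so that Proposition~\ref{c0} identifies $M_3(p_1,p_2,0)$ with $(\bbc\bbp^1)^3$, and let $p_0$ sweep $\bbr^+$. The only thing you add beyond what the paper writes out is the short volume computation $\int_M\omega^3$ to certify that distinct $p_0$ give non-isometric metrics; the paper leaves that point implicit, and your addition is a correct (and welcome) clarification.
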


\begin{remark}
Although the generalized Calabi and admissible constructions do not seem to
lend themselves readily to producing new explicit examples of e.g. smooth stage $3$ Bott
manifolds with extremal (integrable) K\"ahler metrics, direct calculations suggest that it is possible to produce orbifold
examples when the fiber is a non-trivial Hirzebruch surface. Indeed, one can 
combine the notion of Calabi toric K\"ahler metrics from
\cite{Leg09} with the generalized Calabi construction as introduced in
\cite{ACGT04}, to construct explicit examples of admissible extremal K\"ahler metrics on
such orbibundles whose fibers are Hirzebruch orbifolds.
\end{remark}

\subsection*{Acknowledgements} We thank the anonymous referee, whose careful
reading of the manuscript and detailed comments led to numerous improvements
to the paper.

\def\cprime{$'$} \def\cprime{$'$} \def\cprime{$'$} \def\cprime{$'$}
  \def\cprime{$'$} \def\cprime{$'$} \def\cprime{$'$} \def\cprime{$'$}
  \def\cdprime{$''$} \def\cprime{$'$} \def\cprime{$'$} \def\cprime{$'$}
  \def\cprime{$'$} \def\cprime{$'$}
\providecommand{\bysame}{\leavevmode\hbox to3em{\hrulefill}\thinspace}
\providecommand{\MR}{\relax\ifhmode\unskip\space\fi MR }
\providecommand{\MRhref}[2]{%
  \href{http://www.ams.org/mathscinet-getitem?mr=#1}{#2}
}
\providecommand{\href}[2]{#2}

\end{document}